


\documentclass[a4paper, reqno]{amsart}

\usepackage[T1]{fontenc}
\usepackage[latin1]{inputenc}
\usepackage{newlfont, verbatim, indentfirst, enumerate}

\usepackage{amsmath, amsthm, amssymb, amscd}
\usepackage{latexsym, amsfonts, amsbsy, mathrsfs}
\usepackage{array, pst-all, graphicx, rotating}
\usepackage{tikz} \usetikzlibrary{arrows}
  \usetikzlibrary{patterns} \usetikzlibrary{decorations.markings}
\usepackage[unicode]{hyperref}

\allowdisplaybreaks

\numberwithin{equation}{section}

\theoremstyle{plain}
\newtheorem{theorem}{Theorem}[section]
\newtheorem{theo}[theorem]{Theorem}
\newtheorem{prop}[theorem]{Proposition}
\newtheorem{lem}[theorem]{Lemma}   
\newtheorem{cor}[theorem]{Corollary}

\theoremstyle{definition}
\newtheorem{defin}[theorem]{Definition}
\newtheorem{rem}[theorem]{Remark}

\renewenvironment{itemize}{\begin{list}{$\bullet$}{\leftmargin=0.5cm}\parindent=0pt}{\end{list}}

\newcommand {\thickarrowpersonalized} [1]  
  {\draw[rounded corners,decoration={markings, mark=at position 1 with
  {\arrow[scale=2]{>}}}, postaction={decorate}] #1;}

\newcommand {\functor} [1] {\mathcal{#1}}  
\newcommand{\fiber}[4]{{#1}\,_{#2}\times_{#3}\, {#4}}  
\newcommand {\groupoid} [1] {\mathscr{#1}}  
\newcommand {\groupoidtot} [2] {\groupoid{#1}^{#2}_{\bullet}}  
\newcommand {\groupname} [2] {\groupoid{#1}^{#2}_1\rightrightarrows^{\hspace{-0.25 cm}^{s}}_{\hspace{-0.25 cm}_{t}}\groupoid{#1}^{#2}_0}  
\newcommand {\groupoidmap} [2] {({#1}^{#2}_0,{#1}^{#2}_1)}  
\newcommand {\groupoidmaptot} [2] {{#1}^{#2}_{\bullet}}  
\newcommand {\emphatic} [1] {\emph{#1}}

\newcommand {\thetaa} [3] {\theta_{#1,#2,#3}}  
\newcommand {\thetab} [3] {\theta_{#1,#2,#3}^{-1}}  
\newcommand {\Thetaa} [3] {\Theta_{#1,#2,#3}}  
\newcommand {\Thetab} [3] {\Theta_{#1,#2,#3}^{-1}}  

\def \CATA {\mathbf{\mathscr{A}}}  
\def \CATB {\mathbf{\mathscr{B}}}
\def \CATC {\mathbf{\mathscr{C}}}
\def \CATD {\mathbf{\mathscr{D}}}

\def \id {\operatorname{id}}  

\def \SETW {\mathbf{W}} 
\def \SETWsat {\mathbf{W}_{\mathbf{\operatorname{sat}}}}  
\def \SETWsatsat {\mathbf{W}_{\mathbf{\operatorname{sat}},\mathbf{\operatorname{sat}}}} 
\def \SETWinv {\mathbf{W}^{-1}}  
\def \SETWsatinv {\mathbf{W}_{\mathbf{\operatorname{sat}}}^{-1}}  
\def \SETWmin {\mathbf{W}_{\mathbf{\operatorname{min}}}}  
\def \SETWmininv {\mathbf{W}_{\mathbf{\operatorname{min}}}^{-1}}  
\def \SETWequiv {\mathbf{W}_{\mathbf{\operatorname{equiv}}}}  
\def \SETWequivinv {\mathbf{W}_{\mathbf{\operatorname{equiv}}}^{-1}}  

\def \SETWA {\mathbf{W}_{\mathbf{\mathscr{A}}}}  
\def \SETWAsat {\mathbf{W}_{\mathbf{\mathscr{A}},\mathbf{\operatorname{sat}}}}
\def \SETWAinv {\mathbf{W}^{-1}_{\mathbf{\mathscr{A}}}}

\def \SETWB {\mathbf{W}_{\mathbf{\mathscr{B}}}}  
\def \SETWBsat {\mathbf{W}_{\mathbf{\mathscr{B}},\mathbf{\operatorname{sat}}}}

\def \SETWBinv {\mathbf{W}^{-1}_{\mathbf{\mathscr{B}}}}
\def \SETWBsatinv {\mathbf{W}^{-1}_{\mathbf{\mathscr{B}},\mathbf{\operatorname{sat}}}}



\def \EGpd {(\mathbf{\mathcal{\acute{E}}\,\functor{G}pd})}  
\def \PEGpd {(\mathbf{\mathcal{P\acute{E}}\,\mathcal{G}pd})}  
\def \PEEGpd {(\mathbf{\mathcal{PE\acute{E}}\,\mathcal{G}pd})} 

\def \RedAtl {(\mathbf{\mathcal{R}ed\,\mathcal{A}tl})}  

\def \WEGpd {\mathbf{W}_{\mathbf{\mathcal{\acute{E}}\,\functor{G}pd}}}  
\def \WEGpdsat {\mathbf{W}_{\mathbf{\mathcal{\acute{E}}\,\functor{G}pd},\mathbf{\operatorname{sat}}}}  
\def \WEGpdinv {\mathbf{W}^{-1}_{\mathbf{\mathcal{\acute{E}}\,\functor{G}pd}}}  

\def \WPEGpd {\mathbf{W}_{\mathbf{\mathcal{P\acute{E}}\,\mathcal{G}pd}}}  
\def \WPEGpdinv {\mathbf{W}_{\mathbf{\mathcal{P\acute{E}}\,\mathcal{G}pd}}^{-1}}

\def \WPEEGpd {\mathbf{\mathbf{W}_{\mathcal{PE\acute{E}}\,\mathcal{G}pd}}} 
\def \WPEEGpdinv {\mathbf{W}_{\mathbf{\mathcal{PE\acute{E}}\,\mathcal{G}pd}}^{-1}}

\def \WRedAtl {\mathbf{W}_{\mathbf{\mathcal{R}ed\,\mathcal{A}tl}}}  

\hypersetup{unicode=true, pdftoolbar=true, pdfmenubar=true, pdffitwindow=false,
  pdfstartview={FitH}, pdftitle={Some insights on bicategories of fractions - II},
  pdfauthor={Matteo Tommasini}, pdfkeywords={bicategories of fractions, bicalculus of fractions,
  pseudofunctors, \'etale groupoids, Morita equivalences},
  pdfnewwindow=true,colorlinks=false, linkcolor=red, citecolor=red, filecolor=magenta, urlcolor=cyan}

\setlength\parindent{0pt}

\numberwithin{equation}{section}

\hyphenation{pseu-do-func-tor pseu-do-func-tors ge-ne-ra-li-zing li-te-ra-tu-re dif-fe-ren-tia-ble
e-qui-va-len-ce e-qui-va-len-ces pseu-do-na-tu-ral ca-te-go-ry ca-te-go-ries bi-ca-te-go-ry
bi-ca-te-go-ries par-ti-cu-lar sa-tu-ra-tion sa-tu-ra-tions sa-tu-ra-ted
as-so-cia-ted as-so-cia-te as-so-cia-tor sa-ti-sfying or-bi-fold or-bi-folds}

\begin{document}

\title[Some insights on bicategories of fractions - II]
{Some insights on bicategories of fractions - II \\ - \\ \mdseries{\small{Right saturations and induced pseudofunctors between bicategories of fractions}}}

\author{Matteo Tommasini}

\address{\flushright Mathematics Research Unit\newline University of Luxembourg\newline
6, rue Richard Coudenhove-Kalergi\newline L-1359 Luxembourg\newline\newline
website: \href{http://matteotommasini.altervista.org/}
{\nolinkurl{http://matteotommasini.altervista.org/}}\newline\newline
email: \href{mailto:matteo.tommasini2@gmail.com}{\nolinkurl{matteo.tommasini2@gmail.com}}}

\date{\today}
\subjclass[2010]{18A05, 18A30, 22A22}
\keywords{bicategories of fractions, bicalculus of fractions, pseudofunctors, \'etale
groupoids, Morita equivalences}

\thanks{I would like to thank Dorette Pronk for several interesting discussions about her work on
bicategories of fraction and for some useful suggestions about this series of papers. This research 
was performed at the Mathematics Research Unit of the University of Luxembourg, thanks to the grant
4773242 by Fonds National de la Recherche Luxembourg.}

\begin{abstract}
We fix any bicategory $\CATA$ together with a class of morphisms $\SETWA$, such that there is a
bicategory of fractions $\CATA\left[\SETWAinv\right]$ (as described by D.~Pronk). Given another such
pair $(\CATB,\SETWB)$ and any pseudofunctor $\functor{F}:
\CATA\rightarrow\CATB$, we find necessary and sufficient conditions in order to have an induced
pseudofunctor $\functor{G}:\CATA\left[\SETWAinv\right]\rightarrow\CATB\left[\SETWBinv\right]$.
Moreover, we give a simple description of $\functor{G}$ in the case when the class $\SETWB$ is
``right saturated''.
\end{abstract}

\maketitle

\begingroup{\hypersetup{linkbordercolor=white}
\tableofcontents}\endgroup

\section*{Introduction}
In 1996 Dorette Pronk introduced the notion of (\emph{right}) \emph{bicalculus of fractions}
(see~\cite{Pr}), generalizing the concept of (right) calculus of fractions (described in 1967 by
Pierre Gabriel and Michel Zisman, see~\cite{GZ}) from the framework of categories to that of
bicategories. Pronk proved that given a bicategory $\CATC$ together with a class of morphisms $\SETW$
(satisfying a set of technical conditions called
(BF)),
there are a bicategory
$\CATC\left[\SETWinv\right]$
(called (\emph{right}) \emph{bicategory of fractions}) and a pseudofunctor $\functor{U}_{\SETW}:\CATC
\rightarrow\CATC\left[\SETWinv\right]$. Such a pseudofunctor sends each element of $\SETW$ to an
internal equivalence and is universal with respect to such property (see~\cite[Theorem~21]{Pr}).
The structure of $\CATC\left[\SETWinv\right]$ depends on a set of choices \hyperref[C]{C}$(\SETW)$
involving axioms (BF) (see \S~\ref{sec-03}); by the universal property of $\functor{U}_{\SETW}$,
different sets of choices give rise to equivalent bicategories.\\

Now let us suppose that we have fixed any $2$ pairs $(\CATA,\SETWA)$ and $(\CATB,\SETWB)$,
both admitting a right bicalculus of fractions, and any pseudofunctor $\functor{F}:\CATA
\rightarrow\CATB$. Then the following $3$ questions arise naturally:

\begin{enumerate}[(a)]
 \item what are the necessary and sufficient conditions such that there are a pseudofunctor
  $\functor{G}$ and a pseudonatural equivalence $\kappa$ as in the following diagram?

  \begin{equation}\label{eq-56}
  \begin{tikzpicture}[xscale=2.2,yscale=-0.9]
    \node (A0_0) at (0, 0) {$\CATA$};
    \node (A0_2) at (2, 0) {$\CATB$};
    \node (A2_0) at (0, 2) {$\CATA\Big[\SETWAinv\Big]$};
    \node (A2_2) at (2, 2) {$\CATB\Big[\SETWBinv\Big]$};
    
    \node (A1_1) [rotate=225] at (0.9, 1) {$\Longrightarrow$};
    \node (B1_1) at (1.2, 1) {$\kappa$};
    
    \path (A0_0) edge [->]node [auto] {$\scriptstyle{\functor{F}}$} (A0_2);
    \path (A0_0) edge [->]node [auto,swap] {$\scriptstyle{\functor{U}_{\SETWA}}$} (A2_0);
    \path (A0_2) edge [->]node [auto] {$\scriptstyle{\functor{U}_{\SETWB}}$} (A2_2);
    \path (A2_0) edge [->, dashed]node [auto,swap] {$\scriptstyle{\functor{G}}$} (A2_2);
  \end{tikzpicture}
  \end{equation}

  \item If a pair $(\functor{G},\kappa)$ as above exists, can we express
   $\functor{G}$ in a simple form, at least in some cases?
  \item Again if $(\functor{G},\kappa)$ as above exists, what are the necessary and sufficient
   conditions such that $\functor{G}$ is an equivalence of bicategories?
\end{enumerate}

We are going to give an answer to (a) and (b) in this paper, while an answer to (c) will be given
in the next paper~\cite{T5}. In order to prove the results of this paper, a key notion will be
that of (\emph{right}) \emph{saturation}: given any pair $(\CATC,\SETW)$ as above, we define the
(right) saturation $\SETWsat$ of $\SETW$ as the class of all morphisms $f:B
\rightarrow A$ in $\CATC$, such that there are a pair of objects $C,D$ and a pair of morphisms $g:C
\rightarrow B$, $h:D\rightarrow C$, such that both $f\circ g$ and $g\circ h$ belong to $\SETW$.
If $(\CATC,\SETW)$ satisfies conditions (BF),
then $\SETW\subseteq\SETWsat$ and $\SETWsat=\SETWsatsat$, thus explaining the name ``saturation'' for
this class. Moreover, we have the following key result:

\begin{prop}\label{prop-04}
\emphatic{(Lemma~\ref{lem-05} and Proposition~\ref{prop-01})}
Let us fix any pair $(\CATC,\SETW)$ satisfying conditions
\emphatic{(BF)}.
Then also the pair $(\CATC,\SETWsat)$ satisfies the same conditions, so there are a bicategory of
fractions $\CATC\left[\SETWsatinv\right]$ and a pseudofunctor

\begin{equation}\label{eq-46}
\functor{U}_{\SETWsat}:\CATC\longrightarrow\CATC\left[\SETWsatinv\right]
\end{equation}
with the universal property. Moreover, there is an equivalence
of bicategories $\functor{H}:\,\CATC\left[\SETWsatinv\right]\rightarrow\CATC\left[\SETWinv\right]$
and a pseudonatural equivalence of pseudofunctors $\tau:\functor{U}_{\SETW}\Rightarrow\functor{H}
\circ\functor{U}_{\SETWsat}$.
\end{prop}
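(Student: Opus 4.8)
The plan is to split the statement into its three assertions and treat them in order, deriving the equivalence of bicategories formally once the right ``inversion'' property has been established. First I would verify that $(\CATC,\SETWsat)$ satisfies axioms (BF). Here one checks each axiom in turn, systematically reducing statements about $\SETWsat$ to statements about $\SETW$ by unwinding the definition: a morphism $f$ lies in $\SETWsat$ precisely when there are $g,h$ with $f\circ g,\,g\circ h\in\SETW$. The closure and $2$-cell axioms should follow without much trouble from $\SETW\subseteq\SETWsat$ together with the idempotency $\SETWsat=\SETWsatsat$ noted above. I expect the Ore-type axiom --- the one requiring, for each $f\in\SETWsat$ and each morphism with matching target, the existence of a suitable square whose relevant edge again lies in $\SETWsat$ --- to be the fiddly one: one produces the witnesses $g,h$, builds the required squares for the $\SETW$-members $f\circ g$ and $g\circ h$ using (BF) for $\SETW$, and then assembles and pastes them while tracking the associators. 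This bookkeeping is the real content of part~(1). Once (BF) holds for $\SETWsat$, Pronk's construction yields $\CATC[\SETWsatinv]$ together with the universal pseudofunctor $\functor{U}_{\SETWsat}$ of~\eqref{eq-46}.

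The key intermediate step is to show that $\functor{U}_{\SETW}$ sends every morphism of $\SETWsat$ to an internal equivalence of $\CATC[\SETWinv]$. Fix $f\in\SETWsat$ with witnesses $g,h$, so that $f\circ g,\,g\circ h\in\SETW$. Writing $F,G,H$ for the images of $f,g,h$ under $\functor{U}_{\SETW}$ and using the compositor isomorphisms of the pseudofunctor, both $F\circ G$ and $G\circ H$ are internal equivalences. From $F\circ G$ being an equivalence one extracts a left inverse for $G$, and from $G\circ H$ being an equivalence one extracts a right inverse for $G$; a $1$-morphism in a bicategory admitting both a left and a right inverse (up to invertible $2$-cell) is an internal equivalence, so $G$ is one. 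Then $F\cong(F\circ G)\circ G^{-1}$ is a composite of internal equivalences, hence itself an internal equivalence. Thus $\functor{U}_{\SETW}$ inverts all of $\SETWsat$.

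With this in hand the remaining assertions are formal applications of the universal property of~\cite[Theorem~21]{Pr}. Since $\functor{U}_{\SETW}\colon\CATC\to\CATC[\SETWinv]$ sends $\SETWsat$ to internal equivalences, the universal property of $\functor{U}_{\SETWsat}$ furnishes a pseudofunctor $\functor{H}\colon\CATC[\SETWsatinv]\to\CATC[\SETWinv]$ together with a pseudonatural equivalence $\tau\colon\functor{U}_{\SETW}\Rightarrow\functor{H}\circ\functor{U}_{\SETWsat}$, which is exactly the $\tau$ of the statement. Symmetrically, because $\SETW\subseteq\SETWsat$ the pseudofunctor $\functor{U}_{\SETWsat}$ sends $\SETW$ to internal equivalences, so the universal property of $\functor{U}_{\SETW}$ produces a pseudofunctor $\functor{K}\colon\CATC[\SETWinv]\to\CATC[\SETWsatinv]$ with $\functor{U}_{\SETWsat}\simeq\functor{K}\circ\functor{U}_{\SETW}$.

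Finally I would compare the two composites, precomposed with the universal pseudofunctors: one has $\functor{H}\circ\functor{K}\circ\functor{U}_{\SETW}\simeq\functor{H}\circ\functor{U}_{\SETWsat}\simeq\functor{U}_{\SETW}$ and $\functor{K}\circ\functor{H}\circ\functor{U}_{\SETWsat}\simeq\functor{K}\circ\functor{U}_{\SETW}\simeq\functor{U}_{\SETWsat}$, whence the $2$-dimensional (uniqueness) part of the universal property forces $\functor{H}\circ\functor{K}\simeq\id$ and $\functor{K}\circ\functor{H}\simeq\id$. Hence $\functor{H}$ is an equivalence of bicategories, completing the proof. The genuine work is concentrated in the (BF) verification of part~(1) and in the careful handling of the coherence isomorphisms in the inversion lemma of the second paragraph; everything concerning the equivalence then follows formally from the universal property.
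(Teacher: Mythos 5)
Your paragraphs 2--4 are sound and follow essentially the paper's own route: paragraph 2 is the key observation that $\functor{U}_{\SETW}$ inverts all of $\SETWsat$, and paragraphs 3--4 reproduce the formal argument of Proposition~\ref{prop-01} (with one small point to repair: to cancel $\functor{U}_{\SETWsat}$ in $\functor{K}\circ\functor{H}\circ\functor{U}_{\SETWsat}\simeq\functor{U}_{\SETWsat}$ you must first check that the comparison $2$-cell sends elements of $\SETWsat$, not merely of $\SETW$, to internal equivalences of the cylinder bicategory; this is the paper's Lemma~\ref{lem-03}, and your paragraph~2 argument also proves it). The genuine gap is in paragraph 1, at axiom (BF2): closure of $\SETWsat$ under composition. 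Your proposed derivation from ``$\SETW\subseteq\SETWsat$ together with the idempotency $\SETWsat=\SETWsatsat$'' is circular: idempotency is Proposition~\ref{prop-03}(i), which the paper proves \emph{after}, and \emph{from}, Lemma~\ref{lem-05} and Proposition~\ref{prop-01} --- the very statements you are proving. Moreover, even granting idempotency, closure does not follow formally: given composable $\operatorname{w},\operatorname{v}\in\SETWsat$ one must exhibit $p,q$ with $(\operatorname{w}\circ\operatorname{v})\circ p\in\SETW$ and $p\circ q\in\SETW$, and any attempt to build these by pasting (BF3)-squares for the $\SETW$-witnesses founders on the fact that the new edge of an Ore square is an uncontrolled morphism, and $\SETW$ has no $2$-out-of-$3$ property with which to absorb it.

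What makes the paper's verification of (BF2) work is that it proves the full characterization of Proposition~\ref{prop-02} \emph{before} Lemma~\ref{lem-05}: $\functor{U}_{\SETW,1}(f)$ is an internal equivalence of $\CATC\left[\SETWinv\right]$ \emph{if and only if} $f$ belongs to $\SETWsat$. With both directions in hand, (BF2) is immediate: $\functor{U}_{\SETW,1}(\operatorname{w})$ and $\functor{U}_{\SETW,1}(\operatorname{v})$ are internal equivalences, hence so is $\functor{U}_{\SETW,1}(\operatorname{w}\circ\operatorname{v})$, hence $\operatorname{w}\circ\operatorname{v}\in\SETWsat$. Your paragraph 2 establishes only the forward implication. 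The converse --- starting from a quasi-inverse of $\functor{U}_{\SETW,1}(f)$ in the bicategory of fractions and extracting honest morphisms $g$, and then $h$, in $\CATC$ with $f\circ g\in\SETW$ and $g\circ h\in\SETW$, which the paper carries out in the Appendix by a careful analysis of representatives of the $2$-cells involved together with (BF5) --- is absent from your proposal, and it is precisely what (BF2) needs. Once that converse is added, your argument closes; without it, part one of the statement is unproved and everything downstream is conditional.
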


Then an answer to questions (a) is given by the equivalence of (i) and (iii) below.

\begin{theo}\label{theo-04}
Let us fix any $2$ pairs $(\CATA,\SETWA)$ and $(\CATB,\SETWB)$, both satisfying conditions
\emphatic{(BF)}, and any pseudofunctor $\functor{F}:\CATA\rightarrow
\CATB$. Then the following facts are equivalent:

\begin{enumerate}[\emphatic{(}i\emphatic{)}]
 \item $\functor{F}_1(\SETWA)\subseteq\SETWBsat$;
 \item $\functor{F}_1(\SETWAsat)\subseteq\SETWBsat$;
 \item there are a pseudofunctor $\functor{G}$ and a pseudonatural equivalence of pseudofunctors
  $\kappa$ as in \eqref{eq-56};
 \item there is a pair $(\functor{G},\kappa)$ as in \emphatic{(}iii\emphatic{)}, such that the
  pseudofunctor $\mu_{\kappa}:\CATA\rightarrow\operatorname{Cyl}\left(\CATB\left[\SETWBinv\right]
  \right)$ associated to $\kappa$ sends each morphism of $\SETWA$ to an internal
  equivalence \emphatic{(}here $\operatorname{Cyl}(\CATC)$ is the bicategory of cylinders associated
  to any given bicategory $\CATC$, see~\cite[pag.~60]{B}\emphatic{)}.
\end{enumerate}
\end{theo}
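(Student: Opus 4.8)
The plan is to prove the cycle of implications (i) $\Rightarrow$ (iv) $\Rightarrow$ (iii) $\Rightarrow$ (ii) $\Rightarrow$ (i). Two of these are immediate: (iv) $\Rightarrow$ (iii) amounts to forgetting the extra property of $\mu_\kappa$, and (ii) $\Rightarrow$ (i) holds because $\SETWA\subseteq\SETWAsat$ (recalled before Proposition~\ref{prop-04}), so that $\functor{F}_1(\SETWA)\subseteq\functor{F}_1(\SETWAsat)\subseteq\SETWBsat$. The whole argument rests on one characterization of the saturation: for any pair $(\CATC,\SETW)$ satisfying \emphatic{(BF)}, a morphism $f$ of $\CATC$ lies in $\SETWsat$ if and only if $\functor{U}_{\SETW}(f)$ is an internal equivalence in $\CATC\left[\SETWinv\right]$. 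I would derive the ``only if'' direction directly from Proposition~\ref{prop-04}: if $f\in\SETWsat$ then $\functor{U}_{\SETWsat}(f)$ is an internal equivalence by the universal property of $\functor{U}_{\SETWsat}$, hence so is $\functor{H}\bigl(\functor{U}_{\SETWsat}(f)\bigr)$, and therefore so is $\functor{U}_{\SETW}(f)$ via the pseudonatural equivalence $\tau$. The ``if'' direction is the substantive content of the saturation being saturated, and I would invoke it as an already established property of $\SETWsat$.

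For (i) $\Rightarrow$ (iv), I would first reformulate (i) using the ``only if'' direction for $(\CATB,\SETWB)$: condition (i) says precisely that $\functor{U}_{\SETWB}\circ\functor{F}$ sends every morphism of $\SETWA$ to an internal equivalence of $\CATB\left[\SETWBinv\right]$. The universal property of $\functor{U}_{\SETWA}$ (\cite[Theorem~21]{Pr}) then produces a pseudofunctor $\functor{G}:\CATA\left[\SETWAinv\right]\rightarrow\CATB\left[\SETWBinv\right]$ together with a pseudonatural equivalence $\kappa:\functor{U}_{\SETWB}\circ\functor{F}\Rightarrow\functor{G}\circ\functor{U}_{\SETWA}$, which is exactly the datum of \eqref{eq-56}. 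To upgrade this to (iv), I would recall that under the correspondence between pseudonatural transformations and pseudofunctors into the bicategory of cylinders, the morphism $\mu_\kappa(w)$ attached to $w:X\rightarrow Y$ in $\CATA$ is the cylinder whose two parallel sides are $\functor{U}_{\SETWB}\bigl(\functor{F}(w)\bigr)$ and $\functor{G}\bigl(\functor{U}_{\SETWA}(w)\bigr)$, and that such a cylinder is an internal equivalence in $\operatorname{Cyl}\left(\CATB\left[\SETWBinv\right]\right)$ exactly when both of these sides are internal equivalences. For $w\in\SETWA$ the side $\functor{U}_{\SETWA}(w)$ is an internal equivalence by the defining property of $\functor{U}_{\SETWA}$, whence so is $\functor{G}\bigl(\functor{U}_{\SETWA}(w)\bigr)$; and the other side is an internal equivalence by the reformulation of (i). Thus $\mu_\kappa$ sends $\SETWA$ to internal equivalences, which is (iv).

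For (iii) $\Rightarrow$ (ii), I would begin from a pair $(\functor{G},\kappa)$ and fix $w\in\SETWAsat$. By the ``only if'' direction applied to $(\CATA,\SETWA)$, the morphism $\functor{U}_{\SETWA}(w)$ is an internal equivalence, so $\functor{G}\bigl(\functor{U}_{\SETWA}(w)\bigr)$ is one as well. The naturality $2$-cell of $\kappa$ at $w$ is an invertible $2$-cell relating $\functor{G}\bigl(\functor{U}_{\SETWA}(w)\bigr)\circ\kappa_X$ and $\kappa_Y\circ\functor{U}_{\SETWB}\bigl(\functor{F}(w)\bigr)$, where the components $\kappa_X,\kappa_Y$ are internal equivalences since $\kappa$ is a pseudonatural equivalence. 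As being an internal equivalence is invariant under invertible $2$-cells and satisfies two-out-of-three with respect to composition, it follows that $\functor{U}_{\SETWB}\bigl(\functor{F}(w)\bigr)$ is an internal equivalence; the ``if'' direction applied to $(\CATB,\SETWB)$ then gives $\functor{F}_1(w)\in\SETWBsat$. Since $w\in\SETWAsat$ was arbitrary, this is (ii).

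The main obstacle is the ``if'' direction of the saturation characterization, used in (iii) $\Rightarrow$ (ii): the inclusion of the class of morphisms sent to internal equivalences by $\functor{U}_{\SETW}$ into $\SETWsat$. Unlike its converse it does not follow formally from Proposition~\ref{prop-04}, and it genuinely uses the explicit construction of the bicategory of fractions together with the combinatorial definition of $\SETWsat$; it is the only truly nonformal input. Everything else reduces to the universal property of $\functor{U}_{\SETWA}$, the stability of internal equivalences under invertible $2$-cells and two-out-of-three, and the elementary fact that an internal equivalence in a bicategory of cylinders is detected on its two parallel sides.
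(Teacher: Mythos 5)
Your proof is correct, and it turns on the same pivot as the paper's: Proposition~\ref{prop-02}, i.e.\ the identification of the right saturation with the class of morphisms sent by the localization pseudofunctor to internal equivalences. What differs is the decomposition. The paper sets $\overline{\functor{F}}:=\functor{U}_{\SETWB}\circ\functor{F}$, notes via Proposition~\ref{prop-02} that condition (i) says exactly that $\overline{\functor{F}}$ sends $\SETWA$ to internal equivalences (this is \eqref{eq-61}), and then gets the equivalence of (i), (iii) and (iv) in one stroke as Theorem~\ref{theo-02} applied to $\overline{\functor{F}}$, while (i)$\Rightarrow$(ii) comes from Lemma~\ref{lem-03}, whose proof works with the combinatorial definition of $\SETWAsat$. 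You instead run the cycle (i)$\Rightarrow$(iv)$\Rightarrow$(iii)$\Rightarrow$(ii)$\Rightarrow$(i): your (i)$\Rightarrow$(iv) re-proves inline the relevant half of Theorem~\ref{theo-02} (universal property of $\functor{U}_{\SETWA}$, plus the fact that a morphism of $\operatorname{Cyl}\left(\CATB\left[\SETWBinv\right]\right)$ with invertible filler is an internal equivalence precisely when its two sides are), and your (iii)$\Rightarrow$(ii) bypasses Lemma~\ref{lem-03} entirely, using Proposition~\ref{prop-02} on the $\CATA$-side together with Lemmas~\ref{lem-04} and~\ref{lem-06}. This is a viable alternative: it shows Lemma~\ref{lem-03} is dispensable once Proposition~\ref{prop-02} is invoked on both sides, whereas the paper's factorization buys brevity and reuses Theorem~\ref{theo-02} as a black box. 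One technical point you gloss over: essential surjectivity of $-\circ\functor{U}_{\SETWA}$ a priori produces only an internal equivalence in $\operatorname{Hom}_{\SETWA}\left(\CATA,\CATB\left[\SETWBinv\right]\right)$, i.e.\ a Lax natural transformation invertible up to modification; Lemma~\ref{lem-01} is what lets you replace it by a pseudonatural equivalence $\kappa$, and should be cited before your cylinder argument (which needs the fillers of $\kappa$ to be invertible).

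A caveat on your treatment of the characterization itself: deriving its easy direction ($f\in\SETWsat$ implies $\functor{U}_{\SETW,1}(f)$ is an internal equivalence) from Proposition~\ref{prop-04} is circular relative to the paper's architecture, since Proposition~\ref{prop-04} is proved (through Proposition~\ref{prop-01}) using exactly that direction of Proposition~\ref{prop-02}. Because both propositions precede Theorem~\ref{theo-04}, no logical harm results for the theorem you are proving, but the clean reference for both directions is simply Proposition~\ref{prop-02} itself, which is what the paper cites.
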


Then we are able to give a complete answer to question (b) in the case when $\functor{F}_1(\SETWA)
\subseteq\SETWB$: this condition in general is slightly more restrictive than condition (i) above.
In the case when $\functor{F}_1(\SETWA)$ is only contained in $\SETWBsat$ and not in $\SETWB$, we
can still give a complete answer to question (b), provided that we allow as target the
bicategory $\CATB\left[\SETWBsatinv\right]$ instead of $\CATB\left[\SETWBinv\right]$
(by virtue of Proposition~\ref{prop-04}, this does not make any significant difference). To be
more precise, we have:

\begin{theo}\label{theo-03}
Let us fix any $2$ pairs $(\CATA,\SETWA)$ and $(\CATB,\SETWB)$, both satisfying conditions
\emphatic{(BF)}, and any pseudofunctor $\functor{F}:\CATA\rightarrow
\CATB$.

\emphatic{(}A\emphatic{)} If $\functor{F}_1(\SETWA)\subseteq\SETWBsat$, then there are a pseudofunctor

\[\widetilde{\functor{G}}:\,\CATA\Big[\SETWAinv\Big]\longrightarrow\CATB\Big[\SETWBsatinv\Big]\]
and a pseudonatural equivalence $\widetilde{\kappa}:\functor{U}_{\SETWBsat}\circ\functor{F}
\Rightarrow\widetilde{\functor{G}}\circ\functor{U}_{\SETWA}$, such that:

\begin{enumerate}[\emphatic{(}I\emphatic{)}]
 \item the pseudofunctor $\mu_{\widetilde{\kappa}}:\CATA\rightarrow\operatorname{Cyl}
  \left(\CATB\left[\SETWBsatinv\right]\right)$ associated to $\widetilde{\kappa}$ sends each morphism
  of $\SETWA$ to an internal equivalence;
 
 \item for each object $A_{\CATA}$, we have $\widetilde{\functor{G}}_0(A_{\CATA})=
  \functor{F}_0(A_{\CATA})$;

 \item for each morphism $(A'_{\CATA},\operatorname{w}_{\CATA},f_{\CATA}):A_{\CATA}\rightarrow
  B_{\CATA}$ in $\CATA\left[\SETWAinv\right]$, we have 
  
  \[\widetilde{\functor{G}}_1\Big(A'_{\CATA},\operatorname{w}_{\CATA},f_{\CATA}\Big)=\Big(
  \functor{F}_0(A'_{\CATA}),\functor{F}_1(\operatorname{w}_{\CATA}),\functor{F}_1(f_{\CATA})
  \Big);\]
  
 \item for each $2$-morphism

  \begin{equation}\label{eq-27}
  \Big[A^3_{\CATA},\operatorname{v}^1_{\CATA},\operatorname{v}^2_{\CATA},\alpha_{\CATA},
  \beta_{\CATA}\Big]:\Big(A^1_{\CATA},\operatorname{w}^1_{\CATA},f^1_{\CATA}\Big)\Longrightarrow
  \Big(A^2_{\CATA},\operatorname{w}^2_{\CATA},f^2_{\CATA}\Big)
  \end{equation}
  in $\CATA\left[\SETWAinv\right]$, we have
  
  \begin{gather}
  \label{eq-11} \widetilde{\functor{G}}_2\Big(\Big[A^3_{\CATA},\operatorname{v}^1_{\CATA},
  \operatorname{v}^2_{\CATA},\alpha_{\CATA},\beta_{\CATA}\Big]\Big)=
  \Big[\functor{F}_0(A^3_{\CATA}),\functor{F}_1(\operatorname{v}^1_{\CATA}),\functor{F}_1
  (\operatorname{v}^2_{\CATA}),\\
  \nonumber \psi_{\operatorname{w}^2_{\CATA},
  \operatorname{v}^2_{\CATA}}^{\functor{F}}\odot\functor{F}_2(\alpha_{\CATA})\odot
  \Big(\psi_{\operatorname{w}^1_{\CATA},\operatorname{v}^1_{\CATA}}^{\functor{F}}\Big)^{-1},
  \psi_{f^2_{\CATA},\operatorname{v}^2_{\CATA}}^{\functor{F}}\odot\functor{F}_2(\beta_{\CATA})
  \odot\Big(\psi_{f^1_{\CATA},\operatorname{v}^1_{\CATA}}^{\functor{F}}\Big)^{-1}\Big]
  \end{gather}
  \emphatic{(}where the $2$-morphisms $\psi_{\bullet}^{\functor{F}}$ are the associators of
  $\functor{F}$\emphatic{)}.
\end{enumerate}

\emphatic{(}B\emphatic{)} Furthermore, if $\functor{F}_1(\SETWA)\subseteq\SETWB$,
then there are a pseudofunctor

\[\widetilde{\functor{G}}:\,\CATA\Big[\SETWAinv\Big]\longrightarrow\CATB\Big[\SETWBinv\Big]\]
and a pseudonatural equivalence $\widetilde{\kappa}:\functor{U}_{\SETWB}\circ\functor{F}
\Rightarrow\widetilde{\functor{G}}\circ\functor{U}_{\SETWA}$, such that:

\begin{itemize}
 \item the pseudofunctor $\mu_{\widetilde{\kappa}}:\CATA\rightarrow\operatorname{Cyl}
  \left(\CATB\left[\SETWBinv\right]\right)$ associated to $\widetilde{\kappa}$ sends each morphism
  of $\SETWA$ to an internal equivalence;

\item conditions \emphatic{(}II\emphatic{)}, \emphatic{(}III\emphatic{)} and
  \emphatic{(}IV\emphatic{)} hold.
\end{itemize}
\end{theo}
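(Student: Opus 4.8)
The plan is to construct $\widetilde{\functor{G}}$ directly through the formulas (II)--(IV), to equip it with suitable associators and unitors, and then to verify by hand that the resulting assignment is a pseudofunctor; the pseudonatural equivalence $\widetilde{\kappa}$ and property (I) will be produced afterwards. The first thing I would check is that the formulas take values in the correct hom-categories of $\CATB\big[\SETWBsatinv\big]$. For (III) this is just the requirement $\functor{F}_1(\operatorname{w}_{\CATA})\in\SETWBsat$, which is hypothesis $\functor{F}_1(\SETWA)\subseteq\SETWBsat$. For (IV) I must check that the right-hand side is a $2$-morphism of $\CATB\big[\SETWBsatinv\big]$; by the description recalled in \S~\ref{sec-03} this needs $\functor{F}_1(\operatorname{w}^1_{\CATA})\circ\functor{F}_1(\operatorname{v}^1_{\CATA})\in\SETWBsat$. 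Since $\operatorname{w}^1_{\CATA}\circ\operatorname{v}^1_{\CATA}\in\SETWA$ by the very definition of a $2$-morphism, we get $\functor{F}_1(\operatorname{w}^1_{\CATA}\circ\operatorname{v}^1_{\CATA})\in\SETWBsat$, and the associator $\psi_{\operatorname{w}^1_{\CATA},\operatorname{v}^1_{\CATA}}^{\functor{F}}$ is an invertible $2$-cell between this morphism and $\functor{F}_1(\operatorname{w}^1_{\CATA})\circ\functor{F}_1(\operatorname{v}^1_{\CATA})$; as $\SETWBsat$ is stable under invertible $2$-cells (a consequence of (BF) for the pair $(\CATB,\SETWBsat)$, which satisfies (BF) by Proposition~\ref{prop-04}), the claim follows. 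The identical computation, with $\SETWB$ in place of $\SETWBsat$ and using that $\SETWB$ too is stable under invertible $2$-cells by (BF), shows that under the stronger hypothesis of (B) all the formulas land in $\CATB\big[\SETWBinv\big]$; hence (B) will follow from (A) by repeating the construction verbatim with that target.

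Next I would prove that $\widetilde{\functor{G}}_2$ is well defined on equivalence classes. Two representatives of the same $2$-morphism \eqref{eq-27} are linked by an auxiliary object, a pair of morphisms into the two copies of $A^3_{\CATA}$, and invertible $2$-cells subject to the compatibility identities of \S~\ref{sec-03}. Applying $\functor{F}_1$ and $\functor{F}_2$ and inserting the associators $\psi_{\bullet}^{\functor{F}}$ at each composite yields the analogous linking datum in $\CATB\big[\SETWBsatinv\big]$; the compatibility identities survive because $\functor{F}_2$ is a functor and because $\psi_{\bullet}^{\functor{F}}$ satisfies the pseudofunctor coherence axioms of $\functor{F}$ (its naturality and its interaction with the associators of $\CATA$ and $\CATB$). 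Thus equivalent representatives go to equivalent data, so $\widetilde{\functor{G}}_2$ is well defined; compatibility of $\widetilde{\functor{G}}_2$ with vertical composition and with identity $2$-cells is checked by the same bookkeeping.

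The bulk of the work, and the step I expect to be the main obstacle, is producing the associators $\psi^{\widetilde{\functor{G}}}$ and the unitors of $\widetilde{\functor{G}}$ and verifying the three pseudofunctor coherence axioms. Composition of $(A'_{\CATA},\operatorname{w}_{\CATA},f_{\CATA})$ and $(B'_{\CATA},\operatorname{u}_{\CATA},g_{\CATA})$ in $\CATB\big[\SETWBsatinv\big]$ is computed through the fiber products belonging to the fixed set of choices for $(\CATB,\SETWBsat)$, whereas $\widetilde{\functor{G}}_1$ of the corresponding composite in $\CATA\big[\SETWAinv\big]$ carries the image under $\functor{F}$ of a chosen fiber product of $(\CATA,\SETWA)$. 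These disagree a priori, so $\psi^{\widetilde{\functor{G}}}$ must be assembled from $\psi_{\bullet}^{\functor{F}}$ together with the comparison $2$-cell between the chosen fiber product of $\functor{F}_1(f_{\CATA})$, $\functor{F}_1(\operatorname{u}_{\CATA})$ and the image under $\functor{F}$ of the chosen fiber product of $f_{\CATA}$, $\operatorname{u}_{\CATA}$; this comparison is represented by a morphism of $\SETWBsat$ supplied by axiom (BF) for $(\CATB,\SETWBsat)$, hence an internal equivalence. The hexagon and the two unit axioms then reduce, after cancelling these fiber-product comparisons, to the coherence already satisfied by $\functor{F}$. The delicate point is purely organizational: the diagrams are large, and each $2$-cell must be matched with the correct instance of a $\psi_{\bullet}^{\functor{F}}$, of an associator of $\CATB$, or of a universal $2$-cell of a chosen fiber product, while keeping all choices mutually coherent. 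As a guide, Pronk's universal property~\cite{Pr} applied to $\functor{R}:=\functor{U}_{\SETWBsat}\circ\functor{F}$ --- which sends $\SETWA$ to internal equivalences, since $\functor{F}_1(\SETWA)\subseteq\SETWBsat$ and $\functor{U}_{\SETWBsat}$ inverts $\SETWBsat$ by Proposition~\ref{prop-04} and~\cite{Pr} --- already guarantees that a pseudofunctor of exactly this shape exists; our remaining task is only to pin down the specific representatives in (II)--(IV) and confirm they assemble coherently.

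Finally I would define $\widetilde{\kappa}$ and establish (I). On objects $\widetilde{\kappa}$ is the identity, as both $\functor{U}_{\SETWBsat}\circ\functor{F}$ and $\widetilde{\functor{G}}\circ\functor{U}_{\SETWA}$ send $A_{\CATA}$ to $\functor{F}_0(A_{\CATA})$ by (II); its naturality $2$-cell at a morphism of $\CATA$ compares the two images, which differ only by the replacement of $\functor{F}_1(\id)$ by $\id$ in the $\SETWBsat$-slot, and is therefore built from the unitor of $\functor{F}$, pseudonaturality following once more from the coherence of $\functor{F}$. For property (I), the morphism $\mu_{\widetilde{\kappa}}(\operatorname{w}_{\CATA})$ associated with $\operatorname{w}_{\CATA}\in\SETWA$ is built from $\widetilde{\functor{G}}_1\big(\functor{U}_{\SETWA}(\operatorname{w}_{\CATA})\big)$ and $\functor{U}_{\SETWBsat}\big(\functor{F}_1(\operatorname{w}_{\CATA})\big)$, the latter being an internal equivalence because $\functor{F}_1(\operatorname{w}_{\CATA})\in\SETWBsat$; since being an internal equivalence is stable under the invertible comparison $2$-cells produced above, $\mu_{\widetilde{\kappa}}(\operatorname{w}_{\CATA})$ is an internal equivalence, which is (I). This completes (A), and (B) then follows by the remark at the end of the first paragraph.
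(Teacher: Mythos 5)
Your overall architecture (direct definition of $\widetilde{\functor{G}}$ via (II)--(IV), symmetric treatment of (A) and (B), the checks that the formulas land in the correct hom-categories, and the argument for (I)) is reasonable, but there is a genuine gap exactly where you yourself locate ``the bulk of the work'': you never construct the associators $\Psi^{\widetilde{\functor{G}}}_{\bullet}$ and unitors, nor verify the pseudofunctor coherence axioms; you only assert that, ``after cancelling the fiber-product comparisons'', everything ``reduces to the coherence already satisfied by $\functor{F}$''. This cannot be waved through. The comparison $2$-cells between the image under $\functor{F}$ of the chosen square in $\CATA$ and the chosen square in $\CATB$ are themselves equivalence classes of diagrams produced by (BF3)/(BF4), the associators of $\CATB\left[\SETWBsatinv\right]$ are likewise classes of diagrams, and composing or cancelling such cells requires the explicit calculus of~\cite{T3} (its Propositions 0.1, 0.2 and 0.4, and Corollary 5.1) at every step. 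That computation is the actual content of the paper's proof, which deliberately avoids re-proving coherence from scratch: it realizes the desired pseudofunctor as an instance of Pronk's construction in~\cite[Theorem~21]{Pr} applied to $\functor{U}_{\SETWB}\circ\functor{F}$, choosing for each $\operatorname{w}_{\CATA}\in\SETWA$ the quasi-inverse of $\functor{U}_{\SETWB,1}\circ\functor{F}_1(\operatorname{w}_{\CATA})$ to be $\big(\functor{F}_0(A_{\CATA}),\functor{F}_1(\operatorname{w}_{\CATA}),\id_{\functor{F}_0(A'_{\CATA})}\big)$ and fixing a choice set satisfying (C3) (Proposition~\ref{prop-06}); the pseudofunctor axioms are then \emph{inherited} from Pronk's theorem, and the long computation merely identifies the resulting $2$-cells with the stated formulas, after which Corollary~\ref{cor-01} strips off the residual unitors and Corollary~\ref{cor-04} removes the dependence on the auxiliary choice set. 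Part (A) is then deduced from part (B) applied to the pair $(\CATB,\SETWBsat)$, which satisfies (BF) by Lemma~\ref{lem-05}. Your plan instead commits you to verifying the hexagon and unit axioms by hand inside a bicategory of fractions, a task you have not carried out and for which you give no workable strategy.

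Moreover, your fallback justification is incorrect: the universal property of~\cite[Theorem~21]{Pr} applied to $\functor{U}_{\SETWBsat}\circ\functor{F}$ guarantees only that \emph{some} pseudofunctor $\functor{G}$ exists with $\functor{G}\circ\functor{U}_{\SETWA}$ pseudonaturally equivalent to $\functor{U}_{\SETWBsat}\circ\functor{F}$; it says nothing about the values of $\functor{G}$ on objects, morphisms and $2$-morphisms, and the pseudofunctor it produces a priori is the complicated one discussed in Remark~\ref{rem-06}, not one satisfying (II)--(IV). Producing a pseudofunctor ``of exactly this shape'' is precisely the assertion of the theorem, so the universal property cannot be invoked, even as a guide, to discharge the coherence verification you have left open.
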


If $\functor{F}_1(\SETWA)$ is only contained in $\SETWBsat$ but not in $\SETWB$ and if we still
want to describe a pair $(\functor{G},\kappa)$ with $\functor{G}$ with target in $\CATB[
\SETWBinv]$, then $\functor{G}$ can be induced by composing $\widetilde{\functor{G}}$
described in (B) above and the equivalence of bicategories $\functor{H}:
\CATB[\SETWBsatinv]\rightarrow\CATB[\SETWBinv]$ induced by
Proposition~\ref{prop-04} (see Remark~\ref{rem-06}), but in general the explicit description
of $\functor{G}$ is much more complicated than the one of $\widetilde{\functor{G}}$, since
$\functor{H}$ in general is very complicated to describe explicitly.\\

In addition, we have:

\begin{cor}\label{cor-03}
Let us fix any $2$ pairs $(\CATA,\SETWA)$ and $(\CATB,\SETWB)$, both satisfying conditions
\emphatic{(BF)}, and any pseudofunctor $\functor{F}:\CATA\rightarrow\CATB$. Moreover, let us fix
any pair $(\functor{G},\kappa)$ as in
\emphatic{Theorem~\ref{theo-04}(iv)}. Then the following facts are equivalent:

\begin{enumerate}[\emphatic{(}1\emphatic{)}]
 \item $\functor{G}:\CATA\left[\SETWAinv\right]\rightarrow\CATB\left[\SETWBinv\right]$ is an
   equivalence of bicategories;
 \item the pseudofunctor $\widetilde{\functor{G}}:\CATA\left[\SETWAinv\right]\rightarrow\CATB
  \left[\SETWBsatinv\right]$ described in \emphatic{(}A\emphatic{)} above is an equivalence of
  bicategories.
\end{enumerate}
\end{cor}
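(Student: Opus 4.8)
The plan is to reduce the corollary to the fact, supplied by Proposition~\ref{prop-04}, that the comparison pseudofunctor $\functor{H}$ is an equivalence of bicategories, together with the essential uniqueness of the induced pseudofunctor. First I would recall from Proposition~\ref{prop-04} (applied to the pair $(\CATB,\SETWB)$) that there are an equivalence of bicategories $\functor{H}:\CATB\left[\SETWBsatinv\right]\rightarrow\CATB\left[\SETWBinv\right]$ and a pseudonatural equivalence $\tau:\functor{U}_{\SETWB}\Rightarrow\functor{H}\circ\functor{U}_{\SETWBsat}$. Taking the pair $(\widetilde{\functor{G}},\widetilde{\kappa})$ of Theorem~\ref{theo-03}(A), I would form the pseudofunctor $\functor{H}\circ\widetilde{\functor{G}}:\CATA\left[\SETWAinv\right]\rightarrow\CATB\left[\SETWBinv\right]$ together with a pseudonatural equivalence $\kappa':\functor{U}_{\SETWB}\circ\functor{F}\Rightarrow(\functor{H}\circ\widetilde{\functor{G}})\circ\functor{U}_{\SETWA}$, obtained by pasting the whiskering $\tau\ast\functor{F}$ with $\functor{H}\ast\widetilde{\kappa}$ (and the associativity constraints of composition of pseudofunctors). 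This is exactly the pair alluded to in Remark~\ref{rem-06}.

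Next I would check that $(\functor{H}\circ\widetilde{\functor{G}},\kappa')$ again satisfies the hypotheses of Theorem~\ref{theo-04}(iv), i.e.\ that the associated pseudofunctor $\mu_{\kappa'}:\CATA\rightarrow\operatorname{Cyl}\left(\CATB\left[\SETWBinv\right]\right)$ sends each morphism of $\SETWA$ to an internal equivalence. This holds because $\mu_{\widetilde{\kappa}}$ already does so by condition~(I) of Theorem~\ref{theo-03}(A), because $\functor{H}$ preserves internal equivalences (being an equivalence of bicategories), and because pasting with the pseudonatural equivalence $\tau$ cannot destroy the property of a $1$-morphism being an internal equivalence.

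The core of the argument is then the essential uniqueness of $\functor{G}$. I would invoke the two-dimensional universal property of $\functor{U}_{\SETWA}$ from \cite[Theorem~21]{Pr}: precomposition with $\functor{U}_{\SETWA}$ yields an equivalence between the bicategory of pseudofunctors $\CATA\left[\SETWAinv\right]\rightarrow\CATB\left[\SETWBinv\right]$ and the bicategory of those pseudofunctors $\CATA\rightarrow\CATB\left[\SETWBinv\right]$ that send each element of $\SETWA$ to an internal equivalence. Since $\kappa$ and $\kappa'$ exhibit both $\functor{G}\circ\functor{U}_{\SETWA}$ and $(\functor{H}\circ\widetilde{\functor{G}})\circ\functor{U}_{\SETWA}$ as pseudonaturally equivalent to the \emph{same} pseudofunctor $\functor{U}_{\SETWB}\circ\functor{F}$, this universal property produces a pseudonatural equivalence $\functor{G}\simeq\functor{H}\circ\widetilde{\functor{G}}$.

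Finally I would conclude by the usual closure properties of equivalences of bicategories. Being pseudonaturally equivalent to $\functor{H}\circ\widetilde{\functor{G}}$, the pseudofunctor $\functor{G}$ is an equivalence if and only if $\functor{H}\circ\widetilde{\functor{G}}$ is; and, choosing a quasi-inverse $\functor{K}$ of the equivalence $\functor{H}$, the composite $\functor{H}\circ\widetilde{\functor{G}}$ is an equivalence if and only if $\functor{K}\circ\functor{H}\circ\widetilde{\functor{G}}\simeq\widetilde{\functor{G}}$ is, which yields $(1)\Leftrightarrow(2)$. I expect the main obstacle to lie not in this formal two-out-of-three conclusion, which is routine, but in the essential-uniqueness step: making the invertible modification relating $\kappa$, $\kappa'$ and the universal data fully explicit, and checking its coherence with the associators $\psi^{\functor{F}}_{\bullet}$ that enter the description \eqref{eq-11} of $\widetilde{\functor{G}}_2$.
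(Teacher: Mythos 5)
Your proposal is correct and follows essentially the same route as the paper's proof: you invoke Proposition~\ref{prop-04} (i.e.\ Proposition~\ref{prop-01}) to obtain $\functor{H}_{\CATB}$ and $\tau_{\CATB}$, form the composite pair $(\functor{H}_{\CATB}\circ\widetilde{\functor{G}},\kappa')$ as in Remark~\ref{rem-06}, use the universal-property equivalence $-\circ\functor{U}_{\SETWA}$ of Theorem~\ref{theo-01} (together with the fact that both $\kappa$ and $\kappa'$ live in $\operatorname{Hom}'_{\SETWA}$, which the paper makes precise via Lemma~\ref{lem-01}) to produce a pseudonatural equivalence $\functor{G}\Rightarrow\functor{H}_{\CATB}\circ\widetilde{\functor{G}}$, and then conclude by Lemma~\ref{lem-07} and two-out-of-three for equivalences of bicategories. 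The only cosmetic difference is that the paper assembles the comparison $2$-cell $\eta$ in one step (including $\kappa^{-1}$) rather than first building $\kappa'$ and then comparing, and the final ``essential uniqueness'' step is handled cleanly by the equivalence $\functor{E}$ without any explicit modification computations.
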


In the next paper of this series (\cite{T5}) we will find a set of
conditions on $(\CATA,\SETWA,\CATB,$ $\SETWB,\functor{F})$ that are equivalent to (2) above. Combining
with the previous Corollary, this will allow us to give a complete answer to question (c).\\

As an application of the constructions about saturations used in the results above, in the
last part of this paper we will focus on the class of Morita equivalences
in the bicategory of \'etale differentiable (Lie) groupoids, and we will prove that such a class is
right saturated.\\

In all this paper we are going to use the axiom of choice, that we will assume from now on without
further mention. The reason for this is twofold. First of all, the axiom of choice is used heavily
in~\cite{Pr} in order to construct bicategories of fractions. In~\cite[Corollary~0.6]{T3} we proved
that under some restrictive hypothesis the axiom of choice is not necessary, but in the general case
we need it in order to consider any of the bicategories of fractions mentioned above.
Secondly, even in the cases when the axiom of choice is not necessary for the construction of
the bicategories
$\CATA\left[\SETWAinv\right]$ and $\CATB\left[\SETWBinv\right]$, we will have to use
often the universal property of such bicategories of fractions, as stated in~\cite[Theorem~21]{Pr},
and the proof of this property requires the axiom of choice.

\section{Notations and basic facts}
\subsection{Generalities on bicategories}\label{sec-02}
Given any bicategory $\CATC$, we denote its objects by $A,B,\cdots$, its morphisms by $f,g,\cdots$
and its $2$-morphisms by $\alpha,\beta,\cdots$; we will use $A_{\CATC},
f_{\CATC},\alpha_{\CATC},\cdots$ if we have to recall that they belong to $\CATC$ when we are using
more than one bicategory in the computations. Given any triple of morphisms $f:A\rightarrow B$,
$g:B\rightarrow C$, $h:C\rightarrow D$ in $\CATC$, we denote by $\thetaa{h}{g}{f}$ the associator
$h\circ(g\circ f)\Rightarrow(h\circ g)\circ f$ that is part of the structure of $\CATC$; we denote by
$\pi_f:f\circ\id_A\Rightarrow f$ and $\upsilon_f:\id_B\circ f\Rightarrow f$ the right and left unitors
for $\CATC$ relative to any morphism $f$ as above.
We denote any pseudofunctor from $\CATC$ to another bicategory $\CATD$ by $\functor{F}=
(\functor{F}_0,\functor{F}_1,\functor{F}_2,$ $\psi_{\bullet}^{\functor{F}},
\sigma_{\bullet}^{\functor{F}}):\CATC\rightarrow\CATD$. Here for each pair of morphisms $f,g$
as above, $\psi^{\functor{F}}_{g,f}$ is the associator from $\functor{F}_1(g\circ f)$ to
$\functor{F}_1(g)\circ\functor{F}_1(f)$ and for each object $A$, $\sigma^{\functor{F}}_A$ is the
unitor from $\functor{F}_1(\id_A)$ to $\id_{\functor{F}_0(A)}$.\\

We recall that a morphism $e:A\rightarrow B$ in a bicategory $\CATC$ is called an \emph{internal
equivalence} (or, simply, an \emph{equivalence}) of $\CATC$
if and only if there exists a triple $(\overline{e},\delta,\xi)$, where $\overline{e}$ is a morphism
from $B$ to $A$ and $\delta:\id_A\Rightarrow\overline{e}\circ e$ and $\xi:e\circ\overline{e}
\Rightarrow\id_B$ are invertible $2$-morphisms in $\CATC$ (in the literature sometimes the name
``(internal) equivalence'' is used for denoting the whole quadruple $(e,\overline{e},\delta,\xi)$
instead of the morphism $e$ alone). In particular, $\overline{e}$ is an internal equivalence (it
suffices to consider the triple $(e,\xi^{-1},\delta^{-1})$) and it is usually called \emph{a
quasi-inverse} (or \emph{pseudo-inverse}) for $e$ (in general, the quasi-inverse of an internal
equivalence is not unique). An \emph{adjoint equivalence} is a quadruple $(e,\overline{e},\delta,
\xi)$ as above, such that

\begin{equation}\label{eq-10}
\upsilon_e\odot\Big(\xi\ast i_e\Big)\odot\thetaa{e}{\overline{e}}{e}\odot\Big(i_e\ast\delta\Big)
\odot\pi^{-1}_e=i_e
\end{equation}
and

\begin{equation}\label{eq-55}
\pi_{\overline{e}}\odot\Big(i_{\overline{e}}\ast\xi\Big)\odot\thetab{\overline{e}}{e}{\overline{e}}
\odot\Big(\delta\ast i_{\overline{e}}\Big)\odot\upsilon_{\overline{e}}^{-1}=i_{\overline{e}}
\end{equation}
(this more restrictive definition is actually the original definition of internal equivalence used for example
in~\cite[pag.~83]{Mac}). By~\cite[Proposition~1.5.7]{L} a morphism $e$ is (the first component of) an
internal equivalence if and only if it is the first component of a (possibly different)
adjoint equivalence.\\

In the following pages, we will use often the following easy lemmas (a detailed proof of the second
and third lemma is given in the Appendix).

\begin{lem}\label{lem-04}
Let us suppose that $e:A\rightarrow B$ is an internal equivalence in a bicategory $\CATC$ and let
$\gamma:e\Rightarrow\widetilde{e}$ be any invertible $2$-morphism in $\CATC$. Then also
$\widetilde{e}$ is an internal equivalence.
\end{lem}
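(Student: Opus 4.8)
The plan is to keep the same quasi-inverse $\overline{e}$ that already witnesses $e$ as an internal equivalence, and to transport the two structure $2$-morphisms along $\gamma$. Concretely, I would fix a triple $(\overline{e},\delta,\xi)$ as in the definition, so that $\overline{e}:B\rightarrow A$ is a morphism and $\delta:\id_A\Rightarrow\overline{e}\circ e$, $\xi:e\circ\overline{e}\Rightarrow\id_B$ are invertible $2$-morphisms. The claim is that $(\overline{e},\widetilde{\delta},\widetilde{\xi})$ exhibits $\widetilde{e}$ as an internal equivalence, where $\widetilde{\delta}$ and $\widetilde{\xi}$ are defined as follows.

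First I would whisker $\gamma$ on the left by $\overline{e}$ to obtain $i_{\overline{e}}\ast\gamma:\overline{e}\circ e\Rightarrow\overline{e}\circ\widetilde{e}$, and set

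\[\widetilde{\delta}:=\Big(i_{\overline{e}}\ast\gamma\Big)\odot\delta:\id_A\Longrightarrow\overline{e}\circ\widetilde{e}.\]

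Symmetrically, I would whisker $\gamma^{-1}$ on the right by $\overline{e}$ to obtain $\gamma^{-1}\ast i_{\overline{e}}:\widetilde{e}\circ\overline{e}\Rightarrow e\circ\overline{e}$, and set

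\[\widetilde{\xi}:=\xi\odot\Big(\gamma^{-1}\ast i_{\overline{e}}\Big):\widetilde{e}\circ\overline{e}\Longrightarrow\id_B.\]

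Since $e,\widetilde{e}:A\rightarrow B$ share source and target, the source and target of $\widetilde{\delta}$ and $\widetilde{\xi}$ match those required by the definition of internal equivalence, so it only remains to check invertibility.

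For invertibility I would invoke two standard facts valid in any bicategory $\CATC$: the horizontal composite of two invertible $2$-morphisms is invertible, and the vertical composite of two invertible $2$-morphisms is invertible. Since $\gamma$ (hence $\gamma^{-1}$) and each identity $2$-morphism $i_{\overline{e}}$ are invertible, both $i_{\overline{e}}\ast\gamma$ and $\gamma^{-1}\ast i_{\overline{e}}$ are invertible; composing these vertically with the invertible $2$-morphisms $\delta$ and $\xi$ then shows that $\widetilde{\delta}$ and $\widetilde{\xi}$ are invertible. This produces the required triple for $\widetilde{e}$ and finishes the argument.

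I do not expect any genuine obstacle here, since the statement is essentially bookkeeping. The only point that requires a little care is the choice of whiskering side, and of $\gamma$ versus $\gamma^{-1}$, in each of the two formulas, so that the composites have the correct source and target; everything else is an immediate application of the two invertibility facts above.
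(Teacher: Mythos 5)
Your proposal is correct and matches the paper's own argument exactly: the paper also keeps the same quasi-inverse $\overline{e}$ and exhibits $\widetilde{e}$ as an internal equivalence via the triple $(\overline{e},(i_{\overline{e}}\ast\gamma)\odot\delta,\xi\odot(\gamma^{-1}\ast i_{\overline{e}}))$. Your additional remarks on invertibility of the whiskered and vertically composed $2$-morphisms are the same routine checks the paper leaves implicit.
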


\begin{lem}\label{lem-06}
Let us fix any bicategory $\CATC$; the class $\SETWequiv$ of all internal equivalences of $\CATC$
satisfies the ``$\,2$-out-of-$3$'' property, i.e.\ given any pair of morphisms $f:B\rightarrow A$ and
$g:C\rightarrow B$, if any $2$ of the $3$ morphisms $f,g$ and $f\circ g$ are internal equivalences,
so is the third one.
\end{lem}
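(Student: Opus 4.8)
The plan is to prove the ``$2$-out-of-$3$'' property for internal equivalences by treating the three possible cases separately, in each case assuming two of the morphisms $f$, $g$, $f\circ g$ are internal equivalences and constructing an explicit quasi-inverse (together with the two invertible unit/counit $2$-morphisms) for the third. The basic tool throughout is composition of equivalences: the composite of two internal equivalences is again an internal equivalence, with quasi-inverse given by the composite of the quasi-inverses in the opposite order. I would record this as a preliminary observation, since it immediately handles the easiest case.

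\emph{Case 1: $f$ and $g$ are equivalences.} Here $f\circ g$ is a composite of two equivalences, so it is an equivalence by the preliminary observation above. Concretely, if $(\overline{f},\delta_f,\xi_f)$ and $(\overline{g},\delta_g,\xi_g)$ witness that $f$ and $g$ are equivalences, then $\overline{g}\circ\overline{f}$ is a quasi-inverse for $f\circ g$, and the required invertible $2$-morphisms $\id\Rightarrow(\overline{g}\circ\overline{f})\circ(f\circ g)$ and $(f\circ g)\circ(\overline{g}\circ\overline{f})\Rightarrow\id$ are built by pasting $\delta_f,\delta_g$ (respectively $\xi_f,\xi_g$) together with the associators $\theta_{\bullet,\bullet,\bullet}$ and the unitors $\pi_\bullet,\upsilon_\bullet$ of $\CATC$.

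\emph{Cases 2 and 3: $f$ and $f\circ g$ are equivalences (resp. $g$ and $f\circ g$ are equivalences).} These are the genuinely new cases, and the idea is the same in both. Suppose, say, $f$ and $h:=f\circ g$ are equivalences with quasi-inverses $\overline{f}$ and $\overline{h}$. The natural candidate quasi-inverse for $g$ is $\overline{h}\circ f$: intuitively $\overline{h}\circ f$ ``undoes'' $g$ because $h=f\circ g$ while $\overline{f}$ undoes $f$. To make this rigorous I would invoke Lemma~\ref{lem-04}: since $g$ is invertibly $2$-isomorphic (via the unitors and the witnessing $2$-morphisms for $f$) to $\overline{f}\circ(f\circ g)=\overline{f}\circ h$, and $\overline{f}\circ h$ is a composite of two equivalences (hence an equivalence by Case~1), Lemma~\ref{lem-04} forces $g$ itself to be an equivalence. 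The symmetric case, where $g$ and $f\circ g$ are equivalences, is handled the same way by writing $f$ up to invertible $2$-morphism as $(f\circ g)\circ\overline{g}=h\circ\overline{g}$, again a composite of equivalences.

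The main obstacle is the bookkeeping of associators and unitors needed to realize the required invertible $2$-morphism $g\Rightarrow\overline{f}\circ h$ (and its analogue for $f$): one must paste the unitor $\upsilon_g$ with $\delta_f\ast i_g$ and an associator $\theta_{\overline{f},f,g}$ to identify $g$ with $\overline{f}\circ(f\circ g)$, and check invertibility of the composite, which holds because every constituent $2$-morphism is invertible. Once Case~1 and Lemma~\ref{lem-04} are in hand, these remaining cases reduce to such routine, if slightly tedious, pasting diagrams, so the real content of the lemma is the reduction of Cases~2 and~3 to Case~1 via Lemma~\ref{lem-04}, rather than any deep structural argument.
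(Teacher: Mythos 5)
Your proof is correct, but in the two nontrivial cases it takes a genuinely different route from the paper. The paper proves all three cases by explicit construction: when $f$ and $f\circ g$ are equivalences it exhibits $h\circ f$ (with $h$ a chosen quasi-inverse of $f\circ g$) as a quasi-inverse of $g$ and verifies both the unit and the counit by large pasting diagrams built from $\delta$, $\alpha$, $\beta$, the associators $\theta_{\bullet,\bullet,\bullet}$ and the unitors; symmetrically, when $g$ and $f\circ g$ are equivalences it exhibits $g\circ h$ as a quasi-inverse of $f$. You instead reduce these cases to the composition case: since a quasi-inverse $\overline{f}$ of an equivalence is itself an equivalence (this is recorded in \S~\ref{sec-02}), the composite $\overline{f}\circ(f\circ g)$ is an equivalence by your Case 1, and the invertible $2$-morphism $g\Rightarrow\overline{f}\circ(f\circ g)$ obtained from $\upsilon_g$, $\delta_f\ast i_g$ and an associator lets Lemma~\ref{lem-04} transfer the equivalence property to $g$; likewise for $f$ via $(f\circ g)\circ\overline{g}$. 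This is legitimate and non-circular, since Lemma~\ref{lem-04} is proved independently of the present lemma. What your route buys: per case you only need one small invertible $2$-morphism rather than two full pasting diagrams, and unwinding Lemma~\ref{lem-04} even recovers the same quasi-inverse ($\overline{h}\circ f$, in your notation) that the paper writes down. What the paper's route buys: fully explicit witnesses (quasi-inverse, unit and counit) expressed directly in the given data, using nothing beyond the definition of internal equivalence.
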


\begin{lem}\label{lem-02}
Let us fix any bicategory $\CATC$ and any triple of morphisms $f:B\rightarrow A$, $g:C\rightarrow B$
and $h:D\rightarrow C$, such that both $f\circ g$ and $g\circ h$ are internal equivalences. Then
the morphisms $f,g$ and $h$ are all internal equivalences.
\end{lem}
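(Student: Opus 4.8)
The plan is to deduce the full statement from the ``$2$-out-of-$3$'' property of Lemma~\ref{lem-06} as soon as the middle morphism $g:C\rightarrow B$ has been shown to be an internal equivalence; this is the only substantial point, and the strategy for it is the bicategorical version of the elementary fact that a morphism possessing both a left and a right inverse is invertible.

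First I would name quasi-inverses for the two given equivalences: let $p_1:A\rightarrow C$ together with invertible $2$-morphisms $\delta_1:\id_C\Rightarrow p_1\circ(f\circ g)$ and $\xi_1:(f\circ g)\circ p_1\Rightarrow\id_A$ exhibit $f\circ g$ as an internal equivalence, and let $p_2:B\rightarrow D$ together with invertible $2$-morphisms $\delta_2:\id_D\Rightarrow p_2\circ(g\circ h)$ and $\xi_2:(g\circ h)\circ p_2\Rightarrow\id_B$ exhibit $g\circ h$ as an internal equivalence. Reassociating $\delta_1$ through $\thetaa{p_1}{f}{g}$ produces an invertible $2$-morphism $\id_C\Rightarrow(p_1\circ f)\circ g$, so $l:=p_1\circ f$ is a left quasi-inverse for $g$; dually, reassociating $\xi_2$ through $\thetaa{g}{h}{p_2}$ produces an invertible $2$-morphism $g\circ(h\circ p_2)\Rightarrow\id_B$, so $r:=h\circ p_2$ is a right quasi-inverse for $g$.

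Next I would identify $l$ and $r$ up to invertible $2$-morphism by the standard zig-zag
\[l\cong l\circ\id_B\cong l\circ(g\circ r)\cong(l\circ g)\circ r\cong\id_C\circ r\cong r,\]
where each step uses a unitor, an associator, or one of the two $2$-morphisms just built; all of these are invertible, so their pasting is an invertible $2$-morphism $l\Rightarrow r$. Transporting the right-inverse $2$-morphism along this isomorphism shows that $l$ is a genuine two-sided quasi-inverse of $g$, whence $g$ is an internal equivalence. Finally I would apply Lemma~\ref{lem-06} twice: to $f$ and $g$ (whose composite $f\circ g$ and whose factor $g$ are now both equivalences) to conclude that $f$ is an equivalence, and to $g$ and $h$ (whose composite $g\circ h$ and whose factor $g$ are equivalences) to conclude that $h$ is an equivalence.

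The only delicate point is the invertibility and coherent pasting of the $2$-morphism exhibiting $g$ as an equivalence; but since every $2$-cell in play --- unitors, associators, and the $\delta_i,\xi_i$ --- is invertible, invertibility is automatic and the bicategory axioms suffice. Thus there is no genuine obstacle beyond the routine bookkeeping of associators and unitors, which is precisely what the detailed Appendix proof records.
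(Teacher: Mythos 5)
Your proof is correct, but it is organized differently from the one in the paper's Appendix. You first establish that the \emph{middle} morphism $g$ is an internal equivalence, by the classical ``left inverse plus right inverse'' argument: $l:=p_1\circ f$ satisfies $\id_C\Rightarrow l\circ g$ (reassociating the unit of $f\circ g$), $r:=h\circ p_2$ satisfies $g\circ r\Rightarrow\id_B$ (reassociating the counit of $g\circ h$), and the zig-zag $l\cong l\circ(g\circ r)\cong(l\circ g)\circ r\cong r$ lets you transport the counit so that $l$ becomes a two-sided quasi-inverse of $g$; then two applications of Lemma~\ref{lem-06} give $f$ and $h$. The paper instead attacks $f$ first: it exhibits $g\circ m$ (with $m$ a quasi-inverse of $f\circ g$) directly as a quasi-inverse of $f$, via two explicit pasting diagrams --- the nontrivial one being a zig-zag $\id_B\Rightarrow(g\circ m)\circ f$ that interleaves the counit $\mu$ of $g\circ h$ with the unit $\delta$ of $f\circ g$, the easy one being $\thetaa{f}{g}{m}$ followed by $\xi$ --- and only then invokes Lemma~\ref{lem-06} twice to get $g$ and then $h$. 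The two arguments involve essentially the same single nontrivial zig-zag pasting built from the unit of one given equivalence and the counit of the other, so neither is shorter; your version has the advantage of mirroring the familiar $1$-categorical proof and of isolating the computation in the identification $l\cong r$, while the paper's version has the advantage of producing an explicit formula for a quasi-inverse of $f$ in terms of the given data. Your appeal to invertibility of all the $2$-cells involved (unitors, associators, units and counits) to justify the transport step is sound, and the final double application of the ``$2$-out-of-$3$'' lemma is exactly as in the paper.
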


We recall from~\cite[(1.33)]{St} that given any pair of bicategories $\CATC$ and $\CATD$, a
pseudofunctor $\functor{F}:\CATC\rightarrow\CATD$ is a \emph{weak equivalence of bicategories} (also
known as \emph{weak biequivalence}) if and only if the following $2$ conditions hold:

\begin{enumerate}[({X}1)]\label{X}
 \item\label{X1} for each object $A_{\CATD}$ there are an object $A_{\CATC}$ and an internal
  equivalence from $\functor{F}_0(A_{\CATC})$ to $A_{\CATD}$ in $\CATD$;
 \item\label{X2} for each pair of objects $A_{\CATC},B_{\CATC}$, the functor $\functor{F}(A_{\CATC},
  B_{\CATC})$ is an equivalence of categories from $\CATC(A_{\CATC},B_{\CATC})$ to $\CATD
  (\functor{F}_0(A_{\CATC}),\functor{F}_0(B_{\CATC}))$.
\end{enumerate}

\emph{Since in all this paper we assume the axiom of choice, then each weak equivalence of
bicategories is a \emphatic{(}strong\emphatic{)} equivalence of bicategories} (also known as
biequivalence, see~\cite[\S~1]{PW}), i.e.\ it admits a quasi-inverse. Conversely, each strong
equivalence of bicategories is a weak equivalence. So in the present setup we will simply write
``equivalence of bicategories'' meaning weak, equivalently strong, equivalence.
Also the proof of the following lemma can be found in the Appendix.

\begin{lem}\label{lem-07}
Let us fix any pair of bicategories $\CATC,\CATD$, any pair of pseudofunctors $\functor{F},
\functor{G}:\CATC\rightarrow\CATD$ and any pseudonatural equivalence $\phi:\functor{F}\Rightarrow
\functor{G}$. If $\functor{F}$ is an equivalence of bicategories, then so is $\functor{G}$.
\end{lem}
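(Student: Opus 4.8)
The plan is to verify that $\functor{G}$ satisfies the two conditions \hyperref[X1]{(X1)} and \hyperref[X2]{(X2)} characterizing weak equivalences of bicategories; by the axiom of choice (assumed throughout) this will give that $\functor{G}$ is a (strong) equivalence. Throughout I will use the data of the pseudonatural equivalence $\phi$: for each object $A_{\CATC}$ an internal equivalence $\phi_{A_{\CATC}}:\functor{F}_0(A_{\CATC})\rightarrow\functor{G}_0(A_{\CATC})$ in $\CATD$, which I may take to be part of an adjoint equivalence $(\phi_{A_{\CATC}},\overline{\phi_{A_{\CATC}}},\delta_{A_{\CATC}},\xi_{A_{\CATC}})$, and for each morphism $f:A_{\CATC}\rightarrow B_{\CATC}$ an invertible $2$-morphism $\phi_f:\functor{G}_1(f)\circ\phi_{A_{\CATC}}\Rightarrow\phi_{B_{\CATC}}\circ\functor{F}_1(f)$, satisfying the usual coherence axioms. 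I will also use two standard facts: the composite of two internal equivalences is again an internal equivalence (immediate from Lemma~\ref{lem-06}), and an internal equivalence $e:X\rightarrow Y$ in $\CATD$ induces, for every object $Z$, an equivalence of categories $\CATD(Z,X)\rightarrow\CATD(Z,Y)$ by postcomposition with $e$, and an equivalence $\CATD(Y,Z)\rightarrow\CATD(X,Z)$ by precomposition with $e$.

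For \hyperref[X1]{(X1)}: fix any object $A_{\CATD}$. Since $\functor{F}$ is an equivalence, there are an object $A_{\CATC}$ and an internal equivalence $e:\functor{F}_0(A_{\CATC})\rightarrow A_{\CATD}$. Then $e\circ\overline{\phi_{A_{\CATC}}}:\functor{G}_0(A_{\CATC})\rightarrow A_{\CATD}$ is a composite of internal equivalences, hence an internal equivalence; this is exactly what \hyperref[X1]{(X1)} requires for $\functor{G}$.

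For \hyperref[X2]{(X2)}: fix any pair of objects $A_{\CATC},B_{\CATC}$. Using the internal equivalences $\phi_{A_{\CATC}}$ and $\phi_{B_{\CATC}}$, define the functor $T:\CATD(\functor{F}_0(A_{\CATC}),\functor{F}_0(B_{\CATC}))\rightarrow\CATD(\functor{G}_0(A_{\CATC}),\functor{G}_0(B_{\CATC}))$ sending $h\mapsto\phi_{B_{\CATC}}\circ h\circ\overline{\phi_{A_{\CATC}}}$; being a composite of the equivalences of hom-categories induced by $\phi_{B_{\CATC}}$ (postcomposition) and by $\overline{\phi_{A_{\CATC}}}$ (precomposition), $T$ is an equivalence of categories. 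The key point is then to produce a natural isomorphism between $\functor{G}(A_{\CATC},B_{\CATC})$ and $T\circ\functor{F}(A_{\CATC},B_{\CATC})$. On objects, for a morphism $f:A_{\CATC}\rightarrow B_{\CATC}$ I would rewrite the component $\phi_f$ as an invertible $2$-morphism $\functor{G}_1(f)\Rightarrow\phi_{B_{\CATC}}\circ\functor{F}_1(f)\circ\overline{\phi_{A_{\CATC}}}=T(\functor{F}_1(f))$, obtained by whiskering $\phi_f$ with $\overline{\phi_{A_{\CATC}}}$ and inserting the unit $\delta_{A_{\CATC}}$ of the adjoint equivalence $(\phi_{A_{\CATC}},\overline{\phi_{A_{\CATC}}})$ together with the relevant associators and unitors of $\CATD$. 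Once this family is shown to be natural in $f$ (i.e.\ compatible with every $2$-morphism $\alpha:f\Rightarrow g$ of $\CATC(A_{\CATC},B_{\CATC})$), it exhibits $\functor{G}(A_{\CATC},B_{\CATC})$ as naturally isomorphic to $T\circ\functor{F}(A_{\CATC},B_{\CATC})$. Since $\functor{F}(A_{\CATC},B_{\CATC})$ is an equivalence by \hyperref[X2]{(X2)} for $\functor{F}$ and $T$ is an equivalence, so is their composite, and hence so is $\functor{G}(A_{\CATC},B_{\CATC})$.

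The main obstacle is the construction and naturality check in \hyperref[X2]{(X2)}: assembling the invertible $2$-morphism $\functor{G}_1(f)\Rightarrow T(\functor{F}_1(f))$ from $\phi_f$ and the adjoint-equivalence data for $\phi_{A_{\CATC}}$, and verifying that the resulting family is natural, requires a careful diagram chase through the pseudonaturality axioms of $\phi$ and the associativity/unit coherences of $\CATD$. This is the only genuinely computational part; once it is done, the conclusion that $\functor{G}$ is a weak, hence strong, equivalence of bicategories is immediate.
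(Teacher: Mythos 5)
Your proposal is correct in outline, and it is worth comparing the two halves with the paper's own proof. For (\hyperref[X1]{X1}) you do exactly what the paper does: compose an internal equivalence $e:\functor{F}_0(A_{\CATC})\rightarrow A_{\CATD}$, furnished by (\hyperref[X1]{X1}) for $\functor{F}$, with a chosen quasi-inverse $\overline{\phi_{A_{\CATC}}}$ of $\phi_{A_{\CATC}}$, and invoke closure of internal equivalences under composition (Lemma~\ref{lem-06}). For (\hyperref[X2]{X2}) your route is organized genuinely differently: the paper splits (\hyperref[X2]{X2}) into three conditions --- essential surjectivity (\hyperref[X2a]{X2a}), faithfulness (\hyperref[X2b]{X2b}) and fullness (\hyperref[X2c]{X2c}) of the hom-functors --- and transfers each one from $\functor{F}$ to $\functor{G}$ by a separate explicit $2$-cell computation, inserting the cells $\delta$, $\xi$ of the adjoint equivalences and the pseudonaturality cells by hand in each case. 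You instead package the hom-level information into a single statement: conjugation $T(h)=\phi_{B_{\CATC}}\circ h\circ\overline{\phi_{A_{\CATC}}}$ is an equivalence of hom-categories, and the cells $\phi_f$ assemble into a natural isomorphism $\functor{G}(A_{\CATC},B_{\CATC})\cong T\circ\functor{F}(A_{\CATC},B_{\CATC})$, after which the $1$-categorical facts that equivalences of categories compose and are stable under natural isomorphism finish the argument. What your approach buys is conceptual economy: one naturality check (which is precisely the respect-for-$2$-cells axiom of $\phi$ --- the same identity the paper uses in its proof of (\hyperref[X2b]{X2b})) replaces three separate chases. What it costs is the pair of auxiliary ``standard facts'' you cite --- that pre- and post-composition with an internal equivalence induce equivalences of hom-categories --- which the paper never proves and which, written out, are diagram chases of roughly the same total length as the paper's three verifications. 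So neither route is shorter in full detail; yours is better structured, the paper's is more self-contained.

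One small correction to your construction: with the adjoint-equivalence conventions $\delta_{A_{\CATC}}:\id_{\functor{F}_0(A_{\CATC})}\Rightarrow\overline{\phi_{A_{\CATC}}}\circ\phi_{A_{\CATC}}$ and $\xi_{A_{\CATC}}:\phi_{A_{\CATC}}\circ\overline{\phi_{A_{\CATC}}}\Rightarrow\id_{\functor{G}_0(A_{\CATC})}$, the $2$-cell you must insert on the right of $\functor{G}_1(f)$ in order to reach $\big(\functor{G}_1(f)\circ\phi_{A_{\CATC}}\big)\circ\overline{\phi_{A_{\CATC}}}$ is $\xi_{A_{\CATC}}^{-1}$ (the inverse of the counit), not the unit $\delta_{A_{\CATC}}$. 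Since both are invertible this is cosmetic, but the labels should be fixed in the final write-up.
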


In the following pages we will often use the following notations: given any pair of bicategories
$\CATC,\CATD$ and any class of morphisms $\SETW$ in $\CATC$,

\begin{enumerate}[(a)]
 \item $\operatorname{Hom}(\CATC,\CATD)$ is the bicategory of pseudofunctors $\CATC\rightarrow
  \CATD$, Lax natural transformations of them and modifications of Lax natural transformations;
 \item $\operatorname{Hom}'(\CATC,\CATD)$ is the bicategory of pseudofunctors $\CATC\rightarrow
  \CATD$, pseudonatural transformations of them and pseudonatural modifications of pseudonatural
  transformations (a bi-subcategory of (a));
 \item $\operatorname{Hom}_{\SETW}(\CATC,\CATD)$ is the bi-subcategory of (a), such that all the
  pseudofunctors, the Lax natural transformations and the modifications send each element of
  $\SETW$ to an internal equivalence; here a Lax natural transformation is considered as a
  pseudofunctor from $\CATC$ to the bicategory of cylinders $\operatorname{Cyl}(\CATD)$ of $\CATD$
  and a modification is considered as a pseudofunctor from $\CATC$ to $\operatorname{Cyl}
  (\operatorname{Cyl}(\CATD))$ (see~\cite[pag.~60]{B});
 \item $\operatorname{Hom}'_{\SETW}(\CATC,\CATD)$ is the bi-subcategory of (c), obtained by
  restricting morphisms to pseudonatural transformations and $2$-morphisms to pseudonatural
  modifications.
\end{enumerate}

Then it is not difficult to prove that:

\begin{lem}\label{lem-01}
Given any pair of bicategories $\CATC$ and $\CATD$, and any pair of pseudofunctors $\functor{F},
\functor{G}:\CATC\rightarrow\CATD$, there is an internal equivalence from $\functor{F}$ to
$\functor{G}$ in \emphatic{(a)} if and only if there is an internal equivalence in \emphatic{(b)}
between the same $2$ objects \emphatic{(}i.e.\ a pseudonatural equivalence of
pseudofunctors\emphatic{)}.
Moreover, given any class $\SETW$ of morphisms in $\CATC$ and any pair of objects $\functor{F},
\functor{G}$ in \emphatic{(c)}, there is an internal equivalence between such objects in \emphatic{(c)} if and
only if there is an internal equivalence in \emphatic{(d)} between the same $2$ objects 
\emphatic{(}i.e.\ a pseudonatural equivalence of
pseudofunctors\emphatic{)}.
\end{lem}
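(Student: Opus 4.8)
The plan is to prove the two stated equivalences in parallel, since the ``moreover'' assertion about \textbf{(c)}/\textbf{(d)} will follow from the very same argument used for \textbf{(a)}/\textbf{(b)}. In each case one implication is immediate: every pseudonatural transformation is in particular a Lax natural transformation and every modification between pseudonatural transformations is in particular a modification, so \textbf{(b)} is a bi-subcategory of \textbf{(a)} and likewise \textbf{(d)} of \textbf{(c)}. Hence an internal equivalence living in \textbf{(b)} --- a pseudonatural transformation $\phi\colon\functor{F}\Rightarrow\functor{G}$ together with a quasi-inverse $\overline{\phi}$ and invertible modifications $\delta\colon\id_{\functor{F}}\Rrightarrow\overline{\phi}\circ\phi$ and $\xi\colon\phi\circ\overline{\phi}\Rrightarrow\id_{\functor{G}}$ --- is automatically an internal equivalence in \textbf{(a)}, simply by regarding all the data through the inclusion. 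The same remark gives \textbf{(d)}$\Rightarrow$\textbf{(c)}.

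For the substantial direction I start from an internal equivalence $\phi$ in \textbf{(a)}: a Lax natural transformation $\phi$, a quasi-inverse Lax natural transformation $\overline{\phi}$, and invertible modifications $\delta,\xi$ as above. The goal is to show that $\phi$ and $\overline{\phi}$ are in fact \emph{pseudonatural}, i.e.\ that all their structure $2$-morphisms $\phi_f$ and $\overline{\phi}_f$ (for $f$ a morphism of $\CATC$) are invertible; once this is known, $\phi,\overline{\phi}$ become morphisms of \textbf{(b)} and $\delta,\xi$ become invertible $2$-morphisms of \textbf{(b)} --- indeed $\id_{\functor{F}}$, $\id_{\functor{G}}$ and the composites $\overline{\phi}\circ\phi$, $\phi\circ\overline{\phi}$ are pseudonatural once $\phi,\overline{\phi}$ are, and a modification between pseudonatural transformations is precisely a $2$-morphism of \textbf{(b)} --- so that $\phi$ is an internal equivalence in \textbf{(b)}. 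As a first step I note that for each object $A_{\CATC}$ the evaluation at $A_{\CATC}$ is a pseudofunctor $\operatorname{Hom}(\CATC,\CATD)\rightarrow\CATD$, hence preserves internal equivalences; concretely, $\delta_{A_{\CATC}}\colon\id_{\functor{F}_0(A_{\CATC})}\Rightarrow\overline{\phi}_{A_{\CATC}}\circ\phi_{A_{\CATC}}$ and $\xi_{A_{\CATC}}\colon\phi_{A_{\CATC}}\circ\overline{\phi}_{A_{\CATC}}\Rightarrow\id_{\functor{G}_0(A_{\CATC})}$ are invertible $2$-morphisms of $\CATD$ exhibiting each component $\phi_{A_{\CATC}}$ as an internal equivalence of $\CATD$, with quasi-inverse $\overline{\phi}_{A_{\CATC}}$.

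Now fix a morphism $f\colon A_{\CATC}\rightarrow B_{\CATC}$ and let $\phi_f\colon\functor{G}_1(f)\circ\phi_{A_{\CATC}}\Rightarrow\phi_{B_{\CATC}}\circ\functor{F}_1(f)$ be the corresponding structure $2$-morphism. I would construct a candidate inverse $\widehat{\phi_f}$ as the \emph{mate} of $\overline{\phi}_f\colon\functor{F}_1(f)\circ\overline{\phi}_{A_{\CATC}}\Rightarrow\overline{\phi}_{B_{\CATC}}\circ\functor{G}_1(f)$ through the equivalences of the previous step: insert $\delta_{A_{\CATC}}$ on the right, paste with $\phi_{B_{\CATC}}\ast\overline{\phi}_f\ast\phi_{A_{\CATC}}$, collapse $\phi_{B_{\CATC}}\circ\overline{\phi}_{B_{\CATC}}$ using $\xi_{B_{\CATC}}$, and reassociate with the unitors and associators of $\CATD$. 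The main work --- and the point I expect to be the real obstacle --- is then to verify that $\widehat{\phi_f}\odot\phi_f=i_{\functor{G}_1(f)\circ\phi_{A_{\CATC}}}$ and $\phi_f\odot\widehat{\phi_f}=i_{\phi_{B_{\CATC}}\circ\functor{F}_1(f)}$. Both identities should follow by a diagram chase that uses precisely the two modification axioms satisfied by $\delta$ and $\xi$ (which relate the components $\delta_{A_{\CATC}},\delta_{B_{\CATC}},\xi_{A_{\CATC}},\xi_{B_{\CATC}}$ to the structure cells $\phi_f,\overline{\phi}_f$ and to the unitor-built structure cells of $\id_{\functor{F}}$ and $\id_{\functor{G}}$), together with the coherence axioms of $\CATD$ and of the Lax natural transformations. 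This proves $\phi_f$ invertible for every $f$; running the symmetric argument (exchanging the roles of $\phi,\overline{\phi}$ and of $\delta,\xi$) proves $\overline{\phi}_f$ invertible as well, so both $\phi$ and $\overline{\phi}$ are pseudonatural and the first equivalence is established.

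Finally, for the ``moreover'' part it suffices to observe that the construction above does not alter any of the data $\phi,\overline{\phi},\delta,\xi$; it only reinterprets $\phi$ and $\overline{\phi}$ as pseudonatural transformations. Hence if the starting internal equivalence lies in \textbf{(c)} --- so that $\functor{F},\functor{G}$ and all of $\phi,\overline{\phi},\delta,\xi$ send each element of $\SETW$ to an internal equivalence --- then the same morphisms and $2$-morphisms, now recognised as living in the pseudonatural setting, still send each element of $\SETW$ to an internal equivalence, i.e.\ they form an internal equivalence in \textbf{(d)}. This gives \textbf{(c)}$\Rightarrow$\textbf{(d)} and completes the proof.
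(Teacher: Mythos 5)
The paper never actually writes out a proof of this lemma (it is introduced with ``it is not difficult to prove that'', and the Appendix proves other lemmas but not this one), so there is no official argument to compare against; judged on its own merits, your architecture is the right one. The easy halves (an internal equivalence in (b) is one in (a), and likewise (d) into (c), via the inclusions) are fine, the reduction of the whole statement to ``the structure $2$-cells $\phi_f$, $\overline{\phi}_f$ of an internal equivalence in (a) are invertible'' is correct, your first step (evaluation at $A_{\CATC}$ is a pseudofunctor, so each component $\phi_{A_{\CATC}}$ is an internal equivalence of $\CATD$ with quasi-inverse $\overline{\phi}_{A_{\CATC}}$ and invertible $\delta_{A_{\CATC}},\xi_{A_{\CATC}}$) is sound, and the observation that the (c)/(d) case costs nothing extra, because no datum is modified, is also correct.

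The genuine gap is in the step you yourself flag as the main work: the identities $\widehat{\phi_f}\odot\phi_f=i_{\functor{G}_1(f)\circ\phi_{A_{\CATC}}}$ and $\phi_f\odot\widehat{\phi_f}=i_{\phi_{B_{\CATC}}\circ\functor{F}_1(f)}$ are \emph{false} for a general internal equivalence, so the diagram chase you describe cannot terminate in them. If you carry the chase out (interchange, then the modification axiom for $\xi$, respectively for $\delta$), what you actually obtain is that $\widehat{\phi_f}\odot\phi_f$ equals, up to coherence cells, the whiskering $i_{\functor{G}_1(f)}\ast T_{A_{\CATC}}$, where $T_{A_{\CATC}}$ is the left-hand side of \eqref{eq-10} computed for the quadruple $(\phi_{A_{\CATC}},\overline{\phi}_{A_{\CATC}},\delta_{A_{\CATC}},\xi_{A_{\CATC}})$, and similarly $\phi_f\odot\widehat{\phi_f}$ is a whiskering of the analogous composite at $B_{\CATC}$. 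These composites are the identity precisely when the componentwise quadruples are adjoint equivalences, and nothing in your hypotheses guarantees that: an internal equivalence in (a) only gives invertible modifications $\delta,\xi$, and one can, for instance, rescale $\xi$ by a nontrivial invertible modification of $\id_{\functor{G}}$ and destroy the triangle identities while keeping all of your data intact. There are two standard repairs. Either (i) first replace $(\phi,\overline{\phi},\delta,\xi)$ by an adjoint equivalence in $\operatorname{Hom}(\CATC,\CATD)$, which is permitted by \cite[Proposition~1.5.7]{L} (already cited in the paper), and note that since composition, whiskering, associators and unitors in $\operatorname{Hom}(\CATC,\CATD)$ are all computed componentwise, each quadruple $(\phi_{A_{\CATC}},\overline{\phi}_{A_{\CATC}},\delta_{A_{\CATC}},\xi_{A_{\CATC}})$ is then an adjoint equivalence in $\CATD$, after which your two identities do hold; or (ii) keep the given data and observe that the two composites, while not identities, are invertible (being built from components of $\delta$, $\xi$ and coherence cells), so that $\phi_f$ has both a left and a right inverse and is therefore invertible, which is all you need. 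With either fix, the remainder of your argument (the symmetric treatment of $\overline{\phi}$, and the (c)/(d) statement) goes through.
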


\subsection{Bicategories of fractions}\label{sec-03}
We refer to the original reference~\cite{Pr} or to our previous paper~\cite{T3} for the
list of axioms (BF1) -- (BF5)
needed for a bicalculus of fractions. We recall in particular the following fundamental result.

\begin{theo}\label{theo-01}
\cite[Theorem~21]{Pr} Given any pair $(\CATC,\SETW)$ satisfying conditions
\emphatic{(BF)}, there are a bicategory
$\CATC\left[\SETWinv\right]$ \emphatic{(}called \emph{(right) bicategory of fractions}\emphatic{)} and a
pseudofunctor $\functor{U}_{\SETW}:\CATC\rightarrow\CATC\left[\SETWinv\right]$ that sends each
element of $\SETW$ to an internal equivalence and that is universal with respect to such property.
Here ``universal'' means that for each bicategory $\CATD$, composition with $\functor{U}_{\SETW}$
gives an equivalence of bicategories

\begin{equation}\label{eq-94}
-\circ\,\functor{U}_{\SETW}:\,\operatorname{Hom}\Big(\CATC\left[\SETWinv\right],\CATD\Big)
\longrightarrow\operatorname{Hom}_{\SETW}\Big(\CATC,\CATD\Big).
\end{equation}
\end{theo}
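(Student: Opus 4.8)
The plan is to carry out the bicategorical Gabriel--Zisman construction explicitly, producing $\CATC[\SETWinv]$ together with the pseudofunctor $\functor{U}_{\SETW}$, and then to establish the universal property by analysing the functor $-\circ\functor{U}_{\SETW}$ of~\eqref{eq-94} separately on objects, morphisms and $2$-morphisms of the $\operatorname{Hom}$-bicategories involved.

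First I would fix the underlying data. The objects of $\CATC[\SETWinv]$ are declared to be those of $\CATC$. A $1$-morphism $A\to B$ is a span $(A',\operatorname{w},f)$ with $\operatorname{w}:A'\to A$ in $\SETW$ and $f:A'\to B$ arbitrary. A $2$-morphism from $(A',\operatorname{w},f)$ to $(A'',\operatorname{w}',f')$ is an equivalence class $[A''',\operatorname{v},\operatorname{v}',\alpha,\beta]$, where $\operatorname{v}:A'''\to A'$ and $\operatorname{v}':A'''\to A''$ satisfy $\operatorname{w}\circ\operatorname{v}\in\SETW$, and $\alpha:\operatorname{w}\circ\operatorname{v}\Rightarrow\operatorname{w}'\circ\operatorname{v}'$, $\beta:f\circ\operatorname{v}\Rightarrow f'\circ\operatorname{v}'$ are invertible $2$-cells of $\CATC$; two such data are identified when they agree after precomposition with a common refinement. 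The axioms \emph{(BF)} are precisely what make this relation transitive and the quotient a well-defined set.

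The substance of the argument is composition. To compose $(A',\operatorname{w},f):A\to B$ with $(B',\operatorname{w}',g):B\to C$ I would apply the chosen fill-in of \emph{(BF)} to the cospan $A'\xrightarrow{f}B\xleftarrow{\operatorname{w}'}B'$: this yields an object $P$, a morphism $P\to A'$ lying in $\SETW$, a morphism $P\to B'$, and an invertible $2$-cell filling the square, from which one reads off the composite span $A\to C$ (its left leg lies in $\SETW$ by closure of $\SETW$ under composition). Vertical and horizontal composition of $2$-morphisms, the associators $\thetaa{\bullet}{\bullet}{\bullet}$ and the unitors are each defined by a further fill-in. I expect the hardest part of the whole proof to be the verification of the coherence conditions --- the pentagon and triangle axioms --- since each of these unwinds into a large diagram in $\CATC$ whose commutativity must be deduced from the uniqueness clauses of \emph{(BF)}; keeping the various chosen fill-ins mutually compatible is the central bookkeeping difficulty.

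With the bicategory constructed, I would define $\functor{U}_{\SETW}$ to be the identity on objects, to send $f:A\to B$ to $(A,\id_A,f)$, and to send $\gamma:f\Rightarrow f'$ to the class $[A,\id_A,\id_A,\alpha,\beta]$ with $\alpha$ an identity and $\beta$ built from $\gamma$ and the unitors of $\CATC$; the associators and unitors of $\functor{U}_{\SETW}$ are inherited from those of $\CATC$. That each $\operatorname{w}\in\SETW$ is sent to an internal equivalence is immediate, as $(A,\operatorname{w},\id_A)$ is a quasi-inverse of $(A,\id_A,\operatorname{w})$ with the structural $2$-cells supplied by the definition of composition. Finally, for the universal property I would show that~\eqref{eq-94} is essentially surjective, full and faithful. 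For essential surjectivity, given $\functor{H}\in\operatorname{Hom}_{\SETW}(\CATC,\CATD)$ I would choose (using the axiom of choice) a quasi-inverse $\overline{\functor{H}_1(\operatorname{w})}$ for each $\operatorname{w}\in\SETW$ and define an extension $\CATC[\SETWinv]\to\CATD$ by $(A',\operatorname{w},f)\mapsto\functor{H}_1(f)\circ\overline{\functor{H}_1(\operatorname{w})}$, verifying that its composite with $\functor{U}_{\SETW}$ is pseudonaturally equivalent to $\functor{H}$; fullness and faithfulness reduce to the analogous one- and two-dimensional statements for pseudonatural transformations and modifications, proved by the same span bookkeeping. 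Here too the recurring obstacle is coherence: the extension and its structure $2$-cells are well-defined only because the chosen quasi-inverses can be threaded compatibly through every fill-in, which is exactly what axioms \emph{(BF)} guarantee.
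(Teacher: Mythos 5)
Your proposal reconstructs Pronk's proof rather than anything proved in this paper: Theorem~\ref{theo-01} is imported verbatim from \cite[Theorem~21]{Pr} (with Remark~\ref{rem-01} only observing that the hypothesis (BF1)$'$ may be relaxed to (BF1)), so the relevant comparison is with \cite{Pr}. In outline you do track that argument faithfully --- spans as $1$-cells, composition by chosen fill-ins from (BF3), $\functor{U}_{\SETW}$ as in \eqref{eq-70} and \eqref{eq-95}, and, for essential surjectivity of \eqref{eq-94}, the extension $(A',\operatorname{w},f)\mapsto\functor{H}_1(f)\circ\overline{\functor{H}_1(\operatorname{w})}$, which is exactly the mechanism this paper re-runs explicitly in the proof of Proposition~\ref{prop-06}.

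There is, however, a genuine error in your definition of the $2$-cells: you require \emph{both} $\alpha$ and $\beta$ to be invertible. In Pronk's construction only $\alpha:\operatorname{w}\circ\operatorname{v}\Rightarrow\operatorname{w}'\circ\operatorname{v}'$ is required invertible (together with $\operatorname{w}\circ\operatorname{v}\in\SETW$); the component $\beta:f\circ\operatorname{v}\Rightarrow f'\circ\operatorname{v}'$ is an \emph{arbitrary} $2$-cell of $\CATC$. With your definition every hom-category of $\CATC\left[\SETWinv\right]$ is a groupoid, and the theorem fails on two counts. First, $\functor{U}_{\SETW,2}$ cannot be defined on a non-invertible $\gamma$ --- compare \eqref{eq-95}, whose last component $\gamma\ast i_{\id_A}$ is invertible only when $\gamma$ is --- so your $\functor{U}_{\SETW}$ is not even a pseudofunctor on all of $\CATC$. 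Second, the universal property breaks already in the simplest sanity check: for $\SETW=\SETWmin$ (the quasi-units), Lemma~\ref{lem-08}(iii) forces $\CATC\left[\SETWmininv\right]$ to be equivalent to $\CATC$, whereas your construction returns a locally groupoidal bicategory, so \eqref{eq-94} cannot be an equivalence whenever $\CATC$ has non-invertible $2$-cells. The fix is simply to drop the invertibility of $\beta$; invertibility of a $2$-cell of the fraction bicategory is then detected by the existence of a representative with invertible $\beta$ (this is \cite[Proposition~0.8]{T3}, used in the Appendix here). A secondary, fixable omission: in your essential-surjectivity step you should choose not bare quasi-inverses but \emph{adjoint} equivalences, i.e.\ quadruples satisfying \eqref{eq-10} and \eqref{eq-55} --- this is what Pronk does and what step (B) of the proof of Proposition~\ref{prop-06} insists on --- since without the triangle identities the coherence axioms of the extended pseudofunctor do not close up.
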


In particular, \emph{the bicategory} $\CATC\left[\SETWinv\right]$ \emph{is unique up to
equivalences of bicategories}.\\

\begin{rem}
The axiom of choice is used heavily in the construction of bicategories of fractions
(see~\cite[\S~2.2 and 2.3]{Pr}). In some special cases, one can bypass this problem, as we explained
in~\cite[Corollary~0.6]{T3}. However, also in such special cases, in general the proof of
Theorem~\ref{theo-01} relies on the axiom of choice (for the construction of the pseudofunctor
$\widetilde{\functor{F}}$ in~\cite[Theorem~21]{Pr}). The present paper is heavily based on that
result, so this requires implicitly to use the axiom of choice often. For example, even in
order to prove basic results (such as the one in Lemma~\ref{lem-08}(iii) below), one has to use the
axiom of choice. Indeed, in the mentioned Lemma we will implicitly follow
the proof of~\cite[Theorem~21]{Pr}, so we will have to choose
a quasi-inverse for any internal equivalence (of the bicategory $\CATC$ where we are working), and
in general this requires the axiom of choice. One of the few cases when we will not need the axiom
of choice is the proof of Proposition~\ref{prop-06}, (see Remark~\ref{rem-03}).
\end{rem}

\begin{rem}\label{rem-01}
In the notations of~\cite{Pr}, the pseudofunctor $\functor{U}_{\SETW}$ is called a \emph{bifunctor},
but this notation is no more in use. In~\cite{Pr} Theorem~\ref{theo-01} is stated with condition
(BF1) (namely: ``all $1$-identities of $\CATC$ belong to $\SETW$'', see~\cite{T3}) replaced by the
slightly stronger hypothesis 

\begin{enumerate}[({BF}1)$'$:]
\item\label{BF1prime} ``all the internal equivalences of $\CATC$ belong to $\SETW$''.
\end{enumerate}

By looking carefully at the proofs in~\cite{Pr}, it is easy to see that the only part of axiom
(\hyperref[BF1prime]{BF1})$'$ that is really used in all the computations is
(BF1), so we are allowed to state~\cite[Theorem~21]{Pr} under such less restrictive hypothesis.
Note that by virtue of Lemma~\ref{lem-08}(ii) and Proposition~\ref{prop-01} below,
choosing condition
(\hyperref[BF1prime]{BF1})$'$ instead of (BF1) gives equivalent 
bicategories of fractions, so this does not make any significant difference.
\end{rem}

For the explicit construction of bicategories of fractions we refer all the time
either to~\cite{T3} or to the original construction in~\cite{Pr}.
We recall that according to~\cite{Pr} the construction of compositions in $\CATC\left[\SETWinv\right]$
depends on $2$ sets of choices related to axioms (BF3) and (BF4)
respectively. In~\cite[Theorem~0.5]{T3} we proved that actually all the choices
related to axiom (BF4) are not necessary, so in order to have a structure of bicategory on
$\CATC\left[\SETWinv\right]$ it is sufficient to fix a set of choices as follows:

\begin{enumerate}[A$(\SETW)$:]
 \setcounter{enumi}{2}
 \item\label{C} for every set of data in $\CATC$ as follows

  \begin{equation}\label{eq-30}
  \begin{tikzpicture}[xscale=1.5,yscale=-1.2]
    \node (A0_0) at (0, 0) {$A'$};
    \node (A0_1) at (1, 0) {$B$};
    \node (A0_2) at (2, 0) {$B'$};
    \path (A0_0) edge [->]node [auto] {$\scriptstyle{f}$} (A0_1);
    \path (A0_2) edge [->]node [auto,swap] {$\scriptstyle{\operatorname{v}}$} (A0_1);
  \end{tikzpicture}
  \end{equation}
  with $\operatorname{v}$ in $\SETW$, using axiom (BF3)
  we \emph{choose} an object
  $A''$, a pair of morphisms $\operatorname{v}'$ in $\SETW$ and $f'$ and an invertible $2$-morphism
  $\rho$ in $\CATC$, as follows:

  \begin{equation}\label{eq-32}
  \begin{tikzpicture}[xscale=1.5,yscale=-0.8]
    \node (A0_1) at (1, 0) {$A''$};
    \node (A1_0) at (0, 2) {$A'$};
    \node (A1_2) at (2, 2) {$B'$.};
    \node (A2_1) at (1, 2) {$B$};

    \node (A1_1) at (1, 1) {$\rho$};
    \node (B1_1) at (1, 1.4) {$\Rightarrow$};
    
    \path (A1_2) edge [->]node [auto] {$\scriptstyle{\operatorname{v}}$} (A2_1);
    \path (A0_1) edge [->]node [auto] {$\scriptstyle{f'}$} (A1_2);
    \path (A1_0) edge [->]node [auto,swap] {$\scriptstyle{f}$} (A2_1);
    \path (A0_1) edge [->]node [auto,swap] {$\scriptstyle{\operatorname{v}'}$} (A1_0);
  \end{tikzpicture}
  \end{equation}
\end{enumerate}

According to~\cite[\S~2.1]{Pr}, such choices must satisfy the following $2$ conditions:

\begin{enumerate}[({C}1)]
 \item\label{C1} whenever \eqref{eq-30} is such that $B=A'$ and $f=\id_B$, then we choose
   $A'':=B'$, $f':=\id_{B'}$, $\operatorname{v}':=\operatorname{v}$ and $\rho:=
   \pi_{\operatorname{v}}^{-1}\odot\upsilon_{\operatorname{v}}$;
 \item\label{C2} whenever \eqref{eq-30} is such that $B=B'$ and $\operatorname{v}=\id_B$, then we
   choose $A'':=A'$, $f':=f$, $\operatorname{v}':=\id_{A'}$ and $\rho:=\upsilon_f^{-1}\odot\pi_f$.
\end{enumerate}

In the proof of Theorem~\ref{theo-03} below we will have to consider a set of choices
\hyperref[C]{C}$(\SETW)$ satisfying also the following additional condition:

\begin{enumerate}[({C}1)]
 \setcounter{enumi}{2}
 \item\label{C3} whenever \eqref{eq-30} is such that $A'=B'$ and $f=\operatorname{v}$ (with
   $\operatorname{v}$ in $\SETW$), then we choose $A'':=A'$, $f':=\id_{A'}$, $\operatorname{v}':=
   \id_{A'}$ and $\rho:=i_{f\circ\id_{A'}}$.
\end{enumerate}

Condition (\hyperref[C3]{C3}) is not strictly necessary in order to do a right bicalculus of
fractions, but it simplifies lots of the computations below. We have only to check that
(\hyperref[C3]{C3}) is compatible with conditions (\hyperref[C1]{C1}) and (\hyperref[C2]{C2})
required by~\cite{Pr}, but this is obvious. In other terms, given any class $\SETW$ satisfying
condition (BF3), there is always a set of choices \hyperref[C]{C}$(\SETW)$,
satisfying conditions (\hyperref[C1]{C1}), (\hyperref[C2]{C2}) and (\hyperref[C3]{C3}).\\

We refer to~\cite{T3} for a description of the associators
$\Theta_{\bullet}^{\CATC,\SETW}$, the vertical and the horizontal
compositions of $2$-morphisms in $\CATC\left[\SETWinv\right]$; such descriptions
simplify the original constructions given
in~\cite{Pr} and they will be used often in the next pages.
Moreover, we have the following result, whose proof is already implicit in~\cite[Theorem~21]{Pr}.

\begin{theo}\label{theo-02}
\cite{Pr} Let us fix any pair $(\CATA,\SETWA)$ satisfying conditions \emphatic{(BF)},
any bicategory $\CATB$ and any pseudofunctor $\functor{F}:\CATA
\rightarrow\CATB$. Then the following facts are equivalent:

\begin{enumerate}[\emphatic{(}i\emphatic{)}]
 \item $\functor{F}$ sends each morphism of $\SETWA$ to an internal equivalence of $\CATB$;
 \item there are a pseudofunctor $\overline{\functor{G}}:\CATA\left[\SETWAinv\right]\rightarrow
  \CATB$ and a pseudonatural equivalence of pseudofunctors $\overline{\kappa}:\functor{F}\Rightarrow
  \overline{\functor{G}}\circ\functor{U}_{\SETWA}$;
 \item there is a pair $(\overline{\functor{G}},\overline{\kappa})$ as in \emphatic{(}ii\emphatic{)},
  such that the pseudofunctor $\mu_{\overline{\kappa}}:\CATA\rightarrow
  \operatorname{Cyl}(\CATB)$ associated to $\overline{\kappa}$ sends each morphism of $\SETWA$ to an
  internal equivalence
  \emphatic{(}i.e.\ $\overline{\kappa}$ is an internal equivalence in $\operatorname{Hom}'_{\SETWA}
  (\CATA,\CATB)$\emphatic{)}.
\end{enumerate}
\end{theo}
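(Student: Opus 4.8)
The plan is to establish the cycle of implications (i)~$\Rightarrow$~(iii)~$\Rightarrow$~(ii)~$\Rightarrow$~(i). Only the first and the last require genuine work: the implication (iii)~$\Rightarrow$~(ii) is immediate, since (iii) is nothing but a strengthening of (ii) (it adds the requirement that $\mu_{\overline{\kappa}}$ send each morphism of $\SETWA$ to an internal equivalence).

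For (i)~$\Rightarrow$~(iii) the whole point is to read off the statement from the universal property of Theorem~\ref{theo-01}. Condition (i) says exactly that $\functor{F}$ is an object of the bicategory $\operatorname{Hom}_{\SETWA}(\CATA,\CATB)$ of item (c). First I would invoke Theorem~\ref{theo-01}, which asserts that $-\circ\functor{U}_{\SETWA}$ is an equivalence of bicategories from $\operatorname{Hom}(\CATA[\SETWAinv],\CATB)$ onto $\operatorname{Hom}_{\SETWA}(\CATA,\CATB)$. Applying essential surjectivity on objects (condition \hyperref[X1]{(X1)} for this equivalence) to the object $\functor{F}$, I obtain a pseudofunctor $\overline{\functor{G}}\colon\CATA[\SETWAinv]\to\CATB$ together with an internal equivalence between $\overline{\functor{G}}\circ\functor{U}_{\SETWA}$ and $\functor{F}$ in $\operatorname{Hom}_{\SETWA}(\CATA,\CATB)$. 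Note that $\overline{\functor{G}}\circ\functor{U}_{\SETWA}$ does lie in $\operatorname{Hom}_{\SETWA}(\CATA,\CATB)$: indeed $\functor{U}_{\SETWA}$ sends each morphism of $\SETWA$ to an internal equivalence by Theorem~\ref{theo-01}, and pseudofunctors preserve internal equivalences. Finally I would feed this internal equivalence into the second part of Lemma~\ref{lem-01}, which upgrades an internal equivalence between two objects of item (c) to an internal equivalence in item (d), i.e.\ to a pseudonatural equivalence $\overline{\kappa}\colon\functor{F}\Rightarrow\overline{\functor{G}}\circ\functor{U}_{\SETWA}$ lying in $\operatorname{Hom}'_{\SETWA}(\CATA,\CATB)$; by definition of the latter bicategory, $\mu_{\overline{\kappa}}$ sends each morphism of $\SETWA$ to an internal equivalence, which is precisely (iii).

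For (ii)~$\Rightarrow$~(i) I would argue directly on a fixed morphism $\operatorname{w}\colon A\to B$ of $\SETWA$. By Theorem~\ref{theo-01} the pseudofunctor $\functor{U}_{\SETWA}$ sends $\operatorname{w}$ to an internal equivalence, and since pseudofunctors preserve internal equivalences, $(\overline{\functor{G}}\circ\functor{U}_{\SETWA})_1(\operatorname{w})$ is an internal equivalence of $\CATB$; as $\overline{\kappa}_A$ is an internal equivalence ($\overline{\kappa}$ being a pseudonatural equivalence), the composite $(\overline{\functor{G}}\circ\functor{U}_{\SETWA})_1(\operatorname{w})\circ\overline{\kappa}_A$ is an internal equivalence by the $2$-out-of-$3$ property of Lemma~\ref{lem-06}. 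The pseudonaturality datum of $\overline{\kappa}$ at $\operatorname{w}$ is an invertible $2$-morphism relating this composite to $\overline{\kappa}_B\circ\functor{F}_1(\operatorname{w})$, so Lemma~\ref{lem-04} shows that $\overline{\kappa}_B\circ\functor{F}_1(\operatorname{w})$ is an internal equivalence as well. Since $\overline{\kappa}_B$ is an internal equivalence, a second application of Lemma~\ref{lem-06} forces $\functor{F}_1(\operatorname{w})$ to be an internal equivalence, which is (i).

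The main obstacle is entirely concentrated in (i)~$\Rightarrow$~(iii), and it is of a bookkeeping rather than computational nature: one must keep careful track of the four variants $\operatorname{Hom}$, $\operatorname{Hom}'$, $\operatorname{Hom}_{\SETWA}$, $\operatorname{Hom}'_{\SETWA}$ of the functor bicategory. The delicate point is that essential surjectivity of $-\circ\functor{U}_{\SETWA}$ a priori produces only an internal equivalence of Lax natural transformations inside $\operatorname{Hom}_{\SETWA}(\CATA,\CATB)$, whereas (iii) demands a genuine pseudonatural equivalence whose associated cylinder pseudofunctor still sends $\SETWA$ to internal equivalences. This promotion is exactly what the second half of Lemma~\ref{lem-01} provides, so no extra computation is needed; everything else reduces to the preservation of internal equivalences by pseudofunctors together with the two elementary lemmas on the $2$-out-of-$3$ property.
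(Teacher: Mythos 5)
Your proposal is correct and follows essentially the same route as the paper: the implication (i)~$\Rightarrow$~(iii) via (\hyperref[X1]{X1}) applied to the equivalence \eqref{eq-94} combined with the second part of Lemma~\ref{lem-01}, and the implication (ii)~$\Rightarrow$~(i) via the pseudonaturality square of $\overline{\kappa}$ at $\operatorname{w}$ together with Lemmas~\ref{lem-04} and~\ref{lem-06}. The only cosmetic difference is organizational (you close a clean cycle (i)~$\Rightarrow$~(iii)~$\Rightarrow$~(ii)~$\Rightarrow$~(i), whereas the paper states (i)~$\Leftrightarrow$~(iii) directly), and your spelled-out two-out-of-three argument is just the detailed version of what the paper dismisses as ``easily''.
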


\begin{proof}
Using (\hyperref[X1]{X1}) on the equivalence \eqref{eq-94} for $(\CATC,\SETW,\CATD):=(\CATA,\SETWA,
\CATB)$ together with Lemma~\ref{lem-01} (or looking directly at the first part
of~\cite[Proof of Theorem~21]{Pr}), we have that (i) and (iii) are equivalent. Moreover, (iii)
implies (ii), so in order to conclude it suffices only to prove that (ii) implies (i). So let us 
fix any morphism $\operatorname{w}_{\CATA}:B_{\CATA}\rightarrow A_{\CATA}$
in $\SETWA$; since $\overline{\kappa}$ is a pseudonatural
equivalence of pseudofunctors, we have a pair of equivalences $\overline{\kappa}(A_{\CATA})$,
$\overline{\kappa}(B_{\CATA})$ and an invertible $2$-morphism $\overline{\kappa}
(\operatorname{w}_{\CATA})$ as follows:

\[
\begin{tikzpicture}[xscale=3.8,yscale=-1]
    \node (A0_0) at (0, 0) {$\functor{F}_0(B_{\CATA})$};
    \node (A0_2) at (2, 0) {$\functor{F}_0(A_{\CATA})$};
    \node (A2_0) at (0, 2) {$\overline{\functor{G}}_0(B_{\CATA})=
      \overline{\functor{G}}_0\circ\functor{U}_{\SETWA,0}(B_{\CATA})$};
    \node (A2_2) at (2, 2) {$\overline{\functor{G}}_0\circ
      \functor{U}_{\SETWA,0}(A_{\CATA})=\overline{\functor{G}}_0(A_{\CATA})$.};

    \node (A1_1) at (1.2, 1) {$\overline{\kappa}(\operatorname{w}_{\CATA})$};
    \node (B1_1) [rotate=225] at (0.9, 1) {$\Longrightarrow$};
      
    \path (A2_0) edge [->]node [auto,swap] {$\scriptstyle{\overline{\functor{G}}_1\,
      \circ\,\functor{U}_{\SETWA,1}(\operatorname{w}_{\CATA})}$} (A2_2);
    \path (A0_0) edge [->]node [auto] {$\scriptstyle{\functor{F}_1
      (\operatorname{w}_{\CATA})}$} (A0_2);
    \path (A0_2) edge [->]node [auto] {$\scriptstyle{\overline{\kappa}(A_{\CATA})}$} (A2_2);
    \path (A0_0) edge [->]node [auto,swap] {$\scriptstyle{\overline{\kappa}(B_{\CATA})}$} (A2_0);
\end{tikzpicture}
\]

By Theorem~\ref{theo-01}, we have that $\functor{U}_{\SETWA,1}(\operatorname{w}_{\CATA})$ is an
internal equivalence, hence also $\overline{\functor{G}}_1\circ\functor{U}_{\SETWA,1}
(\operatorname{w}_{\CATA})$ is an internal equivalence. So by Lemmas~\ref{lem-04} and~\ref{lem-06}
we get easily that $\functor{F}_1(\operatorname{w}_{\CATA})$ is an
internal equivalence in $\CATB$, i.e.\ (i) holds.
\end{proof}

\section{Internal equivalences in a bicategory of fractions and (right) saturations}
In this section we will introduce the notion of right saturation of a class of morphisms in a
bicategory and we will prove some useful results about this concept.\\

Let us fix any pair $(\CATC,\SETW)$ satisfying conditions (BF);
according to~\cite[\S~2.4]{Pr}, the pseudofunctor $\functor{U}_{\SETW}:\CATC\rightarrow\CATC
\left[\SETWinv
\right]$ mentioned in Theorem~\ref{theo-01} sends each object $A$ to the same object in the target.
For every morphism $f:A\rightarrow B$, we have

\begin{equation}\label{eq-70}
\begin{tikzpicture}[xscale=1.8,yscale=-1.2]
    \node (A0_0) at (-0.5, 0) {$\functor{U}_{\SETW,1}(f)=\Big(B$};
    \node (A0_1) at (1, 0) {$B$};
    \node (A0_2) at (2, 0) {$A\Big)$;};
    \path (A0_1) edge [->]node [auto,swap] {$\scriptstyle{\id_B}$} (A0_0);
    \path (A0_1) edge [->]node [auto] {$\scriptstyle{f}$} (A0_2);
\end{tikzpicture}
\end{equation}
for every pair of morphisms $f^m:A\rightarrow B$ for $m=1,2$ and for every $2$-morphism $\gamma:f^1
\Rightarrow f^2$ in $\CATC$, we have

\begin{equation}\label{eq-95}
\functor{U}_{\SETW,2}(\gamma)=\Big[A,\id_A,\id_A,i_{\id_A\circ\id_A},\gamma\ast i_{\id_A}\Big].
\end{equation}

In particular, by Theorem~\ref{theo-01}, $\functor{U}_{\SETW,1}(f)$ is an internal equivalence
in $\CATC\left[\SETWinv\right]$ whenever $f$ belongs to $\SETW$ (actually, it is easy to see that
a quasi-inverse for \eqref{eq-70} is the triple $(B,f,\id_B)$). Then a natural question to
ask is the following: \emph{are there other morphisms in $\CATC$ that are sent to an internal 
equivalence by $\functor{U}_{\SETW}$?} In order to give an answer to this question, first of all we
give the following definition.

\begin{defin}\label{def-01}
Let us consider any bicategory $\CATC$ and any class of morphisms $\SETW$ in it (not necessarily
satisfying conditions (BF)).
Then we define the
(\emph{right}) \emph{saturation $\SETWsat$ of $\SETW$} as the class of all morphisms $f:B
\rightarrow A$ in $\CATC$, such that there are a pair of objects $C,D$ and a pair of morphisms $g:C
\rightarrow B$, $h:D\rightarrow C$, such that both $f\circ g$ and $g\circ h$ belong to $\SETW$.
We will say that $\SETW$ is (right) saturated if $\SETW=\SETWsat$.
\end{defin}

\begin{rem}\label{rem-02}
Whenever $\CATC$ is a $1$-category (with associated trivial bicategory $\CATC^2$), the notion above
coincides
with the notion of ``left saturation'' for a left multiplicative system implicitly given
in~\cite[Exercise~7.1]{KS}. In~\cite{KS}, the authors mainly focus on right multiplicative
systems (see~\cite[Definition~7.1.5]{KS}) and ``right saturations'', with left multiplicative systems
only mentioned explicitly in~\cite[Remark~7.1.7]{KS}. Note however that there is not a complete
agreement in the literature about what is a ``left'' and what is a ``right'' multiplicative system.
In the present paper ``right'' corresponds to ``left'' in~\cite{KS}. We prefer to use ``right''
instead of ``left'' in order to be consistent with the theory of right bicalculus of fractions
developed by Pronk (and with the theory of right calculus of fractions by Gabriel and Zisman).
In particular, one can easily see that a family $\SETW$ of
morphisms in a $1$-category $\CATC$ is a \emph{left} multiplicative system according to~\cite{KS}
if and only if the pair $(\CATC^2,\SETW)$ satisfies axioms (BF)
for a \emph{right} bicalculus of fractions as described in~\cite{Pr}. Moreover, in this
case the trivial bicategory associated to the left localization $\CATC^L_{\SETW}$ mentioned
in~\cite[Remark~7.1.18]{KS} is equivalent to the right bicategory of fractions
$\CATC^2\left[\SETWinv\right]$.
\end{rem}

\begin{rem}\label{rem-04}
Whenever the pair $(\CATC,\SETW)$ satisfies conditions (BF1) and (BF2),
we have $\SETW\subseteq\SETWsat$. Moreover, if $\SETW\subseteq\SETW'$, then
$\SETWsat\subseteq\SETW'_{\operatorname{sat}}$.
\end{rem}

We will prove in Proposition~\ref{prop-03}(i) below that $\SETWsat=\SETWsatsat$, thus
explaining the name ``saturation'' for such a class. The simplest example of (right) saturated class
is given by the class $\SETWequiv$ of all internal equivalences of any given bicategory $\CATC$,
as a consequence of Lemma~\ref{lem-02}. We will show in Section~\ref{sec-01} a non-trivial
example of a pair $(\CATC,\SETW)$ such that $\SETW=\SETWsat$.

\begin{defin}\label{def-02}
Let us fix any bicategory $\CATC$. According to~\cite[Definition~3.3]{PP}, we call a morphism $f:A
\rightarrow A$ in $\CATC$ a \emph{quasi-unit} if there is an invertible $2$-morphism $f\Rightarrow
\id_A$. We denote by $\SETWmin$ the class of quasi-units of $\CATC$. A direct check proves that
$(\CATC,\SETWmin)$ satisfies conditions (BF).
\end{defin}

\begin{lem}\label{lem-08}
Let us fix any pair $(\CATC,\SETW)$ satisfying conditions \emphatic{(BF)}.
Then:

\begin{enumerate}[\emphatic{(}i\emphatic{)}]
 \item $\SETWmin\subseteq\SETW$, hence $\SETWmin$ is the \emph{minimal} class satisfying conditions
  \emphatic{(BF)};
 \item the right saturation of $\SETWmin$ is the class $\SETWequiv$ of internal equivalences of
  $\CATC$; in particular $\SETWequiv\subseteq\SETWsat$, i.e.\ 
  $\SETWsat$ satisfies condition \emphatic{(\hyperref[BF1prime]{BF1})}$'$ \emphatic{(}see
  \emphatic{Remark~\ref{rem-01}}\emphatic{)};
 \item the induced pseudofunctors

  \[\functor{U}_{\SETWmin}:\,\CATC\longrightarrow\CATC\left[\SETWmininv\right]\quad\quad\textrm{and}
  \quad\quad\functor{U}_{\SETWequiv}:\,\CATC\longrightarrow\CATC\left[\SETWequivinv\right]\]
  are equivalences of bicategories.
\end{enumerate}
\end{lem}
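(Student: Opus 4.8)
The plan is to prove the three parts in order, since each feeds the next. For part (i), I would fix any quasi-unit $f:A\rightarrow A$, so there is an invertible $2$-morphism $\gamma:f\Rightarrow\id_A$. Since the pair $(\CATC,\SETW)$ satisfies (BF1), the identity $\id_A$ belongs to $\SETW$; and since (BF) includes the axiom that $\SETW$ is closed under invertible $2$-morphisms (the ``$2$-out-of-$3$''-type saturation condition built into (BF2)), the presence of $\gamma$ forces $f\in\SETW$ as well. This shows $\SETWmin\subseteq\SETW$. Because $(\CATC,\SETWmin)$ itself satisfies (BF) by Definition~\ref{def-02}, and every class satisfying (BF) must contain all quasi-units by the same argument, $\SETWmin$ is indeed the minimal such class.

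For part (ii) I would prove the set-equality $(\SETWmin)_{\operatorname{sat}}=\SETWequiv$ by double inclusion. For $\subseteq$: take $f$ in the right saturation of $\SETWmin$, so there exist $g,h$ with $f\circ g$ and $g\circ h$ both quasi-units, hence (by an invertible $2$-morphism to an identity, together with Lemma~\ref{lem-04}) both internal equivalences. Then Lemma~\ref{lem-02} applied to the triple $(f,g,h)$ gives immediately that $f$ is an internal equivalence. For $\supseteq$: take any internal equivalence $e:A\rightarrow B$ with quasi-inverse $\overline{e}$ and invertible $2$-morphisms $\delta:\id_A\Rightarrow\overline{e}\circ e$ and $\xi:e\circ\overline{e}\Rightarrow\id_B$. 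Choosing $g:=\overline{e}$ and $h:=e$, the composites $e\circ\overline{e}$ and $\overline{e}\circ e$ are quasi-units (they admit invertible $2$-morphisms to the respective identities via $\xi$ and $\delta^{-1}$), which exhibits $e$ as belonging to $(\SETWmin)_{\operatorname{sat}}$. The final clauses of (ii) then follow formally: $\SETWequiv=(\SETWmin)_{\operatorname{sat}}\subseteq\SETWsat$ by the monotonicity of saturation in Remark~\ref{rem-04} (using $\SETWmin\subseteq\SETW$ from part (i)), and since $\SETWequiv$ consists exactly of the internal equivalences, this is precisely the statement that $\SETWsat$ satisfies (\hyperref[BF1prime]{BF1})$'$.

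For part (iii) I would invoke the universal property of Theorem~\ref{theo-01} in the guise already packaged as Theorem~\ref{theo-02}. The key observation is that both $\functor{U}_{\SETWmin}$ and $\functor{U}_{\SETWequiv}$ are the localization pseudofunctors at classes whose elements are all internal equivalences of $\CATC$ to begin with: every quasi-unit is an internal equivalence, and trivially every element of $\SETWequiv$ is. Therefore the identity pseudofunctor $\id_{\CATC}:\CATC\rightarrow\CATC$ already sends each morphism of $\SETWmin$ (respectively $\SETWequiv$) to an internal equivalence, so by Theorem~\ref{theo-02} applied with $\functor{F}:=\id_{\CATC}$ it factors up to pseudonatural equivalence through the localization. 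The resulting induced pseudofunctor, composed back with $\functor{U}$, is pseudonaturally equivalent to the identity, and one checks using the universal equivalence \eqref{eq-94} that $\functor{U}_{\SETWmin}$ (respectively $\functor{U}_{\SETWequiv}$) is itself an equivalence of bicategories: condition (\hyperref[X2]{X2}) holds because inverting a class of morphisms that are already equivalences does not change the hom-categories up to equivalence, and (\hyperref[X1]{X1}) holds because $\functor{U}$ is the identity on objects.

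The main obstacle I anticipate is part (iii), specifically making rigorous the claim that localizing at a class already contained in $\SETWequiv$ yields an equivalence. The cleanest route is to verify (\hyperref[X1]{X1}) and (\hyperref[X2]{X2}) directly: (\hyperref[X1]{X1}) is immediate since $\functor{U}$ is bijective on objects, while (\hyperref[X2]{X2}) requires showing that each hom-functor $\functor{U}(A,B):\CATC(A,B)\rightarrow\CATC[\SETWmininv](A,B)$ is an equivalence of categories. This is where one must unwind the explicit description of morphisms and $2$-morphisms in the bicategory of fractions (via \eqref{eq-70} and \eqref{eq-95}) and use that every morphism $\operatorname{w}$ occurring in a fraction $(A',\operatorname{w},f)$ is already an equivalence, so that such a fraction is isomorphic, via the $2$-morphisms of $\CATC[\SETWmininv]$, to one of the simple form coming from a single morphism of $\CATC$. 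Alternatively, and more economically, one may cite the universal property: both localizations solve the same universal problem as the identity (since the inverted morphisms are already invertible up to equivalence), and uniqueness up to equivalence of the solution—stated right after Theorem~\ref{theo-01}—forces each $\functor{U}$ to be an equivalence.
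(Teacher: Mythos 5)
Your proposal is correct and follows essentially the same route as the paper in all three parts: (i) identities plus closure of $\SETW$ under invertible $2$-cells; (ii) double inclusion, with the quasi-inverse argument ($g:=\overline{e}$, $h:=e$) for $\supseteq$ and Lemmas~\ref{lem-04} and~\ref{lem-02} for $\subseteq$ (the paper gets $\subseteq$ equivalently via Remark~\ref{rem-04} and Lemma~\ref{lem-02}); (iii) factoring $\id_{\CATC}$ through $\functor{U}_{\SETWmin}$ via Theorem~\ref{theo-02} and then showing the two composites are pseudonaturally equivalent to identities.

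Two corrections. First, in part (i) the axiom you invoke, closure of $\SETW$ under invertible $2$-morphisms, is (BF5), not something ``built into (BF2)''; (BF2) is closure under composition and cannot yield $f\in\SETW$ from an invertible $2$-cell $f\Rightarrow\id_A$, nor is this a ``$2$-out-of-$3$'' property. The argument is unaffected once relabeled. Second, the step you flag as the main obstacle in part (iii) is exactly what the paper makes precise, and it does so by your ``more economical'' route rather than by unwinding fractions: having obtained $\functor{R}$ and $\delta:\id_{\CATC}\Rightarrow\functor{R}\circ\functor{U}_{\SETWmin}$ from Theorem~\ref{theo-02}, it forms the pseudonatural equivalence $\widetilde{\xi}:(\functor{U}_{\SETWmin}\circ\functor{R})\circ\functor{U}_{\SETWmin}\Rightarrow\id_{\CATC\left[\SETWmininv\right]}\circ\functor{U}_{\SETWmin}$, checks that $\widetilde{\xi}$ lies in $\operatorname{Hom}_{\SETWmin}(\CATC,\CATC\left[\SETWmininv\right])$ (because $\SETWmin\subseteq\SETWequiv$), and then uses the biequivalence \eqref{eq-94} together with (X2) and Lemma~\ref{lem-01} to lift $\widetilde{\xi}$ to a pseudonatural equivalence $\functor{U}_{\SETWmin}\circ\functor{R}\Rightarrow\id_{\CATC\left[\SETWmininv\right]}$, so that $\functor{R}$ is a genuine quasi-inverse. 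The membership check $\widetilde{\xi}\in\operatorname{Hom}_{\SETWmin}$ is the small but essential verification your sketch omits; with it included, your plan is the paper's proof.
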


\begin{proof}
Let us fix any object $A$ in $\CATC$; by (BF1)
(see~\cite{T3}), $\SETW$ contains $\id_A$. Then by (BF5)
$\SETW$ contains any morphism $f:A\rightarrow A$ such that there is an
invertible $2$-morphism $\xi:f\Rightarrow\id_A$. So we have proved that $\SETWmin\subseteq\SETW$.\\

Now let us prove (ii). Clearly $\SETWmin\subseteq\SETWequiv$, so by Remark~\ref{rem-04}
the right saturated of $\SETWmin$
is contained in the saturated of $\SETWequiv$, which is again $\SETWequiv$ by Lemma~\ref{lem-02}.
Conversely, let us suppose that $f:B\rightarrow A$ is an internal equivalence. Then there are a
morphism $g:A\rightarrow B$ and a pair of
invertible $2$-morphisms $\delta:\id_B\Rightarrow g\circ f$ and $\xi:f\circ g\Rightarrow\id_A$. Since
$\xi$ is invertible, then
we get that $f\circ g$ belongs to $\SETWmin$. Analogously, $g\circ f$ belongs to $\SETWmin$.
Therefore, if we set $h:=f$, we have proved that $f$ belongs the right saturation of $\SETWmin$,
so (ii) holds.\\

Now let us prove (iii). We have that $\SETWmin\subseteq\SETWequiv$, so $\id_{\CATC}:\CATC
\rightarrow\CATC$ sends each
morphism in $\SETWmin$ to an internal equivalence of $\CATC$. So  by
Theorem~\ref{theo-02} applied to $(\CATC,\SETWmin,\CATC)$ and to $\functor{F}:=\id_{\CATC}$, there are
a pseudofunctor
$\functor{R}:\CATC\left[\SETWmininv\right]\rightarrow\CATC$ and a pseudonatural equivalence of
pseudofunctors

\[\delta:\,\id_{\CATC}\Longrightarrow\functor{R}\circ\functor{U}_{\SETWmin}.\]

Then we consider the pseudonatural equivalence

\[\widetilde{\xi}:=\Big(i_{\functor{U}_{\SETWmin}}\ast\delta^{-1}\Big)\odot
\thetab{\functor{U}_{\SETWmin}}{\functor{R}}{\functor{U}_{\SETWmin}}:\,\,(\functor{U}_{\SETWmin}\circ
\functor{R})\circ\functor{U}_{\SETWmin}\Longrightarrow\id_{\CATC\left[\SETWmininv\right]}\circ\,
\functor{U}_{\SETWmin},\]
that is an equivalence in the bicategory $\operatorname{Hom}(\CATC,\CATC\left[
\SETWmininv\right])$. Since $\SETWmin\subseteq\SETWequiv$, then the pseudofunctor
$\mu_{\widetilde{\xi}}:\CATC\rightarrow\operatorname{Cyl}(\CATC)$ associated to $\widetilde{\xi}$
sends each morphism of $\SETWmin$ to an internal equivalence, i.e.\ $\widetilde{\xi}$
belongs to $\operatorname{Hom}_{\SETWmin}(\CATC,\CATC\left[
\SETWmininv\right])$. By the universal property of $\functor{U}_{\SETWmin}$ applied to
$\widetilde{\xi}$, there is an internal equivalence from $\functor{U}_{\SETWmin}\circ\functor{R}$
to $\id_{\CATC\left[\SETWmininv\right]}$ in the bicategory $\operatorname{Hom}(\CATC
\left[\SETWmininv\right],\CATC\left[\SETWmininv\right])$. By the first part of Lemma~\ref{lem-01},
this implies that there is an internal equivalence $\xi$ between the same $2$ objects in the
bicategory $\operatorname{Hom}'(\CATC
\left[\SETWmininv\right],\CATC\left[\SETWmininv\right])$. Since $\delta$ is an internal equivalence
in the bicategory $\operatorname{Hom}'(\CATC,\CATC)$, then we have proved that
$\functor{U}_{\SETWmin}$ is an equivalence of bicategories (see~\cite[\S~2.2]{L}), with
$\functor{R}$ as quasi-inverse.
The proof for $\functor{U}_{\SETWequiv}$ is analogous.
\end{proof}

Now we have:

\begin{prop}\label{prop-02}
Let us fix any pair $(\CATC,\SETW)$ satisfying conditions
\emphatic{(BF)} and any
morphism $f:B\rightarrow A$ in $\CATC$. Then the morphism

\[
\begin{tikzpicture}[xscale=2.4,yscale=-1.2]
    \node (A0_0) at (-0.35, 0) {$\functor{U}_{\SETW,1}(f)=\Big(B$};
    \node (A0_1) at (1, 0) {$B$};
    \node (A0_2) at (2, 0) {$A\Big)$};
    \path (A0_1) edge [->]node [auto,swap] {$\scriptstyle{\id_B}$} (A0_0);
    \path (A0_1) edge [->]node [auto] {$\scriptstyle{f}$} (A0_2);
\end{tikzpicture}
\]
\emphatic{(}see \eqref{eq-70}\emphatic{)} is an internal equivalence in
$\CATC\left[\SETWinv\right]$ if and only if $f$ belongs to $\SETWsat$.
\end{prop}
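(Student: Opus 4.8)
The plan is to prove the two implications separately, exploiting the universal property of $\functor{U}_{\SETW}$ (Theorem~\ref{theo-01}) together with the characterization of the saturation provided by Lemma~\ref{lem-02} and the earlier lemmas on internal equivalences.

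For the ``if'' direction, suppose $f$ belongs to $\SETWsat$. By Definition~\ref{def-01}, there are morphisms $g:C\rightarrow B$ and $h:D\rightarrow C$ with both $f\circ g$ and $g\circ h$ in $\SETW$. Applying $\functor{U}_{\SETW}$ and using that it sends elements of $\SETW$ to internal equivalences (Theorem~\ref{theo-01}), we get that $\functor{U}_{\SETW,1}(f\circ g)$ and $\functor{U}_{\SETW,1}(g\circ h)$ are internal equivalences in $\CATC\left[\SETWinv\right]$. The associators $\psi_{\bullet}^{\functor{U}_{\SETW}}$ give invertible $2$-morphisms $\functor{U}_{\SETW,1}(f\circ g)\Rightarrow\functor{U}_{\SETW,1}(f)\circ\functor{U}_{\SETW,1}(g)$ and similarly for $g\circ h$; hence by Lemma~\ref{lem-04} the composites $\functor{U}_{\SETW,1}(f)\circ\functor{U}_{\SETW,1}(g)$ and $\functor{U}_{\SETW,1}(g)\circ\functor{U}_{\SETW,1}(h)$ are internal equivalences. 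Now Lemma~\ref{lem-02}, applied to the triple $\functor{U}_{\SETW,1}(f),\functor{U}_{\SETW,1}(g),\functor{U}_{\SETW,1}(h)$, immediately yields that $\functor{U}_{\SETW,1}(f)$ is an internal equivalence, as desired.

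For the ``only if'' direction, suppose $\functor{U}_{\SETW,1}(f)$ is an internal equivalence in $\CATC\left[\SETWinv\right]$. I want to extract from this an explicit quasi-inverse, which in the bicategory of fractions will be a morphism of the form $(A',\operatorname{w},\widetilde{f})$, and then read off the membership $f\in\SETWsat$ from the resulting invertible $2$-morphisms. The key point is that an invertible $2$-morphism in $\CATC\left[\SETWinv\right]$ between the relevant composites unwinds, by the explicit description of $2$-morphisms and compositions in~\cite{T3}, into concrete data in $\CATC$: namely objects and morphisms together with $2$-morphisms witnessing that certain composites of $f$ with the components of the quasi-inverse are themselves (up to invertible $2$-morphism, hence by (BF5)) in $\SETW$. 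From such data one reconstructs morphisms $g$ and $h$ in $\CATC$ with $f\circ g$ and $g\circ h$ in $\SETW$, which is precisely the statement that $f\in\SETWsat$ by Definition~\ref{def-01}.

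The hard part will be the ``only if'' direction: translating the abstract statement ``$\functor{U}_{\SETW,1}(f)$ is an internal equivalence'' into usable equations in $\CATC$. This requires carefully tracking the explicit form of morphisms~\eqref{eq-70}, $2$-morphisms~\eqref{eq-95}, and the compositions and associators $\Theta_{\bullet}^{\CATC,\SETW}$ in the bicategory of fractions, and then repeatedly invoking axiom (BF5) to promote composites that are invertibly $2$-isomorphic to elements of $\SETW$ back into $\SETW$ itself. By contrast the ``if'' direction is essentially a formal consequence of Lemmas~\ref{lem-04} and~\ref{lem-02}, so I expect the bulk of the genuine work to lie in the bookkeeping needed to produce the witnessing morphisms $g$ and $h$ from a chosen quasi-inverse of $\functor{U}_{\SETW,1}(f)$.
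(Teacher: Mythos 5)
Your ``if'' direction is correct, complete, and genuinely different from the paper's. The paper proves this implication by explicitly constructing a quasi-inverse $\underline{t}=(C,f\circ g,g)$ for $\functor{U}_{\SETW,1}(f)$ together with explicit invertible $2$-morphisms $\Xi$ and $\Delta$, a fairly long construction using (BF3), (BF4a), (BF4b) and the choices C$(\SETW)$. You instead get the conclusion formally: apply $\functor{U}_{\SETW}$, transport internal equivalences along its associators via Lemma~\ref{lem-04}, and invoke Lemma~\ref{lem-02}. This is sound and non-circular (both lemmas are proved independently of Proposition~\ref{prop-02}), and it is in fact exactly the argument the paper itself uses later for Lemma~\ref{lem-03}. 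What the paper's longer route buys is an explicit quasi-inverse (indeed, as remarked after the proof, an adjoint equivalence), which is convenient in later arguments; your route gives existence only.

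The ``only if'' direction, however, is a plan rather than a proof, and the plan as written conceals the one genuine difficulty. Write the quasi-inverse of $\functor{U}_{\SETW,1}(f)$ as $\underline{e}=(\widetilde{C},\operatorname{v},\widetilde{g})$ with $\operatorname{v}\in\SETW$. Unwinding a representative of the invertible $2$-morphism $\functor{U}_{\SETW,1}(f)\circ\underline{e}\Rightarrow(A,\id_A,\id_A)$ (with both components invertible, via~\cite[Proposition~0.8]{T3}) does produce, as you anticipate, a morphism $\operatorname{u}:C\rightarrow\widetilde{C}$ such that $g:=\widetilde{g}\circ\operatorname{u}$ satisfies $f\circ g\in\SETW$ by (BF5). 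But unwinding the other $2$-morphism $(B,\id_B,\id_B)\Rightarrow\underline{e}\circ\functor{U}_{\SETW,1}(f)$ only yields a morphism $h'$ with $\widetilde{g}\circ h'\in\SETW$: a witness for $\widetilde{g}$, not for $g=\widetilde{g}\circ\operatorname{u}$. Definition~\ref{def-01} requires the \emph{same} middle morphism in both composites, and $\operatorname{u}$ need not belong to $\SETW$ (only $(\operatorname{v}\circ\id_{\widetilde{C}})\circ\operatorname{u}$ does), so the two pieces of data do not assemble into the required pair $(g,h)$ without a further argument.

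The paper resolves this mismatch by a bootstrap, and this is the idea missing from your proposal: having produced $g$ with $f\circ g\in\SETW$, it proves that $\functor{U}_{\SETW,1}(g)$ is \emph{itself} an internal equivalence (by composing $\underline{e}$ with $\functor{U}_{\SETW,1}(\operatorname{v})$ and $\functor{U}_{\SETW,1}(\operatorname{u})$, building an explicit invertible $2$-morphism to $(\widetilde{C},\id_{\widetilde{C}},\widetilde{g})\circ\functor{U}_{\SETW,1}(\operatorname{u})$, and using Lemma~\ref{lem-04}, Lemma~\ref{lem-06} and condition (C2)), and then runs the counit extraction a second time, on $g$, to obtain $h:D\rightarrow C$ with $g\circ h\in\SETW$. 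One can alternatively repair your direct approach, but that also requires real work: filling a square via (BF3) applied to $\operatorname{v}\circ h'$ against $\operatorname{v}\circ\operatorname{u}\in\SETW$, then cancelling $\operatorname{v}$ with (BF4a)/(BF4b), and finishing with (BF2) and (BF5). Either way, producing the second witness $h$ for the \emph{same} $g$ is not ``bookkeeping''; it is the substantive step your proposal leaves out.
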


If $\CATC$ is a $1$-category considered as a trivial bicategory, then this result coincides with the
analogous of~\cite[Proposition~7.1.20(i)]{KS} for left multiplicative systems instead of right
multiplicative systems (see Remark~\ref{rem-02}). The case of a non-trivial bicategory is much
longer, but conceptually similar; we refer to the Appendix for the details.

\begin{cor}\label{cor-02}
Let us fix any pair $(\CATC,\SETW)$ satisfying conditions \emphatic{(BF)}.
Given any pair of objects $A^1,A^2$ in $\CATC$, any internal
equivalence from $A^1$ to $A^2$ in $\CATC\left[\SETWinv\right]$ is necessarily of the form

\begin{equation}\label{eq-34}
\begin{tikzpicture}[xscale=2.4,yscale=-1.2]
    \node (A0_0) at (0, 0) {$A^1$};
    \node (A0_1) at (1, 0) {$A^3$};
    \node (A0_2) at (2, 0) {$A^2$,};
    \path (A0_1) edge [->]node [auto,swap] {$\scriptstyle{\operatorname{w}}$} (A0_0);
    \path (A0_1) edge [->]node [auto] {$\scriptstyle{f}$} (A0_2);
\end{tikzpicture}
\end{equation}
with $\operatorname{w}$ in $\SETW$ and $f$ in $\SETWsat$. Conversely, any such morphism is an
internal equivalence in $\CATC\left[\SETWinv\right]$.
\end{cor}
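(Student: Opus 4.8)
Since every morphism of $\CATC\left[\SETWinv\right]$ from $A^1$ to $A^2$ is by construction a triple $(A^3,\operatorname{w},f)$ with $\operatorname{w}\in\SETW$, the assertion that an internal equivalence ``has this shape'' with $\operatorname{w}\in\SETW$ is automatic; the real content is the equivalence ``$(A^3,\operatorname{w},f)$ is an internal equivalence $\Longleftrightarrow f\in\SETWsat$.'' The plan is to factor such a morphism through the images under $\functor{U}_{\SETW}$ of its two legs and then reduce everything to Proposition~\ref{prop-02}.

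First I would record that, since $\operatorname{w}\in\SETW$, the morphism $\functor{U}_{\SETW,1}(\operatorname{w})=(A^3,\id_{A^3},\operatorname{w})$ is an internal equivalence of $\CATC\left[\SETWinv\right]$ by Theorem~\ref{theo-01}, with explicit quasi-inverse $q:=(A^3,\operatorname{w},\id_{A^3})$ (as noted after \eqref{eq-70}); being a quasi-inverse of an internal equivalence, $q$ is itself an internal equivalence. The key step is then to exhibit an invertible $2$-morphism $(A^3,\operatorname{w},f)\Rightarrow\functor{U}_{\SETW,1}(f)\circ q$ in $\CATC\left[\SETWinv\right]$. To produce it I would compute the composite $\functor{U}_{\SETW,1}(f)\circ q$ directly: its formation requires a choice of data as in \eqref{eq-30} for the cospan given by the right leg $\id_{A^3}$ of $q$ and the left leg $\id_{A^3}$ of $\functor{U}_{\SETW,1}(f)$. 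Since the latter is an identity (so $\operatorname{v}=\id_{A^3}$), condition (\hyperref[C2]{C2}) forces the chosen data to be trivial, and the composite equals $(A^3,\operatorname{w}\circ\id_{A^3},f\circ\id_{A^3})$; the unitors of $\CATC$ then furnish the desired invertible $2$-morphism onto $(A^3,\operatorname{w},f)$.

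With this factorization in hand, both implications follow formally. If $(A^3,\operatorname{w},f)$ is an internal equivalence, then so is $\functor{U}_{\SETW,1}(f)\circ q$ by Lemma~\ref{lem-04}; since $q$ is an internal equivalence, the ``$2$-out-of-$3$'' property (Lemma~\ref{lem-06}) shows that $\functor{U}_{\SETW,1}(f)$ is an internal equivalence, whence $f\in\SETWsat$ by Proposition~\ref{prop-02}. Conversely, if $f\in\SETWsat$ then $\functor{U}_{\SETW,1}(f)$ is an internal equivalence by Proposition~\ref{prop-02}, so the composite $\functor{U}_{\SETW,1}(f)\circ q$ of two internal equivalences is again one (Lemma~\ref{lem-06}), and Lemma~\ref{lem-04} transports this conclusion along the invertible $2$-morphism to show that $(A^3,\operatorname{w},f)$ is an internal equivalence.

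The main obstacle is the middle step: verifying the invertible $2$-morphism $(A^3,\operatorname{w},f)\Rightarrow\functor{U}_{\SETW,1}(f)\circ q$ requires unwinding the definition of horizontal composition in $\CATC\left[\SETWinv\right]$ and tracking the unitors and associators of $\CATC$ introduced by the choice attached to \eqref{eq-30}. Everything else is a formal consequence of Proposition~\ref{prop-02} together with Lemmas~\ref{lem-04} and~\ref{lem-06}.
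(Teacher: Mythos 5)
Your proof is correct, and every ingredient you invoke is available in the paper: Proposition~\ref{prop-02}, Lemmas~\ref{lem-04} and~\ref{lem-06}, the fact (stated after \eqref{eq-70}) that $q:=(A^3,\operatorname{w},\id_{A^3})$ is a quasi-inverse of $\functor{U}_{\SETW,1}(\operatorname{w})$, and the normalization conditions (\hyperref[C1]{C1})/(\hyperref[C2]{C2}). Where you genuinely diverge from the paper is in the forward direction. The paper composes the given equivalence \eqref{eq-34} on the right with $\functor{U}_{\SETW,1}(\operatorname{w})$ itself; the middle cospan of that composition has both legs equal to $\operatorname{w}$, so its chosen completion is \emph{not} normalized by (\hyperref[C1]{C1})/(\hyperref[C2]{C2}), and the paper must invoke (BF4a) and (BF4b) to refine the comparison $2$-cell $\eta$ into an $\varepsilon$ along some $\operatorname{v}^3$ in $\SETW$ before it can write down the invertible $2$-morphism $\Gamma$ onto $\functor{U}_{\SETW,1}(f)$. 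You instead factor through the quasi-inverse: the composite $\functor{U}_{\SETW,1}(f)\circ q$ has a middle cospan consisting of two identities, so the choices are forced to be trivial (both (\hyperref[C1]{C1}) and (\hyperref[C2]{C2}) apply here and agree, since $\pi_{\id}=\upsilon_{\id}$), the composite is literally $(A^3,\operatorname{w}\circ\id_{A^3},f\circ\id_{A^3})$, and only unitors are needed to compare it with \eqref{eq-34}. This buys two things: your forward direction avoids the (BF3)/(BF4) bookkeeping entirely, and a single factorization $(A^3,\operatorname{w},f)\cong\functor{U}_{\SETW,1}(f)\circ q$ serves both implications symmetrically --- indeed your converse direction is exactly the paper's, which computes $(A^3,\id_{A^3},f)\circ(A^3,\operatorname{w},\id_{A^3})$ via (\hyperref[C1]{C1}). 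The one step you should not leave implicit (the paper does not prove it either, only asserts it) is that $q$ really is a quasi-inverse of $\functor{U}_{\SETW,1}(\operatorname{w})$; your whole argument, like the paper's converse, rests on that verification.
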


\begin{proof}
Let us suppose that \eqref{eq-34} is an internal equivalence in $\CATC\left[\SETWinv\right]$. By the
description of morphisms in a bicategory of fractions, $\operatorname{w}$ belongs to $\SETW$. So 
by Theorem~\ref{theo-01} the morphism

\[
\begin{tikzpicture}[xscale=2.4,yscale=-1.2]
    \node (A0_0) at (-0.4, 0) {$\functor{U}_{\SETW,1}(\operatorname{w})=\Big(A^3$};
    \node (A0_1) at (1, 0) {$A^3$};
    \node (A0_2) at (2, 0) {$A^1\Big)$};
    
    \path (A0_1) edge [->]node [auto,swap] {$\scriptstyle{\id_{A^3}}$} (A0_0);
    \path (A0_1) edge [->]node [auto] {$\scriptstyle{\operatorname{w}}$} (A0_2);
\end{tikzpicture}
\]
is an internal equivalences in $\CATC\left[\SETWinv\right]$. Then using Lemma~\ref{lem-06}
we get that also $(A^3,\operatorname{w},f)\circ\functor{U}_{\SETW,1}(\operatorname{w})$
is an internal equivalence. Now let us suppose that choices \hyperref[C]{C}$(\SETW)$ give data as
in the upper part of the following diagram, with $\operatorname{v}^1$ in $\SETW$ and $\eta$
invertible:

\[
\begin{tikzpicture}[xscale=2.2,yscale=-0.8]
    \node (A0_1) at (1, 0) {$A^4$};
    \node (A1_0) at (0, 2) {$A^3$};
    \node (A1_2) at (2, 2) {$A^3$.};
    \node (A2_1) at (1, 2) {$A^1$};
    
    \node (A1_1) at (1, 0.95) {$\eta$};
    \node (B1_1) at (1, 1.4) {$\Rightarrow$};
    
    \path (A1_2) edge [->]node [auto] {$\scriptstyle{\operatorname{w}}$} (A2_1);
    \path (A0_1) edge [->]node [auto] {$\scriptstyle{\operatorname{v}^2}$} (A1_2);
    \path (A1_0) edge [->]node [auto,swap] {$\scriptstyle{\operatorname{w}}$} (A2_1);
    \path (A0_1) edge [->]node [auto,swap] {$\scriptstyle{\operatorname{v}^1}$} (A1_0);
\end{tikzpicture}
\]

Then by~\cite[\S~2.2]{Pr} we have

\[\Big(A^3,\operatorname{w},f\Big)\circ\functor{U}_{\SETW,1}(\operatorname{w})=\Big(A^4,
\id_{A^3}\circ\operatorname{v}^1,f\circ\operatorname{v}^2\Big).\]

By (BF4a) and (BF4b) applied to $\eta$,
there are an object $A^5$, a morphism $\operatorname{v}^3:A^5\rightarrow
A^4$ in $\SETW$ and an invertible $2$-morphism $\varepsilon:\operatorname{v}^1\circ\operatorname{v}^3
\Rightarrow\operatorname{v}^2\circ\operatorname{v}^3$. Then we define an invertible $2$-morphism
in $\CATC\left[\SETWinv\right]$ as follows

\begin{gather*}
\Gamma:=\Big[A^5,\operatorname{v}^3,\operatorname{v}^2\circ\operatorname{v}^3,
 \Big(i_{\id_{A^3}}\ast\varepsilon\Big)\odot\thetab{\id_{A^3}}{\operatorname{v}^1}
 {\operatorname{v}^3},\thetab{f}{\operatorname{v}^2}{\operatorname{v}^3}\Big]:\\
\Big(A^3,\operatorname{w},f\Big)\circ\functor{U}_{\SETW,1}(\operatorname{w})
 \Longrightarrow\functor{U}_{\SETW,1}(f).
\end{gather*}

By Lemma~\ref{lem-04} applied to $\Gamma$, we get that
$\functor{U}_{\SETW,1}(f)$ is an internal equivalence, so by Proposition~\ref{prop-02} $f$ belongs
to $\SETWsat$.\\

Conversely,
if $f$ belongs to $\SETWsat$, again by Proposition~\ref{prop-02} we get that $(A^3,\id_{A^3},
f)$ is an internal equivalence in $\CATC\left[\SETWinv\right]$. Since also $(A^3,\operatorname{w},
\id_{A^3})$ is an internal equivalence (because it is a quasi-inverse for $(A^3,\id_{A^3},
\operatorname{w})$), then also the composition

\[\Big(A^3,\id_{A^3},f\Big)\circ\Big(A^3,\operatorname{w},
\id_{A^3}\Big)\stackrel{(\operatorname{C}1)}{=}\Big(A^3,\operatorname{w}\circ\id_{A^3},f
\circ\id_{A^3}\Big)\]
is an internal
equivalence in $\CATC\left[\SETWinv\right]$. From this and Lemma~\ref{lem-04} we get that
\eqref{eq-34} is an internal equivalence.
\end{proof}

Given the previous results, $2$ natural questions arise for any $(\CATC,\SETW)$ satisfying
conditions (BF):

\begin{itemize}
 \item does the pair $(\CATC,\SETWsat)$ satisfy conditions (BF)?
 \item if yes, is the resulting right bicategory of fractions equivalent to $\CATC\left[\SETWinv
  \right]$?
\end{itemize}

Both questions have positive answers, as we are going to show below.

\begin{lem}\label{lem-05}
Let us fix any pair $(\CATC,\SETW)$ satisfying conditions \emphatic{(BF)}.
Then also the pair $(\CATC,\SETWsat)$ satisfies the same conditions.
\end{lem}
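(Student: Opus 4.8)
The plan is to verify directly that the pair $(\CATC,\SETWsat)$ satisfies each of the five axioms (BF1)--(BF5), using throughout the fact that $(\CATC,\SETW)$ already satisfies them and the inclusion $\SETW\subseteq\SETWsat$ from Remark~\ref{rem-04}. The inclusion gives several axioms almost for free: since $\SETW$ contains all $1$-identities (BF1) and $\SETW\subseteq\SETWsat$, axiom (BF1) for $\SETWsat$ is immediate. For the $2$-out-of-$3$-type closure under invertible $2$-morphisms (BF5), one uses that if $f\in\SETWsat$ via witnesses $g,h$ (so $f\circ g,\,g\circ h\in\SETW$) and $\gamma:f\Rightarrow\widetilde{f}$ is invertible, then $\widetilde{f}\circ g$ is isomorphic to $f\circ g$ via $\gamma\ast i_g$, so by (BF5) for $\SETW$ it lies in $\SETW$, and $g\circ h$ already lies in $\SETW$; hence $\widetilde{f}\in\SETWsat$ with the same witnesses.

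The more substantial axioms are (BF2) (closure under composition), (BF3) (existence of the completion square of the form \eqref{eq-32}), and (BF4) (the condition on $2$-morphisms). For (BF2), I would take $f_1,f_2\in\SETWsat$ that are composable and, using Lemma~\ref{lem-02} as the organising principle, show $f_2\circ f_1\in\SETWsat$: the definition of $\SETWsat$ is engineered precisely so that membership is detected by a pair of ``three-term'' conditions landing in $\SETW$, and the witnesses for $f_1$ and $f_2$ can be spliced together (after using (BF3)/(BF4) for $\SETW$ to build the needed composites and invertible $2$-morphisms) to produce witnesses for the composite. For (BF3), given a cospan as in \eqref{eq-30} with the morphism $\operatorname{v}$ now only in $\SETWsat$, the strategy is to reduce to the case already available for $\SETW$: write $\operatorname{v}\in\SETWsat$ via witnesses, apply the genuine (BF3) for $\SETW$ to the associated $\SETW$-morphism to obtain a completion square, and then transport that square back along the witnessing $2$-morphisms to cover the original $\operatorname{v}$, checking that the resulting $\operatorname{v}'$ again lands in $\SETWsat$.

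The main obstacle I expect is axiom (BF4), which concerns a $2$-morphism $\alpha$ and requires producing a morphism of $\SETWsat$ coequalizing it (up to invertible $2$-morphism) after precomposition. Here the difficulty is that the defining witnesses $g,h$ for membership in $\SETWsat$ are not canonical, so one must carefully pre- and post-compose $\alpha$ with these witnesses to land in a situation where (BF4) for $\SETW$ applies, and then argue that the morphism produced by (BF4) for $\SETW$, once recomposed with the witnesses, still belongs to $\SETWsat$. This bookkeeping of associators $\thetaa{}{}{}$ and of the interplay between the chosen witnesses is where the computation becomes delicate; the conceptual content, however, is simply that $\SETWsat$ is ``built from $\SETW$'' by a condition (two consecutive composites in $\SETW$) that is stable under the manipulations (BF2)--(BF4) permit. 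I would present (BF1) and (BF5) first as warm-ups, then (BF2), and finally the coupled arguments for (BF3) and (BF4), flagging that each reduction ultimately invokes the corresponding axiom for the original class $\SETW$ together with Lemma~\ref{lem-02}.
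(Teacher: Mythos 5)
Most of your plan coincides with the paper's appendix proof: (BF1) and (BF5) are handled exactly as you describe, and for (BF3) and (BF4) the paper likewise reduces to the corresponding axioms for $\SETW$ via the witnesses (for (BF3) the paper's version is even simpler than your sketch: given $\operatorname{w}\in\SETWsat$, pick a single witness $\operatorname{v}$ with $\operatorname{w}\circ\operatorname{v}\in\SETW$, apply (BF3) for $\SETW$ to the cospan formed by $f$ and $\operatorname{w}\circ\operatorname{v}$, and compose; the new $\SETW$-leg of the resulting square already lies in $\SETW\subseteq\SETWsat$, so nothing needs to be ``transported back''). The long bookkeeping you anticipate for (BF4) is indeed the bulk of the paper's proof.

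The genuine gap is (BF2): splicing witnesses cannot be made to work. Concretely, take composable $\operatorname{w},\operatorname{v}$ in $\SETWsat$ with witnesses $\operatorname{w}',\operatorname{w}''$ (so $\operatorname{w}\circ\operatorname{w}',\,\operatorname{w}'\circ\operatorname{w}''\in\SETW$) and $\operatorname{v}',\operatorname{v}''$. Applying (BF3) for $\SETW$ to the cospan formed by $\operatorname{w}'$ and $\operatorname{v}\circ\operatorname{v}'$ produces $\operatorname{u}\in\SETW$ and a morphism $p$ with $(\operatorname{w}\circ\operatorname{v})\circ(\operatorname{v}'\circ p)$ isomorphic to $(\operatorname{w}\circ\operatorname{w}')\circ\operatorname{u}\in\SETW$, so $g:=\operatorname{v}'\circ p$ serves as a first witness by (BF2) and (BF5) for $\SETW$. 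But the second witness, some $h$ with $g\circ h\in\SETW$, is not reachable by further applications of (BF3)/(BF4): every such attempt only yields identities of the shape $g\circ(\textrm{morphism of }\SETW)\cong(\textrm{morphism of }\SETW)\circ q$ with $q$ an uncontrolled morphism, because $p$ itself is an arbitrary output of (BF3). What is actually needed is that $p$ (equivalently $\operatorname{v}'$, or $g$) lies in $\SETWsat$ \emph{with producible witnesses} --- and that is precisely the ``$2$-out-of-$3$'' property of $\SETWsat$, i.e.\ Proposition~\ref{prop-03}(ii), which the paper proves only \emph{after}, and using, the present lemma. The paper escapes this circle by arguing semantically: Proposition~\ref{prop-02} (whose proof uses only conditions (BF) for $\SETW$, so there is no circularity) identifies $\SETWsat$ with the class of morphisms sent by $\functor{U}_{\SETW}$ to internal equivalences of $\CATC\left[\SETWinv\right]$; closure of $\SETWsat$ under composition then follows immediately from closure of internal equivalences under composition (Lemma~\ref{lem-06}), together with Lemma~\ref{lem-04} and the compatibility of $\functor{U}_{\SETW}$ with composition. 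Your instinct to organize the argument around Lemma~\ref{lem-02} points in the right direction, but that lemma concerns internal equivalences, and to apply it to $\SETWsat$ you must first pass to the bicategory of fractions via Proposition~\ref{prop-02}; without that step the composition axiom does not close up.
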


This is the analogous of~\cite[Exercise~7.1]{KS} (for left multiplicative systems instead of
right multiplicative systems, see Remark~\ref{rem-02}) in the more complicated
framework of bicategories instead of
$1$-categories. A detailed proof is given in the Appendix.

\begin{lem}\label{lem-03}
Let us fix any any pseudofunctor $\functor{F}:\CATC\rightarrow\CATD$ and any class of morphisms
$\SETW$ in $\CATC$. If $\functor{F}$ sends each morphism of $\SETW$ to an internal equivalence, then
it sends each morphism of $\SETWsat$ to an internal equivalence.
\end{lem}

\begin{proof}
Let us fix any morphism $f:B\rightarrow
A$ in $\SETWsat$ and let us choose any pair of objects $C,D$ and any pair of 
morphisms $g:C\rightarrow B$ and $h:D\rightarrow C$,
such that both $f\circ g$ and $g\circ h$ belong to $\SETW$. Then there is an invertible $2$-morphism
(the associator for $\functor{F}$ relative to the pair $(f,g)$) in $\CATD$ from the internal
equivalence $\functor{F}_1(f\circ g)$ to the morphism $\functor{F}_1(f)\circ\functor{F}_1(g)$. So by
Lemma~\ref{lem-04} we have that $\functor{F}_1(f)\circ\functor{F}_1(g)$ is an internal equivalence of
$\CATD$; analogously $\functor{F}_1(g)\circ\functor{F}_1(h)$ is an internal equivalence. Then by
Lemma~\ref{lem-02} applied to the bicategory $\CATD$ we conclude that $\functor{F}_1(f)$ is
an internal equivalence of $\CATD$.
\end{proof}

\begin{prop}\label{prop-01}
Let us fix any pair $(\CATC,\SETW)$ satisfying conditions \emphatic{(BF)}.
Then there are a bicategory of fractions $\CATC\left[\SETWsatinv\right]$ and a pseudofunctor
as in \eqref{eq-46}, with the universal property. Moreover, there are $2$
equivalences of bicategories

\[\functor{H}:\,\CATC\left[\SETWsatinv\right]\longrightarrow\CATC\left[\SETWinv\right]\quad
\textrm{and}\quad\functor{L}:\,\CATC\left[\SETWinv\right]\longrightarrow\CATC\left[\SETWsatinv
\right],\]
one the quasi-inverse of the other, and a pseudonatural equivalence of pseudofunctors $\tau:
\functor{U}_{\SETW}\Rightarrow\functor{H}\circ\functor{U}_{\SETWsat}$ that is a morphism in
$\operatorname{Hom}'_{\SETWsat}(\CATC,\CATC\left[\SETWinv\right])$.
\end{prop}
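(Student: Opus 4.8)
The plan is to deduce Proposition~\ref{prop-01} from the combination of Lemma~\ref{lem-05} (which tells us that $(\CATC,\SETWsat)$ again satisfies conditions (BF)) and the universal property of Theorem~\ref{theo-01}, applied in both directions. First I would invoke Lemma~\ref{lem-05} to guarantee that there exist a bicategory of fractions $\CATC\left[\SETWsatinv\right]$ and a pseudofunctor $\functor{U}_{\SETWsat}$ as in \eqref{eq-46}, enjoying the universal property of Theorem~\ref{theo-01}. This disposes of the first assertion immediately, so the real content is the construction of the two equivalences $\functor{H}$ and $\functor{L}$ and the pseudonatural equivalence $\tau$.

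To build $\functor{H}$, I would apply Theorem~\ref{theo-02} to the pseudofunctor $\functor{U}_{\SETW}:\CATC\rightarrow\CATC\left[\SETWinv\right]$ with respect to the \emph{larger} class $\SETWsat$. The hypothesis of Theorem~\ref{theo-02}(i) for the pair $(\CATC,\SETWsat)$ is precisely the statement that $\functor{U}_{\SETW}$ sends each morphism of $\SETWsat$ to an internal equivalence; this is exactly Proposition~\ref{prop-02} (a morphism lies in $\SETWsat$ iff $\functor{U}_{\SETW,1}$ of it is an internal equivalence). Granting this, Theorem~\ref{theo-02} furnishes a pseudofunctor $\functor{H}:\CATC\left[\SETWsatinv\right]\rightarrow\CATC\left[\SETWinv\right]$ together with a pseudonatural equivalence $\tau:\functor{U}_{\SETW}\Rightarrow\functor{H}\circ\functor{U}_{\SETWsat}$, and moreover part~(iii) of that theorem gives that $\mu_{\tau}$ sends each morphism of $\SETWsat$ to an internal equivalence, i.e.\ $\tau$ is a morphism in $\operatorname{Hom}'_{\SETWsat}(\CATC,\CATC\left[\SETWinv\right])$, as required by the final clause of the statement.

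The main work is to show that this $\functor{H}$ is an \emph{equivalence} of bicategories, and to produce a quasi-inverse $\functor{L}$. The clean strategy is to verify conditions (\hyperref[X1]{X1}) and (\hyperref[X2]{X2}). For (\hyperref[X1]{X1}) I would use that both $\functor{U}_{\SETW}$ and $\functor{U}_{\SETWsat}$ are the identity on objects (as recalled before \eqref{eq-70}), so $\functor{H}$ is surjective on objects up to the identity; the equivalence $\tau$ then shows $\functor{H}_0\circ\functor{U}_{\SETWsat,0}=\functor{H}_0$ agrees with $\functor{U}_{\SETW,0}$ up to internal equivalence, giving (\hyperref[X1]{X1}). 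For (\hyperref[X2]{X2}), I would exploit the universal property once more: since $\SETW\subseteq\SETWsat$ (Remark~\ref{rem-04}), the pseudofunctor $\functor{U}_{\SETWsat}$ sends each morphism of $\SETW$ to an internal equivalence, so by Theorem~\ref{theo-01} applied to $(\CATC,\SETW)$ it factors, up to pseudonatural equivalence, through a pseudofunctor $\functor{L}:\CATC\left[\SETWinv\right]\rightarrow\CATC\left[\SETWsatinv\right]$. Composing the two universal factorizations and using the essential uniqueness guaranteed by the equivalence \eqref{eq-94} in both Theorem~\ref{theo-01}'s (for $\SETW$ and for $\SETWsat$), one sees that $\functor{H}\circ\functor{L}$ and $\functor{L}\circ\functor{H}$ are each pseudonaturally equivalent to the respective identity pseudofunctors. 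By Lemma~\ref{lem-07} this upgrades $\functor{H}$ (and $\functor{L}$) to an equivalence of bicategories, with each the quasi-inverse of the other.

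The step I expect to be the main obstacle is verifying the two composite identities $\functor{H}\circ\functor{L}\simeq\id$ and $\functor{L}\circ\functor{H}\simeq\id$ rigorously, since this requires tracking the pseudonatural equivalences produced by two distinct applications of the universal property and comparing them via the essential uniqueness clause of Theorem~\ref{theo-01}. Concretely, one compares the pseudofunctors $\functor{H}\circ\functor{L}\circ\functor{U}_{\SETW}$ and $\id_{\CATC\left[\SETWinv\right]}\circ\functor{U}_{\SETW}$ inside $\operatorname{Hom}_{\SETW}(\CATC,\CATC\left[\SETWinv\right])$, observing both are pseudonaturally equivalent to $\functor{U}_{\SETW}$ itself, and then invokes that \eqref{eq-94} is an equivalence of bicategories to conclude the two composites agree up to a pseudonatural equivalence; an entirely analogous argument handles $\functor{L}\circ\functor{H}$ using the universal property of $\functor{U}_{\SETWsat}$. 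The bookkeeping of associators and unitors in these comparisons is routine but lengthy, and this is the part I would carry out most carefully.
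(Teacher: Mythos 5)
Your overall route is the same as the paper's: Lemma~\ref{lem-05} plus Theorem~\ref{theo-01} for the existence of $\CATC\left[\SETWsatinv\right]$ and $\functor{U}_{\SETWsat}$; Theorem~\ref{theo-02} applied to $(\CATC,\SETWsat,\functor{U}_{\SETW})$, with Proposition~\ref{prop-02} verifying its hypothesis, to produce $(\functor{H},\tau)$ with $\tau$ in $\operatorname{Hom}'_{\SETWsat}(\CATC,\CATC\left[\SETWinv\right])$; Theorem~\ref{theo-02} applied to $(\CATC,\SETW,\functor{U}_{\SETWsat})$ to produce $(\functor{L},\zeta)$; and then the essential uniqueness clause of \eqref{eq-94}, in both directions, to show that $\functor{H}\circ\functor{L}$ and $\functor{L}\circ\functor{H}$ are pseudonaturally equivalent to the respective identities. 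This is exactly the paper's proof in outline.

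There is, however, one concrete point where your "entirely analogous argument" for $\functor{L}\circ\functor{H}$ breaks the symmetry, and it is the only genuinely delicate step. For $\functor{H}\circ\functor{L}\simeq\id_{\CATC\left[\SETWinv\right]}$ the comparison cell built from $\tau$ and $i_{\functor{H}}\ast\zeta^{-1}$ lies in $\operatorname{Hom}_{\SETW}\left(\CATC,\CATC\left[\SETWinv\right]\right)$, which is precisely the target of the equivalence \eqref{eq-94} for $\functor{U}_{\SETW}$, so the universal property applies directly. But for $\functor{L}\circ\functor{H}\simeq\id_{\CATC\left[\SETWsatinv\right]}$ the relevant universal property is that of $\functor{U}_{\SETWsat}$, whose equivalence \eqref{eq-94} lands in $\operatorname{Hom}_{\SETWsat}\left(\CATC,\CATC\left[\SETWsatinv\right]\right)$; the comparison cell $\widetilde{\delta}:\id\circ\functor{U}_{\SETWsat}\Rightarrow(\functor{L}\circ\functor{H})\circ\functor{U}_{\SETWsat}$ involves $\zeta$, which Theorem~\ref{theo-02} only places in $\operatorname{Hom}'_{\SETW}\left(\CATC,\CATC\left[\SETWsatinv\right]\right)$, so a priori $\mu_{\widetilde{\delta}}$ sends only morphisms of $\SETW$ (not of $\SETWsat$) to internal equivalences, and you are not yet entitled to invoke \eqref{eq-94} for $\SETWsat$. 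The paper bridges exactly this point with Lemma~\ref{lem-03}: any pseudofunctor (in particular $\mu_{\widetilde{\delta}}:\CATC\rightarrow\operatorname{Cyl}(\CATC\left[\SETWsatinv\right])$) sending every morphism of $\SETW$ to an internal equivalence automatically does the same for every morphism of $\SETWsat$. Your proof needs this step (or a substitute) to go through. A further minor point: Lemma~\ref{lem-07} is not the right tool for the final conclusion; once both composites are pseudonaturally equivalent to the identities, $\functor{H}$ and $\functor{L}$ are equivalences of bicategories, quasi-inverse to each other, directly by the definition of (strong) equivalence (the paper cites~\cite[\S~2.2]{L} for this).
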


\begin{proof}
Using Lemma~\ref{lem-05} and Theorem~\ref{theo-01}, there is a bicategory of fractions $\CATC
\left[\SETWsatinv\right]$ and a pseudofunctor as in \eqref{eq-46}, with the universal property. Since
$\SETW\subseteq\SETWsat$, then $\functor{U}_{\SETWsat}$ sends each morphism of $\SETW$ to an internal
equivalence. So using Theorem~\ref{theo-02} for $(\CATC,\SETW,
\functor{U}_{\SETWsat})$, there are a pseudofunctor $\functor{L}$ as above and
a pseudonatural equivalence

\begin{equation}\label{eq-05}
\zeta:\,\functor{U}_{\SETWsat}\Longrightarrow\functor{L}\circ\functor{U}_{\SETW}\quad\textrm{in}
\quad\operatorname{Hom}'_{\SETW}\left(\CATC,\CATC\left[\SETWsatinv\right]\right).
\end{equation}

By Proposition~\ref{prop-02}, $\functor{U}_{\SETW}$ sends each morphism of $\SETWsat$ to an
internal equivalence of $\CATC\left[\SETWinv\right]$; so using Theorem~\ref{theo-02} for $(\CATC,
\SETWsat,\functor{U}_{\SETW})$ there are a pseudofunctor $\functor{H}$ as in the
claim and a pseudonatural equivalence

\begin{equation}\label{eq-96}
\tau:\,\functor{U}_{\SETW}\Longrightarrow\functor{H}\circ
\functor{U}_{\SETWsat}\quad\operatorname{in}\quad\operatorname{Hom}'_{\SETWsat}\left(\CATC,\CATC
\left[\SETWinv\right]\right)\subseteq\operatorname{Hom}'_{\SETW}\left(\CATC,\CATC\left[\SETWinv
\right]\right).
\end{equation}

Now we consider the pseudonatural equivalence

\[\widetilde{\xi}:=\tau^{-1}\odot\Big(i_{\functor{H}}\ast\zeta^{-1}\Big)\odot\thetab{\functor{H}}
{\functor{L}}{\,\functor{U}_{\SETW}}:\,\,(\functor{H}\circ\functor{L})\circ\functor{U}_{\SETW}
\Longrightarrow\functor{U}_{\SETW}=\id_{\CATC\left[\SETWinv\right]}\circ\,\functor{U}_{\SETW}.\]

By \eqref{eq-05} and \eqref{eq-96}, $\widetilde{\xi}$ is an internal equivalence in the bicategory
$\operatorname{Hom}'_{\SETW}(\CATC,\CATC\left[\SETWinv\right])$ $\subseteq
\operatorname{Hom}_{\SETW}(\CATC,\CATC\left[\SETWinv\right])$, so by the universal property of
$\functor{U}_{\SETW}$, (\hyperref[X2]{X2}) and Lemma~\ref{lem-01}, there is a pseudonatural
equivalence $\xi:\functor{H}\circ\functor{L}\Rightarrow\id_{\CATC\left[\SETWinv\right]}$.\\

Moreover, we consider the pseudonatural equivalence 

\[\widetilde{\delta}:=\thetaa{\functor{L}}{\functor{H}}{\functor{U}_{\SETWsat}}\odot\Big(
i_{\functor{L}}\ast\tau\Big)\odot\zeta:\,\,\id_{\CATC\left[\SETWsatinv\right]}\circ\,
\functor{U}_{\SETWsat}\Longrightarrow(\functor{L}\circ\functor{H})\circ\functor{U}_{\SETWsat}.\]

By \eqref{eq-05} and \eqref{eq-96} the pseudofunctor $\mu_{\widetilde{\delta}}:\CATC\rightarrow
\operatorname{Cyl}
(\CATC\left[\SETWsatinv\right])$ associated to $\widetilde{\delta}$ sends each morphism of $\SETW$ to
an internal equivalence. So by Lemma~\ref{lem-03} we conclude that $\mu_{\widetilde{\delta}}$
sends each morphism of $\SETWsat$ to an internal equivalence, so $\widetilde{\delta}$ is an
internal equivalence in $\operatorname{Hom}_{\SETWsat}(\CATC,\CATC\left[\SETWsatinv\right])$.
Therefore by the universal property of
$\functor{U}_{\SETWsat}$, (\hyperref[X2]{X2}) and Lemma~\ref{lem-01}, there is a pseudonatural
equivalence $\delta:\id_{\CATC\left[\SETWsatinv\right]}\Rightarrow\functor{L}\circ\functor{H}$.
Using~\cite[\S~2.2]{L}, this proves that $\functor{L}$ and $\functor{H}$ are equivalences of
bicategories, one the quasi-inverse of the other.
\end{proof}

\begin{prop}\label{prop-03}
Let us fix any pair $(\CATC,\SETW)$ satisfying axioms \emphatic{(BF)}. Then:

\begin{enumerate}[\emphatic{(}i\emphatic{)}]
 \item the classes $\SETWsat$ and $\SETWsatsat$ coincide \emphatic{(}i.e.\ $\SETWsat$ is
  \emph{(}right\emph{)} saturated\emphatic{)}; 
 \item the class $\SETWsat$ satisfies the ``$\,2$-out-of-$3$'' property, i.e.\ given any pair of
  morphisms $f:B\rightarrow A$ and $g:C\rightarrow B$, if any $2$ of the $3$ morphisms $f,g$ and $f
  \circ g$ belong to $\SETWsat$, then so does the third one.
\end{enumerate}
\end{prop}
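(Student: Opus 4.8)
The plan is to reduce both statements to properties of internal equivalences in $\CATC\left[\SETWinv\right]$ by means of the characterization in Proposition~\ref{prop-02}, which identifies $\SETWsat$ with the class of morphisms $f$ such that $\functor{U}_{\SETW,1}(f)$ is an internal equivalence. The single recurring device will be the associator $\psi^{\functor{U}_{\SETW}}_{f,g}:\functor{U}_{\SETW,1}(f\circ g)\Rightarrow\functor{U}_{\SETW,1}(f)\circ\functor{U}_{\SETW,1}(g)$ of the pseudofunctor $\functor{U}_{\SETW}$: being an invertible $2$-morphism, Lemma~\ref{lem-04} shows that $\functor{U}_{\SETW,1}(f\circ g)$ is an internal equivalence if and only if $\functor{U}_{\SETW,1}(f)\circ\functor{U}_{\SETW,1}(g)$ is.

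For (i), the inclusion $\SETWsat\subseteq\SETWsatsat$ is immediate from Remark~\ref{rem-04} applied to the pair $(\CATC,\SETWsat)$, which satisfies conditions \emphatic{(BF)} by Lemma~\ref{lem-05}. For the reverse inclusion I would show that $\functor{U}_{\SETW}$ sends each morphism of $\SETWsatsat$ to an internal equivalence: indeed $\functor{U}_{\SETW}$ sends $\SETW$ to internal equivalences by Theorem~\ref{theo-01}, hence it sends $\SETWsat$ to internal equivalences by Lemma~\ref{lem-03}, and a second application of Lemma~\ref{lem-03} (with the class $\SETWsat$ in place of $\SETW$) shows it sends $(\SETWsat)_{\operatorname{sat}}=\SETWsatsat$ to internal equivalences. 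Then Proposition~\ref{prop-02} forces each $f\in\SETWsatsat$ to lie in $\SETWsat$, giving $\SETWsatsat\subseteq\SETWsat$ and hence equality.

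For (ii), given $f:B\rightarrow A$ and $g:C\rightarrow B$, I would translate membership in $\SETWsat$ into internal equivalences via Proposition~\ref{prop-02}, so that $f$, $g$ and $f\circ g$ lying in $\SETWsat$ correspond respectively to $\functor{U}_{\SETW,1}(f)$, $\functor{U}_{\SETW,1}(g)$ and (using the associator and Lemma~\ref{lem-04}) $\functor{U}_{\SETW,1}(f)\circ\functor{U}_{\SETW,1}(g)$ being internal equivalences in $\CATC\left[\SETWinv\right]$. The two morphisms $\functor{U}_{\SETW,1}(f)$, $\functor{U}_{\SETW,1}(g)$ together with their composite are exactly the data to which the ``$2$-out-of-$3$'' property for internal equivalences (Lemma~\ref{lem-06}) applies in $\CATC\left[\SETWinv\right]$; so whichever two of $f$, $g$, $f\circ g$ lie in $\SETWsat$, Lemma~\ref{lem-06} yields that the third image is an internal equivalence, and Proposition~\ref{prop-02} returns the conclusion that the third morphism lies in $\SETWsat$.

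The argument is light once Proposition~\ref{prop-02} is available, so the genuine obstacle is contained entirely in that earlier result (proved in the Appendix); here one must only be careful to pass through the associator of $\functor{U}_{\SETW}$ rather than assuming strict functoriality, and to match the composition directions with those in Lemmas~\ref{lem-02} and~\ref{lem-06}.
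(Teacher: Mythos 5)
Your proposal is correct, and its two halves relate to the paper differently. Part (ii) is essentially the paper's own argument: Proposition~\ref{prop-02} to translate membership in $\SETWsat$ into internal equivalences, Lemma~\ref{lem-04} applied to the associator of $\functor{U}_{\SETW}$ for the pair $(f,g)$ to trade $\functor{U}_{\SETW,1}(f\circ g)$ for $\functor{U}_{\SETW,1}(f)\circ\functor{U}_{\SETW,1}(g)$, then Lemma~\ref{lem-06} and Proposition~\ref{prop-02} again. Part (i), however, takes a genuinely different route on the non-trivial inclusion $\SETWsatsat\subseteq\SETWsat$. The paper passes through the second bicategory of fractions: it applies Proposition~\ref{prop-02} to the pair $(\CATC,\SETWsat)$ (legitimate by Lemma~\ref{lem-05}) to see that $\functor{U}_{\SETWsat,1}(f)$ is an internal equivalence in $\CATC\left[\SETWsatinv\right]$, and then transports this along the pseudofunctor $\functor{H}$ and the invertible $2$-morphism $\tau_f$ supplied by Proposition~\ref{prop-01} to conclude that $\functor{U}_{\SETW,1}(f)$ is an internal equivalence in $\CATC\left[\SETWinv\right]$. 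You instead stay entirely inside $\CATC\left[\SETWinv\right]$ and iterate Lemma~\ref{lem-03}: since $\functor{U}_{\SETW}$ sends $\SETW$ to internal equivalences (Theorem~\ref{theo-01}), it sends $\SETWsat$ to internal equivalences, and a second application of Lemma~\ref{lem-03} with $\SETWsat$ as the ambient class --- valid because that lemma imposes no (BF) hypotheses on the class --- shows it sends $\SETWsatsat$ to internal equivalences; Proposition~\ref{prop-02} for the original pair $(\CATC,\SETW)$ then closes the argument. Your version is leaner: it avoids Proposition~\ref{prop-01} altogether, hence also the universal-property machinery (Theorem~\ref{theo-02}) underlying it, and it needs Proposition~\ref{prop-02} only for $(\CATC,\SETW)$, never for $(\CATC,\SETWsat)$; Lemma~\ref{lem-05} remains necessary only for the easy inclusion $\SETWsat\subseteq\SETWsatsat$ via Remark~\ref{rem-04}. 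What the paper's detour buys is mainly coherence with the surrounding material, since it exercises the comparison $\functor{H}:\CATC\left[\SETWsatinv\right]\rightarrow\CATC\left[\SETWinv\right]$ that the rest of the paper relies on; mathematically both arguments are complete.
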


In Lemma~\ref{lem-08}(ii) we proved that $\SETWequiv$ is right saturated; for that class,
(ii) above is simply the already stated Lemma~\ref{lem-06}, that we will use explicitly in the
proof below.

\begin{proof}
Using Lemma~\ref{lem-05} and Remark~\ref{rem-04}, we have that
$\SETWsat\subseteq\SETWsatsat$, so we need only to prove the other inclusion. So let us
fix any morphism $f:B\rightarrow A$ belonging to $\SETWsatsat$. By Lemma~\ref{lem-05}, the pair
$(\CATC,\SETWsat)$ satisfies conditions (BF),
so we can apply Proposition~\ref{prop-02} for such a pair. Then we get that the morphism

\[
\begin{tikzpicture}[xscale=2.4,yscale=-1.2]
    \node (A0_0) at (-0.4, 0) {$\functor{U}_{\SETWsat,1}(f)=\Big(B$};
    \node (A0_1) at (1, 0) {$B$};
    \node (A0_2) at (2, 0) {$A\Big)$};
    
    \path (A0_1) edge [->]node [auto,swap] {$\scriptstyle{\id_B}$} (A0_0);
    \path (A0_1) edge [->]node [auto] {$\scriptstyle{f}$} (A0_2);
\end{tikzpicture}
\]
is an internal equivalence in $\CATC\left[\SETWsatinv\right]$. Hence 
$\functor{H}_1\circ\functor{U}_{\SETWsat,1}(f)$ is an internal equivalence in $\CATC\left[
\SETWinv\right]$, where $\functor{H}$ is the pseudofunctor obtained in Proposition~\ref{prop-01}.
Using that proposition, we have an invertible $2$-morphism

\[\tau_{f}:\,\functor{U}_{\SETW,1}(f)\Longrightarrow\functor{H}_1\circ\functor{U}_{\SETWsat,1}(f)\]
in $\CATC\left[\SETWinv\right]$. Then by Lemma~\ref{lem-04} we conclude that also
$\functor{U}_{\SETW,1}(f)$ is an internal equivalence in $\CATC\left[\SETWinv\right]$. By
Proposition~\ref{prop-02}, this implies that $f$ belongs to $\SETWsat$, so (i) holds.\\

Now let us suppose that any $2$ of the $3$ morphisms $f,g$ and $f\circ g$ belong to $\SETWsat$.
Then by Proposition~\ref{prop-02}, $2$ of the $3$ morphisms $\functor{U}_{\SETW,1}(f)$, 
$\functor{U}_{\SETW,1}(g)$ and $\functor{U}_{\SETW,1}(f\circ g)$ are internal equivalences
in $\CATC\left[\SETWinv\right]$. Using Lemma~\ref{lem-04} on the associator of $\functor{U}_{\SETW}$
relative to the pair $(f,g)$, this implies that $2$ of the $3$ morphisms $\functor{U}_{\SETW,1}(f)$, 
$\functor{U}_{\SETW,1}(g)$ and $\functor{U}_{\SETW,1}(f)\circ\functor{U}_{\SETW,1}(g)$
are internal equivalences. By Lemma~\ref{lem-06}, all such $3$ morphisms are internal equivalences.\\

Again by Lemma~\ref{lem-04}, this implies that the $3$ morphisms
$\functor{U}_{\SETW,1}(f)$, $\functor{U}_{\SETW,1}(g)$ and $\functor{U}_{\SETW,1}(f\circ g)$ are
all internal equivalences. Again by Proposition~\ref{prop-02}, this gives the claim.
\end{proof}

\begin{rem}
In general, even if a pair $(\CATC,\SETW)$ satisfies axioms (BF),
$\SETW$ does not have to satisfy the ``$\,2$-out-of-$3$'' property. This is for
example the case when we consider the class $\WRedAtl$ of all ``refinements'' in the $2$-category
$\RedAtl$ of reduced orbifold atlases. We described such data in our paper~\cite{T7}; we refer
directly to it for all the relevant definitions. Again referring to that paper, the class
of all ``unit weak equivalences'' of reduced orbifold atlases
satisfies the ``$\,2$-out-of-$3$'' property, but it is not saturated. This proves that in general the
converse of Proposition~\ref{prop-03}(ii) does not hold, namely $\SETW$ can satisfy the
``$\,2$-out-of-$3$'' property without being right saturated.\\

An interesting known case when
the ``$\,2$-out-of-$3$'' property holds is the case when we consider the class $\WEGpd$ of all
Morita equivalences of \'etale differentiable 
groupoids. In this case, it was proved in~\cite[Lemma~8.1]{PS} that
$\WEGpd$ has the mentioned property. Actually, in the last part of this paper we will prove that
$\WEGpd$ is right saturated, thus giving another proof of~\cite[Lemma~8.1]{PS}.
\end{rem}

Now we are able to give the proof of the first main result of this paper.

\begin{proof}[Proof of Theorem~\ref{theo-04}.]
Let us define a pseudofunctor as follows:

\[\overline{\functor{F}}:=\functor{U}_{\SETWB}\circ\functor{F}:\,\CATA\longrightarrow\CATB\left[
\SETWBinv\right]\]
and let us denote by $\SETWequiv$ the class of internal equivalences of the bicategory
$\CATB\left[\SETWBinv\right]$. By Proposition~\ref{prop-02},

\[\functor{U}_{\SETWB}^{-1}(\SETWequiv)=\SETWBsat,\]
hence

\begin{equation}\label{eq-61}
\overline{\functor{F}}^{\,-1}(\SETWequiv)=\functor{F}^{\,-1}(\SETWBsat). 
\end{equation}

Now we use Theorem~\ref{theo-02} for $\CATB$ replaced by $\CATB\left[\SETWBinv\right]$ and
$\functor{F}$ replaced by $\overline{\functor{F}}$. So we have that (i), (iii) and (iv) are
equivalent.\\

Since $\SETWA\subseteq\SETWAsat$, then (ii) implies (i), so we need only to prove that (i) implies (ii).
So let us suppose that $\functor{F}_1(\SETWA)\subseteq\SETWBsat$; then by \eqref{eq-61},
$\overline{\functor{F}}$ sends each morphism of $\SETWA$ to an internal equivalence.
Therefore by Lemma~\ref{lem-03}, $\overline{\functor{F}}$ sends each morphism of $\SETWAsat$
to an internal equivalence. Again by \eqref{eq-61}, we conclude that $\functor{F}_1(\SETWAsat)
\subseteq\SETWBsat$.
\end{proof}

\section{The induced pseudofunctor $\widetilde{\functor{G}}$}
This section is mainly used to give the proof of Theorem~\ref{theo-03}.
The essential part of such a proof relies on the following proposition.

\begin{prop}\label{prop-06}
Let us fix any $2$ pairs $(\CATA,\SETWA)$ and $(\CATB,\SETWB)$, both satisfying conditions
\emphatic{(BF)} and any pseudofunctor $\functor{F}:\CATA\rightarrow\CATB$
such that $\functor{F}_1(\SETWA)\subseteq\SETWB$. Moreover, let us fix any set of choices
\emphatic{\hyperref[C]{C}}$(\SETWB)$ satisfying condition \emphatic{(\hyperref[C3]{C3})}.
Then there are a pseudofunctor

\[\functor{M}:\,\CATA\Big[\SETWAinv\Big]\longrightarrow\CATB\Big[\SETWBinv\Big]\]
\emphatic{(}where $\CATB[\SETWBinv]$ is the bicategory of fractions induced by choices
\emphatic{\hyperref[C]{C}}$(\SETWB)$\emphatic{)}    
and a pseudonatural equivalence $\zeta:\functor{U}_{\SETWB}\circ\functor{F}
\Rightarrow\functor{M}\circ\functor{U}_{\SETWA}$, such that:

\begin{enumerate}[\emphatic{(}I\emphatic{)}]
 \item the pseudofunctor $\mu_{\zeta}:\CATA\rightarrow\operatorname{Cyl}
  \left(\CATB\left[\SETWBinv\right]\right)$ associated to $\zeta$ sends each morphism
  of $\SETWA$ to an internal equivalence;
 
 \item for each object $A_{\CATA}$, we have $\functor{M}_0(A_{\CATA})=
  \functor{F}_0(A_{\CATA})$;

 \item for each morphism $(A'_{\CATA},\operatorname{w}_{\CATA},f_{\CATA}):A_{\CATA}\rightarrow
  B_{\CATA}$ in $\CATA\left[\SETWAinv\right]$, we have 
   
  \begin{equation}\label{eq-63}
  \functor{M}_1\Big(A'_{\CATA},\operatorname{w}_{\CATA}\Big)=\Big(\functor{F}_0(A'_{\CATA}),
  \functor{F}_1(\operatorname{w}_{\CATA})\circ\id_{\functor{F}_0(A'_{\CATA})},
  \functor{F}_1(f_{\CATA})\circ\id_{\functor{F}_0(A'_{\CATA})}\Big);
  \end{equation}

 \item for each $2$-morphism

  \begin{equation}\label{eq-75}
  \Big[A^3_{\CATA},\operatorname{v}^1_{\CATA},\operatorname{v}^2_{\CATA},\alpha_{\CATA},
  \beta_{\CATA}\Big]:\Big(A^1_{\CATA},\operatorname{w}^1_{\CATA},f^1_{\CATA}\Big)\Longrightarrow
  \Big(A^2_{\CATA},\operatorname{w}^2_{\CATA},f^2_{\CATA}\Big)
  \end{equation}
  in $\CATA\left[\SETWAinv\right]$, we have
  
  \begin{gather}
  \label{eq-68} \functor{M}_2\Big(\Big[A^3_{\CATA},\operatorname{v}^1_{\CATA},
   \operatorname{v}^2_{\CATA},\alpha_{\CATA},\beta_{\CATA}\Big]\Big)= 
   \Big[\functor{F}_0(A^3_{\CATA}),\functor{F}_1(\operatorname{v}^1_{\CATA}),
   \functor{F}_1(\operatorname{v}^1_{\CATA}), \\
  \nonumber \Big(\pi_{\functor{F}_1(\operatorname{w}^2_{\CATA})}^{-1}\ast i_{\functor{F}_1
   (\operatorname{v}^2_{\CATA})}\Big)\odot\psi^{\functor{F}}_{\operatorname{w}^2_{\CATA},
   \operatorname{v}^2_{\CATA}}\odot\functor{F}_2(\alpha_{\CATA})\odot\Big(
   \psi^{\functor{F}}_{\operatorname{w}^1_{\CATA},
   \operatorname{v}^1_{\CATA}}\Big)^{-1}\odot \\
  \nonumber \odot\Big(\pi_{\functor{F}_1(\operatorname{w}^1_{\CATA})}
   \ast i_{\functor{F}_1(\operatorname{v}^1_{\CATA})}\Big),
   \Big(\pi_{\functor{F}_1(f^2_{\CATA})}^{-1}\ast i_{\functor{F}_1
   (\operatorname{v}^2_{\CATA})}\Big)\odot\psi^{\functor{F}}_{f^2_{\CATA},
   \operatorname{v}^2_{\CATA}}\odot \\
  \nonumber \odot\,\functor{F}_2(\beta_{\CATA})\odot\Big(\psi^{\functor{F}}_{f^1_{\CATA},
   \operatorname{v}^1_{\CATA}}\Big)^{-1}\odot\Big(\pi_{\functor{F}_1(f^1_{\CATA})}
   \ast i_{\functor{F}_1(\operatorname{v}^1_{\CATA})}\Big)\Big]
  \end{gather}
  \emphatic{(}where the $2$-morphisms $\psi_{\bullet}^{\functor{F}}$ are the associators of
  $\functor{F}$ and the $2$-morphisms $\pi_{\bullet}$ are the right unitors of $\CATB$\emphatic{)}.
\end{enumerate}
\end{prop}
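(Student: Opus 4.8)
The plan is to build $\functor{M}$ and $\zeta$ by hand, directly from $\functor{F}$ and the choices $C(\SETWB)$, rather than extracting them abstractly from the universal property of $\functor{U}_{\SETWA}$. The motivating observation is that, since $\functor{F}_1(\SETWA)\subseteq\SETWB$ and since by Theorem~\ref{theo-01} the pseudofunctor $\functor{U}_{\SETWB}$ carries every morphism of $\SETWB$ to an internal equivalence, the composite $\overline{\functor{F}}:=\functor{U}_{\SETWB}\circ\functor{F}$ carries every morphism of $\SETWA$ to an internal equivalence of $\CATB[\SETWBinv]$; abstractly this already forces the existence of a pair $(\functor{M},\zeta)$ with property (I) through Theorem~\ref{theo-02}. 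However, that route uses the axiom of choice and yields no usable formula, so instead I would exploit the fact that the equivalence $\overline{\functor{F}}_1(\operatorname{w})=\functor{U}_{\SETWB,1}(\functor{F}_1(\operatorname{w}))=(\functor{F}_0(A'),\id,\functor{F}_1(\operatorname{w}))$ attached to $\operatorname{w}\in\SETWA$ has the \emph{canonical} quasi-inverse $(\functor{F}_0(A'),\functor{F}_1(\operatorname{w}),\id)$ (see the remark after \eqref{eq-70}). Since this requires no choice, the whole construction is explicit and choice-free.

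Concretely, I would set $\functor{M}_0:=\functor{F}_0$, which is (II), and define $\functor{M}_1$ on a morphism $(A',\operatorname{w},f)$ as the composite in $\CATB[\SETWBinv]$ of $\overline{\functor{F}}_1(f)=(\functor{F}_0(A'),\id,\functor{F}_1(f))$ with the canonical quasi-inverse $(\functor{F}_0(A'),\functor{F}_1(\operatorname{w}),\id)$ of $\overline{\functor{F}}_1(\operatorname{w})$. The fill-in needed for this composite is over the cospan at $\functor{F}_0(A')$ whose two legs are both identities, so by conditions (\hyperref[C1]{C1})/(\hyperref[C2]{C2}) it is trivial and the composite is exactly $(\functor{F}_0(A'),\functor{F}_1(\operatorname{w})\circ\id,\functor{F}_1(f)\circ\id)$, which is \eqref{eq-63}. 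For $\functor{M}_2$ I would then write down \eqref{eq-68}: the $2$-cell $\functor{F}_2(\alpha_{\CATA})$ (respectively $\functor{F}_2(\beta_{\CATA})$) has the wrong source and target to serve as the $\operatorname{w}$-component (respectively $f$-component) of a $2$-morphism between the images computed in (III), and one repairs this by conjugating with the associators $\psi^{\functor{F}}_{\bullet}$ of $\functor{F}$ (to pass between $\functor{F}_1(\operatorname{w}^i\circ\operatorname{v}^i)$ and $\functor{F}_1(\operatorname{w}^i)\circ\functor{F}_1(\operatorname{v}^i)$) and with the right unitors $\pi_{\bullet}$ of $\CATB$ (to absorb the factors $\circ\,\id$ coming from \eqref{eq-63}); a direct type-check shows the resulting $2$-cell has exactly the source and target demanded by the calculus of $2$-morphisms in $\CATB[\SETWBinv]$. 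Alongside these I would specify the associators $\psi^{\functor{M}}_{\bullet}$ and unitors $\sigma^{\functor{M}}_{\bullet}$ of $\functor{M}$ in terms of the associators $\Theta_{\bullet}^{\CATB,\SETWB}$ of $\CATB[\SETWBinv]$, the cells $\psi^{\functor{F}}_{\bullet}$, and the fill-in $2$-cells produced by $C(\SETWB)$.

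The rest of the proof is verification, which I would organize as follows: first that \eqref{eq-68} is independent of the chosen representative $[A^3_{\CATA},\operatorname{v}^1_{\CATA},\operatorname{v}^2_{\CATA},\alpha_{\CATA},\beta_{\CATA}]$ and respects the vertical composition of $2$-morphisms, so that $\functor{M}_2$ is a functor on each hom-category; then that $\functor{M}$ preserves horizontal composition up to $\psi^{\functor{M}}_{\bullet}$, and that $\psi^{\functor{M}}_{\bullet}$ and $\sigma^{\functor{M}}_{\bullet}$ obey the two coherence axioms of a pseudofunctor. Finally I would take $\zeta(A_{\CATA}):=\id_{\functor{F}_0(A_{\CATA})}$ on objects and build the pseudonaturality $2$-cells $\zeta(g)$ from the unitors $\sigma^{\functor{F}}_{\bullet}$ and $\pi_{\bullet}$ (these compare $(\functor{U}_{\SETWB}\circ\functor{F})_1(g)=\functor{U}_{\SETWB,1}(\functor{F}_1(g))$ with $\functor{M}_1(\functor{U}_{\SETWA,1}(g))=\functor{M}_1(A,\id_A,g)$, which by (III) differ only by a factor $\functor{F}_1(\id_A)$ versus $\id$ on the $\SETWB$-leg), checking that they assemble into a pseudonatural transformation; it is an equivalence since every $\zeta(A_{\CATA})$ is an identity and every $\zeta(g)$ is invertible. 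Property (I) is then immediate: for $\operatorname{w}\in\SETWA$ the two horizontal components of the cylinder $\mu_{\zeta}(\operatorname{w})$ are internal equivalences of $\CATB[\SETWBinv]$ (because $\functor{F}_1(\operatorname{w})\in\SETWB$ and because $\functor{M}$, being a pseudofunctor, preserves the internal equivalence $\functor{U}_{\SETWA,1}(\operatorname{w})$), while $\zeta(\operatorname{w})$ is invertible.

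I expect the genuine difficulty to lie in the middle block — functoriality of $\functor{M}_2$ and the two pseudofunctor coherence axioms — because horizontal and vertical composition, together with the equivalence relation on $2$-cells, in $\CATB[\SETWBinv]$ are governed by the intricate fractions calculus of \cite{T3}, so each of these checks unwinds into a lengthy diagram chase among the $\psi^{\functor{F}}_{\bullet}$, the unitors and associators of $\CATB$, the cells $\functor{F}_2(\alpha_{\CATA})$, $\functor{F}_2(\beta_{\CATA})$, and the fill-in $2$-cells of $C(\SETWB)$. This is exactly where the extra hypothesis (\hyperref[C3]{C3}) is used: it trivializes the family of fill-ins whose two legs are a common morphism of the form $\functor{F}_1(\operatorname{w})$ (with $\operatorname{w}\in\SETWA$, so $\functor{F}_1(\operatorname{w})\in\SETWB$), which is what keeps the structure $2$-cells of $\functor{M}$, and with them the closed forms \eqref{eq-63} and \eqref{eq-68}, as simple as stated; without (\hyperref[C3]{C3}) the same pseudofunctor could still be produced, but its description would be markedly more cumbersome.
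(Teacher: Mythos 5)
Your construction data coincide exactly with the paper's: the canonical quasi-inverse $\functor{P}(\operatorname{w}_{\CATA})=(\functor{F}_0(A'_{\CATA}),\functor{F}_1(\operatorname{w}_{\CATA}),\id_{\functor{F}_0(A'_{\CATA})})$ of $\overline{\functor{F}}_1(\operatorname{w}_{\CATA})$, the definition $\functor{M}_0=\functor{F}_0$, the evaluation of $\overline{\functor{F}}_1(f_{\CATA})\circ\functor{P}(\operatorname{w}_{\CATA})$ via (\hyperref[C1]{C1})/(\hyperref[C2]{C2}) giving \eqref{eq-63}, and the description of $\zeta$ (identity $1$-cells, $2$-cells built from $\sigma^{\functor{F}}_{\bullet}$ and the unitors, cf.\ Remark~\ref{rem-05}). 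The divergence is architectural, and that is where your proposal has a genuine gap. The paper never verifies the pseudofunctor axioms for $\functor{M}$: it runs the \emph{proof} of \cite[Theorem~21]{Pr} with the above data as input --- condition (\hyperref[C3]{C3}) is used precisely so that $\functor{P}(\operatorname{w}_{\CATA})\circ\overline{\functor{F}}_1(\operatorname{w}_{\CATA})$ is literally an identity, $\Delta(\operatorname{w}_{\CATA})$ can be taken to be a $2$-identity, and \cite[Proposition~20]{Pr} yields an adjoint equivalence --- so that well-definedness of $\functor{M}_2$ on classes, the existence of associators and unitors, all coherence axioms, and the pseudonatural equivalence $\zeta$ with property (I) are inherited from Pronk's theorem. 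The entire labor of the paper's proof is then the computation that Pronk's prescription for $\functor{M}_2$, a vertical composite $\Gamma^1\odot\cdots\odot\Gamma^{12}$ of twelve $2$-cells, collapses under these choices to the closed form \eqref{eq-68}.

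You invert this: you decree \eqref{eq-68} to be the \emph{definition} of $\functor{M}_2$ and defer to ``verification'' (a) independence of the chosen representative, (b) functoriality under vertical composition, (c) the actual construction of $\Psi^{\functor{M}}_{\bullet}$, $\Sigma^{\functor{M}}_{\bullet}$ together with both pseudofunctor coherence axioms, and (d) pseudonaturality of $\zeta$. In your setup none of these can be obtained by citation, since you explicitly decline to route the construction through Pronk's proof; and they are not routine --- they are essentially the whole content of the proposition. Nobody carries out these checks from scratch: even the paper's Corollary~\ref{cor-01}, whose formulas are simpler than yours, obtains its pseudofunctor structure by transporting that of $\functor{M}$ along the equivalence $\varphi$ rather than by direct verification, and the paper relegates the remaining (already transported) details to a separate appendix. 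Moreover, your stated reason for rejecting the universal-property route --- that it ``uses the axiom of choice and yields no usable formula'' --- is a misconception: that objection applies to invoking the \emph{statement} of Theorem~\ref{theo-02}, not to following its proof with prescribed input, and Remark~\ref{rem-03} points out that the paper's construction of $\functor{M}$ is exactly as choice-free as yours while producing \eqref{eq-63} and \eqref{eq-68} as output. So your formulas are right and the plan could in principle be completed, but as written the decisive middle block is missing, and the architecture you chose is the one that makes that block unavoidable.
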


\begin{proof}
In order to simplify a bit the exposition, \emph{we assume for the moment that all the unitors and
associators of $\CATA$ and $\CATB$ are trivial} (i.e.\ that $\CATA$ and $\CATB$ are $2$-categories).
Using (\hyperref[C1]{C1}) and (\hyperref[C2]{C2}), this implies
easily that also the unitors for $\CATA\left[\SETWAinv\right]$ and $\CATB\left[\SETWBinv\right]$
are trivial (even if in general the same is not true for the associators of such bicategories).
For simplicity, \emph{we assume also that $\functor{F}$ is a strict
pseudofunctor} (i.e.\ that it preserves compositions and identities). At the end of the proof
we will discuss briefly the general case.\\

We set $\overline{\functor{F}}:=\functor{U}_{\SETWB}\circ\functor{F}:\CATA\rightarrow\CATB\left[
\SETWBinv\right]$. Then
we follow the proof of~\cite[Theorem~21]{Pr} in order to give the explicit description of a
pair $(\functor{M},\zeta)$ induced by $\overline{\functor{F}}$ and that satisfies the claim.
According to the mentioned proof 
in~\cite{Pr}, first of all we need to fix some choices as follows.

\begin{enumerate}[(A)]
 \item We have to choose a structure of bicategory on $\CATA\left[\SETWAinv\right]$ and on $\CATB
  \left[\SETWBinv\right]$. For that, we fix \emph{any} set of choices \hyperref[C]{C}$(\SETWA)$;
  moreover, we fix any set of choices \hyperref[C]{C}$(\SETWB)$ satisfying condition
  (\hyperref[C3]{C3}) (and obviously satisfying also conditions
  (\hyperref[C1]{C1}) and (\hyperref[C2]{C2}) by definition of set of choices for $\SETW$,
  see \S~\ref{sec-03}).
 \item We need to fix some choices as in the proof of~\cite[Theorem~21]{Pr}. To be more precise,
  given any morphism $\operatorname{w}_{\CATA}:A'_{\CATA}\rightarrow A_{\CATA}$ in $\SETWA$, we
  need to \emph{choose} data as follows in $\CATB\left[\SETWBinv\right]$:

  \begin{itemize}
   \item a morphism $\functor{P}(\operatorname{w}_{\CATA}):\overline{\functor{F}}_0(A_{\CATA})
    \rightarrow\overline{\functor{F}}_0(A'_{\CATA})$,
   \item an invertible $2$-morphism $\Delta(\operatorname{w}_{\CATA}):\id_{\overline{\functor{F}}_0
    (A_{\CATA})}\Rightarrow\overline{\functor{F}}_1(\operatorname{w}_{\CATA})\circ\functor{P}
    (\operatorname{w}_{\CATA})$,
   \item an invertible $2$-morphism $\Xi(\operatorname{w}_{\CATA}):\functor{P}
    (\operatorname{w}_{\CATA})\circ\overline{\functor{F}}_1(\operatorname{w}_{\CATA})\Rightarrow
    \id_{\overline{\functor{F}}_0(A'_{\CATA})}$,
  \end{itemize}
  such that the quadruple $(\overline{\functor{F}}_1(\operatorname{w}_{\CATA}),\functor{P}
  (\operatorname{w}_{\CATA}),\Delta(\operatorname{w}_{\CATA}),\Xi(\operatorname{w}_{\CATA}))$ is an
  \emph{adjoint equivalence} in $\CATB\left[\SETWBinv\right]$ (see \S~\ref{sec-02}).
  In particular, we need to choose $\functor{P}(\operatorname{w}_{\CATA})$ so that it is a
  quasi-inverse for
  \[
  \begin{tikzpicture}[xscale=3.2,yscale=-1.2]
    \node (A0_0) at (-0.2, 0) {$\overline{\functor{F}}_1(\operatorname{w}_{\CATA})=
      \Big(\functor{F}_0(A'_{\CATA})$};
    \node (A0_1) at (1, 0) {$\functor{F}_0(A'_{\CATA})$};
    \node (A0_2) at (2, 0) {$\functor{F}_0(A_{\CATA})\Big)$.};
    
    \path (A0_1) edge [->]node [auto,swap] {$\scriptstyle{\id_{\functor{F}_0(A'_{\CATA})}}$} (A0_0);
    \path (A0_1) edge [->]node [auto]
      {$\scriptstyle{\functor{F}_1(\operatorname{w}_{\CATA})}$} (A0_2);
  \end{tikzpicture}
  \]
  Since we assumed that $\functor{F}_1(\SETWA)\subseteq\SETWB$, then we are allowed to choose
  
  \begin{equation}\label{eq-37}
  \begin{tikzpicture}[xscale=3.2,yscale=-1.2]
    \node (A0_0) at (-0.2, 0) {$\functor{P}(\operatorname{w}_{\CATA}):=
      \Big(\functor{F}_0(A_{\CATA})$};
    \node (A0_1) at (1, 0) {$\functor{F}_0(A'_{\CATA})$};
    \node (A0_2) at (2, 0) {$\functor{F}_0(A'_{\CATA})\Big)$,};
    \path (A0_1) edge [->]node [auto,swap]
      {$\scriptstyle{\functor{F}_1(\operatorname{w}_{\CATA})}$} (A0_0);
    \path (A0_1) edge [->]node [auto] {$\scriptstyle{\id_{\functor{F}_0(A'_{\CATA})}}$} (A0_2);
  \end{tikzpicture}
  \end{equation}
  and
  
  \begin{gather}
  \label{eq-39}\Xi(\operatorname{w}_{\CATA}):=\Big[\functor{F}_0(A'_{\CATA}),\functor{F}_1
   (\operatorname{w}_{\CATA}),\id_{\functor{F}_0(A'_{\CATA})},i_{\functor{F}_1
   (\operatorname{w}_{\CATA})},i_{\functor{F}_1(\operatorname{w}_{\CATA})}\Big]: \\
  \nonumber \Big(\functor{F}_0(A_{\CATA}),\id_{\functor{F}_0(A_{\CATA})},\id_{\functor{F}_0
   (A_{\CATA})}\Big)\Longrightarrow \\
  \nonumber \Longrightarrow\Big(\functor{F}_0(A'_{\CATA}),\functor{F}_1
   (\operatorname{w}_{\CATA}),\functor{F}_1(\operatorname{w}_{\CATA})\Big)=\overline{\functor{F}}_1
   (\operatorname{w}_{\CATA})\circ\functor{P}(\operatorname{w}_{\CATA}).
  \end{gather}
  Since \hyperref[C]{C}$(\SETWB)$ satisfies condition (\hyperref[C3]{C3}), then we get
  that $\functor{P}
  (\operatorname{w}_{\CATA})\circ\overline{\functor{F}}_1(\operatorname{w}_{\CATA})$ is the identity
  of $\functor{F}_0(A'_{\CATA})$ in $\CATB\left[\SETWB\right]$ so we can choose
  
  \begin{gather*}
  \Delta(\operatorname{w}_{\CATA}):=i_{\id_{\functor{F}_0(A'_{\CATA})}}=\Big[\functor{F}_0(A'_{\CATA}),
   \id_{\functor{F}_0(A'_{\CATA})},\id_{\functor{F}_0(A'_{\CATA})},
   i_{\id_{\functor{F}_0(A'_{\CATA})}},i_{\id_{\functor{F}_0(A'_{\CATA})}}\Big]: \\
  \functor{P}(\operatorname{w}_{\CATA})\circ\overline{\functor{F}}_1(\operatorname{w}_{\CATA})=
  \Big(\functor{F}_0(A'_{\CATA}),\id_{\functor{F}_0(A'_{\CATA})},\id_{\functor{F}_0(A'_{\CATA})}
   \Big)\Longrightarrow \\
  \Longrightarrow\Big(\functor{F}_0(A'_{\CATA}),\id_{\functor{F}_0(A'_{\CATA})},
   \id_{\functor{F}_0(A'_{\CATA})}\Big).
  \end{gather*}
  Using~\cite[Proposition~20]{Pr} and condition (\hyperref[C3]{C3}), we get that 
  the quadruple $(\overline{\functor{F}}_1(\operatorname{w}_{\CATA}),\functor{P}
  (\operatorname{w}_{\CATA}),\Delta(\operatorname{w}_{\CATA}),\Xi(\operatorname{w}_{\CATA}))$ is
  an adjoint equivalence as required.
\end{enumerate}

Now we follow the proof of~\cite[Theorem~21]{Pr} in order to define the pair
$(\functor{M},\zeta)$ induced by $\overline{\functor{F}}$ and by the
previous choices. For every object $A_{\CATA}$, we have to set $\functor{M}_0(A_{\CATA}):=
\overline{\functor{F}}_0(A_{\CATA})=\functor{F}_0(A_{\CATA})$. Given any $1$-morphism

\[\Big(A'_{\CATA},\operatorname{w}_{\CATA},f_{\CATA}\Big):A_{\CATA}\longrightarrow B_{\CATA}\]
in $\CATA\left[\SETWAinv\right]$, following~\cite[pag.~265]{Pr} we have to define

\begin{equation}\label{eq-81}
\functor{M}_1\Big(A'_{\CATA},\operatorname{w}_{\CATA},f_{\CATA}\Big):=
\overline{\functor{F}}_1(f_{\CATA})\circ\functor{P}(\operatorname{w}_{\CATA}).
\end{equation}

By definition of $\functor{U}_{\SETWB}$, we have:

\[
\begin{tikzpicture}[xscale=3.0,yscale=-1.2]
    \node (A0_0) at (-0.8, 0) {$\overline{\functor{F}}_1(f_{\CATA})=
      \functor{U}_{\SETWB,1}\circ\functor{F}_1(f_{\CATA})=\Big(\functor{F}_0(A'_{\CATA})$};
    \node (A0_1) at (1, 0) {$\functor{F}_0(A'_{\CATA})$};
    \node (A0_2) at (2, 0) {$\functor{F}_0(B_{\CATA})\Big)$.};
    \path (A0_1) edge [->]node [auto,swap]
      {$\scriptstyle{\id_{\functor{F}_0(A'_{\CATA})}}$} (A0_0);
    \path (A0_1) edge [->]node [auto] {$\scriptstyle{\functor{F}_1(f_{\CATA})}$} (A0_2);
\end{tikzpicture}
\]

Therefore, by condition (\hyperref[C2]{C2}) and~\cite[\S~2.2]{Pr}, we have

\[
\begin{tikzpicture}[xscale=3.2,yscale=-1.2]
    \node (A0_0) at (-0.7, 0) {$\functor{M}_1\Big(A'_{\CATA},
      \operatorname{w}_{\CATA},f_{\CATA}\Big)=\Big(\functor{F}_0(A_{\CATA})$};
    \node (A0_1) at (1, 0) {$\functor{F}_0(A'_{\CATA})$};
    \node (A0_2) at (2, 0) {$\functor{F}_0(B_{\CATA})\Big)$.};
    \path (A0_1) edge [->]node [auto,swap]
      {$\scriptstyle{\functor{F}_1(\operatorname{w}_{\CATA})}$} (A0_0);
    \path (A0_1) edge [->]node [auto] {$\scriptstyle{\functor{F}_1(f_{\CATA})}$} (A0_2);
\end{tikzpicture}
\]

Now let us fix any pair of morphisms $(A^m_{\CATA},\operatorname{w}^m_{\CATA},f^m_{\CATA}):A_{\CATA}
\rightarrow B_{\CATA}$ for $m=1,2$ in $\CATA\left[\SETWAinv\right]$ and any $2$-morphism in $\CATA
\left[\SETWAinv\right]$ as in \eqref{eq-75}.
Then we recall that the image of such a $2$-morphism via
$\functor{M}_2$ is obtained as the vertical composition of a long series of $2$-morphisms
of $\CATB\left[\SETWBinv\right]$ as listed in~\cite[pag.~266]{Pr}. In the case under
exam (with the already mentioned assumptions on $\CATA,\CATB$ and $\functor{F}$), we have:

\begin{equation}\label{eq-33}
\functor{M}_2\Big(\Big[A^3_{\CATA},\operatorname{v}^1_{\CATA},
\operatorname{v}^2_{\CATA},\alpha_{\CATA},\beta_{\CATA}\Big]\Big)=\Gamma^1\odot\cdots\odot
\Gamma^{12},
\end{equation}
where:

\begin{equation}\label{eq-02}
\begin{tikzpicture}[xscale=3.05,yscale=-1.5]
    \node (B0_0) at (0, 6) {$\functor{F}_0(A_{\CATA})$};
    \node (C0_0) at (4, 6) {$\functor{F}_0(B_{\CATA})$};
   
    \node at (2, 0.5) {$\Downarrow\,\Gamma^{12}:=i_{\overline{\functor{F}}_1(f^1_{\CATA})}
      \ast\Big(i_{\functor{P}(\operatorname{w}^1_{\CATA})}\ast\Xi(\operatorname{w}^1_{\CATA}
      \circ\operatorname{v}^1_{\CATA})\Big)$};
    \node at (2, 1.5) {$\Downarrow\,\Gamma^{11}:=i_{\overline{\functor{F}}_1(f^1_{\CATA})}
      \ast\Big(i_{\functor{P}(\operatorname{w}^1_{\CATA})}\ast\Theta^{-1}_{\overline{\functor{F}}_1
      (\operatorname{w}^1_{\CATA}),\overline{\functor{F}}_1(\operatorname{v}^1_{\CATA}),
      \functor{P}(\operatorname{w}^1_{\CATA}\circ\operatorname{v}^1_{\CATA})}\Big)$};
    \node at (2, 2.5) {$\Downarrow\,\Gamma^{10}:=i_{\overline{\functor{F}}_1(f^1_{\CATA})}\ast
      \Thetaa{\functor{P}(\operatorname{w}^1_{\CATA})}{\overline{\functor{F}}_1
      (\operatorname{w}^1_{\CATA})}{\overline{\functor{F}}_1(\operatorname{v}^1_{\CATA})\circ
      \functor{P}(\operatorname{w}^1_{\CATA}\circ\operatorname{v}^1_{\CATA})}$};
    \node at (2, 3.5) {$\Downarrow\,\Gamma^9:=i_{\overline{\functor{F}}_1(f^1_{\CATA})}\ast
      \Big(\Delta(\operatorname{w}^1_{\CATA})\ast i_{\overline{\functor{F}}_1
      (\operatorname{v}^1_{\CATA})\circ \functor{P}(\operatorname{w}^1_{\CATA}
      \circ\operatorname{v}^1_{\CATA})}\Big)$};
    \node at (2, 4.5) {$\Downarrow\,\Gamma^8:=\Thetaa{\overline{\functor{F}}_1(f^1_{\CATA})}
      {\overline{\functor{F}}_1(\operatorname{v}^1_{\CATA})}{\functor{P}(\operatorname{w}^1_{\CATA}
      \circ\operatorname{v}^1_{\CATA})}$};
    \node at (2, 5.5) {$\Downarrow\,\Gamma^7:=\overline{\functor{F}}_2(\beta_{\CATA})\ast
      i_{\functor{P}(\operatorname{w}^1_{\CATA}\circ\operatorname{v}^1_{\CATA})}$};
    \node at (2, 6.5) {$\Downarrow\,\Gamma^6:=\Big(i_{\overline{\functor{F}}_1(f^2_{\CATA})}
      \ast\Big(\left(\Delta(\operatorname{w}^2_{\CATA})\right)^{-1}\ast i_{\overline{\functor{F}}_1
      (\operatorname{v}^2_{\CATA})}\Big)\Big)\ast i_{\functor{P}(\operatorname{w}^1_{\CATA}
      \circ\operatorname{v}^1_{\CATA})}$};
    \node at (2, 7.5) {$\Downarrow\,\Gamma^5:=\Big(i_{\overline{\functor{F}}_1(f^2_{\CATA})}
      \ast\Thetab{\functor{P}(\operatorname{w}^2_{\CATA})}{\overline{\functor{F}}_1
      (\operatorname{w}^2_{\CATA})}{\overline{\functor{F}}_1(\operatorname{v}^2_{\CATA})}\Big)
      \ast i_{\functor{P}(\operatorname{w}^1_{\CATA}\circ\operatorname{v}^1_{\CATA})}$};
    \node at (2, 8.5) {$\Downarrow\,\Gamma^4:=\Thetaa{\overline{\functor{F}}_1(f^2_{\CATA})}
      {\functor{P}(\operatorname{w}^2_{\CATA})}{\overline{\functor{F}}_1(\operatorname{w}^2_{\CATA}
      \circ\operatorname{v}^2_{\CATA})}\ast i_{\functor{P}(\operatorname{w}^1_{\CATA}\circ
      \operatorname{v}^1_{\CATA})}$};
    \node at (2, 9.5) {$\Downarrow\,\Gamma^3:=\Big(i_{\overline{\functor{F}}_1(f^2_{\CATA})\circ
      \functor{P}(\operatorname{w}^2_{\CATA})}\ast\overline{\functor{F}}_2
      \left(\alpha_{\CATA}^{-1}\right)\Big)\ast i_{\functor{P}
      (\operatorname{w}^1_{\CATA}\circ\operatorname{v}^1_{\CATA})}$};
    \node at (2, 10.5) {$\Downarrow\,\Gamma^2:=\Thetab{\overline{\functor{F}}_1(f^2_{\CATA})
      \circ \functor{P}(\operatorname{w}^2_{\CATA})}{\overline{\functor{F}}_1
      (\operatorname{w}^1_{\CATA}\circ\operatorname{v}^1_{\CATA})}
      {\functor{P}(\operatorname{w}^1_{\CATA}\circ\operatorname{v}^1_{\CATA})}$};
    \node at (2, 11.5) {$\Downarrow\,\Gamma^1:=i_{\overline{\functor{F}}_1(f^2_{\CATA})\circ
      \functor{P}(\operatorname{w}^2_{\CATA})}\ast\left(\Xi(\operatorname{w}^1_{\CATA}
      \circ\operatorname{v}^1_{\CATA})\right)^{-1}$};
    
    \foreach \i in {0,...,12} {\draw[rounded corners,->] (B0_0) to (0.5,\i) to (3.5,\i) to (C0_0);}
    
    \node[color=black,draw=white,fill=white,shape=rectangle,rounded corners=0.5ex,text centered]
      at (2, 0) {$\scriptstyle{f^{12}:=\overline{\functor{F}}_1(f^1_{\CATA})\circ
      \functor{P}(\operatorname{w}^1_{\CATA})}$};
    \node[color=black,draw=white,fill=white,shape=rectangle,rounded corners=0.5ex,text centered]
      at (2, 1) {$\scriptstyle{f^{11}:=\overline{\functor{F}}_1(f^1_{\CATA})\circ\Big(
      \functor{P}(\operatorname{w}^1_{\CATA})\circ\Big(\overline{\functor{F}}_1
      (\operatorname{w}^1_{\CATA}\circ\operatorname{v}^1_{\CATA})\circ
      \functor{P}(\operatorname{w}^1_{\CATA}\circ\operatorname{v}^1_{\CATA})\Big)\Big)}$};
    \node[color=black,draw=white,fill=white,shape=rectangle,rounded corners=0.5ex,text centered]
      at (2, 2) {$\scriptstyle{f^{10}:=\overline{\functor{F}}_1(f^1_{\CATA})\circ\Big(
      \functor{P}(\operatorname{w}^1_{\CATA})\circ\Big(\overline{\functor{F}}_1
      (\operatorname{w}^1_{\CATA})\circ\Big(\overline{\functor{F}}_1(\operatorname{v}^1_{\CATA})
      \circ \functor{P}(\operatorname{w}^1_{\CATA}\circ\operatorname{v}^1_{\CATA})\Big)\Big)\Big)}$};
    \node[color=black,draw=white,fill=white,shape=rectangle,rounded corners=0.5ex,text centered]
      at (2, 3) {$\scriptstyle{f^9:=\overline{\functor{F}}_1(f^1_{\CATA})\circ\Big(\Big(
      \functor{P}(\operatorname{w}^1_{\CATA})\circ\overline{\functor{F}}_1
      (\operatorname{w}^1_{\CATA})\Big)\circ\Big(\overline{\functor{F}}_1(\operatorname{v}^1_{\CATA})
      \circ \functor{P}(\operatorname{w}^1_{\CATA}\circ\operatorname{v}^1_{\CATA})\Big)\Big)}$};
    \node[color=black,draw=white,fill=white,shape=rectangle,rounded corners=0.5ex,text centered]
      at (2, 4) {$\scriptstyle{f^8:=\overline{\functor{F}}_1(f^1_{\CATA})\circ\Big(
      \overline{\functor{F}}_1(\operatorname{v}^1_{\CATA})\circ
      \functor{P}(\operatorname{w}^1_{\CATA}\circ\operatorname{v}^1_{\CATA})\Big)}$};
    \node[color=black,draw=white,fill=white,shape=rectangle,rounded corners=0.5ex,text centered]
      at (2, 5) {$\scriptstyle{f^7:=\overline{\functor{F}}_1(f^1_{\CATA}\circ
      \operatorname{v}^1_{\CATA})\circ\functor{P}(\operatorname{w}^1_{\CATA}\circ
      \operatorname{v}^1_{\CATA})}$};
    \node[color=black,draw=white,fill=white,shape=rectangle,rounded corners=0.5ex,text centered]
      at (2, 6) {$\scriptstyle{f^6:=\overline{\functor{F}}_1(f^2_{\CATA}\circ
      \operatorname{v}^2_{\CATA})\circ\functor{P}(\operatorname{w}^1_{\CATA}\circ
      \operatorname{v}^1_{\CATA})}$};
    \node[color=black,draw=white,fill=white,shape=rectangle,rounded corners=0.5ex,text centered]
      at (2, 7) {$\scriptstyle{f^5:=\Big(\overline{\functor{F}}_1(f^2_{\CATA})\circ\Big(\Big(
      \functor{P}(\operatorname{w}^2_{\CATA})\circ\overline{\functor{F}}_1
      (\operatorname{w}^2_{\CATA})\Big)\circ\overline{\functor{F}}_1(\operatorname{v}^2_{\CATA})\Big)
      \Big)\circ\functor{P}(\operatorname{w}^1_{\CATA}\circ\operatorname{v}^1_{\CATA})}$};
    \node[color=black,draw=white,fill=white,shape=rectangle,rounded corners=0.5ex,text centered]
      at (2, 8) {$\scriptstyle{f^4:=\Big(\overline{\functor{F}}_1(f^2_{\CATA})\circ\Big(
      \functor{P}(\operatorname{w}^2_{\CATA})\circ\overline{\functor{F}}_1
      (\operatorname{w}^2_{\CATA}\circ\operatorname{v}^2_{\CATA})\Big)\Big)\circ
      \functor{P}(\operatorname{w}^1_{\CATA}\circ\operatorname{v}^1_{\CATA})}$};
    \node[color=black,draw=white,fill=white,shape=rectangle,rounded corners=0.5ex,text centered]
      at (2, 9) {$\scriptstyle{f^3:=\Big(\Big(\overline{\functor{F}}_1(f^2_{\CATA})\circ
      \functor{P}(\operatorname{w}^2_{\CATA})\Big)\circ\overline{\functor{F}}_1
      (\operatorname{w}^2_{\CATA}\circ\operatorname{v}^2_{\CATA})\Big)\circ
      \functor{P}(\operatorname{w}^1_{\CATA}\circ\operatorname{v}^1_{\CATA})}$};
    \node[color=black,draw=white,fill=white,shape=rectangle,rounded corners=0.5ex,text centered]
      at (2, 10) {$\scriptstyle{f^2:=\Big(\Big(\overline{\functor{F}}_1(f^2_{\CATA})\circ
      \functor{P}(\operatorname{w}^2_{\CATA})\Big)\circ\overline{\functor{F}}_1
      (\operatorname{w}^1_{\CATA}\circ\operatorname{v}^1_{\CATA})\Big)\circ
      \functor{P}(\operatorname{w}^1_{\CATA}\circ\operatorname{v}^1_{\CATA})}$};
    \node[color=black,draw=white,fill=white,shape=rectangle,rounded corners=0.5ex,text centered]
      at (2, 11) {$\scriptstyle{f^1:=\Big(\overline{\functor{F}}_1(f^2_{\CATA})\circ
      \functor{P}(\operatorname{w}^2_{\CATA})\Big)\circ\Big(\overline{\functor{F}}_1
      (\operatorname{w}^1_{\CATA}\circ\operatorname{v}^1_{\CATA})\circ
      \functor{P}(\operatorname{w}^1_{\CATA}\circ\operatorname{v}^1_{\CATA})\Big)}$};
    \node[color=black,draw=white,fill=white,shape=rectangle,rounded corners=0.5ex,text centered]
      at (2, 12) {$\scriptstyle{f^0:=\overline{\functor{F}}_1(f^2_{\CATA})\circ
      \functor{P}(\operatorname{w}^2_{\CATA})}$};
\end{tikzpicture}
\end{equation}
(here and in the following lines, for simplicity we denote by $\Theta_{\bullet}$ the associators
$\Theta_{\bullet}^{\CATB,\SETWB}$ for $\CATB\left[\SETWBinv\right]$).
In the next pages we are going to compute all the morphisms and $2$-morphisms of \eqref{eq-02}.
In order to do that, for each $m=1,2$ let us consider the following pair of morphisms

\[
\begin{tikzpicture}[xscale=3.7,yscale=-1.2]
    \node (A1_0) at (0, 2) {$\functor{F}_0(A^3_{\CATA})$};
    \node (A1_2) at (2, 2) {$\functor{F}_0(A^m_{\CATA})$};
    \node (A2_1) at (1, 2) {$\functor{F}_0(A_{\CATA})$};
    \path (A1_2) edge [->]node [auto,swap]
      {$\scriptstyle{\functor{F}_1(\operatorname{w}^m_{\CATA})}$} (A2_1);
    \path (A1_0) edge [->]node [auto] {$\scriptstyle{\functor{F}_1
      (\operatorname{w}^m_{\CATA})\circ\functor{F}_1(\operatorname{v}^m_{\CATA})}$} (A2_1);
\end{tikzpicture}
\]
and let us suppose that the fixed choice \hyperref[C]{C}$(\SETWB)$ for such a pair is given by
the data in the upper part of the following diagram

\begin{equation}\label{eq-52}
\begin{tikzpicture}[xscale=3.9,yscale=-0.8]
    \node (A0_1) at (1, 0) {$A^m_{\CATB}$};
    \node (A1_2) at (2, 2) {$\functor{F}_0(A^m_{\CATA})$,};
    \node (A2_1) at (1, 2) {$\functor{F}_0(A_{\CATA})$};
    \node (A1_0) at (0, 2) {$\functor{F}_0(A^3_{\CATA})$};
    
    \node (B1_1) at (2.5, 0.6) {$m=1,2$};
    \node (A1_1) at (1, 0.95) {$\sigma^m_{\CATB}$};
    \node (B1_1) at (1, 1.4) {$\Rightarrow$};
    
    \path (A1_2) edge [->]node [auto]
      {$\scriptstyle{\functor{F}_1(\operatorname{w}^m_{\CATA})}$} (A2_1);
    \path (A0_1) edge [->]node [auto] {$\scriptstyle{\operatorname{z}^m_{\CATB}}$} (A1_2);
    \path (A1_0) edge [->]node [auto,swap] {$\scriptstyle{\functor{F}_1
      (\operatorname{w}^m_{\CATA})\circ\functor{F}_1(\operatorname{v}^m_{\CATA})}$} (A2_1);
    \path (A0_1) edge [->]node [auto,swap] {$\scriptstyle{\operatorname{u}^m_{\CATB}}$} (A1_0);
\end{tikzpicture}
\end{equation} 
with $\operatorname{u}^m_{\CATB}$ in $\SETWB$ and $\sigma^m_{\CATB}$ invertible (we recall
that $\functor{F}_1(\SETWA)\subseteq\SETWB$ by hypothesis, so it makes sense to consider the
choice \hyperref[C]{C}$(\SETWB)$ for the pair above). Moreover, let us 
suppose that the fixed choice \hyperref[C]{C}$(\SETWB)$ for the pair

\[
\begin{tikzpicture}[xscale=3.7,yscale=-1.2]
    \node (A1_0) at (0, 2) {$\functor{F}_0(A^3_{\CATA})$};
    \node (A1_2) at (2, 2) {$\functor{F}_0(A^2_{\CATA})$};
    \node (A2_1) at (1, 2) {$\functor{F}_0(A_{\CATA})$};
    \path (A1_2) edge [->]node [auto,swap]
      {$\scriptstyle{\functor{F}_1(\operatorname{w}^2_{\CATA})}$} (A2_1);
    \path (A1_0) edge [->]node [auto] {$\scriptstyle{\functor{F}_1
      (\operatorname{w}^1_{\CATA})\circ\functor{F}_1(\operatorname{v}^1_{\CATA})}$} (A2_1);
\end{tikzpicture}
\]
is given by the data in the upper part of the following diagram

\begin{equation}\label{eq-51}
\begin{tikzpicture}[xscale=3.7,yscale=-0.8]
    \node (A0_1) at (1, 0) {$A^3_{\CATB}$};
    \node (A1_0) at (0, 2) {$\functor{F}_0(A^3_{\CATA})$};
    \node (A1_2) at (2, 2) {$\functor{F}_0(A^2_{\CATA})$,};
    \node (A2_1) at (1, 2) {$\functor{F}_0(A_{\CATA})$};
    
    \node (A1_1) at (1, 0.95) {$\sigma^3_{\CATB}$};
    \node (B1_1) at (1, 1.4) {$\Rightarrow$};
    
    \path (A1_2) edge [->]node [auto]
      {$\scriptstyle{\functor{F}_1(\operatorname{w}^2_{\CATA})}$} (A2_1);
    \path (A0_1) edge [->]node [auto] {$\scriptstyle{\operatorname{z}^3_{\CATB}}$} (A1_2);
    \path (A1_0) edge [->]node [auto,swap] {$\scriptstyle{\functor{F}_1
      (\operatorname{w}^1_{\CATA})\circ\functor{F}_1(\operatorname{v}^1_{\CATA})}$} (A2_1);
    \path (A0_1) edge [->]node [auto,swap]
      {$\scriptstyle{\operatorname{u}^3_{\CATB}}$} (A1_0);
\end{tikzpicture}
\end{equation}
with $\operatorname{u}^3_{\CATB}$ in $\SETWB$ and $\sigma^3_{\CATB}$ invertible. According to
the definition of composition of $1$-morphism in a bicategory of fractions (see~\cite[pag.~256]{Pr}),
this set of choices completely determines the morphisms $f^0,\cdots,f^{12}$ as follows:

\begin{gather*}
f^{12}=\Big(\functor{F}_0(A^1_{\CATA}),\functor{F}_1(\operatorname{w}^1_{\CATA}),\functor{F}_1
 (f^1_{\CATA})\Big),
\end{gather*}

\begin{gather*}
f^{11}=\Big(\functor{F}_0(A^1_{\CATA}),\id_{\functor{F}_0(A^1_{\CATA})},\functor{F}_1(f^1_{\CATA})
 \Big)\circ \\
\circ\Big[\Big(\functor{F}_0(A^1_{\CATA}),\functor{F}_1(\operatorname{w}^1_{\CATA}),
 \id_{\functor{F}_0(A^1_{\CATA})}\Big)\circ\Big(\functor{F}_0(A^3_{\CATA}),\functor{F}_1
 (\operatorname{w}^1_{\CATA}\circ\operatorname{v}^1_{\CATA}),\functor{F}_1(\operatorname{w}^1_{\CATA}
 \circ\operatorname{v}^1_{\CATA})\Big)\Big]\stackrel{\eqref{eq-52},m=1}{=} \\
\stackrel{\eqref{eq-52},m=1}{=}\Big(\functor{F}_0(A^1_{\CATA}),\id_{\functor{F}_0(A^1_{\CATA})},
 \functor{F}_1(f^1_{\CATA})\Big)\circ\Big(A^1_{\CATB},\functor{F}_1(\operatorname{w}^1_{\CATA}\circ
 \operatorname{v}^1_{\CATA})\circ\operatorname{u}^1_{\CATB},\operatorname{z}^1_{\CATB}\Big)= \\
=\Big(A^1_{\CATB},\functor{F}_1(\operatorname{w}^1_{\CATA}\circ\operatorname{v}^1_{\CATA})\circ
 \operatorname{u}^1_{\CATB},\functor{F}_1(f^1_{\CATA})\circ\operatorname{z}^1_{\CATB}\Big)=f^{10},
\end{gather*}

\begin{gather*}
f^9\stackrel{(\operatorname{C}3)}{=}\Big(\functor{F}_0(A^1_{\CATA}),\id_{\functor{F}_0
 (A^1_{\CATA})},\functor{F}_1(f^1_{\CATA})\Big)\circ\Big(\functor{F}_0(A^3_{\CATA}),\functor{F}_1
 (\operatorname{w}^1_{\CATA}\circ\operatorname{v}^1_{\CATA}),\functor{F}_1
 (\operatorname{v}^1_{\CATA})\Big)= \\
=\Big(\functor{F}_0(A^3_{\CATA}),\functor{F}_1(\operatorname{w}^1_{\CATA}\circ
 \operatorname{v}^1_{\CATA}),\functor{F}_1(f^1_{\CATA}\circ\operatorname{v}^1_{\CATA})\Big)=f^8=f^7,
\end{gather*}

\begin{gather*}
f^6=\Big(\functor{F}_0(A^3_{\CATA}),\functor{F}_1
 (\operatorname{w}^1_{\CATA}\circ\operatorname{v}^1_{\CATA}),\functor{F}_1(f^2_{\CATA}\circ
 \operatorname{v}^2_{\CATA})\Big)\stackrel{(\operatorname{C}3)}{=}f^5,
\end{gather*}

\begin{gather*}
f^4\stackrel{\eqref{eq-52},m=2}{=}\Big[\Big(\functor{F}_0(A^2_{\CATA}),\id_{\functor{F}_0
 (A^2_{\CATA})},\functor{F}_1(f^2_{\CATA})\Big)\circ\Big(A^2_{\CATB},\operatorname{u}^2_{\CATB},
 \operatorname{z}^2_{\CATB}\Big)\Big]\circ \\
\circ\Big(\functor{F}_0(A^3_{\CATA}),\functor{F}_1(\operatorname{w}^1_{\CATA}\circ
 \operatorname{v}^1_{\CATA}),\id_{\functor{F}_0(A^3_{\CATA})}\Big)= \\
=\Big(A^2_{\CATB},\functor{F}_1(\operatorname{w}^1_{\CATA}\circ\operatorname{v}^1_{\CATA})\circ
 \operatorname{u}^2_{\CATB},\functor{F}_1(f^2_{\CATA})\circ\operatorname{z}^2_{\CATB}\Big)=f^3,
\end{gather*}

\[f^2\stackrel{\eqref{eq-51}}{=}\Big(A^3_{\CATB},\functor{F}_1(\operatorname{w}^1_{\CATA}\circ
\operatorname{v}^1_{\CATA})\circ\operatorname{u}^3_{\CATB},\functor{F}_1(f^2_{\CATA})\circ
\operatorname{z}^3_{\CATB}\Big)\stackrel{\eqref{eq-51}}{=}f^1,\]

\[f^0=\Big(\functor{F}_0(A^2_{\CATA}),\functor{F}_1(\operatorname{w}^2_{\CATA}),\functor{F}_1
(f^2_{\CATA})\Big).\]

Then we need to compute all the $2$-morphisms $\Gamma^1,\cdots,\Gamma^{12}$. In order to do that,
we will use all the descriptions of associators, vertical and horizontal compositions that we gave
in~\cite{T3}, applied to the case when the pair $(\CATC,\SETW)$ is given by $(\CATB,\SETWB)$.\\

First of all, we want to compute the associators $\Theta_{\bullet}$ appearing in $\Gamma^{11}$,
$\Gamma^8$, $\Gamma^4$ and $\Gamma^2$ (for the associators appearing in $\Gamma^{10}$ and
$\Gamma^5$, see below). For each of them we can apply~\cite[Corollary~2.2 and Remark~2.3]{T3}, so we
get that all such associators are simply $2$-identities. This implies at once that

\begin{equation}\label{eq-03}
\Gamma^{11}=i_{f^{11}}=i_{f^{10}},\quad\Gamma^8=i_{f^8}=i_{f^7},\quad\Gamma^4=i_{f^4}=i_{f^3}\quad
\textrm{and}\quad\Gamma^2=i_{f^2}=i_{f^1}.
\end{equation}

Moreover, by construction the $2$-morphisms $\Delta(\operatorname{w}^1_{\CATA})$ and $\Delta
(\operatorname{w}^2_{\CATA})$ are $2$-identities, hence

\begin{equation}\label{eq-20}
\Gamma^9=i_{f^9}=i_{f^8}\quad\textrm{and}\quad\Gamma^6=i_{f^6}=i_{f^5},
\end{equation}
so we will simply omit all the $2$-morphisms of \eqref{eq-03} and \eqref{eq-20} in the following
lines. Now let us compute $\Gamma^{12}$. In order to do that, the first step is to compute the
$2$-morphism

\begin{equation}\label{eq-08}
i_{\functor{P}(\operatorname{w}^1_{\CATA})}\ast\Xi(\operatorname{w}^1_{\CATA}\circ
\operatorname{v}^1_{\CATA}):\,\functor{P}(\operatorname{w}^1_{\CATA})\Longrightarrow
\functor{P}(\operatorname{w}^1_{\CATA})\circ\left(
\overline{\functor{F}}_1(\operatorname{w}^1_{\CATA}\circ\operatorname{v}^1_{\CATA})\circ\functor{P}
(\operatorname{w}^1_{\CATA}\circ\operatorname{v}^1_{\CATA})\right).
\end{equation}
Using \eqref{eq-37} we have

\[
\begin{tikzpicture}[xscale=2.8,yscale=-1.2]
    \node (A0_0) at (-0.4, 0) {$\functor{P}(\operatorname{w}^1_{\CATA})=
      \Big(\functor{F}_0(A_{\CATA})$};
    \node (A0_1) at (1.05, 0) {$\functor{F}_0(A^1_{\CATA})$};
    \node (A0_2) at (2.2, 0) {$\functor{F}_0(A^1_{\CATA})\Big)$;};
    
    \path (A0_1) edge [->]node [auto,swap] {$\scriptstyle{\functor{F}_1
      (\operatorname{w}^1_{\CATA})}$} (A0_0);
    \path (A0_1) edge [->]node [auto] {$\scriptstyle{\id_{\functor{F}_0(A^1_{\CATA})}}$} (A0_2);
\end{tikzpicture}
\]
moreover by \eqref{eq-39}, $\Xi(\operatorname{w}^1_{\CATA}\circ\operatorname{v}^1_{\CATA})$ is
represented by the following diagram:

\begin{equation}\label{eq-28}
\begin{tikzpicture}[xscale=2.4,yscale=-1]
    \node (A0_2) at (2, 0) {$\functor{F}_0(A_{\CATA})$};
    \node (A2_0) at (0, 2) {$\functor{F}_0(A_{\CATA})$};
    \node (A2_2) at (2, 2) {$\functor{F}_0(A^3_{\CATA})$};
    \node (A2_4) at (4, 2) {$\functor{F}_0(A_{\CATA})$.};
    \node (A4_2) at (2, 4) {$\functor{F}_0(A^3_{\CATA})$};

    \node (A2_1) at (1, 2) {$\Downarrow\,i_{\functor{F}_1(\operatorname{w}^1_{\CATA}
      \circ\operatorname{v}^1_{\CATA})}$};
    \node (A2_3) at (3, 2) {$\Downarrow\,i_{\functor{F}_1(\operatorname{w}^1_{\CATA}
      \circ\operatorname{v}^1_{\CATA})}$};
    
    \path (A0_2) edge [->]node [auto,swap] {$\scriptstyle{\id_{\functor{F}_0(A_{\CATA})}}$} (A2_0);
    \path (A0_2) edge [->]node [auto] {$\scriptstyle{\id_{\functor{F}_0(A_{\CATA})}}$} (A2_4);
    \path (A4_2) edge [->]node [auto,swap] {$\scriptstyle{\functor{F}_1
      (\operatorname{w}^1_{\CATA}\circ\operatorname{v}^1_{\CATA})}$} (A2_4);
    \path (A2_2) edge [->]node [auto,swap] {$\scriptstyle{\functor{F}_1
      (\operatorname{w}^1_{\CATA}\circ\operatorname{v}^1_{\CATA})}$} (A0_2);
    \path (A2_2) edge [->]node [auto] {$\scriptstyle{\id_{\functor{F}_0(A^3_{\CATA})}}$} (A4_2);
    \path (A4_2) edge [->]node [auto] {$\scriptstyle{\functor{F}_1
      (\operatorname{w}^1_{\CATA}\circ\operatorname{v}^1_{\CATA})}$} (A2_0);
\end{tikzpicture}
\end{equation}

Therefore, using (\hyperref[C1]{C1}) and diagram \eqref{eq-52} for $m=1$, we get that \eqref{eq-08}
is defined between the following morphisms:

\[\Big(\functor{F}_0(A^1_{\CATA}),\functor{F}_1
(\operatorname{w}^1_{\CATA}),\id_{\functor{F}_0(A^1_{\CATA})}\Big)\Longrightarrow\Big(
A^1_{\CATB},\functor{F}_1(\operatorname{w}^1_{\CATA}\circ\operatorname{v}^1_{\CATA})\circ
\operatorname{u}^1_{\CATB},\operatorname{z}^1_{\CATB}\Big).\]

In order to compute \eqref{eq-08} we are going to use~\cite[Proposition~0.4]{T3} with

\begin{gather*}
\Gamma:=\Delta(\operatorname{w}^1_{\CATA}\circ\operatorname{v}^1_{\CATA}),\quad\quad\underline{f}^1:=
 \Big(\functor{F}_0(A_{\CATA}),\id_{\functor{F}_0(A_{\CATA})},\id_{\functor{F}_0(A_{\CATA})}\Big), \\
\underline{f}^2:=\Big(\functor{F}_0(A^3_{\CATA}),\functor{F}_1(\operatorname{w}^1_{\CATA}\circ
 \operatorname{v}^1_{\CATA}),\functor{F}_1(\operatorname{w}^1_{\CATA}\circ
 \operatorname{v}^1_{\CATA})\Big).
\end{gather*}

In this case, the $2$-commutative diagrams of~\cite[Proposition~0.4(0.14)]{T3} are given by

\[
\begin{tikzpicture}[xscale=3.9,yscale=-0.8]
    \node (A0_1) at (0, 2) {$A^1=\functor{F}_0(A_{\CATA})$};
    \node (A1_0) at (1, 0) {$A^{\prime 1}:=\functor{F}_0(A^1_{\CATA})$};
    \node (A1_2) at (1, 2) {$B=\functor{F}_0(A_{\CATA})$};
    \node (A2_1) at (2, 2) {$B'=\functor{F}_0(A^1_{\CATA})$};

    \node (A1_1) at (1, 0.95) {$\rho^1:=i_{\functor{F}_1(\operatorname{w}^1_{\CATA})}$};
    \node (B1_1) at (1, 1.4) {$\Rightarrow$};

    \path (A1_0) edge [->]node [auto,swap] {$\scriptstyle{\operatorname{u}^{\prime 1}:=
      \functor{F}_1(\operatorname{w}^1_{\CATA})}$} (A0_1);
    \path (A1_0) edge [->]node [auto] {$\scriptstyle{f^{\prime 1}:=
      \id_{\functor{F}_0(A^1_{\CATA})}}$} (A2_1);
    \path (A0_1) edge [->]node [auto,swap] {$\scriptstyle{f^1=
      \id_{\functor{F}_0(A_{\CATA})}}$} (A1_2);
    \path (A2_1) edge [->]node [auto] {$\scriptstyle{\operatorname{u}=
      \functor{F}_1(\operatorname{w}^1_{\CATA})}$} (A1_2);
\end{tikzpicture}
\]
(since choices \hyperref[C]{C}$(\SETWB)$ must satisfy condition (\hyperref[C1]{C1})) and by

\[
\begin{tikzpicture}[xscale=4.3,yscale=-0.8]
    \node (A0_1) at (0, 2) {$A^2=\functor{F}_0(A^3_{\CATA})$};
    \node (A1_0) at (1, 0) {$A^{\prime 2}:=A^1_{\CATB}$};
    \node (A1_2) at (1, 2) {$B=\functor{F}_0(A_{\CATA})$};
    \node (A2_1) at (2, 2) {$B'=\functor{F}_0(A^1_{\CATA})$};

    \node (A1_1) at (1, 0.95) {$\rho^2:=\sigma^1_{\CATB}$};
    \node (B1_1) at (1, 1.4) {$\Rightarrow$};

    \path (A1_0) edge [->]node [auto,swap] {$\scriptstyle{\operatorname{u}^{\prime 2}:=
      \operatorname{u}^1_{\CATB}}$} (A0_1);
    \path (A1_0) edge [->]node [auto] {$\scriptstyle{f^{\prime 2}:=
      \operatorname{z}^1_{\CATB}}$} (A2_1);
    \path (A0_1) edge [->]node [auto,swap] {$\scriptstyle{f^2=\functor{F}_1
      (\operatorname{w}^1_{\CATA}\circ\operatorname{v}^1_{\CATA})}$} (A1_2);
    \path (A2_1) edge [->]node [auto] {$\scriptstyle{\operatorname{u}=
      \functor{F}_1(\operatorname{w}^1_{\CATA})}$} (A1_2);
\end{tikzpicture}
\]
(since in \eqref{eq-52} for $m=1$ we assumed that this was the fixed choice
\hyperref[C]{C}$(\SETWB)$ for the pair $(\functor{F}_1(\operatorname{w}^1_{\CATA}\circ
\operatorname{v}^1_{\CATA}),\functor{F}_1(\operatorname{w}^1_{\CATA}))$). Then we need to fix a set
of choices as in (F8) -- (F10) in~\cite[Proposition~0.4]{T3}:

\begin{description}
 \item[(F8)] for $m=1$, we choose the data in the upper part of the following diagram:
 
  \[
  \begin{tikzpicture}[xscale=-4.5,yscale=-0.8]
    \node (A0_1) at (0, 2) {$A^{\prime 1}=\functor{F}_0(A^1_{\CATA})$;};
    \node (A1_0) at (1, 0) {$A^{\prime\prime 1}:=\functor{F}_0(A^3_{\CATA})$};
    \node (A1_2) at (1, 2) {$A^1=\functor{F}_0(A_{\CATA})$};
    \node (A2_1) at (2, 2) {$A^3=\functor{F}_0(A^3_{\CATA})$};

    \node (A1_1) at (1, 0.95) {$\eta^1:=i_{\functor{F}_1(\operatorname{w}^1_{\CATA}
      \circ\operatorname{v}^1_{\CATA})}$};
    \node (B1_1) at (1, 1.45) {$\Rightarrow$};

    \path (A1_0) edge [->]node [auto] {$\scriptstyle{\operatorname{v}^{\prime 1}:=
      \functor{F}_1(\operatorname{v}^1_{\CATA})}$} (A0_1);
    \path (A1_0) edge [->]node [auto,swap] {$\scriptstyle{\operatorname{u}^{\prime\prime 1}:=
      \id_{\functor{F}_0(A^3_{\CATA})}}$} (A2_1);
    \path (A0_1) edge [->]node [auto] {$\scriptstyle{\operatorname{u}^{\prime 1}=
      \functor{F}_1(\operatorname{w}^1_{\CATA})}$} (A1_2);
    \path (A2_1) edge [->]node [auto,swap] {$\scriptstyle{\operatorname{v}^1=
      \functor{F}_1(\operatorname{w}^1_{\CATA}\circ\operatorname{v}^1_{\CATA})}$} (A1_2);
  \end{tikzpicture}
  \]
  for $m=2$, we choose the data in the upper part of the following diagram:
 
  \[
  \begin{tikzpicture}[xscale=-4.5,yscale=-0.8]
    \node (A0_1) at (0, 2) {$A^{\prime 2}=A^1_{\CATB}$;};
    \node (A1_0) at (1, 0) {$A^{\prime\prime 2}:=A^1_{\CATB}$};
    \node (A1_2) at (1, 2) {$A^2=\functor{F}_0(A^3_{\CATA})$};
    \node (A2_1) at (2, 2) {$A^3=\functor{F}_0(A^3_{\CATA})$};

    \node (A1_1) at (1, 0.95) {$\eta^2:=i_{\operatorname{u}^1_{\CATB}}$};
    \node (B1_1) at (1, 1.4) {$\Rightarrow$};

    \path (A1_0) edge [->]node [auto] {$\scriptstyle{\operatorname{v}^{\prime 2}:=
      \id_{A^1_{\CATB}}}$} (A0_1);
    \path (A1_0) edge [->]node [auto,swap] {$\scriptstyle{\operatorname{u}^{\prime\prime 2}:=
      \operatorname{u}^1_{\CATB}}$} (A2_1);
    \path (A0_1) edge [->]node [auto] {$\scriptstyle{\operatorname{u}^{\prime 2}=
      \operatorname{u}^1_{\CATB}}$} (A1_2);
    \path (A2_1) edge [->]node [auto,swap] {$\scriptstyle{\operatorname{v}^2=
      \id_{\functor{F}_0(A^3_{\CATA})}}$} (A1_2);
  \end{tikzpicture}
  \]
  
 \item[(F9) and (F10)] we use axioms (BF4a) and (BF4b)
  for $(\CATB,\SETWB)$ and $\sigma^1_{\CATB}$ (see \eqref{eq-52} for $m=1$); so we get
  an object $\overline{A}^1_{\CATB}$, a morphism $\operatorname{t}^1_{\CATB}:
  \overline{A}^1_{\CATB}\rightarrow A^1_{\CATB}$ in $\SETWB$ and an invertible $2$-morphism
 
  \[\mu^1_{\CATB}:\,\functor{F}_1(\operatorname{v}^1_{\CATA})\circ\operatorname{u}^1_{\CATB}
  \circ\operatorname{t}^1_{\CATB}\Longrightarrow\operatorname{z}^1_{\CATB}\circ
  \operatorname{t}^1_{\CATB},\]
  such that
  
  \begin{equation}\label{eq-13}
  \sigma^1_{\CATB}\ast i_{\operatorname{t}^1_{\CATB}}=i_{\functor{F}_1(\operatorname{w}^1_{\CATA})}
  \ast\mu^1_{\CATB}
  \end{equation}
  (in general such data are not unique, we make any arbitrary choice as above). Then we choose the
  data of (F9) as the data of the upper part of the following diagram
  
  \[
  \begin{tikzpicture}[xscale=3.9,yscale=-1]
    \node (A0_1) at (0, 2) {$A^{\prime\prime 1}=\functor{F}_0(A^3_{\CATA})$};
    \node (A1_0) at (1, 0) {$A^{\prime\prime}:=\overline{A}^1_{\CATB}$};
    \node (A1_2) at (1, 2) {$A^3=\functor{F}_0(A^3_{\CATA})$};
    \node (A2_1) at (2, 2) {$A^{\prime\prime 2}=A^1_{\CATB}$};

    \node (A1_1) at (1, 0.95) {$\eta^3:=i_{\operatorname{u}^1_{\CATB}
      \circ\operatorname{t}^1_{\CATB}}$};
    \node (B1_1) at (1, 1.4) {$\Rightarrow$};

    \path (A1_0) edge [->]node [auto,swap] {$\scriptstyle{\operatorname{z}^1:=
      \operatorname{u}^1_{\CATB}\circ\operatorname{t}^1_{\CATB}}$} (A0_1);
    \path (A1_0) edge [->]node [auto] {$\scriptstyle{\operatorname{z}^2:=
      \operatorname{t}^1_{\CATB}}$} (A2_1);
    \path (A0_1) edge [->]node [auto,swap]
      {$\scriptstyle{\operatorname{u}^{\prime\prime 1}=\id_{\functor{F}_0(A^3_{\CATA})}}$} (A1_2);
    \path (A2_1) edge [->]node [auto] {$\scriptstyle{\operatorname{u}^{\prime\prime 2}=
     \operatorname{u}^1_{\CATB}}$} (A1_2);
  \end{tikzpicture}
  \]
  (in this way all the $2$-morphisms of~\cite[Proposition~0.4(0.15)]{T3} are trivial, except
  possibly for $\rho^2=\sigma^1_{\CATB}$); moreover, we choose the datum of (F10) as $\beta':=
  \mu^1_{\CATB}$; so the technical condition of~\cite[Proposition~0.4(F10)]{T3} is satisfied because
  of \eqref{eq-13}.
\end{description}

Since $\eta^1,\eta^2$ and $\eta^3$ are all $2$-identities, then using~\cite[Proposition~0.4]{T3},
we get immediately that \eqref{eq-08} is represented by the following diagram:

\begin{equation}\label{eq-42}
\begin{tikzpicture}[xscale=2.6,yscale=-0.8]
    \node (A0_2) at (2, 0) {$\functor{F}_0(A^1_{\CATA})$};
    \node (A2_0) at (0, 2) {$\functor{F}_0(A_{\CATA})$};
    \node (A2_2) at (2, 2) {$\overline{A}^1_{\CATB}$};
    \node (A2_4) at (4, 2) {$\functor{F}_0(A^1_{\CATA})$.};
    \node (A4_2) at (2, 4) {$A^1_{\CATB}$};

    \node (A2_1) at (1.1, 2) {$\Downarrow\,i_{\functor{F}_1(\operatorname{w}^1_{\CATA}
      \circ\operatorname{v}^1_{\CATA})\circ\operatorname{u}^1_{\CATB}
      \circ\operatorname{t}^1_{\CATB}}$};
    \node (A2_3) at (2.8, 2) {$\Downarrow\,\mu^1_{\CATB}$};
    
    \node (B1_1) at (2.25, 0.9) {$\scriptstyle{\functor{F}_1(\operatorname{v}^1_{\CATA})\circ}$};    
    \node (B2_2) at (2.28, 1.35) {$\scriptstyle{\circ\operatorname{u}^1_{\CATB}
      \circ\operatorname{t}^1_{\CATB}}$};

    \path (A2_2) edge [->]node [auto,swap] {} (A0_2);  
    \path (A4_2) edge [->]node [auto,swap] {$\scriptstyle{\operatorname{z}^1_{\CATB}}$} (A2_4);
    \path (A4_2) edge [->]node [auto] {$\scriptstyle{\functor{F}_1(\operatorname{w}^1_{\CATA}
      \circ\operatorname{v}^1_{\CATA})\circ\operatorname{u}^1_{\CATB}}$} (A2_0);
    \path (A0_2) edge [->]node [auto,swap] {$\scriptstyle{\functor{F}_1
      (\operatorname{w}^1_{\CATA})}$} (A2_0);
    \path (A0_2) edge [->]node [auto] {$\scriptstyle{\id_{\functor{F}_0(A^1_{\CATA})}}$} (A2_4);
    \path (A2_2) edge [->]node [auto] {$\scriptstyle{\operatorname{t}^1_{\CATB}}$} (A4_2);
\end{tikzpicture}
\end{equation}

Therefore, we get easily that $\Gamma^{12}=i_{\overline{\functor{F}}_1(f^1_{\CATA})}\ast
\eqref{eq-42}$ is represented by the following diagram:

\begin{equation}\label{eq-04}
\begin{tikzpicture}[xscale=2.6,yscale=-0.8]
    \node (A0_2) at (2, 0) {$\functor{F}_0(A^1_{\CATA})$};
    \node (A2_0) at (0, 2) {$\functor{F}_0(A_{\CATA})$};
    \node (A2_2) at (2, 2) {$\overline{A}^1_{\CATB}$};
    \node (A2_4) at (4, 2) {$\functor{F}_0(B_{\CATA})$.};
    \node (A4_2) at (2, 4) {$A^1_{\CATB}$};

    \node (A2_1) at (1.1, 2) {$\Downarrow\,i_{\functor{F}_1(\operatorname{w}^1_{\CATA}
      \circ\operatorname{v}^1_{\CATA})\circ\operatorname{u}^1_{\CATB}
      \circ\operatorname{t}^1_{\CATB}}$};
    \node (A2_3) at (2.8, 2.1) {$\Downarrow\,i_{\functor{F}_1(f^1_{\CATA})}\ast\mu^1_{\CATB}$};
    
    \node (B1_1) at (2.25, 0.9) {$\scriptstyle{\functor{F}_1(\operatorname{v}^1_{\CATA})\circ}$};    
    \node (B2_2) at (2.28, 1.35) {$\scriptstyle{\circ\operatorname{u}^1_{\CATB}
      \circ\operatorname{t}^1_{\CATB}}$};

    \path (A2_2) edge [->]node [auto,swap] {} (A0_2);  
    \path (A4_2) edge [->]node [auto,swap] {$\scriptstyle{\functor{F}_1(f^1_{\CATA})
      \circ\operatorname{z}^1_{\CATB}}$} (A2_4);
    \path (A4_2) edge [->]node [auto] {$\scriptstyle{\functor{F}_1(\operatorname{w}^1_{\CATA}
      \circ\operatorname{v}^1_{\CATA})\circ\operatorname{u}^1_{\CATB}}$} (A2_0);
    \path (A0_2) edge [->]node [auto,swap] {$\scriptstyle{\functor{F}_1
      (\operatorname{w}^1_{\CATA})}$} (A2_0);
    \path (A0_2) edge [->]node [auto] {$\scriptstyle{\functor{F}_1(f^1_{\CATA})}$} (A2_4);
    \path (A2_2) edge [->]node [auto] {$\scriptstyle{\operatorname{t}^1_{\CATB}}$} (A4_2);
\end{tikzpicture}
\end{equation}

In order to compute $\Gamma^{10}$, we have first of all to compute the associator

\begin{equation}\label{eq-14}
\Thetaa{\functor{P}(\operatorname{w}^1_{\CATA})}{\overline{\functor{F}}_1
(\operatorname{w}^1_{\CATA})}{\overline{\functor{F}}_1(\operatorname{v}^1_{\CATA})
\circ\functor{P}(\operatorname{w}^1_{\CATA}\circ\operatorname{v}^1_{\CATA})}.
\end{equation}

In order to do that, we are going to use~\cite[Proposition~0.1]{T3} for the triple of morphisms

\[
\begin{tikzpicture}[xscale=2.9,yscale=-1.2]
    \node (A0_0) at (-1, 0) {$\underline{f}:=\overline{\functor{F}}_1(\operatorname{v}^1_{\CATA})
      \circ\functor{P}(\operatorname{w}^1_{\CATA}\circ\operatorname{v}^1_{\CATA})
      =\Big(\functor{F}_0(A_{\CATA})$};
    \node (A0_1) at (0.95, 0) {$\functor{F}_0(A^3_{\CATA})$};
    \node (A0_2) at (2, 0) {$\functor{F}_0(A^1_{\CATA})\Big)$,};

    \path (A0_1) edge [->]node [auto,swap] {$\scriptstyle{\functor{F}_1
      (\operatorname{w}^1_{\CATA}\circ\operatorname{v}^1_{\CATA})}$} (A0_0);
    \path (A0_1) edge [->]node [auto] {$\scriptstyle{\functor{F}_1
      (\operatorname{v}^1_{\CATA})}$} (A0_2);
    \node (B0_0) at (-0.6, 1) {$\underline{g}:=\overline{\functor{F}}_1
      (\operatorname{w}^1_{\CATA})=\Big(\functor{F}_0(A^1_{\CATA})$};
    \node (B0_1) at (0.95, 1) {$\functor{F}_0(A^1_{\CATA})$};
    \node (B0_2) at (2, 1) {$\functor{F}_0(A_{\CATA})\Big)$,};

    \path (B0_1) edge [->]node [auto,swap] {$\scriptstyle{\id_{\functor{F}_0(A^1_{\CATA})}}$} (B0_0);
    \path (B0_1) edge [->]node [auto] {$\scriptstyle{\functor{F}_1
      (\operatorname{w}^1_{\CATA})}$} (B0_2);
   \node (C0_0) at (-0.6, 2) {$\underline{h}:=\functor{P}(\operatorname{w}^1_{\CATA})
      =\Big(\functor{F}_0(A_{\CATA})$};
    \node (C0_1) at (0.95, 2) {$\functor{F}_0(A^1_{\CATA})$};
    \node (C0_2) at (2, 2) {$\functor{F}_0(A^1_{\CATA})\Big)$.};
    
    \path (C0_1) edge [->]node [auto,swap] {$\scriptstyle{\functor{F}_1
      (\operatorname{w}^1_{\CATA})}$} (C0_0);
    \path (C0_1) edge [->]node [auto] {$\scriptstyle{\id_{\functor{F}_0(A^1_{\CATA})}}$} (C0_2);
\end{tikzpicture}
\]
 
In this case, the $4$ diagrams listed in~\cite[Proposition~0.1(0.4)]{T3} are given as follows;
the ones with $\delta$ and $\eta$ are a consequence of condition (\hyperref[C2]{C2}),
the one with $\xi$ is a consequence of (\hyperref[C3]{C3}), while the one with $\sigma$ is
a consequence of the fact that we have supposed that choices \hyperref[C]{C}$(\SETWB)$ give diagram
\eqref{eq-52} for $m=1$ when applied to the pair $(\functor{F}_1(\operatorname{w}^1_{\CATA}\circ
\operatorname{v}^1_{\CATA}),\functor{F}_1(\operatorname{w}^1_{\CATA}))$:

\[
\begin{tikzpicture}[xscale=2.5,yscale=-0.8]
    \node (A3_2) at (2, 4) {$\functor{F}_0(A^3_{\CATA})$};
    \node (A6_1) at (1, 6) {$\functor{F}_0(A^3_{\CATA})$};
    \node (A6_2) at (1.9, 6) {$\functor{F}_0(A^1_{\CATA})$};
    \node (A6_3) at (3, 6) {$\functor{F}_0(A^1_{\CATA})$,};
    
    \node (A5_2) at (2, 4.9) {$\Rightarrow$};
    \node (A4_2) at (2, 5.3) {$\delta:=i_{\functor{F}_1(\operatorname{v}^1_{\CATA})}$};

    \path (A3_2) edge [->]node [auto] {$\scriptstyle{\functor{F}_1(\operatorname{v}^1_{\CATA})}$} (A6_3);
    \path (A6_1) edge [->]node [auto,swap] {$\scriptstyle{\functor{F}_1(\operatorname{v}^1_{\CATA})}$} (A6_2);
    \path (A3_2) edge [->]node [auto,swap] {$\scriptstyle{\id_{\functor{F}_0(A^3_{\CATA})}}$} (A6_1);
    \path (A6_3) edge [->]node [auto] {$\scriptstyle{\id_{\functor{F}_0(A^1_{\CATA})}}$} (A6_2);
    
    \def \x {2.6}
    
    \node (B3_2) at (2+\x, 4) {$A^1_{\CATB}$};
    \node (B6_1) at (1+\x, 6) {$\functor{F}_0(A^3_{\CATA})$};
    \node (B6_2) at (2.1+\x, 6) {$\functor{F}_0(A_{\CATA})$};
    \node (B6_3) at (3+\x, 6) {$\functor{F}_0(A^1_{\CATA})$,};
    
    \node (B5_2) at (2+\x, 4.9) {$\Rightarrow$};
    \node (B4_2) at (2+\x, 5.3) {$\sigma:=\sigma^1_{\CATB}$};

    \path (B3_2) edge [->]node [auto] {$\scriptstyle{\operatorname{z}^1_{\CATB}}$} (B6_3);
    \path (B6_1) edge [->]node [auto,swap] {$\scriptstyle{\functor{F}_1
      (\operatorname{w}^1_{\CATA}\circ\operatorname{v}^1_{\CATA})}$} (B6_2);
    \path (B3_2) edge [->]node [auto,swap] {$\scriptstyle{\operatorname{u}^1_{\CATB}}$} (B6_1);
    \path (B6_3) edge [->]node [auto] {$\scriptstyle{\functor{F}_1
      (\operatorname{w}^1_{\CATA})}$} (B6_2);
\end{tikzpicture}
\]

\[
\begin{tikzpicture}[xscale=-2.5,yscale=-0.8]
    \def \x {2.6}
    
    \node (B3_2) at (2+\x, 4) {$\functor{F}_0(A^1_{\CATA})$};
    \node (B6_1) at (1+\x, 6) {$\functor{F}_0(A^1_{\CATA})$,};
    \node (B6_2) at (2+\x, 6) {$\functor{F}_0(A_{\CATA})$};
    \node (B6_3) at (3+\x, 6) {$\functor{F}_0(A^1_{\CATA})$};
    
    \node (B5_2) at (2+\x, 4.9) {$\Rightarrow$};
    \node (B4_2) at (2+\x, 5.3) {$\xi:=i_{\functor{F}_1(\operatorname{w}^1_{\CATA})}$};
    
    \path (B3_2) edge [->]node [auto,swap] {$\scriptstyle{\id_{\functor{F}_0(A^1_{\CATA})}}$} (B6_3);
    \path (B6_1) edge [->]node [auto] {$\scriptstyle{\functor{F}_1
      (\operatorname{w}^1_{\CATA})}$} (B6_2);
    \path (B3_2) edge [->]node [auto] {$\scriptstyle{\id_{\functor{F}_0(A^1_{\CATA})}}$} (B6_1);
    \path (B6_3) edge [->]node [auto,swap] {$\scriptstyle{\functor{F}_1
      (\operatorname{w}^1_{\CATA})}$} (B6_2);
    
    \node (A3_2) at (2, 4) {$\functor{F}_0(A^3_{\CATA})$};
    \node (A6_1) at (1, 6) {$\functor{F}_0(A^1_{\CATA})$.};
    \node (A6_2) at (2.1, 6) {$\functor{F}_0(A^1_{\CATA})$};
    \node (A6_3) at (3, 6) {$\functor{F}_0(A^3_{\CATA})$};
    
    \node (A5_2) at (2, 4.9) {$\Rightarrow$};
    \node (A4_2) at (2, 5.3) {$\eta:=i_{\functor{F}_1(\operatorname{v}^1_{\CATA})}$};
    
    \path (A3_2) edge [->]node [auto,swap] {$\scriptstyle{\id_{\functor{F}_0(A^3_{\CATA})}}$} (A6_3);
    \path (A6_1) edge [->]node [auto] {$\scriptstyle{\id_{\functor{F}_0(A^1_{\CATA})}}$} (A6_2);
    \path (A3_2) edge [->]node [auto] {$\scriptstyle{\functor{F}_1
      (\operatorname{v}^1_{\CATA})}$} (A6_1);
    \path (A6_3) edge [->]node [auto,swap] {$\scriptstyle{\functor{F}_1
      (\operatorname{v}^1_{\CATA})}$} (A6_2);
\end{tikzpicture}
\]

Then following~\cite[Proposition~0.1]{T3}, we choose data as follows:

\begin{description}
 \item[(F1)] we set $A^4:=\overline{A}^1_{\CATB}$, $\operatorname{u}^4:=\operatorname{t}^1_{\CATB}$,
  $\operatorname{u}^5:=\operatorname{u}^1_{\CATB}\circ\operatorname{t}^1_{\CATB}$ and $\gamma:=
  i_{\operatorname{u}^1_{\CATB}\circ\operatorname{t}^1_{\CATB}}$;
 \item[(F2)] we choose $\omega:=i_{\functor{F}_1(\operatorname{v}^1_{\CATA})\circ
  \operatorname{u}^1_{\CATB}\circ\operatorname{t}^1_{\CATB}}$;
 \item[(F3)] given the choices above, then the only possibly non-trivial $2$-morphism
  in~\cite[Proposition~0.1(0.8)]{T3} is $(\sigma^1_{\CATB})^{-1}\ast i_{\operatorname{t}^1_{\CATB}}$,
  so using \eqref{eq-13} we can choose $\rho:=(\mu^1_{\CATB})^{-1}$.
\end{description}

So by~\cite[Proposition~0.1]{T3} we conclude that the associator \eqref{eq-14} is represented by
the following diagram:

\begin{equation}\label{eq-21}
\begin{tikzpicture}[xscale=2.6,yscale=-0.8]
    \node (A0_2) at (2, 0) {$A^1_{\CATB}$};
    \node (A2_0) at (0, 2) {$\functor{F}_0(A_{\CATA})$};
    \node (A2_2) at (2, 2) {$\overline{A}^1_{\CATB}$};
    \node (A2_4) at (4, 2) {$\functor{F}_0(A^1_{\CATA})$.};
    \node (A4_2) at (2, 4) {$\functor{F}_0(A^3_{\CATA})$};

    \node (A2_1) at (1.1, 2) {$\Downarrow\,i_{\functor{F}_1(\operatorname{w}^1_{\CATA}
      \circ\operatorname{v}^1_{\CATA})\circ\operatorname{u}^1_{\CATB}
      \circ\operatorname{t}^1_{\CATB}}$};
    \node (A2_3) at (2.9, 2) {$\Downarrow\,(\mu^1_{\CATB})^{-1}$};
    
    \path (A0_2) edge [->]node [auto,swap] {$\scriptstyle{\functor{F}_1
      (\operatorname{w}^1_{\CATA}\circ\operatorname{v}^1_{\CATA})\circ
      \operatorname{u}^1_{\CATB}}$} (A2_0);
    \path (A0_2) edge [->]node [auto] {$\scriptstyle{\operatorname{z}^1_{\CATB}}$} (A2_4);
    \path (A4_2) edge [->]node [auto] {$\scriptstyle{\functor{F}_1(\operatorname{w}^1_{\CATA}
      \circ\operatorname{v}^1_{\CATA})}$} (A2_0);
    \path (A4_2) edge [->]node [auto,swap] {$\scriptstyle{\functor{F}_1
      (\operatorname{v}^1_{\CATA})}$} (A2_4);
    \path (A2_2) edge [->]node [auto,swap] {$\scriptstyle{\operatorname{t}^1_{\CATB}}$} (A0_2);
    \path (A2_2) edge [->]node [auto] {$\scriptstyle{\operatorname{u}^1_{\CATB}
      \circ\operatorname{t}^1_{\CATB}}$} (A4_2);
\end{tikzpicture}
\end{equation}

This implies easily that $\Gamma^{10}=i_{\overline{\functor{F}}_1(f^1_{\CATA})}\ast\eqref{eq-21}$
is represented by the following diagram:

\begin{equation}\label{eq-15}
\begin{tikzpicture}[xscale=2.6,yscale=-0.8]
    \node (A0_2) at (2, 0) {$A^1_{\CATB}$};
    \node (A2_0) at (0, 2) {$\functor{F}_0(A_{\CATA})$};
    \node (A2_2) at (2, 2) {$\overline{A}^1_{\CATB}$};
    \node (A2_4) at (4, 2) {$\functor{F}_0(B_{\CATA})$.};
    \node (A4_2) at (2, 4) {$\functor{F}_0(A^3_{\CATA})$};

    \node (A2_1) at (1.1, 2) {$\Downarrow\,i_{\functor{F}_1(\operatorname{w}^1_{\CATA}
      \circ\operatorname{v}^1_{\CATA})\circ\operatorname{u}^1_{\CATB}
      \circ\operatorname{t}^1_{\CATB}}$};
    \node (A2_3) at (2.9, 2) {$\Downarrow\,i_{\functor{F}_1(f^1_{\CATA})}\ast(\mu^1_{\CATB})^{-1}$};
    
    \path (A0_2) edge [->]node [auto,swap] {$\scriptstyle{\functor{F}_1
      (\operatorname{w}^1_{\CATA}\circ\operatorname{v}^1_{\CATA})\circ
      \operatorname{u}^1_{\CATB}}$} (A2_0);
    \path (A0_2) edge [->]node [auto] {$\scriptstyle{\functor{F}_1(f^1_{\CATA})
      \circ\operatorname{z}^1_{\CATB}}$} (A2_4);
    \path (A4_2) edge [->]node [auto] {$\scriptstyle{\functor{F}_1(\operatorname{w}^1_{\CATA}
      \circ\operatorname{v}^1_{\CATA})}$} (A2_0);
    \path (A4_2) edge [->]node [auto,swap] {$\scriptstyle{\functor{F}_1
      (f^1_{\CATA}\circ\operatorname{v}^1_{\CATA})}$} (A2_4);
    \path (A2_2) edge [->]node [auto,swap] {$\scriptstyle{\operatorname{t}^1_{\CATB}}$} (A0_2);
    \path (A2_2) edge [->]node [auto] {$\scriptstyle{\operatorname{u}^1_{\CATB}
      \circ\operatorname{t}^1_{\CATB}}$} (A4_2);
\end{tikzpicture}
\end{equation}

Moreover, it is easy to see that $\Gamma^7$ is represented by the following diagram:

\begin{equation}\label{eq-36}
\begin{tikzpicture}[xscale=2.6,yscale=-0.8]
    \node (A0_2) at (2, 0) {$\functor{F}_0(A^3_{\CATA})$};
    \node (A2_0) at (0, 2) {$\functor{F}_0(A_{\CATA})$};
    \node (A2_2) at (2, 2) {$\functor{F}_0(A^3_{\CATA})$};
    \node (A2_4) at (4, 2) {$\functor{F}_0(B_{\CATA})$.};
    \node (A4_2) at (2, 4) {$\functor{F}_0(A^3_{\CATA})$};

    \node (A2_1) at (1.1, 2) {$\Downarrow\,i_{\functor{F}_1(\operatorname{w}^1_{\CATA}
      \circ\operatorname{v}^1_{\CATA})}$};
    \node (A2_3) at (2.9, 2) {$\Downarrow\,\functor{F}_2(\beta_{\CATA})$};
    
    \path (A0_2) edge [->]node [auto,swap] {$\scriptstyle{\functor{F}_1
      (\operatorname{w}^1_{\CATA}\circ\operatorname{v}^1_{\CATA})}$} (A2_0);
    \path (A0_2) edge [->]node [auto] {$\scriptstyle{\functor{F}_1
      (f^1_{\CATA}\circ\operatorname{v}^1_{\CATA})}$} (A2_4);
    \path (A4_2) edge [->]node [auto] {$\scriptstyle{\functor{F}_1(\operatorname{w}^1_{\CATA}
      \circ\operatorname{v}^1_{\CATA})}$} (A2_0);
    \path (A4_2) edge [->]node [auto,swap] {$\scriptstyle{\functor{F}_1(f^2_{\CATA}
      \circ\operatorname{v}^2_{\CATA})}$} (A2_4);
    \path (A2_2) edge [->]node [auto,swap]
      {$\scriptstyle{\id_{\functor{F}_0(A^3_{\CATA})}}$} (A0_2);
    \path (A2_2) edge [->]node [auto]
      {$\scriptstyle{\id_{\functor{F}_0(A^3_{\CATA})}}$} (A4_2);
\end{tikzpicture}
\end{equation}

Therefore, using \eqref{eq-04}, \eqref{eq-15} and \eqref{eq-36} together
with~\cite[Proposition~0.2]{T3}, we get that $\Gamma^7\odot
\Gamma^{10}\odot\Gamma^{12}$ is represented by the following diagram

\begin{equation}\label{eq-41}
\begin{tikzpicture}[xscale=2.6,yscale=-0.8]
    \node (A0_2) at (2, 0) {$\functor{F}_0(A^1_{\CATA})$};
    \node (A2_0) at (0, 2) {$\functor{F}_0(A_{\CATA})$};
    \node (A2_2) at (2, 2) {$\functor{F}_0(A^3_{\CATA})$};
    \node (A2_4) at (4, 2) {$\functor{F}_0(B_{\CATA})$.};
    \node (A4_2) at (2, 4) {$\functor{F}_0(A^3_{\CATA})$};

    \node (A2_1) at (1.1, 2) {$\Downarrow\,i_{\functor{F}_1(\operatorname{w}^1_{\CATA}
      \circ\operatorname{v}^1_{\CATA})}$};
    \node (A2_3) at (2.9, 2) {$\Downarrow\,\functor{F}_2(\beta_{\CATA})$};
    
    \path (A0_2) edge [->]node [auto,swap] {$\scriptstyle{\functor{F}_1
      (\operatorname{w}^1_{\CATA})}$} (A2_0);
    \path (A0_2) edge [->]node [auto] {$\scriptstyle{\functor{F}_1(f^1_{\CATA})}$} (A2_4);
    \path (A4_2) edge [->]node [auto] {$\scriptstyle{\functor{F}_1(\operatorname{w}^1_{\CATA}
      \circ\operatorname{v}^1_{\CATA})}$} (A2_0);
    \path (A4_2) edge [->]node [auto,swap] {$\scriptstyle{\functor{F}_1
      (f^2_{\CATA}\circ\operatorname{v}^2_{\CATA})}$} (A2_4);
    \path (A2_2) edge [->]node [auto,swap]
      {$\scriptstyle{\functor{F}_1(\operatorname{v}^1_{\CATA})}$} (A0_2);
    \path (A2_2) edge [->]node [auto]
      {$\scriptstyle{\id_{\functor{F}_0(A^3_{\CATA})}}$} (A4_2);
\end{tikzpicture}
\end{equation}

Now we want to compute $\Gamma^5$. In order to do that, firstly we compute the associator

\begin{equation}\label{eq-19}
\Thetaa{\functor{P}(\operatorname{w}^2_{\CATA})}{\overline{\functor{F}}_1
(\operatorname{w}^2_{\CATA})}{\overline{\functor{F}}_1(\operatorname{v}^2_{\CATA})}.
\end{equation}
For that, we use~\cite[Proposition~0.1]{T3} on the triple of morphisms

\[
\begin{tikzpicture}[xscale=2.9,yscale=-1.2]
    \node (A0_0) at (-0.6, 0) {$\underline{f}:=\overline{\functor{F}}_1
      (\operatorname{v}^2_{\CATA})=\Big(\functor{F}_0(A^3_{\CATA})$};
    \node (A0_1) at (0.95, 0) {$\functor{F}_0(A^3_{\CATA})$};
    \node (A0_2) at (2, 0) {$\functor{F}_0(A^2_{\CATA})\Big)$,};

    \path (A0_1) edge [->]node [auto,swap]
      {$\scriptstyle{\id_{\functor{F}_0(A^3_{\CATA})}}$} (A0_0);
    \path (A0_1) edge [->]node [auto] {$\scriptstyle{\functor{F}_1
      (\operatorname{v}^2_{\CATA})}$} (A0_2);
    \node (B0_0) at (-0.6, 1) {$\underline{g}:=\overline{\functor{F}}_1
      (\operatorname{w}^2_{\CATA})=\Big(\functor{F}_0(A^2_{\CATA})$};
    \node (B0_1) at (0.95, 1) {$\functor{F}_0(A^2_{\CATA})$};
    \node (B0_2) at (2, 1) {$\functor{F}_0(A_{\CATA})\Big)$,};

    \path (B0_1) edge [->]node [auto,swap] {$\scriptstyle{\id_{\functor{F}_0(A^2_{\CATA})}}$} (B0_0);
    \path (B0_1) edge [->]node [auto] {$\scriptstyle{\functor{F}_1
      (\operatorname{w}^2_{\CATA})}$} (B0_2);
    \node (C0_0) at (-0.6, 2) {$\underline{h}:=\functor{P}(\operatorname{w}^2_{\CATA})
      =\Big(\functor{F}_0(A_{\CATA})$};
    \node (C0_1) at (0.95, 2) {$\functor{F}_0(A^2_{\CATA})$};
    \node (C0_2) at (2, 2) {$\functor{F}_0(A^2_{\CATA})\Big)$.};
    
    \path (C0_1) edge [->]node [auto,swap] {$\scriptstyle{\functor{F}_1
      (\operatorname{w}^2_{\CATA})}$} (C0_0);
    \path (C0_1) edge [->]node [auto] {$\scriptstyle{\id_{\functor{F}_0(A^2_{\CATA})}}$} (C0_2);
\end{tikzpicture}
\]

In order to do that, first of all we have to identify the $2$-morphisms
in~\cite[Proposition~0.1(0.4)]{T3}; using conditions (\hyperref[C1]{C1}) and (\hyperref[C2]{C2})
and \eqref{eq-52} for $m=2$, such $2$-morphisms are given as follows:

\[\delta:=i_{\functor{F}_1(\operatorname{v}^2_{\CATA})},\quad\sigma:=\sigma^2_{\CATB},\quad
\xi:=i_{\functor{F}_1(\operatorname{w}^2_{\CATA})},\quad\eta:=i_{\functor{F}_1
(\operatorname{v}^2_{\CATA})}.\]

Using (BF4a) and (BF4b)
for $(\CATB,\SETWB)$ and $\sigma^2_{\CATB}$ (see \eqref{eq-52} for $m=2$),
there are an object
$\overline{A}^2_{\CATB}$, a morphism $\operatorname{t}^2_{\CATB}:\overline{A}^2_{\CATB}\rightarrow
A^2_{\CATB}$ in $\SETWB$
and an invertible $2$-morphism $\mu^2_{\CATB}:\functor{F}_1(\operatorname{v}^2_{\CATA})
\circ\operatorname{u}^2_{\CATB}\circ\operatorname{t}^2_{\CATB}\Rightarrow\operatorname{z}^2_{\CATB}
\circ\operatorname{t}^2_{\CATB}$, such that

\begin{equation}\label{eq-16}
\sigma^2_{\CATB}\ast i_{\operatorname{t}^2_{\CATB}}=i_{\functor{F}_1(\operatorname{w}^2_{\CATA})}
\ast\mu^2_{\CATB}.
\end{equation}

Then we perform a series of computations analogous to those leading to \eqref{eq-21}, so we get
that the associator \eqref{eq-19} is represented by the following diagram:

\begin{equation}\label{eq-43}
\begin{tikzpicture}[xscale=2.6,yscale=-0.8]
    \node (A0_2) at (2, 0) {$A^2_{\CATB}$};
    \node (A2_0) at (0, 2) {$\functor{F}_0(A^3_{\CATA})$};
    \node (A2_2) at (2, 2) {$\overline{A}^2_{\CATB}$};
    \node (A2_4) at (4, 2) {$\functor{F}_0(A^2_{\CATA})$.};
    \node (A4_2) at (2, 4) {$\functor{F}_0(A^3_{\CATA})$};

    \node (A2_1) at (1.1, 2) {$\Downarrow\,i_{\operatorname{u}^2_{\CATB}
      \circ\operatorname{t}^2_{\CATB}}$};
    \node (A2_3) at (2.9, 2) {$\Downarrow\,(\mu^2_{\CATB})^{-1}$};
    
    \path (A0_2) edge [->]node [auto,swap] {$\scriptstyle{\operatorname{u}^2_{\CATB}}$} (A2_0);
    \path (A0_2) edge [->]node [auto] {$\scriptstyle{\operatorname{z}^2_{\CATB}}$} (A2_4);
    \path (A4_2) edge [->]node [auto] {$\scriptstyle{\id_{\functor{F}_0(A^3_{\CATA})}}$} (A2_0);
    \path (A4_2) edge [->]node [auto,swap] {$\scriptstyle{\functor{F}_1
      (\operatorname{v}^2_{\CATA})}$} (A2_4);
    \path (A2_2) edge [->]node [auto,swap] {$\scriptstyle{\operatorname{t}^2_{\CATB}}$} (A0_2);
    \path (A2_2) edge [->]node [auto] {$\scriptstyle{\operatorname{u}^2_{\CATB}
      \circ\operatorname{t}^2_{\CATB}}$} (A4_2);
\end{tikzpicture}
\end{equation}

Then taking the inverse of the previous associator, it is easy to prove that the $2$-morphism

\[\Gamma^5=\left(i_{\left(\functor{F}_0(A^2_{\CATA}),\id_{\functor{F}_0(A^2_{\CATA})},
\functor{F}_1(f^2_{\CATA})\right)}\ast\eqref{eq-43}^{-1}\right)\ast i_{\left(\functor{F}_0
(A^3_{\CATA}),\functor{F}_1(\operatorname{w}^1_{\CATA}\circ\operatorname{v}^1_{\CATA}),
\id_{\functor{F}_0(A^3_{\CATA})}\right)}\]
is represented by the following diagram:

\begin{equation}\label{eq-38}
\begin{tikzpicture}[xscale=2.6,yscale=0.8]
    \node (A0_2) at (2, 0) {$A^2_{\CATB}$};
    \node (A2_0) at (0, 2) {$\functor{F}_0(A_{\CATA})$};
    \node (A2_2) at (2, 2) {$\overline{A}^2_{\CATB}$};
    \node (A2_4) at (4, 2) {$\functor{F}_0(B_{\CATA})$.};
    \node (A4_2) at (2, 4) {$\functor{F}_0(A^3_{\CATA})$};

    \node (A2_1) at (1.1, 2) {$\Downarrow\,i_{\functor{F}_1(\operatorname{w}^1_{\CATA}
      \circ\operatorname{v}^1_{\CATA})\circ\operatorname{u}^2_{\CATB}
      \circ\operatorname{t}^2_{\CATB}}$};
    \node (A2_3) at (2.9, 2) {$\Downarrow\,i_{\functor{F}_1(f^2_{\CATA})}
      \ast\mu^2_{\CATB}$};
    
    \path (A0_2) edge [->]node [auto] {$\scriptstyle{\functor{F}_1
      (\operatorname{w}^1_{\CATA}\circ\operatorname{v}^1_{\CATA})
      \circ\operatorname{u}^2_{\CATB}}$} (A2_0);
    \path (A0_2) edge [->]node [auto,swap] {$\scriptstyle{\functor{F}_1
      (f^2_{\CATA})\circ\operatorname{z}^2_{\CATB}}$} (A2_4);
    \path (A4_2) edge [->]node [auto,swap] {$\scriptstyle{\functor{F}_1
      (\operatorname{w}^1_{\CATA}\circ\operatorname{v}^1_{\CATA})}$} (A2_0);
    \path (A4_2) edge [->]node [auto] {$\scriptstyle{\functor{F}_1
      (f^2_{\CATA}\circ\operatorname{v}^2_{\CATA})}$} (A2_4);
    \path (A2_2) edge [->]node [auto] {$\scriptstyle{\operatorname{t}^2_{\CATB}}$} (A0_2);
    \path (A2_2) edge [->]node [auto,swap] {$\scriptstyle{\operatorname{u}^2_{\CATB}
      \circ\operatorname{t}^2_{\CATB}}$} (A4_2);
\end{tikzpicture}
\end{equation}

Now we need to compute $\Gamma^3$; by definition of $\overline{\functor{F}}$ and of
$\functor{U}_{\SETWB,2}$ (see \eqref{eq-95}), we have that $\overline{\functor{F}}_2
(\alpha_{\CATA}^{-1})$ is represented by the following diagram:

\begin{equation}\label{eq-44}
\begin{tikzpicture}[xscale=2.6,yscale=-0.8]
    \node (A0_2) at (2, 0) {$\functor{F}_0(A^3_{\CATA})$};
    \node (A2_0) at (0, 2) {$\functor{F}_0(A^3_{\CATA})$};
    \node (A2_2) at (2, 2) {$\functor{F}_0(A^3_{\CATA})$};
    \node (A2_4) at (4, 2) {$\functor{F}_0(A_{\CATA})$.};
    \node (A4_2) at (2, 4) {$\functor{F}_0(A^3_{\CATA})$};

    \node (A2_3) at (2.9, 2) {$\Downarrow\,\functor{F}_2\left(\alpha_{\CATA}
      \right)^{-1}$};
    \node (A2_1) at (1.1, 2) {$\Downarrow\,i_{\id_{\functor{F}_0(A^3_{\CATA})}}$};
    
    \path (A0_2) edge [->]node [auto,swap] {$\scriptstyle{\id_{\functor{F}_0(A^3_{\CATA})}}$} (A2_0);
    \path (A4_2) edge [->]node [auto] {$\scriptstyle{\id_{\functor{F}_0(A^3_{\CATA})}}$} (A2_0);
    \path (A4_2) edge [->]node [auto,swap] {$\scriptstyle{\functor{F}_1
      (\operatorname{w}^1_{\CATA}\circ\operatorname{v}^1_{\CATA})}$} (A2_4);
    \path (A2_2) edge [->]node [auto,swap] {$\scriptstyle{\id_{\functor{F}_0(A^3_{\CATA})}}$} (A0_2);
    \path (A2_2) edge [->]node [auto] {$\scriptstyle{\id_{\functor{F}_0(A^3_{\CATA})}}$} (A4_2);
    \path (A0_2) edge [->]node [auto] {$\scriptstyle{\functor{F}_1(\operatorname{w}^2_{\CATA}
      \circ\operatorname{v}^2_{\CATA})}$} (A2_4);
\end{tikzpicture}
\end{equation}

Then we need to compute the composition

\begin{equation}\label{eq-24}
i_{\left(\functor{F}_0(A^2_{\CATA}),\functor{F}_1(\operatorname{w}^2_{\CATA}),
\functor{F}_1(f^2_{\CATA})\right)}\ast\eqref{eq-44}
\end{equation}

For that, we are going to use~\cite[Proposition~0.4]{T3} with $\Gamma$ given by the class of
\eqref{eq-44} and $\underline{g}:=(\functor{F}_0(A^2_{\CATA}),\functor{F}_1
(\operatorname{w}^2_{\CATA}),\functor{F}_1(f^2_{\CATA}))$. In order to do that, we have to
identify the pair of diagrams appearing in~\cite[Proposition~0.4(0.14)]{T3} for $m=1,2$. In this
case, we use \eqref{eq-52} for $m=2$ and \eqref{eq-51}, so we are in the hypothesis
of~\cite[Proposition~0.4]{T3} if we set

\[
\begin{tikzpicture}[xscale=4.2,yscale=-0.8]
    \node (A0_1) at (0, 2) {$A^1:=\functor{F}_0(A^3_{\CATA})$};
    \node (A1_0) at (1, 0) {$A^{\prime 1}:=A^2_{\CATB}$};
    \node (A1_2) at (1.1, 2) {$B:=\functor{F}_0(A_{\CATA})$};
    \node (A2_1) at (2, 2) {$B':=\functor{F}_0(A^2_{\CATA})$};

    \node (A1_1) at (1, 1) {$\rho^1:=\sigma^2_{\CATB}$};
    \node (B1_1) at (1, 1.4) {$\Rightarrow$};

    \path (A1_0) edge [->]node [auto,swap] {$\scriptstyle{\operatorname{u}^{\prime 1}:=
      \operatorname{u}^2_{\CATB}}$} (A0_1);
    \path (A1_0) edge [->]node [auto] {$\scriptstyle{f^{\prime 1}:=
      \operatorname{z}^2_{\CATB}}$} (A2_1);
    \path (A0_1) edge [->]node [auto,swap] {$\scriptstyle{f^1:=\functor{F}_1
      (\operatorname{w}^2_{\CATA}\circ\operatorname{v}^2_{\CATA})}$} (A1_2);
    \path (A2_1) edge [->]node [auto] {$\scriptstyle{\operatorname{u}:=
      \functor{F}_1(\operatorname{w}^2_{\CATA})}$} (A1_2);
\end{tikzpicture}
\]
and

\[
\begin{tikzpicture}[xscale=4.2,yscale=-0.8]
    \node (A0_1) at (0, 2) {$A^2:=\functor{F}_0(A^3_{\CATA})$};
    \node (A1_0) at (1, 0) {$A^{\prime 2}:=A^3_{\CATB}$};
    \node (A1_2) at (1.1, 2) {$B=\functor{F}_0(A_{\CATA})$};
    \node (A2_1) at (2, 2) {$B'=\functor{F}_0(A^2_{\CATA})$.};

    \node (A1_1) at (1, 1) {$\rho^2:=\sigma^3_{\CATB}$};
    \node (B1_1) at (1, 1.4) {$\Rightarrow$};

    \path (A1_0) edge [->]node [auto,swap] {$\scriptstyle{\operatorname{u}^{\prime 2}:=
      \operatorname{u}^3_{\CATB}}$} (A0_1);
    \path (A1_0) edge [->]node [auto] {$\scriptstyle{f^{\prime 2}:=
      \operatorname{z}^3_{\CATB}}$} (A2_1);
    \path (A0_1) edge [->]node [auto,swap] {$\scriptstyle{f^2:=\functor{F}_1
      (\operatorname{w}^1_{\CATA}\circ\operatorname{v}^1_{\CATA})}$} (A1_2);
    \path (A2_1) edge [->]node [auto] {$\scriptstyle{\operatorname{u}=
      \functor{F}_1(\operatorname{w}^2_{\CATA})}$} (A1_2);
\end{tikzpicture}
\]

Then we need to fix a set of choices (F8) -- (F10) as in~\cite[Proposition~0.4]{T3}. In order
to do that, we do a preliminary step as follows: using (BF3)
for $(\CATB,\SETWB)$, we choose data as in
the upper part of the following diagram, with $\operatorname{r}^2_{\CATB}$ in $\SETWB$ and
$\eta_{\CATB}$ invertible.

\begin{equation}\label{eq-50}
\begin{tikzpicture}[xscale=3.5,yscale=-0.8]
    \node (A0_1) at (1, 0) {$\overline{A}_{\CATB}$};
    \node (A1_0) at (0, 2) {$\overline{A}^2_{\CATB}$};
    \node (A1_2) at (2, 2) {$A^3_{\CATB}$.};
    \node (A2_1) at (1, 2) {$\functor{F}_0(A^3_{\CATA})$};
    
    \node (A1_1) at (1, 1) {$\eta_{\CATB}$};
    \node (B1_1) at (1, 1.4) {$\Rightarrow$};
    
    \path (A1_2) edge [->]node [auto] {$\scriptstyle{\operatorname{u}^3_{\CATB}}$} (A2_1);
    \path (A0_1) edge [->]node [auto] {$\scriptstyle{\operatorname{r}^3_{\CATB}}$} (A1_2);
    \path (A1_0) edge [->]node [auto,swap] {$\scriptstyle{\operatorname{u}^2_{\CATB}
      \circ\operatorname{t}^2_{\CATB}}$} (A2_1);
    \path (A0_1) edge [->]node [auto,swap] {$\scriptstyle{\operatorname{r}^2_{\CATB}}$} (A1_0);
\end{tikzpicture}
\end{equation}

Moreover, using (BF4a) and (BF4b)
for $(\CATB,\SETWB)$,
we choose an object
$\widetilde{A}_{\CATB}$, a morphism $\operatorname{s}_{\CATB}:\widetilde{A}_{\CATB}\rightarrow
\overline{A}_{\CATB}$ in $\SETWB$ and an invertible $2$-morphism

\begin{equation}\label{eq-01}
\rho_{\CATB}:\,\operatorname{z}^2_{\CATB}\circ\operatorname{t}^2_{\CATB}\circ
\operatorname{r}^2_{\CATB}\circ\operatorname{s}_{\CATB}\Longrightarrow\operatorname{z}^3_{\CATB}
\circ\operatorname{r}^3_{\CATB}\circ\operatorname{s}_{\CATB},
\end{equation}
such that $i_{\functor{F}_1(\operatorname{w}^2_{\CATA})}\ast\rho_{\CATB}$ coincides with the
following composition:

\begin{equation}\label{eq-22}
\begin{tikzpicture}[xscale=2.4,yscale=-0.8]
    \node (A0_3) at (1.6, 0) {$A^2_{\CATB}$};
    \node (A0_4) at (4.4, 0) {$\functor{F}_0(A^2_{\CATA})$};
    \node (A2_0) at (0.4, 2) {$\widetilde{A}_{\CATB}$};
    \node (A2_1) at (1, 2) {$\overline{A}_{\CATB}$};
    \node (A2_3) at (2.2, 2) {$\functor{F}_0(A^3_{\CATA})$};
    \node (A2_5) at (5, 2) {$\functor{F}_0(A_{\CATA})$.};
    \node (A4_2) at (1.6, 4) {$A^3_{\CATB}$};
    \node (A4_4) at (4.4, 4) {$\functor{F}_0(A^2_{\CATA})$};
    
    \node (A1_4) at (2.8, 0.7) {$\Downarrow\,(\sigma^2_{\CATB})^{-1}$};
    \node (A2_4) at (3.6, 2) {$\Downarrow\,\functor{F}_2(\alpha_{\CATA})^{-1}$};
    \node (A3_4) at (2.8, 3.3) {$\Downarrow\,\sigma^3_{\CATB}$};
    \node (A2_2) at (1.6, 2) {$\Downarrow\,\eta_{\CATB}$};
    
    \path (A2_3) edge [->,bend right=25]node [auto] {$\scriptstyle{\functor{F}_1
      (\operatorname{w}^2_{\CATA}\circ\operatorname{v}^2_{\CATA})}$} (A2_5);
    \path (A2_3) edge [->,bend left=25]node [auto,swap] {$\scriptstyle{\functor{F}_1
      (\operatorname{w}^1_{\CATA}\circ\operatorname{v}^1_{\CATA})}$} (A2_5);
    \path (A4_2) edge [->]node [auto,swap] {$\scriptstyle{\operatorname{z}^3_{\CATB}}$} (A4_4);
    \path (A0_3) edge [->]node [auto] {$\scriptstyle{\operatorname{u}^2_{\CATB}}$} (A2_3);
    \path (A0_3) edge [->]node [auto] {$\scriptstyle{\operatorname{z}^2_{\CATB}}$} (A0_4);
    \path (A2_1) edge [->]node [auto,swap] {$\scriptstyle{\operatorname{r}^3_{\CATB}}$} (A4_2);
    \path (A4_4) edge [->]node [auto,swap] {$\scriptstyle{\functor{F}_1
      (\operatorname{w}^2_{\CATA})}$} (A2_5);
    \path (A0_4) edge [->]node [auto] {$\scriptstyle{\functor{F}_1
      (\operatorname{w}^2_{\CATA})}$} (A2_5);
    \path (A4_2) edge [->]node [auto,swap] {$\scriptstyle{\operatorname{u}^3_{\CATB}}$} (A2_3);
    \path (A2_0) edge [->]node [auto] {$\scriptstyle{\operatorname{s}_{\CATB}}$} (A2_1);
    \path (A2_1) edge [->]node [auto] {$\scriptstyle{\operatorname{t}^2_{\CATB}
      \circ\operatorname{r}^2_{\CATB}}$} (A0_3);
\end{tikzpicture}
\end{equation}

Using \eqref{eq-16}, this implies that

\begin{gather}
\nonumber \Big(i_{\functor{F}_1(\operatorname{w}^2_{\CATA})}\ast\rho_{\CATB}\Big)\odot
 \Big(i_{\functor{F}_1(\operatorname{w}^2_{\CATA})}\ast\mu^2_{\CATB}\ast
 i_{\operatorname{r}^2_{\CATB}\circ\operatorname{s}_{\CATB}}\Big)\odot\Big(\functor{F}_2
 (\alpha_{\CATA})\ast i_{\operatorname{u}^2_{\CATB}\circ\operatorname{t}^2_{\CATB}\circ
 \operatorname{r}^2_{\CATB}\circ\operatorname{s}_{\CATB}}\Big)= \\
\label{eq-29} =\Big(\sigma^3_{\CATB}\ast i_{\operatorname{r}^3_{\CATB}\circ\operatorname{s}_{\CATB}}
 \Big)\odot\Big(i_{\functor{F}_1(\operatorname{w}^1_{\CATA}\circ\operatorname{v}^1_{\CATA})}\ast
 \eta_{\CATB}\ast i_{\operatorname{s}_{\CATB}}\Big).
\end{gather}

Then we fix the following choices:

\begin{description}
 \item[(F8)] we choose the data in the upper part of the following $2$ diagrams:
  
  \[
  \begin{tikzpicture}[xscale=-3.9,yscale=-0.8]
    \node (A0_1) at (0, 2) {$A^{\prime 1}=A^2_{\CATB}$,};
    \node (A1_0) at (1, 0) {$A^{\prime\prime 1}:=\overline{A}^2_{\CATB}$};
    \node (A1_2) at (1, 2) {$A^1=\functor{F}_0(A^3_{\CATA})$};
    \node (A2_1) at (2, 2) {$A^3=\functor{F}_0(A^3_{\CATA})$};

    \node (A1_1) at (1, 1) {$\eta^1:=i_{\operatorname{u}^2_{\CATB}
      \circ\operatorname{t}^2_{\CATB}}$};
    \node (B1_1) at (1, 1.45) {$\Rightarrow$};

    \path (A1_0) edge [->]node [auto] {$\scriptstyle{\operatorname{v}^{\prime 1}:=
      \operatorname{t}^2_{\CATB}}$} (A0_1);
    \path (A1_0) edge [->]node [auto,swap]
      {$\scriptstyle{\operatorname{u}^{\prime\prime 1}:=
      \operatorname{u}^2_{\CATB}\circ\operatorname{t}^2_{\CATB}}$} (A2_1);
    \path (A0_1) edge [->]node [auto] {$\scriptstyle{\operatorname{u}^{\prime 1}=
      \operatorname{u}^2_{\CATB}}$} (A1_2);
    \path (A2_1) edge [->]node [auto,swap] {$\scriptstyle{\operatorname{v}^1=
      \id_{\functor{F}_0(A^3_{\CATA})}}$} (A1_2);
  \end{tikzpicture}
  \]

  \[
  \begin{tikzpicture}[xscale=-3.9,yscale=-0.8]
    \node (A0_1) at (0, 2) {$A^{\prime 2}=A^3_{\CATB}$;};
    \node (A1_0) at (1, 0) {$A^{\prime\prime 2}:=A^3_{\CATB}$};
    \node (A1_2) at (1, 2) {$A^2=\functor{F}_0(A^3_{\CATA})$};
    \node (A2_1) at (2, 2) {$A^3=\functor{F}_0(A^3_{\CATA})$};

    \node (A1_1) at (1, 0.95) {$\eta^2:=i_{\operatorname{u}^3_{\CATB}}$};
    \node (B1_1) at (1, 1.4) {$\Rightarrow$};

    \path (A1_0) edge [->]node [auto] {$\scriptstyle{\operatorname{v}^{\prime 2}:=
      \id_{A^3_{\CATB}}}$} (A0_1);
    \path (A1_0) edge [->]node [auto,swap]
      {$\scriptstyle{\operatorname{u}^{\prime\prime 2}:=
      \operatorname{u}^3_{\CATB}}$} (A2_1);
    \path (A0_1) edge [->]node [auto] {$\scriptstyle{\operatorname{u}^{\prime 2}=
      \operatorname{u}^3_{\CATB}}$} (A1_2);
    \path (A2_1) edge [->]node [auto,swap] {$\scriptstyle{\operatorname{v}^2=
      \id_{\functor{F}_0(A^3_{\CATA})}}$} (A1_2);
  \end{tikzpicture}
  \]
 
 \item[(F9)] we choose $A'':=\widetilde{A}_{\CATB}$, $\operatorname{z}^1:=
  \operatorname{r}^2_{\CATB}\circ\operatorname{s}_{\CATB}$,
  $\operatorname{z}^2:=\operatorname{r}^3_{\CATB}\circ\operatorname{s}_{\CATB}$ and
  $\eta^3:=\eta_{\CATB}\ast i_{\operatorname{s}_{\CATB}}$;

 \item[(F10)] we choose $\beta':=\rho_{\CATB}$ (the fact that $i_{\functor{F}_1
  (\operatorname{w}^2_{\CATA})}\ast\rho_{\CATB}$ coincides with diagram \eqref{eq-22} implies
  immediately that the condition of~\cite[Proposition~0.4(F10)]{T3} is verified).
\end{description}

Then according to~\cite[Proposition~0.4]{T3}, we get that \eqref{eq-24} is represented by
the following diagram:

\begin{equation}\label{eq-45}
\begin{tikzpicture}[xscale=2.6,yscale=-0.8]
    \node (A0_2) at (2, 0) {$A^2_{\CATB}$};
    \node (A2_0) at (0, 2) {$\functor{F}_0(A^3_{\CATA})$};
    \node (A2_2) at (2, 2) {$\widetilde{A}_{\CATB}$};
    \node (A2_4) at (4, 2) {$\functor{F}_0(B_{\CATA})$.};
    \node (A4_2) at (2, 4) {$A^3_{\CATB}$};

    \node (A2_1) at (1.1, 2) {$\Downarrow\,\eta_{\CATB}\ast i_{\operatorname{s}_{\CATB}}$};
    \node (A2_3) at (2.9, 2) {$\Downarrow\,i_{\functor{F}_1(f^2_{\CATA})}\ast\rho_{\CATB}$};
    
    \path (A4_2) edge [->]node [auto,swap] {$\scriptstyle{\functor{F}_1(f^2_{\CATA})
      \circ\operatorname{z}^3_{\CATB}}$} (A2_4);
    \path (A0_2) edge [->]node [auto] {$\scriptstyle{\functor{F}_1(f^2_{\CATA})
      \circ\operatorname{z}^2_{\CATB}}$} (A2_4);
    \path (A0_2) edge [->]node [auto,swap] {$\scriptstyle{\operatorname{u}^2_{\CATB}}$} (A2_0);
    \path (A2_2) edge [->]node [auto,swap] {$\scriptstyle{\operatorname{t}^2_{\CATB}
      \circ\operatorname{r}^2_{\CATB}\circ\operatorname{s}_{\CATB}}$} (A0_2);
    \path (A2_2) edge [->]node [auto] {$\scriptstyle{\operatorname{r}^3_{\CATB}
      \circ\operatorname{s}_{\CATB}}$} (A4_2);
    \path (A4_2) edge [->]node [auto] {$\scriptstyle{\operatorname{u}^3_{\CATB}}$} (A2_0);
\end{tikzpicture}
\end{equation}

Lastly, in order to compute $\Gamma^3$ we need to compose the class of \eqref{eq-45} with the
$2$-identity of the morphism

\[\functor{P}(\operatorname{w}^1_{\CATA}\circ\operatorname{v}^1_{\CATA})=\left(\functor{F}_0
(A^3_{\CATA}),\functor{F}_1(\operatorname{w}^1_{\CATA}\circ\operatorname{v}^1_{\CATA}),
\id_{\functor{F}_0(A^3_{\CATA})}\right)\]
(applied on the left of \eqref{eq-45}). Then we get easily that $\Gamma^3$ is represented by
the following diagram:

\begin{equation}\label{eq-26}
\begin{tikzpicture}[xscale=2.6,yscale=-0.8]
    \node (A0_2) at (2, 0) {$A^2_{\CATB}$};
    \node (A2_0) at (0, 2) {$\functor{F}_0(A_{\CATA})$};
    \node (A2_2) at (2, 2) {$\widetilde{A}_{\CATB}$};
    \node (A2_4) at (4, 2) {$\functor{F}_0(B_{\CATA})$.};
    \node (A4_2) at (2, 4) {$A^3_{\CATB}$};

    \node (A2_1) at (1.1, 2) {$\Downarrow\,i_{\functor{F}_1(\operatorname{w}^1_{\CATA}
      \circ\operatorname{v}^1_{\CATA})}\ast\eta_{\CATB}\ast i_{\operatorname{s}_{\CATB}}$};
    \node (A2_3) at (2.8, 2.1) {$\Downarrow\,i_{\functor{F}_1(f^2_{\CATA})}\ast\rho_{\CATB}$};
         
    \path (A4_2) edge [->]node [auto,swap] {$\scriptstyle{\functor{F}_1(f^2_{\CATA})
      \circ\operatorname{z}^3_{\CATB}}$} (A2_4);
    \path (A0_2) edge [->]node [auto] {$\scriptstyle{\functor{F}_1(f^2_{\CATA})
      \circ\operatorname{z}^2_{\CATB}}$} (A2_4);
    \path (A0_2) edge [->]node [auto,swap] {$\scriptstyle{\functor{F}_1
      (\operatorname{w}^1_{\CATA}\circ\operatorname{v}^1_{\CATA})
      \circ\operatorname{u}^2_{\CATB}}$} (A2_0);
    \path (A2_2) edge [->]node [auto,swap] {$\scriptstyle{\operatorname{t}^2_{\CATB}\circ
      \operatorname{r}^2_{\CATB}\circ\operatorname{s}_{\CATB}}$} (A0_2);
    \path (A2_2) edge [->]node [auto] {$\scriptstyle{\operatorname{r}^3_{\CATB}
      \circ\operatorname{s}_{\CATB}}$} (A4_2);
    \path (A4_2) edge [->]node [auto] {$\scriptstyle{\functor{F}_1
      (\operatorname{w}^1_{\CATA}\circ\operatorname{v}^1_{\CATA})
      \circ\operatorname{u}^3_{\CATB}}$} (A2_0);
\end{tikzpicture}
\end{equation}

Now we need to compute $\Gamma^1$; we recall that the inverse of $\Xi(\operatorname{w}^1_{\CATA}
\circ\operatorname{v}^1_{\CATA})$ has a representative given by the inverse of \eqref{eq-28}
(namely, the same diagram with upper and lower part interchanged); moreover,

\[\overline{\functor{F}}_1(f^2_{\CATA})\circ\functor{P}(\operatorname{w}^2_{\CATA})=
\Big(\functor{F}_0(A^2_{\CATA}),\functor{F}_1(\operatorname{w}^2_{\CATA}),
\functor{F}_1(f^2_{\CATA})\Big).\]

We also recall that by construction the morphisms $\operatorname{u}^2_{\CATB}$,
$\operatorname{t}_{\CATB}$, $\operatorname{r}^2_{\CATB}$ and $\operatorname{s}_{\CATB}$ belong
all to $\SETW$. So by (BF2) and (BF5) applied to $\eta_{\CATB}^{-1}
\ast i_{\operatorname{s}_{\CATB}}$, we get that $\operatorname{u}^3_{\CATB}\circ
\operatorname{r}^3_{\CATB}\circ\operatorname{s}_{\CATB}$ belongs to $\SETWB$. Since also 
$\operatorname{u}^3_{\CATB}$ belongs to $\SETWB$, then by Proposition~\ref{prop-03}(ii) we conclude
that $\operatorname{r}^3_{\CATB}\circ\operatorname{s}_{\CATB}$ belongs to $\SETWBsat$.
So by Definition~\ref{def-01} there are an object $\widetilde{A}'_{\CATB}$ and a morphism
$\operatorname{s}'_{\CATB}:\widetilde{A}'_{\CATB}\rightarrow\widetilde{A}_{\CATB}$, such that
$\operatorname{r}^3_{\CATB}\circ\operatorname{s}_{\CATB}\circ
\operatorname{s}'_{\CATB}$ belongs to $\SETWB$.\\

Then we use~\cite[Proposition~0.4]{T3} with $\rho^1:=\sigma^3_{\CATB}$ and $\rho^2=i_{\functor{F}
(\operatorname{w}^2_{\CATA})}$; a set of choices (F8) -- (F10) for this case is easily given by
$\eta^1:=i_{\operatorname{u}^3_{\CATB}}$, $\eta^2:=\sigma^3_{\CATB}$, $\eta^3:=
i_{\operatorname{u}^3_{\CATB}\circ\operatorname{r}^3_{\CATB}\circ\operatorname{s}_{\CATB}
\circ\operatorname{s}'_{\CATB}}$
and $\beta':=i_{\operatorname{z}^3_{\CATB}\circ\operatorname{r}^3_{\CATB}\circ
\operatorname{s}_{\CATB}\circ\operatorname{s}'_{\CATB}}$. Then we get that $\Gamma^1$ is represented
by the following diagram:

\[
\begin{tikzpicture}[xscale=2.8,yscale=-0.8]
    \node (A0_2) at (2, 0) {$A^3_{\CATB}$};
    \node (A2_0) at (0, 2) {$\functor{F}_0(A_{\CATA})$};
    \node (A2_2) at (2, 2) {$\widetilde{A}'_{\CATB}$};
    \node (A2_4) at (4, 2) {$\functor{F}_0(B_{\CATA})$.};
    \node (A4_2) at (2, 4) {$\functor{F}_0(A^2_{\CATA})$};
    
    \node (A2_3) at (2.9, 2) {$\Downarrow\,i_{\functor{F}_1(f^2_{\CATA})
      \circ\operatorname{z}^3_{\CATB}\circ\operatorname{r}^3_{\CATB}
      \circ\operatorname{s}_{\CATB}\circ\operatorname{s}'_{\CATB}}$};
    \node (A2_1) at (1.1, 2) {$\Downarrow\,\sigma^3_{\CATB}\ast
      i_{\operatorname{r}^3_{\CATB}\circ\operatorname{s}_{\CATB}\circ\operatorname{s}'_{\CATB}}$};

    \node (C1_1) at (2.48, 2.78) {$\scriptstyle{\operatorname{z}^3_{\CATB}
      \circ\operatorname{r}^3_{\CATB}\circ\operatorname{s}_{\CATB}
      \circ\operatorname{s}'_{\CATB}}$};
    
    \path (A0_2) edge [->]node [auto,swap] {$\scriptstyle{\functor{F}_1
      (\operatorname{w}^1_{\CATA}\circ\operatorname{v}^1_{\CATA})
      \circ\operatorname{u}^3_{\CATB}}$} (A2_0);
    \path (A4_2) edge [->]node [auto] {$\scriptstyle{\functor{F}_1
      (\operatorname{w}^2_{\CATA})}$} (A2_0);
    \path (A4_2) edge [->]node [auto,swap] {$\scriptstyle{\functor{F}_1(f^2_{\CATA})}$} (A2_4);
    \path (A2_2) edge [->]node [auto,swap] {$\scriptstyle{\operatorname{r}^3_{\CATB}
      \circ\operatorname{s}_{\CATB}\circ\operatorname{s}'_{\CATB}}$} (A0_2);
    \path (A2_2) edge [->]node [auto] {} (A4_2);
    \path (A0_2) edge [->]node [auto] {$\scriptstyle{\functor{F}_1(f^2_{\CATA})
      \circ\operatorname{z}^3_{\CATB}}$} (A2_4);
\end{tikzpicture}
\]

Therefore, $\Gamma^1$ is also represented by the following diagram:

\begin{equation}\label{eq-25}
\begin{tikzpicture}[xscale=2.6,yscale=-0.8]
    \node (A0_2) at (2, 0) {$A^3_{\CATB}$};
    \node (A2_0) at (0, 2) {$\functor{F}_0(A_{\CATA})$};
    \node (A2_2) at (2, 2) {$\widetilde{A}_{\CATB}$};
    \node (A2_4) at (4, 2) {$\functor{F}_0(B_{\CATA})$.};
    \node (A4_2) at (2, 4) {$\functor{F}_0(A^2_{\CATA})$};
    
    \node (A2_3) at (2.9, 2) {$\Downarrow\,i_{\functor{F}_1(f^2_{\CATA})
      \circ\operatorname{z}^3_{\CATB}\circ\operatorname{r}^3_{\CATB}
      \circ\operatorname{s}_{\CATB}}$};
    \node (A2_1) at (1.1, 2) {$\Downarrow\,\sigma^3_{\CATB}\ast
      i_{\operatorname{r}^3_{\CATB}\circ\operatorname{s}_{\CATB}}$};

    \path (A0_2) edge [->]node [auto,swap] {$\scriptstyle{\functor{F}_1
      (\operatorname{w}^1_{\CATA}\circ\operatorname{v}^1_{\CATA})
      \circ\operatorname{u}^3_{\CATB}}$} (A2_0);
    \path (A4_2) edge [->]node [auto] {$\scriptstyle{\functor{F}_1
      (\operatorname{w}^2_{\CATA})}$} (A2_0);
    \path (A4_2) edge [->]node [auto,swap] {$\scriptstyle{\functor{F}_1(f^2_{\CATA})}$} (A2_4);
    \path (A2_2) edge [->]node [auto,swap] {$\scriptstyle{\operatorname{r}^3_{\CATB}
      \circ\operatorname{s}_{\CATB}}$} (A0_2);
    \path (A2_2) edge [->]node [auto] {$\scriptstyle{\operatorname{z}^3_{\CATB}
      \circ\operatorname{r}^3_{\CATB}\circ\operatorname{s}_{\CATB}}$} (A4_2);
    \path (A0_2) edge [->]node [auto] {$\scriptstyle{\functor{F}_1(f^2_{\CATA})
      \circ\operatorname{z}^3_{\CATB}}$} (A2_4);
\end{tikzpicture}
\end{equation}

Then using \eqref{eq-25}, \eqref{eq-26} and \eqref{eq-29}, we get that $\Gamma^1\odot\Gamma^3$
is represented by the following diagram:

\[
\begin{tikzpicture}[xscale=2.6,yscale=-0.8]
    \node (A0_2) at (2, 0) {$A^2_{\CATB}$};
    \node (A2_0) at (0, 2) {$\functor{F}_0(A_{\CATA})$};
    \node (A2_2) at (2, 2) {$\widetilde{A}_{\CATB}$};
    \node (A2_4) at (4, 2) {$\functor{F}_0(B_{\CATA})$,};
    \node (A4_2) at (2, 4) {$\functor{F}_0(A^2_{\CATA})$};

    \node (A2_3) at (2.8, 2) {$\Downarrow\,i_{\functor{F}_1(f^2_{\CATA})}\ast\rho_{\CATB}$};
    \node (B1_1) at (1.1, 2) {$\Downarrow\,\varepsilon_{\CATB}$};
    
    \path (A4_2) edge [->]node [auto,swap] {$\scriptstyle{\functor{F}_1(f^2_{\CATA})}$} (A2_4);
    \path (A0_2) edge [->]node [auto] {$\scriptstyle{\functor{F}_1(f^2_{\CATA})
      \circ\operatorname{z}^2_{\CATB}}$} (A2_4);
    \path (A4_2) edge [->]node [auto] {$\scriptstyle{\functor{F}_1
      (\operatorname{w}^2_{\CATA})}$} (A2_0);
    \path (A0_2) edge [->]node [auto,swap] {$\scriptstyle{\functor{F}_1
      (\operatorname{w}^1_{\CATA}\circ\operatorname{v}^1_{\CATA})
      \circ\operatorname{u}^2_{\CATB}}$} (A2_0);
    \path (A2_2) edge [->]node [auto] {$\scriptstyle{\operatorname{t}^2_{\CATB}
      \circ\operatorname{r}^2_{\CATB}\circ\operatorname{s}_{\CATB}}$} (A0_2);
    \path (A2_2) edge [->]node [auto,swap] {$\scriptstyle{\operatorname{z}^3_{\CATB}
      \circ\operatorname{r}^3_{\CATB}\circ\operatorname{s}_{\CATB}}$} (A4_2);
\end{tikzpicture}
\]
where $\varepsilon_{\CATB}$ is the following composition

\[\Big(i_{\functor{F}_1(\operatorname{w}^2_{\CATA})}\ast\rho_{\CATB}\Big)\odot
\Big(i_{\functor{F}_1(\operatorname{w}^2_{\CATA})}\ast\mu^2_{\CATB}\ast i_{\operatorname{r}^2_{\CATB}
\circ\operatorname{s}_{\CATB}}\Big)\odot\Big(\functor{F}_2(\alpha_{\CATA})\ast
i_{\operatorname{u}^2_{\CATB}\circ\operatorname{t}^2_{\CATB}\circ\operatorname{r}^2_{\CATB}
\circ\operatorname{s}_{\CATB}}\Big).\]

Then we get easily that $\Gamma^1\odot\Gamma^3$ is also represented by the following diagram:

\begin{equation}\label{eq-31}
\begin{tikzpicture}[xscale=2.6,yscale=-0.8]
    \node (A0_2) at (2, 0) {$A^2_{\CATB}$};
    \node (A2_0) at (0, 2) {$\functor{F}_0(A_{\CATA})$};
    \node (A2_2) at (2, 2) {$\overline{A}^2_{\CATB}$};
    \node (A2_4) at (4, 2) {$\functor{F}_0(B_{\CATA})$.};
    \node (A4_2) at (2, 4) {$\functor{F}_0(A^2_{\CATA})$};

    \node (A2_3) at (2.75, 1.9) {$\Downarrow\,i_{\functor{F}_1(f^2_{\CATA})}
      \ast(\mu^2_{\CATB})^{-1}$};
    \node (B3_3) at (1.2, 2) {$\Downarrow\,\functor{F}_2(\alpha_{\CATA})\ast
      i_{\operatorname{u}^2_{\CATB}\circ\operatorname{t}^2_{\CATB}}$};
    
    \node (B1_1) at (2.38, 2.65) {$\scriptstyle{\functor{F}_1(\operatorname{v}^2_{\CATA})
      \circ\operatorname{u}^2_{\CATB}\circ}$};
    \node (B2_2) at (2.15, 3.1) {$\scriptstyle{\circ\operatorname{t}^2_{\CATB}}$};

    \path (A4_2) edge [->]node [auto,swap] {$\scriptstyle{\functor{F}_1(f^2_{\CATA})}$} (A2_4);
    \path (A0_2) edge [->]node [auto] {$\scriptstyle{\functor{F}_1(f^2_{\CATA})
      \circ\operatorname{z}^2_{\CATB}}$} (A2_4);
    \path (A4_2) edge [->]node [auto] {$\scriptstyle{\functor{F}_1
      (\operatorname{w}^2_{\CATA})}$} (A2_0);
    \path (A0_2) edge [->]node [auto,swap] {$\scriptstyle{\functor{F}_1
      (\operatorname{w}^1_{\CATA}\circ\operatorname{v}^1_{\CATA})
      \circ\operatorname{u}^2_{\CATB}}$} (A2_0);
    \path (A2_2) edge [->]node [auto,swap] {$\scriptstyle{\operatorname{t}^2_{\CATB}}$} (A0_2);
    \path (A2_2) edge [->]node [auto] {} (A4_2);
\end{tikzpicture}
\end{equation}

If we compose \eqref{eq-31} with \eqref{eq-38}, we get that $\Gamma^1\odot\Gamma^3\odot\Gamma^5$
is represented by the following diagram:

\begin{equation}\label{eq-35}
\begin{tikzpicture}[xscale=2.8,yscale=-0.8]
    \node (A0_2) at (2, 0) {$\functor{F}_0(A^3_{\CATA})$};
    \node (A2_0) at (0, 2) {$\functor{F}_0(A_{\CATA})$};
    \node (A2_2) at (2, 2) {$\functor{F}_0(A^3_{\CATA})$};
    \node (A2_4) at (4, 2) {$\functor{F}_0(B_{\CATA})$.};
    \node (A4_2) at (2, 4) {$\functor{F}_0(A^2_{\CATA})$};

    \node (A2_3) at (2.8, 2) {$\Downarrow\,i_{\functor{F}_1(f^2_{\CATA}
      \circ\operatorname{v}^2_{\CATA})}$};
    \node (B3_3) at (1.2, 2) {$\Downarrow\,\functor{F}_2(\alpha_{\CATA})$};
    
    \path (A4_2) edge [->]node [auto,swap] {$\scriptstyle{\functor{F}_1(f^2_{\CATA})}$} (A2_4);
    \path (A0_2) edge [->]node [auto] {$\scriptstyle{\functor{F}_1(f^2_{\CATA}
      \circ\operatorname{v}^2_{\CATA})}$} (A2_4);
    \path (A4_2) edge [->]node [auto] {$\scriptstyle{\functor{F}_1
      (\operatorname{w}^2_{\CATA})}$} (A2_0);
    \path (A0_2) edge [->]node [auto,swap] {$\scriptstyle{\functor{F}_1
      (\operatorname{w}^1_{\CATA}\circ\operatorname{v}^1_{\CATA})}$} (A2_0);
    \path (A2_2) edge [->]node [auto,swap] {$\scriptstyle{\id_{\functor{F}_0(A^3_{\CATA})}}$} (A0_2);
    \path (A2_2) edge [->]node [auto] {$\scriptstyle{\functor{F}_1
      (\operatorname{v}^2_{\CATA})}$} (A4_2);
\end{tikzpicture}
\end{equation}

Lastly, using together \eqref{eq-41} and \eqref{eq-35}, we get that $\Gamma^1\odot\Gamma^3\odot
\Gamma^5\odot\Gamma^7\odot\Gamma^{10}\odot\Gamma^{12}$ is represented by the following composition:

\begin{equation}\label{eq-12}
\begin{tikzpicture}[xscale=2.8,yscale=-0.8]
    \node (A0_2) at (2, 0) {$\functor{F}_0(A^1_{\CATA})$};
    \node (A2_0) at (0, 2) {$\functor{F}_0(A_{\CATA})$};
    \node (A2_2) at (2, 2) {$\functor{F}_0(A^3_{\CATA})$};
    \node (A2_4) at (4, 2) {$\functor{F}_0(B_{\CATA})$.};
    \node (A4_2) at (2, 4) {$\functor{F}_0(A^2_{\CATA})$};

    \node (A2_3) at (2.8, 2) {$\Downarrow\,\functor{F}_2(\beta_{\CATA})$};
    \node (B3_3) at (1.2, 2) {$\Downarrow\,\functor{F}_2(\alpha_{\CATA})$};
    
    \path (A4_2) edge [->]node [auto,swap] {$\scriptstyle{\functor{F}_1(f^2_{\CATA})}$} (A2_4);
    \path (A0_2) edge [->]node [auto] {$\scriptstyle{\functor{F}_1(f^1_{\CATA})}$} (A2_4);
    \path (A4_2) edge [->]node [auto] {$\scriptstyle{\functor{F}_1
      (\operatorname{w}^2_{\CATA})}$} (A2_0);
    \path (A0_2) edge [->]node [auto,swap] {$\scriptstyle{\functor{F}_1
      (\operatorname{w}^1_{\CATA})}$} (A2_0);
    \path (A2_2) edge [->]node [auto,swap] {$\scriptstyle{\functor{F}_1
      (\operatorname{v}^1_{\CATA})}$} (A0_2);
    \path (A2_2) edge [->]node [auto] {$\scriptstyle{\functor{F}_1
      (\operatorname{v}^2_{\CATA})}$} (A4_2);
\end{tikzpicture}
\end{equation}

By the proof of~\cite[Theorem~21]{Pr} we have that $\functor{M}_2$ is well-defined on
classes (i.e.\ the equivalence class of \eqref{eq-12} does not depend on the choice of a
representative for \eqref{eq-75}). Moreover the same result implies that there is a set
of unitors $\Sigma^{\functor{M}}_{\bullet}$ and associators
$\Psi^{\functor{M}}_{\bullet}$, making the data $(\functor{M}_0,
\functor{M}_1,\functor{M}_2)$ into a pseudofunctor $\functor{M}$.
Since we are assuming that $\functor{F}$ is a strict pseudofunctor, then the class of \eqref{eq-12}
coincides with the class of \eqref{eq-11}.\\

If we denote by $\functor{U}_{\SETWB}$
the universal pseudofunctor
from $\CATB$ to $\CATB\left[\SETWBinv\right]$, again
following the proof of~\cite[Theorem~21]{Pr}, we get a pseudonatural equivalence

\[\zeta:\,\functor{U}_{\SETWB}\circ\functor{F}\Longrightarrow
\functor{M}\circ\,\functor{U}_{\SETWA}\quad\operatorname{in}\quad
\operatorname{Hom}_{\SETWA}\left(\CATA,
\CATB\left[\SETWBinv\right]\right).\]

Since we don't need to describe $\zeta$ explicitly here, we postpone its description to
Remark~\ref{rem-05} below.\\

So we have completely proved Proposition~\ref{prop-06} in the particular case where $\CATA$
and $\CATB$ are $2$-categories and $\functor{F}$ is a strict pseudofunctor (i.e.\ a $2$-functor)
between them.\\

In the general case, in the proof above we have to set

\begin{gather*}
\Xi(\operatorname{w}_{\CATA}):=\Big[\functor{F}_0(A'_{\CATA}),\functor{F}_1
 (\operatorname{w}_{\CATA}),\id_{\functor{F}_0(A'_{\CATA})}, \\
\left(\pi_{\functor{F}_1(\operatorname{w}_{\CATA})\circ\id_{\functor{F}_0(A'_{\CATA})}}
 \right)^{-1}\odot\left(\pi_{\functor{F}_1(\operatorname{w}_{\CATA})}\right)^{-1}
 \odot\upsilon_{\functor{F}_1(\operatorname{w}_{\CATA})}, \\
\left(\pi_{\functor{F}_1(\operatorname{w}_{\CATA})\circ\id_{\functor{F}_0(A'_{\CATA})}}
 \right)^{-1}\odot\left(\pi_{\functor{F}_1(\operatorname{w}_{\CATA})}\right)^{-1}\odot
 \upsilon_{\functor{F}_1(\operatorname{w}_{\CATA})}\Big]: \\
\Big(\functor{F}_0(A_{\CATA}),\id_{\functor{F}_0(A_{\CATA})},\id_{\functor{F}_0(A_{\CATA})}\Big)
 \Longrightarrow \\
\Longrightarrow\Big(\functor{F}_0(A'_{\CATA}),\functor{F}_1(\operatorname{w}_{\CATA})\circ
 \id_{\functor{F}_0(A'_{\CATA})},\functor{F}_1(\operatorname{w}_{\CATA})\circ
 \id_{\functor{F}_0(A'_{\CATA})}\Big)
\end{gather*}
and

\begin{gather*}
\Delta(\operatorname{w}_{\CATA}):=\Big[\functor{F}_0(A'_{\CATA}),
 \id_{\functor{F}_0(A'_{\CATA})},\id_{\functor{F}_0(A'_{\CATA})}, \\
\pi_{\id_{\functor{F}_0(A'_{\CATA})}\circ\id_{\functor{F}_0(A'_{\CATA})}},
 \pi_{\id_{\functor{F}_0(A'_{\CATA})}\circ\id_{\functor{F}_0(A'_{\CATA})}}\Big]: \\
\Big(\functor{F}_0(A'_{\CATA}),\id_{\functor{F}_0(A'_{\CATA})}\circ\id_{\functor{F}_0(A'_{\CATA})},
 \id_{\functor{F}_0(A'_{\CATA})}\circ\id_{\functor{F}_0(A'_{\CATA})}\Big)\Longrightarrow \\
\Longrightarrow\Big(\functor{F}_0(A'_{\CATA}),\id_{\functor{F}_0(A'_{\CATA})},\id_{\functor{F}_0
 (A'_{\CATA})}\Big),
\end{gather*}
where $\pi_{\bullet}$ and $\upsilon_{\bullet}$ are the right and left unitors of $\CATB$. 
Moreover, we have to add unitors and associators for $\CATA$, $\CATB$ and
$\functor{F}$ wherever it is necessary. Then following the previous computations, we 
get a pseudofunctor
$\functor{M}$ such that $\functor{M}_0(A_{\CATA})=
\functor{F}_0(A_{\CATA})$ for each object $A_{\CATA}$, but that has the slightly more complicated
form mentioned in \eqref{eq-63} and \eqref{eq-68}. For example, \eqref{eq-63}
follows from \eqref{eq-37} and \eqref{eq-81}: in
a $2$-category we can omit the pair of identities that we obtain in \eqref{eq-63},
but we cannot do the same if $\CATB$ is simply a bicategory.
\end{proof}

\begin{rem}\label{rem-03}
The proof above implicitly uses the axiom of choice because in (A) we had to
fix a structure of bicategory
on $\CATA\left[\SETWAinv\right]$ and on $\CATB\left[\SETWBinv\right]$, and this implicitly
requires the axiom of choice in~\cite{Pr}. However, choices (B) in the proof above do not need the
axiom of choice since we have a precise prescription on how to define each morphism $\functor{P}
(\operatorname{w}_{\CATA})$ and each $2$-morphism $\Delta(\operatorname{w}_{\CATA})$ and
$\Xi(\operatorname{w}_{\CATA})$, not relying on axioms (BF).
In other terms, the construction of $\functor{M}$ above
does not require the axiom of choice if we can fix:

\begin{itemize}
 \item a set choices \hyperref[C]{C}$(\SETWA)$ and
 \item a set of choices \hyperref[C]{C}$(\SETWB)$ satisfying condition (\hyperref[C3]{C3}),
\end{itemize}
in such a way that the axiom of choice is not used (see also~\cite[Corollary~0.6]{T3} for
more details).
\end{rem}

\begin{cor}\label{cor-01}
Let us fix any $2$ pairs $(\CATA,\SETWA)$ and $(\CATB,\SETWB)$, both satisfying conditions
\emphatic{(BF)} and any pseudofunctor $\functor{F}:\CATA\rightarrow\CATB$
such that $\functor{F}_1(\SETWA)\subseteq\SETWB$. Moreover, let us fix any set of choices
\emphatic{\hyperref[C]{C}}$(\SETWB)$ satisfying condition \emphatic{(\hyperref[C3]{C3})}.
Then there are a pseudofunctor

\[\functor{N}:\,\CATA\Big[\SETWAinv\Big]\longrightarrow\CATB\Big[\SETWBinv\Big]\]
\emphatic{(}where $\CATB[\SETWBinv]$ is the bicategory of fractions induced by choices
\emphatic{\hyperref[C]{C}}$(\SETWB)$\emphatic{)}    
and a pseudonatural equivalence $\partial:\functor{U}_{\SETWB}\circ\functor{F}
\Rightarrow\functor{N}\circ\functor{U}_{\SETWA}$, such that:

\begin{enumerate}[\emphatic{(}I\emphatic{)}]
 \item the pseudofunctor $\mu_{\partial}:\CATA\rightarrow\operatorname{Cyl}
  \left(\CATB\left[\SETWBinv\right]\right)$ associated to $\partial$ sends each morphism
  of $\SETWA$ to an internal equivalence;
 
 \item for each object $A_{\CATA}$, we have $\functor{N}_0(A_{\CATA})=
  \functor{F}_0(A_{\CATA})$;

 \item for each morphism $(A'_{\CATA},\operatorname{w}_{\CATA},f_{\CATA}):A_{\CATA}\rightarrow
  B_{\CATA}$ in $\CATA\left[\SETWAinv\right]$, we have 
   
  \[\functor{N}_1\Big(A'_{\CATA},\operatorname{w}_{\CATA}\Big)=\Big(\functor{F}_0(A'_{\CATA}),
  \functor{F}_1(\operatorname{w}_{\CATA}),\functor{F}_1(f_{\CATA})\Big);\]

 \item for each $2$-morphism as in \eqref{eq-75} in $\CATA\left[\SETWAinv\right]$, we have
  
  \begin{gather*}
  \functor{N}_2\Big(\Big[A^3_{\CATA},\operatorname{v}^1_{\CATA},
   \operatorname{v}^2_{\CATA},\alpha_{\CATA},\beta_{\CATA}\Big]\Big)= 
   \Big[\functor{F}_0(A^3_{\CATA}),\functor{F}_1(\operatorname{v}^1_{\CATA}),
   \functor{F}_1(\operatorname{v}^1_{\CATA}), \\
  \psi^{\functor{F}}_{\operatorname{w}^2_{\CATA},
   \operatorname{v}^2_{\CATA}}\odot\functor{F}_2(\alpha_{\CATA})\odot\Big(
   \psi^{\functor{F}}_{\operatorname{w}^1_{\CATA},\operatorname{v}^1_{\CATA}}\Big)^{-1},
   \psi^{\functor{F}}_{f^2_{\CATA},\operatorname{v}^2_{\CATA}}\odot
   \functor{F}_2(\beta_{\CATA})\odot\Big(\psi^{\functor{F}}_{f^1_{\CATA},
   \operatorname{v}^1_{\CATA}}\Big)^{-1}\Big]
  \end{gather*}
  \emphatic{(}where the $2$-morphisms $\psi_{\bullet}^{\functor{F}}$ are the associators of
  $\functor{F}$\emphatic{)}.
\end{enumerate}
\end{cor}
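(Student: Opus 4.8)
The plan is to obtain $\functor{N}$ by ``straightening'' the pseudofunctor $\functor{M}$ furnished by Proposition~\ref{prop-06}. Comparing the formulas \eqref{eq-63} and \eqref{eq-68} for $\functor{M}$ with those claimed for $\functor{N}$, one sees that the two pseudofunctors are meant to coincide on objects and to differ on $1$- and $2$-morphisms only by the right-unitor factors $\pi_{\bullet}$ of $\CATB$ that arise from the spurious compositions with $\id_{\functor{F}_0(A'_{\CATA})}$ in \eqref{eq-37} and \eqref{eq-81}. Accordingly I would first set $\functor{N}_0:=\functor{M}_0=\functor{F}_0$ and define $\functor{N}_1$ by the clean formula in~(III). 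For each $1$-morphism $g=(A'_{\CATA},\operatorname{w}_{\CATA},f_{\CATA})$ of $\CATA\left[\SETWAinv\right]$ I would then write down the canonical invertible $2$-morphism
\[
\theta_g:\,\functor{M}_1(g)\Longrightarrow\functor{N}_1(g)
\]
represented by the class $\left[\functor{F}_0(A'_{\CATA}),\id_{\functor{F}_0(A'_{\CATA})},\id_{\functor{F}_0(A'_{\CATA})},-,-\right]$, whose two invertible $2$-cells are built from the right unitors $\pi_{\functor{F}_1(\operatorname{w}_{\CATA})}$ and $\pi_{\functor{F}_1(f_{\CATA})}$ of $\CATB$; invertibility is immediate since these unitors are invertible and by Lemma~\ref{lem-04}.

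Next I would transport the pseudofunctor structure of $\functor{M}$ along the family $\{\theta_g\}$. Explicitly, for a $2$-morphism $\Gamma$ as in \eqref{eq-75} I set $\functor{N}_2(\Gamma):=\theta_{g^2}\odot\functor{M}_2(\Gamma)\odot\theta_{g^1}^{-1}$, and I define the associators and unitors of $\functor{N}$ by conjugating those of $\functor{M}$ by the appropriate horizontal composites of the $2$-cells $\theta_{\bullet}$ (for instance $\psi^{\functor{N}}_{g,h}:=(\theta_g\ast\theta_h)\odot\psi^{\functor{M}}_{g,h}\odot\theta_{g\circ h}^{-1}$). The pseudofunctor axioms for $\functor{N}$ follow formally from those of $\functor{M}$ together with the interchange law; this is a routine transport-of-structure argument, and by the same token the $2$-cells $\theta_{\bullet}$ assemble into a pseudonatural equivalence $\Theta:\functor{M}\Rightarrow\functor{N}$ that is the identity on objects. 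Finally, since $\functor{M}_0=\functor{N}_0$, I would set $\partial:=\left(\Theta\ast i_{\functor{U}_{\SETWA}}\right)\odot\zeta$, where $\zeta$ is the pseudonatural equivalence of Proposition~\ref{prop-06}; being the vertical composite of two pseudonatural equivalences, $\partial$ is again a pseudonatural equivalence $\functor{U}_{\SETWB}\circ\functor{F}\Rightarrow\functor{N}\circ\functor{U}_{\SETWA}$.

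It then remains to verify (I)--(IV). Conditions (II) and (III) hold by construction. For (I), each component $\partial(\operatorname{w}_{\CATA})$ with $\operatorname{w}_{\CATA}\in\SETWA$ differs from $\zeta(\operatorname{w}_{\CATA})$ only by pasting with the invertible structure $2$-cells of $\Theta$ and with the identity components $\Theta_{\bullet}=\id$; since $\mu_{\zeta}$ sends $\SETWA$ to internal equivalences by Proposition~\ref{prop-06}(I), Lemmas~\ref{lem-04} and~\ref{lem-06} show that $\mu_{\partial}$ does too. The main obstacle is~(IV): I must compute the vertical composite $\theta_{g^2}\odot\functor{M}_2(\Gamma)\odot\theta_{g^1}^{-1}$ by means of the explicit description of vertical composition in $\CATB\left[\SETWBinv\right]$ from~\cite{T3}, and check that the right-unitor factors $\pi_{\functor{F}_1(\operatorname{w}^1_{\CATA})}$, $\pi_{\functor{F}_1(\operatorname{w}^2_{\CATA})}$, $\pi_{\functor{F}_1(f^1_{\CATA})}$ and $\pi_{\functor{F}_1(f^2_{\CATA})}$ occurring in \eqref{eq-68} are exactly absorbed by the unitors carried by $\theta_{g^1}^{-1}$ and $\theta_{g^2}$, leaving precisely the clean expression in the statement. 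This cancellation is the point where the triangle (coherence) identities of $\CATB$ enter; everything else is formal.
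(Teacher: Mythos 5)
Your proposal is correct and is essentially the paper's own proof: the paper likewise defines, for each morphism $\underline{f}$ of $\CATA\left[\SETWAinv\right]$, an invertible unitor-built $2$-cell $\varphi(\underline{f}):\functor{N}_1(\underline{f})\Rightarrow\functor{M}_1(\underline{f})$ (your $\theta_{\underline{f}}^{-1}$), transports the associators and unitors of $\functor{M}$ by conjugation, checks the pseudofunctor axioms via the coherence result \cite[Corollary~5.1]{T3}, assembles the $\varphi(\underline{f})$ into a pseudonatural equivalence $\varphi:\functor{N}\Rightarrow\functor{M}$, and sets $\partial:=(\varphi^{-1}\ast i_{\functor{U}_{\SETWA}})\odot\zeta$. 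The only cosmetic difference is the order of quantification: the paper takes the clean formula in (IV) as the definition of $\functor{N}_2$ and verifies the conjugation identity, whereas you define $\functor{N}_2$ by conjugation and verify it reduces to the clean formula; the computation involved is the same.
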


\begin{proof}
It is easy to prove that $\functor{N}$ is well-defined on $2$-morphisms, i.e.\ that it does not
depend on the representative chosen for \eqref{eq-75}. So the statement gives
a description of $\functor{N}$ on objects, morphisms and $2$-morphisms, hence
it suffices to describe a set of associators and unitors for $\functor{N}$ and to prove that
the axioms of a pseudofunctor are satisfied. We want to induce such data from the associators
and unitors for the pseudofunctor $\functor{M}$ constructed in Proposition~\ref{prop-06}. Given
any morphism 

\[\underline{f}:=\Big(A'_{\CATA},\operatorname{w}_{\CATA},f_{\CATA}\Big):\,A_{\CATA}\longrightarrow
B_{\CATA}\]
in $\CATA\left[\SETWAinv\right]$, we define an invertible $2$-morphism

\[\varphi\left(\underline{f}\right):\,
\functor{N}_1(\underline{f})\Longrightarrow
\functor{M}_1(\underline{f})\]
as the class of the following diagram:

\[
\begin{tikzpicture}[xscale=2.6,yscale=-0.8]
    \node (A0_2) at (2, 0) {$\functor{F}_0(A'_{\CATA})$};
    \node (A2_0) at (0, 2) {$\functor{F}_0(A_{\CATA})$};
    \node (A2_2) at (2, 2) {$\functor{F}_0(A'_{\CATA})$};
    \node (A2_4) at (4, 2) {$\functor{F}_0(B_{\CATA})$.};
    \node (A4_2) at (2, 4) {$\functor{F}_0(A'_{\CATA})$};

    \node (A2_3) at (2.9, 2) {$\Downarrow\,\pi^{-1}_{\functor{F}_1(f_{\CATA})\circ
      \id_{\functor{F}_0(A'_{\CATA})}}$};
    \node (A2_1) at (1.1, 2) {$\Downarrow\,\pi^{-1}_{\functor{F}_1(\operatorname{w}_{\CATA})
      \circ\id_{\functor{F}_0(A'_{\CATA})}}$};
    
    \path (A4_2) edge [->]node [auto,swap] {$\scriptstyle{\functor{F}_1(f)\circ
      \id_{\functor{F}_0(A'_{\CATA})}}$} (A2_4);
    \path (A0_2) edge [->]node [auto] {$\scriptstyle{\functor{F}_1(f_{\CATA})}$} (A2_4);
    \path (A0_2) edge [->]node [auto,swap] {$\scriptstyle{\functor{F}_1
      (\operatorname{w}_{\CATA})}$} (A2_0);
    \path (A2_2) edge [->]node [auto,swap] {$\scriptstyle{\id_{\functor{F}_0(A'_{\CATA})}}$} (A0_2);
    \path (A2_2) edge [->]node [auto] {$\scriptstyle{\id_{\functor{F}_0(A'_{\CATA})}}$} (A4_2);
    \path (A4_2) edge [->]node [auto] {$\scriptstyle{\functor{F}_1(\operatorname{w}_{\CATA})
      \circ\id_{\functor{F}_0(A'_{\CATA})}}$} (A2_0);
\end{tikzpicture}
\]

Then given any $2$-morphism $\Gamma:\underline{f}^1\Rightarrow\underline{f}^2$ in
$\CATA\left[\SETWAinv\right]$ we have easily the following identity:

\begin{equation}\label{eq-76}
\functor{N}_2(\Gamma)=\varphi\left(\underline{f}^2\right)^{-1}\odot\functor{M}_2
(\Gamma)\odot\varphi\left(\underline{f}^1\right).
\end{equation}

Now given any other morphism $\underline{g}:B_{\CATA}\rightarrow C_{\CATA}$ in $\CATA\left[
\SETWAinv\right]$, we define the associator for $\functor{N}$ relative to the pair $(\underline{g},
\underline{f})$ as the following composition:

\[\Psi^{\functor{N}}_{\underline{g},\underline{f}}:=
\Big(\varphi\left(\underline{g}\right)^{-1}\ast\varphi\left(\underline{f}\right)^{-1}\Big)\odot
\Psi^{\functor{M}}_{\underline{g},\underline{f}}\odot\varphi\left(\underline{g}\circ
\underline{f}\right):\,\functor{N}_1(\underline{g}\circ\underline{f})\Longrightarrow
\functor{N}_1(\underline{g})\circ\functor{N}_1(\underline{f}).\]

Moreover, for any object $A_{\CATA}$, we define the unitor for $\functor{N}$ relative to
$A_{\CATA}$ as the following composition:

\[\Sigma^{\functor{N}}_{A_{\CATA}}:=\Sigma^{\functor{M}}_{A_{\CATA}}
\odot\varphi\left(\id_{A_{\CATA}}\right):\,\functor{N}_1(\id_{A_{\CATA}})\Longrightarrow
\id_{\functor{M}_0(A_{\CATA})}=\id_{\functor{F}_0(A_{\CATA})}=
\id_{\functor{N}_0(A_{\CATA})}.\]

Then we claim that the set of data 

\[\functor{N}:=\Big(\functor{N}_0=\functor{F}_0,\functor{N}_1,
\functor{N}_2,\Psi^{\functor{N}}_{\bullet},\Sigma^{\functor{N}}_{\bullet}\Big)\]
is a pseudofuntor. First of all, we have to verify that $\functor{N}$ preserves
associators, namely that given any pair of morphisms $\underline{f},\underline{g}$ as above and
any morphism $\underline{h}:C_{\CATA}\rightarrow D_{\CATA}$ in $\CATA\left[\SETWAinv\right]$,
the associator

\begin{equation}\label{eq-79}
\Thetaa{\functor{N}_1(\underline{h})}{\functor{N}_1(\underline{g})}
{\functor{N}_1(\underline{f})}:\,
\functor{N}_1(\underline{h})\circ\Big(\functor{N}_1(\underline{g})\circ
\functor{N}_1(\underline{f})\Big)\Longrightarrow\Big(\functor{N}_1
(\underline{h})\circ\functor{N}_1(\underline{g})\Big)\circ
\functor{N}_1(\underline{f})
\end{equation}
in $\CATB\left[\SETWBinv\right]$ coincides with the composition:

\begin{equation}\label{eq-77}
\Big(\Psi^{\functor{N}}_{\underline{h},\underline{g}}\ast i_{\functor{N}_1
(\underline{f})}\Big)\odot\Psi^{\functor{N}}_{\underline{h}\circ\underline{g},
\underline{f}}\odot\functor{N}_2\left(\Thetaa{\underline{h}}{\underline{g}}
{\underline{f}}\right)
\odot\Big(\Psi^{\functor{N}}_{\underline{h},\underline{g}\circ\underline{f}}\Big)^{-1}
\odot\Big(i_{\functor{N}_1(\underline{h})}\ast
\Psi^{\functor{N}}_{\underline{g},\underline{f}}\Big)^{-1}.
\end{equation}

In \eqref{eq-77} we replace the definition of the associators
$\Psi^{\functor{N}}_{\bullet}$ and we use \eqref{eq-76} for $\Gamma=
\Thetaa{\underline{h}}{\underline{g}}{\underline{f}}$. So after some simplifications, \eqref{eq-77}
is equal to the following composition

\begin{gather}
\nonumber\Big(\Big(\varphi\left(\underline{h}\right)^{-1}\ast\varphi\left(\underline{g}\right)^{-1}
 \Big)\ast\varphi\left(\underline{f}\right)^{-1}\Big)\odot \\
\nonumber \odot\Big(\Psi^{\functor{M}}_{\underline{h},\underline{g}}\ast
 i_{\functor{M}_1(\underline{f})}\Big)\odot\Psi^{\functor{M}}_{\underline{h}
  \circ\underline{g},\underline{f}}\odot
 \functor{M}_2\left(\Thetaa{\underline{h}}{\underline{g}}{\underline{f}}\right)\odot
 \Big(\Psi^{\functor{M}}_{\underline{h},\underline{g}\circ\underline{f}}\Big)^{-1}
 \odot\Big(i_{\functor{M}_1(\underline{h})}\ast
 \Psi^{\functor{M}}_{\underline{g},\underline{f}}\Big)^{-1}\odot \\
\label{eq-78}\odot\Big(\varphi\left(\underline{h}\right)\ast\Big(\varphi\left(\underline{g}\right)
 \ast\varphi\left(\underline{f}\right)\Big)\Big).
\end{gather}

Since $\functor{M}$ is a pseudofunctor by Proposition~\ref{prop-06},
then the central line of \eqref{eq-78}
is equal to the associator $\Thetaa{\functor{M}_1(\underline{h})}
{\functor{M}_1(\underline{g})}{\functor{M}_1(\underline{f})}$.
Then by~\cite[Corollary~5.1]{T3} applied to the bicategory $\CATB\left[\SETWBinv\right]$ and
for $\chi(\functor{N}_1(\underline{f}))=\varphi(\underline{f})$ (and analogously
for $\underline{g}$ and $\underline{h}$),
we conclude that \eqref{eq-78} coincides with \eqref{eq-79}.\\

All the remaining axioms of a pseudofunctor for $\functor{N}$ follow easily from
the analogous conditions for the pseudofuntor $\functor{M}$ and from \eqref{eq-76}.
So we have proved that there is a pseudofunctor $\functor{N}$ satisfying the claim
of Corollary~\ref{cor-01}. Then it remains only to define a pseudonatural equivalence
$\partial$
as in the claim. For that, we remark that the set of invertible $2$-morphisms $\{\varphi
(\underline{f})\}_{\underline{f}}$ (indexed on all morphisms $\underline{f}$ of $\CATA\left[
\SETWAinv\right]$) induces a pseudonatural equivalence $\varphi:\functor{N}
\Rightarrow\functor{M}$. Then we define

\[\partial:=\Big(\varphi^{-1}\ast i_{\functor{U}_{\SETWA}}\Big)\odot\zeta:\,\,\functor{U}_{\SETWB}
\circ\functor{F}\Longrightarrow\functor{N}\circ\functor{U}_{\SETWA},\]
where $\zeta:\functor{U}_{\SETWB}
\circ\functor{F}\Rightarrow\functor{M}\circ\functor{U}_{\SETWA}$ is the pseudonatural equivalence
obtained in Proposition~\ref{prop-06}. By~\cite[Proposition~20]{Pr}
the pseudofunctor $\functor{U}_{\SETWA}$ sends each morphism of $\SETWA$ to an internal
equivalence, hence so does the pseudofunctor associated to $i_{\functor{U}_{\SETWA}}$, therefore
also the pseudofunctor associated to $\varphi^{-1}\ast i_{\functor{U}_{\SETWA}}$ sends each morphism
of $\SETWA$ to an internal equivalence. Since also $\zeta$ does the same by
Proposition~\ref{prop-06}, then we have proved that $\partial$ satisfies the claim.
\end{proof}

\begin{rem}
In the proof of~\cite[Theorem~21]{Pr} the unitors and the associators for the induced
pseudofunctor $\functor{M}$ ($\widetilde{\functor{F}}$ in Pronk's notations)
are not described explicitly, nor it is explicitly shown that all the axioms of a pseudofunctor
are satisfied. This is why we have not described them explicitly in the present paper, nor we
have described explicitly the induced unitors and associators for $\functor{N}$. The reader
interested in
such (long, but most of the time straightforward) details can download an additional appendix from
our website (\href{http://matteotommasini.altervista.org}{http://matteotommasini.altervista.org}).
\end{rem}

\begin{rem}\label{rem-05}
Given the pseudofunctor $\functor{M}$ constructed in Proposition~\ref{prop-06},
for each object $A_{\CATA}$ we have

\[\functor{U}_{\SETWB,0}\circ\functor{F}_0(A_{\CATA})=\functor{F}_0(A_{\CATA})=
\functor{M}_0(A_{\CATA})=\functor{M}_0\circ\functor{U}_{\SETWA,0}
(A_{\CATA}).\]

Moreover, for each morphism $f_{\CATA}:A_{\CATA}\rightarrow B_{\CATA}$ we have

\[\functor{U}_{\SETWB,1}\circ\functor{F}_1(f_{\CATA})=\Big(\functor{F}_0(A_{\CATA}),
\id_{\functor{F}_0(A_{\CATA})},\functor{F}_1(f_{\CATA})\Big)\]
and

\[\functor{M}_1\circ\functor{U}_{\SETWA,1}(f_{\CATA})=\Big(\functor{F}_0(A_{\CATA}),
\functor{F}_1(\id_{A_{\CATA}})\circ\id_{\functor{F}_0(A_{\CATA})},
\functor{F}_1(f_{\CATA})\circ\id_{\functor{F}_0(A_{\CATA})}\Big).\]

Then a pseudonatural equivalence $\zeta:\functor{U}_{\SETWB}
\circ\functor{F}\Rightarrow\functor{M}\circ\functor{U}_{\SETWA}$ as in Proposition~\ref{prop-06}
has to be given by the data of an internal equivalence

\[\zeta(A_{\CATA}):\,\,\functor{F}_0(A_{\CATA})\longrightarrow\functor{F}_0(A_{\CATA})\]
in $\CATB\left[\SETWBinv\right]$ for each object $A_{\CATA}$ and by the data of an
invertible $2$-morphism

\begin{gather*}
\zeta(f_{\CATA}):\,\zeta(B_{\CATA})\circ\Big(\functor{F}_0(A_{\CATA}),
 \id_{\functor{F}_0(A_{\CATA})},\functor{F}_1(f_{\CATA})\Big)\Longrightarrow \\
\Longrightarrow\Big(\functor{F}_0(A_{\CATA}),\functor{F}_1(\id_{A_{\CATA}})
 \circ\id_{\functor{F}_0(A_{\CATA})},\functor{F}_1(f_{\CATA})\circ
 \id_{\functor{F}_0(A_{\CATA})}\Big)\circ\zeta(A_{\CATA})
\end{gather*}
for each morphism $f_{\CATA}:A_{\CATA}\rightarrow B_{\CATA}$. Then following the proof
of~\cite[Theorem~21]{Pr}, a possible choice for $\zeta$ is given as follows. First of all,
we set

\begin{equation}\label{eq-62}
\zeta(A_{\CATA}):=\Big(\functor{F}_0(A_{\CATA}),\id_{\functor{F}_0(A_{\CATA})},
\id_{\functor{F}_0(A_{\CATA})}\Big)
\end{equation}
for each object $A_{\CATA}$; then we declare that for each morphism $f_{\CATA}$ as above,
$\zeta(f_{\CATA})$ is the invertible $2$-morphism represented by the following diagram:

\[
\begin{tikzpicture}[xscale=2.2,yscale=-0.8]
    \node (A0_2) at (2, 0) {$\functor{F}_0(A_{\CATA})$};
    \node (A2_0) at (0, 2) {$\functor{F}_0(A_{\CATA})$};
    \node (A2_2) at (2, 2) {$\functor{F}_0(A_{\CATA})$};
    \node (A2_4) at (4, 2) {$\functor{F}_0(B_{\CATA})$,};
    \node (A4_2) at (2, 4) {$\functor{F}_0(A_{\CATA})$};

    \node (A2_1) at (1, 2) {$\Downarrow\,\varepsilon^1_{\CATB}$};
    \node (A2_3) at (3, 2) {$\Downarrow\,\varepsilon^2_{\CATB}$};
    
    \path (A0_2) edge [->]node [auto,swap] {$\scriptstyle{\id_{\functor{F}_0(A_{\CATA})}
      \circ\id_{\functor{F}_0(A_{\CATA})}}$} (A2_0);
    \path (A0_2) edge [->]node [auto] {$\scriptstyle{\id_{\functor{F}_0(B_{\CATA})}
      \circ\functor{F}_1(f_{\CATA})}$} (A2_4);
    \path (A4_2) edge [->]node [auto,swap] {$\scriptstyle{(\functor{F}_1(f_{\CATA})
      \circ\id_{\functor{F}_0(A_{\CATA})})\circ\id_{\functor{F}_0(A_{\CATA})}}$} (A2_4);
    \path (A2_2) edge [->]node [auto,swap] {$\scriptstyle{\id_{\functor{F}_0(A_{\CATA})}}$} (A0_2);
    \path (A2_2) edge [->]node [auto] {$\scriptstyle{\id_{\functor{F}_0(A_{\CATA})}}$} (A4_2);
    \path (A4_2) edge [->]node [auto] {$\scriptstyle{\id_{\functor{F}_0(A_{\CATA})}
      \circ(\functor{F}_1(\id_{A_{\CATA}})\circ\id_{\functor{F}_0(A_{\CATA})})}$} (A2_0);
\end{tikzpicture}
\]
where

\begin{gather*}
\varepsilon^1_{\CATB}:=\Big(i_{\id_{\functor{F}_0(A_{\CATA})}}\ast
 \Big(\Big(\left(\sigma^{\functor{F}}_{A_{\CATA}}\right)^{-1}\ast
 i_{\id_{\functor{F}_0(A_{\CATA})}}\Big)\odot\pi^{-1}_{\id_{\functor{F}_0(A_{\CATA})}}
 \Big)\Big)\ast i_{\id_{\functor{F}_0(A_{\CATA})}},\\
\varepsilon^2_{\CATB}:=\Big(\pi^{-1}_{\functor{F}_1(f_{\CATA})\circ\id_{\functor{F}_0(A_{\CATA})}}
 \odot\pi^{-1}_{\functor{F}_1(f_{\CATA})}\odot
 \upsilon_{\functor{F}_1(f_{\CATA})}\Big)\ast i_{\id_{\functor{F}_0(A_{\CATA})}}.
\end{gather*}

Then using the proof of Corollary~\ref{cor-01}, the induced pseudonatural equivalence $\partial:
\functor{U}_{\SETWB}
\circ\functor{F}\Rightarrow\functor{N}\circ\functor{U}_{\SETWA}$
coincides with \eqref{eq-62} for each object $A_{\CATA}$; for each morphism $f_{\CATA}$ as above,
$\partial(f_{\CATA})$ is represented by the following diagram

\[
\begin{tikzpicture}[xscale=2.2,yscale=-0.8]
    \node (A0_2) at (2, 0) {$\functor{F}_0(A_{\CATA})$};
    \node (A2_0) at (0, 2) {$\functor{F}_0(A_{\CATA})$};
    \node (A2_2) at (2, 2) {$\functor{F}_0(A_{\CATA})$};
    \node (A2_4) at (4, 2) {$\functor{F}_0(B_{\CATA})$,};
    \node (A4_2) at (2, 4) {$\functor{F}_0(A_{\CATA})$};

    \node (A2_1) at (1.2, 2) {$\Downarrow\,\mu^1_{\CATB}$};
    \node (A2_3) at (2.8, 2) {$\Downarrow\,\mu^2_{\CATB}$};
    
    \path (A0_2) edge [->]node [auto,swap] {$\scriptstyle{\id_{\functor{F}_0(A_{\CATA})}
      \circ\id_{\functor{F}_0(A_{\CATA})}}$} (A2_0);
    \path (A0_2) edge [->]node [auto] {$\scriptstyle{\id_{\functor{F}_0(B_{\CATA})}
      \circ\functor{F}_1(f_{\CATA})}$} (A2_4);
    \path (A4_2) edge [->]node [auto,swap] {$\scriptstyle{\functor{F}_1(f_{\CATA})
      \circ\id_{\functor{F}_0(A_{\CATA})}}$} (A2_4);
    \path (A2_2) edge [->]node [auto,swap] {$\scriptstyle{\id_{\functor{F}_0(A_{\CATA})}}$} (A0_2);
    \path (A2_2) edge [->]node [auto] {$\scriptstyle{\id_{\functor{F}_0(A_{\CATA})}}$} (A4_2);
    \path (A4_2) edge [->]node [auto] {$\scriptstyle{\id_{\functor{F}_0(A_{\CATA})}
      \circ\functor{F}_1(\id_{A_{\CATA}})}$} (A2_0);
\end{tikzpicture}
\]
where

\begin{gather*}
\mu^1_{\CATB}:=\Big(i_{\id_{\functor{F}_0(A_{\CATA})}}\ast
 \left(\sigma^{\functor{F}}_{A_{\CATA}}\right)^{-1}\Big)\ast i_{\id_{\functor{F}_0(A_{\CATA})}},\quad
\mu^2_{\CATB}:=\Big(\pi_{\functor{F}_1(f_{\CATA})}^{-1}\odot
 \upsilon_{\functor{F}_1(f_{\CATA})}\Big)\ast i_{\id_{\functor{F}_0(A_{\CATA})}}.
\end{gather*}

In particular, if $\CATB$ is a $2$-category and $\functor{F}$ preserves $1$-identities, then
$\partial$ is the identical natural transformation of the pseudofunctor
$\functor{U}_{\SETWB}\circ\functor{F}=\functor{N}\circ\functor{U}_{\SETWA}$.
\end{rem}

\begin{cor}\label{cor-04}
Let us fix any pair $(\CATC,\SETW)$ satisfying conditions \emphatic{(BF)}.
Let us fix any pair of choices \emphatic{\hyperref[C]{C}}$^m(\SETW)$ for $m=1,2$
and let us denote by $\CATC^m\left[\SETWinv\right]$ and $\functor{U}^m_{\SETW}$ for $m=1,2$ the
associated bicategories of fractions and universal pseudofunctors.
Then there is a pseudofunctor

\begin{equation}\label{eq-80}
\functor{Q}:\,\CATC^1\left[\SETWinv\right]\longrightarrow\CATC^2\left[\SETWinv\right]
\end{equation}
that is the identity on objects, morphisms and $2$-morphisms. Moreover, there is a pseudonatural
equivalence

\[\phi:\,\functor{U}^2_{\SETW}\Longrightarrow\functor{Q}\circ\functor{U}^1_{\SETW}\quad\textrm{in}
\quad\operatorname{Hom}'_{\SETW}\left(\CATC,\CATC^2\left[\SETWinv\right]\right).\]
\end{cor}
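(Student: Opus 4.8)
The plan is to deduce the statement from Corollary~\ref{cor-01}, applied to the identity pseudofunctor $\id_{\CATC}$, by bridging the two arbitrary sets of choices through an auxiliary one satisfying condition (\hyperref[C3]{C3}). First I would fix a third set of choices \hyperref[C]{C}$^3(\SETW)$ satisfying conditions (\hyperref[C1]{C1}), (\hyperref[C2]{C2}) and (\hyperref[C3]{C3}); this is always possible by the discussion following (\hyperref[C3]{C3}) in \S~\ref{sec-03}. Denote by $\CATC^3\left[\SETWinv\right]$ and $\functor{U}^3_{\SETW}$ the associated bicategory of fractions and universal pseudofunctor. Since $(\id_{\CATC})_1(\SETW)=\SETW\subseteq\SETW$, I can apply Corollary~\ref{cor-01} with $(\CATA,\SETWA)=(\CATB,\SETWB)=(\CATC,\SETW)$ and $\functor{F}=\id_{\CATC}$, using $C^1(\SETW)$ (respectively $C^2(\SETW)$) on the source and the fixed $C^3(\SETW)$ on the target. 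As $\id_{\CATC}$ is a strict $2$-functor, all its associators are $2$-identities, so the explicit descriptions in Corollary~\ref{cor-01}(II)--(IV) collapse, and the two resulting pseudofunctors

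\[\functor{N}_{13}:\,\CATC^1\left[\SETWinv\right]\longrightarrow\CATC^3\left[\SETWinv\right],
\qquad\functor{N}_{23}:\,\CATC^2\left[\SETWinv\right]\longrightarrow\CATC^3\left[\SETWinv\right]\]

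are both the identity on objects, on morphisms and on $2$-morphisms. Corollary~\ref{cor-01} also yields pseudonatural equivalences $\partial_{13}:\functor{U}^3_{\SETW}\Rightarrow\functor{N}_{13}\circ\functor{U}^1_{\SETW}$ and $\partial_{23}:\functor{U}^3_{\SETW}\Rightarrow\functor{N}_{23}\circ\functor{U}^2_{\SETW}$, both lying in $\operatorname{Hom}'_{\SETW}(\CATC,\CATC^3\left[\SETWinv\right])$.

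I would then observe that $\functor{N}_{23}$ is a strict isomorphism of bicategories. Indeed, the three bicategories $\CATC^m\left[\SETWinv\right]$ have the same objects, the same $1$-morphisms and the same hom-categories of $2$-morphisms: the choices $C^m(\SETW)$ intervene only in horizontal composition and in the associators and unitors, not in the vertical structure. Hence a pseudofunctor which is the identity on all cells admits a two-sided inverse $\functor{N}_{23}^{-1}:\CATC^3\left[\SETWinv\right]\to\CATC^2\left[\SETWinv\right]$, again the identity on all cells, whose associators and unitors are the inverses of those of $\functor{N}_{23}$; the pseudofunctor axioms for $\functor{N}_{23}^{-1}$ follow by inverting those of $\functor{N}_{23}$, and $\functor{N}_{23}^{-1}\circ\functor{N}_{23}=\id_{\CATC^2\left[\SETWinv\right]}$. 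I would set

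\[\functor{Q}:=\functor{N}_{23}^{-1}\circ\functor{N}_{13}:\,\CATC^1\left[\SETWinv\right]
\longrightarrow\CATC^2\left[\SETWinv\right],\]

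which is again the identity on objects, morphisms and $2$-morphisms, as required in \eqref{eq-80}.

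For the pseudonatural equivalence $\phi$, I would whisker $\partial_{13}$ and $\partial_{23}$ on the left by $\functor{N}_{23}^{-1}$ and compose. Using $\functor{N}_{23}^{-1}\circ\functor{N}_{23}=\id$ one has $\functor{N}_{23}^{-1}\circ\functor{N}_{23}\circ\functor{U}^2_{\SETW}=\functor{U}^2_{\SETW}$ and $\functor{N}_{23}^{-1}\circ\functor{N}_{13}\circ\functor{U}^1_{\SETW}=\functor{Q}\circ\functor{U}^1_{\SETW}$ (up to the canonical associators for composition of pseudofunctors), so

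\[\phi:=\Big(i_{\functor{N}_{23}^{-1}}\ast\partial_{13}\Big)\odot\Big(i_{\functor{N}_{23}^{-1}}
\ast\partial_{23}\Big)^{-1}:\,\functor{U}^2_{\SETW}\Longrightarrow\functor{Q}\circ\functor{U}^1_{\SETW}\]

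is a pseudonatural equivalence. It belongs to $\operatorname{Hom}'_{\SETW}(\CATC,\CATC^2\left[\SETWinv\right])$ because $\partial_{13}$ and $\partial_{23}$ do, and because whiskering by $\functor{N}_{23}^{-1}$ (which preserves internal equivalences), together with vertical composition and inversion, preserves the property of sending each morphism of $\SETW$ to an internal equivalence. The step I expect to be the main obstacle is the middle one: establishing cleanly that the hom-categories and their vertical compositions are genuinely independent of the chosen $C^m(\SETW)$, so that $\functor{N}_{23}$ is invertible with an identity-on-cells inverse; once this is granted, the construction of $\functor{Q}$ and $\phi$ is a formal manipulation of whiskerings and vertical compositions.
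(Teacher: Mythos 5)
Your proposal is correct and follows essentially the same route as the paper's own proof: the paper likewise fixes an auxiliary set of choices satisfying (\hyperref[C3]{C3}), applies Corollary~\ref{cor-01} with $\functor{F}=\id_{\CATC}$ to map both $\CATC^1\left[\SETWinv\right]$ and $\CATC^2\left[\SETWinv\right]$ into the resulting bicategory, inverts the second identity-on-cells pseudofunctor (justified, as you do, by the fact that the cells and vertical structure are independent of the choices), and assembles $\functor{Q}$ and $\phi$ by the same composition and whiskering. The step you flag as the main obstacle is exactly the one the paper also relies on and treats as immediate.
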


The existence of an equivalence of bicategories as in \eqref{eq-80} and of $\phi$
is an obvious consequence
of the universal property of bicategories of fractions (see~\cite[Theorem~21]{Pr}). However,
in~\cite[Theorem~21]{Pr} there are no explicit descriptions on the behavior of $\functor{Q}$
on objects, morphisms and $2$-morphisms, so Corollary~\ref{cor-04} a priori is not trivial.

\begin{proof}
Let us fix any set of choices \hyperref[C]{C}$(\SETW)$ satisfying condition (\hyperref[C3]{C3}).
Then we apply Corollary~\ref{cor-01} to the case when:

\begin{itemize}
 \item $(\CATA,\SETWA):=(\CATC,\SETW)$ and the choices for this pair are given by
  \hyperref[C]{C}$\,^1(\SETW)$; the associated bicategory is then $\CATC^1\left[\SETWinv\right]$;
 \item $(\CATB,\SETWB):=(\CATC,\SETW)$ and the choices for this pair are given by
  \hyperref[C]{C}$(\SETW)$; we denote the associated bicategory by $\CATC\left[\SETWinv\right]$;
 \item the pseudofunctor $\functor{F}$ is the identity of $\CATC$.
\end{itemize}

Then there are an induced pseudofunctor

\[\functor{N}^1:\,\CATC^1\left[\SETWinv\right]\longrightarrow\CATC\left[\SETWinv\right]\]
given on objects, morphisms and $2$-morphisms as the identity (its associators are induced by the
choices \hyperref[C]{C}$\,^1(\SETW)$ and \hyperref[C]{C}$(\SETW)$) and a pseudonatural equivalence
of pseudofunctors

\[\partial^1:\functor{U}_{\SETW}\Longrightarrow\functor{N}^1\circ\functor{U}^1_{\SETW}
\quad\textrm{in}\quad\operatorname{Hom}'_{\SETW}\left(\CATC,\CATC\left[\SETWinv\right]\right).\]

Analogously, there are an induced pseudofunctor

\[\functor{N}^2:\,\CATC^2\left[\SETWinv\right]\longrightarrow\CATC\left[\SETWinv\right]\]
given on objects, morphisms and $2$-morphisms as the identity, and a pseudonatural equivalence

\[\partial^2:\functor{U}_{\SETW}\Longrightarrow\functor{N}^2\circ\functor{U}^2_{\SETW}
\quad\textrm{in}\quad\operatorname{Hom}'_{\SETW}\left(\CATC,\CATC\left[\SETWinv\right]\right).\]

Since the objects, morphisms and 
$2$-morphisms of the source of $\functor{N}^2$ are the same as those of its target, we have that
actually $\functor{N}^2$ is a bijection on objects, morphisms and $2$-morphisms. So
it is easy to construct a pseudofunctor

\[\widetilde{\functor{N}}^2:\,\CATC\left[\SETWinv\right]\longrightarrow\CATC^2\left[\SETWinv\right]\]
that is an inverse for $\functor{N}^2$: it is described as the identity on objects,
morphisms and $2$-morphisms; its associators and unitors are induced by the inverses of the
associators and unitors for $\functor{N}^2$. This induces also a pseudonatural equivalence

\[\tau:\,\functor{U}^2_{\SETW}\Longrightarrow\widetilde{\functor{N}}^2\circ\functor{U}_{\SETW}
\quad\textrm{in}\quad\operatorname{Hom}'_{\SETW}\left(\CATC,\CATC^2\left[\SETWinv\right]\right).\]

Then we define $\functor{Q}:=\widetilde{\functor{N}}^2\circ\functor{N}^1$ and we set

\[\phi:=\thetaa{\widetilde{\functor{N}}^2}{\functor{N}^1}{\functor{U}^1_{\SETW}}\odot
\Big(i_{\widetilde{\functor{N}}^2}\ast\partial^1\Big)\odot\tau:\,\,\functor{U}^2_{\SETW}
\Longrightarrow\functor{Q}\circ\functor{U}^1_{\SETW}\quad\textrm{in}
\quad\operatorname{Hom}'_{\SETW}\left(\CATC,\CATC^2\left[\SETWinv\right]\right).\]
\end{proof}

\begin{proof}[Proof of Theorem~\ref{theo-03}.]
First of all, we prove part (B) of the statement, so let us assume that $\functor{F}_1(\SETWA)
\subseteq\SETWB$. Let us suppose that the bicategory $\CATB\left[\SETWBinv\right]$ is induced
by a set of choices \hyperref[C]{C}$(\SETW)$. If \hyperref[C]{C}$(\SETW)$ satisfies condition
(\hyperref[C3]{C3}), then (B) coincides with Corollary~\ref{cor-01}. Otherwise,
let us fix another set of choices \hyperref[C]{C}$\,'(\SETWB)$ satisfying condition
(\hyperref[C3]{C3}),
let us denote by $\CATB'\left[\SETWBinv\right]$ the associated bicategory of fractions
and by $\functor{U}'_{\SETWB}$ the associated universal pseudofunctor.
Then there are a pseudofunctor

\[\functor{N}:\,\CATA\left[\SETWAinv\right]\longrightarrow\CATB'\left[\SETWBinv\right]\]
and a pseudonatural equivalence $\partial:\functor{U}'_{\SETWB}\circ\functor{F}\Rightarrow
\functor{N}\circ\functor{U}_{\SETWA}$ satisfying Corollary~\ref{cor-01}. Now we apply
Corollary~\ref{cor-04} for \hyperref[C]{C}$^1(\SETW):=$\hyperref[C]{C}$\,'(\SETW)$ and
\hyperref[C]{C}$^2(\SETW):=$\hyperref[C]{C}$(\SETW)$. So there are a pseudofunctor

\[\functor{Q}:\CATB'\left[\SETWBinv\right]\longrightarrow\CATB\left[\SETWBinv\right]\]
that is the identity on objects, morphisms and $2$-morphisms, and a pseudonatural equivalence

\[\phi:\,\functor{U}_{\SETWB}\Longrightarrow\functor{Q}\circ\functor{U}'_{\SETWB}
\quad\textrm{in}\quad\operatorname{Hom}'_{\SETW}\left(\CATC,\CATC\left[\SETWinv\right]\right).\]

We set $\widetilde{\functor{G}}:=\functor{Q}\circ\functor{N}:\CATA\left[\SETWAinv\right]
\rightarrow\CATB\left[\SETWBinv\right]$. Since $\functor{Q}$
is the identity on objects, morphisms and $2$-morphisms, then the description of
$\widetilde{\functor{G}}$ on such data coincides with the description of $\functor{N}$ on
the same data (see Corollary~\ref{cor-01}), so conditions (II), (III) and (IV) are satisfied.
Then we define:

\[\widetilde{\kappa}:=\thetaa{\functor{Q}}{\functor{N}}{\functor{U}_{\SETWA}}\odot
\Big(i_{\functor{Q}}\ast\partial\Big)\odot\thetab{\functor{Q}}{\functor{U}'_{\SETWB}}{\functor{F}}
\odot\Big(\phi\ast i_{\functor{F}}\Big):\,\,\functor{U}_{\SETWB}\circ\functor{F}\Longrightarrow
\widetilde{\functor{G}}\circ\functor{U}_{\SETWA}.\]

Using the properties of $\partial$ and $\phi$ already stated in Corollaries~\ref{cor-01}
and~\ref{cor-04}, we conclude that $\widetilde{\kappa}$ satisfies condition (I). This suffices
to prove part (B) of Theorem~\ref{theo-03}.\\

Now let us prove also part (A), so let us assume only that $\functor{F}_1(\SETWA)\subseteq\SETWBsat$.
Since $(\CATB,\SETWB)$ satisfies conditions (BF), then by Lemma~\ref{lem-05}
we have that also $(\CATB,\SETWBsat)$ satisfies conditions (BF). Therefore, we can
apply part (B) to the case when we replace $(\CATB,\SETWB)$ by $(\CATB,\SETWBsat)$. So there
are a pseudofunctor

\[\widetilde{\functor{G}}:\,\CATA\Big[\SETWAinv\Big]\longrightarrow\CATB'\Big[\SETWBsatinv
\Big]\]
and a pseudonatural equivalence $\widetilde{\kappa}:\functor{U}_{\SETWBsat}\circ\functor{F}
\Rightarrow\widetilde{\functor{G}}\circ\functor{U}_{\SETWA}$, such that

\begin{itemize}
 \item the pseudofunctor $\mu_{\widetilde{\kappa}}:\CATA\rightarrow\operatorname{Cyl}
  \left(\CATB\left[\SETWBsatinv\right]\right)$ associated to $\widetilde{\kappa}$ sends each morphism
  of $\SETWA$ to an internal equivalence;
\item conditions (II), (III) and (IV) hold.
\end{itemize}
\end{proof}

\begin{rem}\label{rem-06}
In the case when $\functor{F}_1(\SETWA)\cap(\SETWBsat\smallsetminus\SETWB)\neq\varnothing$, then a
pair
$(\functor{G},\kappa)$ as in Theorem~\ref{theo-04}(iv) can be obtained in any of the following $2$
ways. Both give the same pseudofunctor (up to pseudonatural equivalences), and in both cases such a
pseudofunctor is very complicated to study directly. The first possibility is to follow the proof
of~\cite[Theorem~21]{Pr}, as we did in the proof of Proposition~\ref{prop-06}. In this case, it is
much more difficult to give a set of data $(\functor{P}(\operatorname{w}_{\CATA}),
\Delta(\operatorname{w}_{\CATA}),\Xi(\operatorname{w}_{\CATA}))$ for each $\operatorname{w}_{\CATA}
\in\SETWA$; moreover in general one cannot express the composition of diagram \eqref{eq-02} in a
simple form. The second possibility is given as follows: first of all we consider
the pair $(\widetilde{\functor{G}},\widetilde{\kappa})$ described in Theorem~\ref{theo-03}(A).
Then we consider the pair

\[\functor{H}_{\CATB}:\,\CATB\Big[\SETWBsatinv\Big]\longrightarrow\CATB\Big[
\SETWBinv\Big],\quad\quad
\tau_{\CATB}:\,\functor{U}_{\SETWB}\Longrightarrow\functor{H}_{\CATB}\circ\functor{U}_{\SETWBsat}\]
associated to $(\CATB,\SETWB)$ by Proposition~\ref{prop-01} and we set:

\begin{itemize}
 \item $\functor{G}:=\functor{H}_{\CATB}\circ\widetilde{\functor{G}}$;
 \item $\kappa:=\thetaa{\functor{H}_{\CATB}}{\widetilde{\functor{G}}}
  {\functor{U}_{\SETWA}}\odot(i_{\functor{H}_{\CATB}}\ast\widetilde{\kappa})\odot
  \thetab{\functor{H}_{\CATB}}{\functor{U}_{\SETWBsat}}{\functor{F}}\odot 
  (\tau_{\CATB}\ast i_{\functor{F}})$.
\end{itemize}

In this case, the complexity of the pseudofunctor $\functor{G}$ is hidden in the complexity
of $\functor{H}_{\CATB}$, that was also implicitly obtained using~\cite[Theorem~21]{Pr}: indeed
$\functor{H}_{\CATB}$ was obtained in Proposition~\ref{prop-01}, that uses Theorem~\ref{theo-02},
that is essentially part of~\cite[Theorem~21]{Pr}.
Therefore, also in this case in general it is not possible
to give a simple description of $\functor{G}$.
\end{rem}

Now we are ready to prove the third main result of this paper.

\begin{proof}[Proof of Corollary~\ref{cor-03}.]
Let us fix any pair $(\functor{G},\kappa)$ as in Theorem~\ref{theo-04}(iv). By that theorem, we have
$\functor{F}_1(\SETWA)\subseteq\SETWBsat$. This implies that there
are a pseudofunctor

\[\widetilde{\functor{G}}:\,\CATA\Big[\SETWAinv\Big]\longrightarrow\CATB\Big[\SETWBsatinv\Big],\]
described as in Theorem~\ref{theo-04}(A), and a pseudonatural equivalence $\widetilde{\kappa}:
\functor{U}_{\SETWBsat}\circ\functor{F}\Rightarrow\widetilde{\functor{G}}\circ\functor{U}_{\SETWA}$,
that is an internal equivalence in $\operatorname{Hom}'_{\SETWA}\left(\CATA,\CATB\left[\SETWBsatinv
\right]\right)$. By Proposition~\ref{prop-01} applied to the pair
$(\CATB,\SETWB)$, there are an equivalence of bicategories

\[\functor{H}_{\CATB}:\,\CATB\Big[\SETWBsatinv\Big]\longrightarrow\CATB\Big[\SETWBinv\Big]\]
and a pseudonatural equivalence of pseudofunctors

\[\tau_{\CATB}:\,\functor{U}_{\SETWB}\Longrightarrow\functor{H}_{\CATB}\circ
\functor{U}_{\SETWBsat}\]
belonging to $\operatorname{Hom}'_{\SETWBsat}(\CATB,\CATB\left[\SETWBinv\right])$.
Now let us consider the following composition of pseudonatural equivalences of pseudofunctors:

\begin{gather*}
\eta:=\thetaa{\functor{H}_{\CATB}}{\widetilde{\functor{G}}}{\functor{U}_{\SETWA}}\odot
 \Big(i_{\functor{H}_{\CATB}}\ast\widetilde{\kappa}\Big)\odot
 \thetab{\functor{H}_{\CATB}}{\functor{U}_{\SETWBsat}}{\functor{F}}\odot\Big(\tau_{\CATB}
 \ast i_{\functor{F}}\Big)\odot\kappa^{-1}: \\
\functor{G}\circ\functor{U}_{\SETWA}\Longrightarrow
 (\functor{H}_{\CATB}\circ\widetilde{\functor{G}})\circ\functor{U}_{\SETWA}.
\end{gather*}

Now:

\begin{itemize}
 \item $\kappa$ belongs to $\operatorname{Hom}'_{\SETWA}\left(\CATA,\CATB\left[\SETWBinv\right]\right)$;
 \item $\functor{F}$ belongs to $\operatorname{Hom}(\CATA,\CATB)$ and is such that
  $\functor{F}_1(\SETWA)\subseteq\SETWBsat$; moreover $\tau_{\CATB}$ is a morphism in
  $\operatorname{Hom}'_{\SETWBsat}\left(\CATB,\CATB\left[\SETWBinv\right]\right)$, so $\tau_{\CATB}\ast
  i_{\functor{F}}$ is a morphism in $\operatorname{Hom}'_{\SETWA}\left(\CATA,\CATB\left[\SETWBinv\right]
  \right)$;
 \item $\widetilde{\kappa}$ is a morphism in $\operatorname{Hom}'_{\SETWA}\left(\CATA,\CATB\left[
  \SETWBsatinv,\right]\right)$, so $i_{\functor{H}_{\CATB}}\ast\widetilde{\kappa}$ is a morphism in
  $\operatorname{Hom}'_{\SETWA}(\CATA,\CATB\left[\SETWBinv\right])$.
\end{itemize}

We recall that by~Theorem~\ref{theo-01} we have an equivalence of bicategories

\begin{equation}\label{eq-47}
\functor{E}:\,\operatorname{Hom}\Big(\CATA\left[\SETWAinv\right],\CATB\left[\SETWBinv\right]\Big)
\longrightarrow\operatorname{Hom}_{\SETWA}\Big(\CATA,\CATB\left[\SETWBinv\right]\Big),
\end{equation}
given for each object

\[\functor{G}:\CATA\left[\SETWAinv\right]\longrightarrow\CATB\left[\SETWBinv\right]\]
in $\operatorname{Hom}(\CATA\left[\SETWAinv\right],\CATB\left[\SETWBinv\right])$ by

\[\functor{E}(\functor{G}):=\functor{G}\circ\functor{U}_{\SETWA}:\,\CATA\longrightarrow\CATB
\left[\SETWBinv\right].\]

So we have defined an internal equivalence

\[\eta:\,\functor{E}(\functor{G})\Longrightarrow\functor{E}(\functor{H}_{\CATB}\circ
\widetilde{\functor{G}})\]
in the bicategory $\operatorname{Hom}'_{\SETWA}(\CATA,\CATB\left[\SETWBinv\right])\subseteq
\operatorname{Hom}_{\SETWA}(\CATA,\CATB\left[\SETWBinv\right])$. Since
$\functor{E}$ is an equivalence of bicategories, this implies that there is an internal
equivalence from the pseudofunctor
$\functor{G}$ to $\functor{H}_{\CATB}\circ\widetilde{\functor{G}}$ in the bicategory
$\operatorname{Hom}(\CATA\left[\SETWAinv\right],\CATB\left[\SETWBinv\right])$. So
by Lemma~\ref{lem-01} there is also
an internal equivalence from $\functor{G}$ to $\functor{H}_{\CATB}\circ\widetilde{\functor{G}}$
in the bicategory $\operatorname{Hom}'(\CATA\left[\SETWAinv\right],\CATB\left[\SETWBinv\right])$,
i.e.\ a pseudonatural equivalence of pseudofunctors

\[\delta:\,\functor{G}\Longrightarrow\functor{H}_{\CATB}\circ\widetilde{\functor{G}}.\]

Now $\functor{H}_{\CATB}$ is an equivalence of bicategories, so $\functor{H}_{\CATB}
\circ\widetilde{\functor{G}}$ is an equivalence of
bicategories if and only if $\widetilde{\functor{G}}$ is so. So by Lemma~\ref{lem-07} applied to
$\delta$,
we conclude that $\functor{G}$ is an equivalence of bicategories if and only if
$\widetilde{\functor{G}}$ is so.
\end{proof}

As a consequence of Corollary~\ref{cor-03}, we have the following necessary (but in general not
sufficient) condition in order to have an induced equivalence between bicategories of fractions.

\begin{cor}
Let us fix any $2$ pairs $(\CATA,\SETWA)$ and $(\CATB,\SETWB)$, both satisfying conditions
\emphatic{(BF)}, any pseudofunctor $\functor{F}:\CATA\rightarrow\CATB$ and let us suppose that
there is a pair $(\functor{G},\kappa)$ as in \emphatic{Theorem~\ref{theo-04}(iv)}, such that
$\functor{G}:\CATA\left[\SETWAinv\right]\rightarrow\CATB\left[\SETWBinv\right]$ is an equivalence
of bicategories. Then $\functor{F}_1^{\,-1}(\SETWAsat)=\SETWBsat$.
\end{cor}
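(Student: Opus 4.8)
The plan is to establish the claim in the (well-typed) form $\functor{F}_1^{-1}(\SETWBsat)=\SETWAsat$, i.e.\ that a morphism $f:B_{\CATA}\rightarrow A_{\CATA}$ of $\CATA$ lies in $\SETWAsat$ if and only if $\functor{F}_1(f)$ lies in $\SETWBsat$ (here $\functor{F}_1^{-1}$ denotes the preimage under $\functor{F}_1$ of the class $\SETWBsat$ of morphisms of $\CATB$); the asserted set equality follows at once. The backbone of the argument is to translate each membership condition into the statement that a suitable morphism is sent to an internal equivalence, and then to transport the internal-equivalence property back and forth along the pseudonatural equivalence $\kappa$ and along the equivalence of bicategories $\functor{G}$. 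First I would fix $f$ as above and set $\overline{\functor{F}}:=\functor{U}_{\SETWB}\circ\functor{F}$, as in the proof of Theorem~\ref{theo-04}. By Proposition~\ref{prop-02} applied to $(\CATA,\SETWA)$, one has $f\in\SETWAsat$ if and only if $\functor{U}_{\SETWA,1}(f)$ is an internal equivalence in $\CATA\left[\SETWAinv\right]$; by Proposition~\ref{prop-02} applied to $(\CATB,\SETWB)$ (equivalently, by \eqref{eq-61}), one has $\functor{F}_1(f)\in\SETWBsat$ if and only if $\overline{\functor{F}}_1(f)=\functor{U}_{\SETWB,1}(\functor{F}_1(f))$ is an internal equivalence in $\CATB\left[\SETWBinv\right]$.

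Next I would transport this property along $\kappa$. Since $\kappa:\overline{\functor{F}}\Rightarrow\functor{G}\circ\functor{U}_{\SETWA}$ is a pseudonatural equivalence, for the morphism $f$ we obtain an invertible $2$-morphism $\kappa(f)$ filling the naturality square whose horizontal edges are $\overline{\functor{F}}_1(f)$ and $(\functor{G}\circ\functor{U}_{\SETWA})_1(f)=\functor{G}_1(\functor{U}_{\SETWA,1}(f))$, and whose vertical edges $\kappa(A_{\CATA})$ and $\kappa(B_{\CATA})$ are internal equivalences. Arguing exactly as in the proof that (ii) implies (i) in Theorem~\ref{theo-02}, and using Lemma~\ref{lem-04} (invariance of the internal-equivalence property under invertible $2$-morphisms) together with Lemma~\ref{lem-06} (the ``$2$-out-of-$3$'' property), this square shows that $\overline{\functor{F}}_1(f)$ is an internal equivalence if and only if $\functor{G}_1(\functor{U}_{\SETWA,1}(f))$ is.

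Finally I would transport the property across $\functor{G}$ itself. Being a pseudofunctor, $\functor{G}$ preserves internal equivalences; being an equivalence of bicategories (hence, under the standing axiom of choice, admitting a quasi-inverse $\functor{G}'$ together with a pseudonatural equivalence $\functor{G}'\circ\functor{G}\Rightarrow\id_{\CATA\left[\SETWAinv\right]}$), it also reflects them: applying $\functor{G}'$ to $\functor{G}_1(\functor{U}_{\SETWA,1}(f))$ and then running the same Lemma~\ref{lem-04}/Lemma~\ref{lem-06} argument on the naturality square of $\functor{G}'\circ\functor{G}\Rightarrow\id$ recovers $\functor{U}_{\SETWA,1}(f)$. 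Hence $\functor{G}_1(\functor{U}_{\SETWA,1}(f))$ is an internal equivalence if and only if $\functor{U}_{\SETWA,1}(f)$ is. Chaining the three equivalences gives $f\in\SETWAsat$ if and only if $\functor{F}_1(f)\in\SETWBsat$, as wanted; note that the ``only if'' direction recovers $\functor{F}_1(\SETWAsat)\subseteq\SETWBsat$, which is also Theorem~\ref{theo-04}(ii). The one genuinely nontrivial step, and the main obstacle, is the reflection of internal equivalences by $\functor{G}$; everything else is a formal transport argument of the kind already used for Theorem~\ref{theo-02}. One can alternatively package the reflection step through Lemma~\ref{lem-01} and the induced equivalences on Hom-categories, but the quasi-inverse argument is the most direct.
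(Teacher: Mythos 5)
Your proof is correct, and you were right to read the statement in its well-typed form $\functor{F}_1^{-1}(\SETWBsat)=\SETWAsat$: the equality as printed contains a typo, and the paper's own proof works with exactly this form. Your route, however, is genuinely different from the paper's. The paper proves the two inclusions separately: $\SETWAsat\subseteq\functor{F}_1^{-1}(\SETWBsat)$ comes from Theorem~\ref{theo-04}, and for the converse it takes a detour through the saturated pair $(\CATB,\SETWBsat)$. Namely, given $\operatorname{w}_{\CATA}$ with $\functor{F}_1(\operatorname{w}_{\CATA})\in\SETWBsat$, it applies Proposition~\ref{prop-03}(i) and Corollary~\ref{cor-02} to $(\CATB,\SETWBsat)$ to see that $\left(\functor{F}_0(A_{\CATA}),\functor{F}_1(\id_{A_{\CATA}}),\functor{F}_1(\operatorname{w}_{\CATA})\right)$ is an internal equivalence in $\CATB\left[\SETWBsatinv\right]$, recognizes this morphism as the image of $\left(A_{\CATA},\id_{A_{\CATA}},\operatorname{w}_{\CATA}\right)$ under the explicitly described pseudofunctor $\widetilde{\functor{G}}$ of Theorem~\ref{theo-03}(A), transfers the hypothesis that $\functor{G}$ is an equivalence of bicategories to $\widetilde{\functor{G}}$ via Corollary~\ref{cor-03}, and finally reads off $\operatorname{w}_{\CATA}\in\SETWAsat$ from Corollary~\ref{cor-02} applied to $(\CATA,\SETWA)$. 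You instead stay entirely in $\CATB\left[\SETWBinv\right]$ and argue with the given pair $(\functor{G},\kappa)$ directly: Proposition~\ref{prop-02} on both sides, transport along the components of $\kappa$ exactly as in the proof of Theorem~\ref{theo-02}, and reflection of internal equivalences by the biequivalence $\functor{G}$ via a chosen quasi-inverse. Both arguments are sound; the trade-off is that the paper obtains the ``reflection'' step for free from the explicit fraction-calculus description of $\widetilde{\functor{G}}$ and its already-established corollaries, whereas you need the standard facts (used implicitly elsewhere in the paper, e.g.\ in Theorem~\ref{theo-02}) that pseudofunctors preserve internal equivalences and that equivalences of bicategories reflect them. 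In exchange, your proof bypasses Theorem~\ref{theo-03} and Corollaries~\ref{cor-02} and~\ref{cor-03} altogether, handles both inclusions in a single chain of equivalences, and, notably, uses only that $\kappa$ is a pseudonatural equivalence as in Theorem~\ref{theo-04}(iii), not the stronger condition (iv) on which the paper's route through Corollary~\ref{cor-03} essentially relies.
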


\begin{proof}
By Theorem~\ref{theo-04}(iv), we have that $\functor{F}_1(\SETWAsat)\subseteq\SETWBsat$, hence
$\SETWAsat\subseteq\functor{F}_1^{\,-1}(\SETWBsat)$, so we need only to prove the other inclusion.\\

So let us fix any morphism $\operatorname{w}_{\CATA}:A_{\CATA}\rightarrow B_{\CATA}$ such that
$\functor{F}_1(\operatorname{w}_{\CATA})\in\SETWBsat$ and let us prove that
$\operatorname{w}_{\CATA}$ belongs to $\SETWAsat$. Since $\id_{A_{\CATA}}$ belongs to $\SETWA$
by (BF1), then $\functor{F}_1(\id_{A_{\CATA}})$ belongs to $\functor{F}_1(\SETWAsat)\subseteq
\SETWBsat$.
Moreover, by hypothesis $\functor{F}_1(\operatorname{w}_{\CATA})$ belongs to $\SETWBsat$.
Therefore, if we apply Proposition~\ref{prop-03}(i) and Corollary~\ref{cor-02} to $(\CATB,\SETWBsat)$,
we get that the following is an internal equivalence in $\CATB\left[\SETWBsatinv\right]$:

\begin{equation}\label{eq-72}
\begin{tikzpicture}[xscale=2.7,yscale=-1.2]
    \node (A0_0) at (0, 0) {$\functor{F}_0(A_{\CATA})$};
    \node (A0_1) at (1, 0) {$\functor{F}_0(A_{\CATA})$};
    \node (A0_2) at (2, 0) {$\functor{F}_0(B_{\CATA})$.};
    \path (A0_1) edge [->]node [auto,swap] {$\scriptstyle{\functor{F}_1(\id_{A_{\CATA}})}$} (A0_0);
    \path (A0_1) edge [->]node [auto]
      {$\scriptstyle{\functor{F}_1(\operatorname{w}_{\CATA})}$} (A0_2);
\end{tikzpicture}
\end{equation}

This morphism is the image of the morphism

\begin{equation}\label{eq-73}
\begin{tikzpicture}[xscale=2.7,yscale=-1.2]
    \node (A0_0) at (0, 0) {$A_{\CATA}$};
    \node (A0_1) at (1, 0) {$A_{\CATA}$};
    \node (A0_2) at (2, 0) {$B_{\CATA}$};
    \path (A0_1) edge [->]node [auto,swap] {$\scriptstyle{\id_{A_{\CATA}}}$} (A0_0);
    \path (A0_1) edge [->]node [auto] {$\scriptstyle{\operatorname{w}_{\CATA}}$} (A0_2);
\end{tikzpicture}
\end{equation}
via the pseudofunctor

\[\widetilde{\functor{G}}:\,\CATA\Big[\SETWAinv\Big]\longrightarrow\CATB
\Big[\SETWBsatinv\Big]\]
described in Theorem~\ref{theo-04}(A). By Corollary~\ref{cor-03}, $\widetilde{\functor{G}}$ is
an equivalence of bicategories; since \eqref{eq-72} is an internal equivalence
then we conclude that \eqref{eq-73}
is an internal equivalence. By Corollary~\ref{cor-02} applied to $(\CATA,\SETWA)$, we conclude
that $\operatorname{w}_{\CATA}$ belongs to $\SETWA$.
\end{proof}

As we said above, the previous condition is only a necessary one.
In the next paper of this series (\cite{T5}) we are going to find a set of
necessary and sufficient conditions
such that $\widetilde{\functor{G}}$ is an equivalence of bicategories. Combining this with
Corollary~\ref{cor-03}, we will get necessary and sufficient conditions
such that $\functor{G}$ is an equivalence of bicategories for any pseudofunctor $\functor{G}$
satisfying the conditions of Theorem~\ref{theo-04}(iv).

\section{Applications to Morita equivalences of \'etale groupoids}\label{sec-01}
In this section we apply some of the previous results about saturations to the class
of Morita equivalences of \'etale differentiable
groupoids. We denote any (\'etale) Lie groupoid by $(\groupname{X}{})$ (omitting the structure
morphisms $m,i$ and $e$ only for clarity of exposition) or simply $\groupoidtot{X}{}$, and
any morphism of Lie groupoids either by $\groupoidmap{\phi}{}$ or by $\groupoidmaptot{\phi}{}$.\\

We recall (see~\cite[\S~2.4]{M}) that a morphism $\groupoidmaptot{\phi}{}:\groupoidtot{Y}{}
\rightarrow\groupoidtot{X}{}$
between Lie groupoids is a \emph{weak equivalence} (also known as \emph{Morita equivalence} or
\emph{essential equivalence}) if and only if the following $2$ conditions hold:

\begin{enumerate}[({V}1)]
 \item\label{V1} the smooth map $t\circ\pi^1:\fiber{\groupoid{X}_1}{s}{\phi_0}{\groupoid{Y}_0}
  \rightarrow\groupoid{X}_0$ is a surjective submersion (here $\pi^1$ is the projection
  $\fiber{\groupoid{X}_1}{s}{\phi_0}{\groupoid{Y}_0}\rightarrow\groupoid{X}_1$ and the fiber product
  is a manifold since $s$ is a submersion by definition of Lie groupoid);
 \item\label{V2} the following square is cartesian (it is commutative by definition of groupoid):
 
  \begin{equation}\label{eq-09}
  \begin{tikzpicture}[xscale=1.8,yscale=-0.8]
    \node (A0_0) at (0, 0) {$\groupoid{Y}_1$};
    \node (A0_2) at (2, 0) {$\groupoid{X}_1$};
    \node (A2_0) at (0, 2) {$\groupoid{Y}_0\times\groupoid{Y}_0$};
    \node (A2_2) at (2, 2) {$\groupoid{X}_0\times\groupoid{X}_0$.};
    
    \path (A0_0) edge [->]node [auto] {$\scriptstyle{\phi_1}$} (A0_2);
    \path (A0_0) edge [->]node [auto,swap] {$\scriptstyle{(s,t)}$} (A2_0);
    \path (A0_2) edge [->]node [auto] {$\scriptstyle{(s,t)}$} (A2_2);
    \path (A2_0) edge [->]node [auto,swap] {$\scriptstyle{(\phi_0\times\phi_0)}$} (A2_2);
  \end{tikzpicture}
  \end{equation}
\end{enumerate}

We denote by $\EGpd$ the $2$-category of \emph{\'etale} groupoids (i.e.\ Lie groupoids
$\groupoidtot{X}{}$ such that $\operatorname{dim}\groupoid{X}_0=\operatorname{dim}\groupoid{X}_1$,
equivalently such that either $s$ or $t$ are \'etale smooth maps, see~\cite[Exercise~5.16(2)]{MM})
and by $\WEGpd$ the class of all Morita equivalences between such objects. We recall that
by~\cite[Corollary~43]{Pr}:

\begin{itemize}
 \item the pair $(\EGpd,\WEGpd)$ satisfies conditions (BF);
 \item the induced bicategory of fractions $\EGpd\left[\WEGpdinv\right]$ is equivalent to the
  $2$-category of differentiable stacks.
\end{itemize}

We denote by $\PEGpd$ the $2$-category of proper, \'etale groupoids and by $\PEEGpd$ the
$2$-category of proper, effective, \'etale groupoids (see~\cite{MM}); moreover we
denote by $\WPEGpd$ and $\WPEEGpd$ the classes of all Morita equivalences in such $2$-categories.
Such classes satisfy again conditions (BF),
so a right bicalculus of fractions can be performed also in such frameworks.

\begin{prop}\label{prop-05}
Let us fix any triple of morphisms of \'etale groupoids as follows

\[\groupoidmaptot{\xi}{}:\,\groupoidtot{U}{}\longrightarrow\groupoidtot{Z}{},\quad\quad\quad
\groupoidmaptot{\psi}{}:\,\groupoidtot{Z}{}\longrightarrow\groupoidtot{Y}{},\quad\quad\quad
\groupoidmaptot{\phi}{}:\,\groupoidtot{Y}{}\longrightarrow\groupoidtot{X}{}\]
and let us suppose that both $\groupoidmaptot{\phi}{}\circ\groupoidmaptot{\psi}{}$ and
$\groupoidmaptot{\psi}{}\circ\groupoidmaptot{\xi}{}$ are Morita equivalences. Then
$\groupoidmaptot{\phi}{}$ is a Morita equivalence. Therefore, the \emphatic{(}right\emphatic{)}
saturation of the class $\WEGpd$ is exactly the same same class. The same holds for $\WPEGpd$
and $\WPEEGpd$.
\end{prop}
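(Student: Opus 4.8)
The plan is to reduce the whole statement to the single cancellation assertion about $\phi$. Granting that $\phi\circ\psi,\psi\circ\xi\in\WEGpd$ forces $\phi\in\WEGpd$, Definition~\ref{def-01} gives at once $\WEGpdsat\subseteq\WEGpd$: any $f$ in the right saturation comes equipped with $g,h$ such that $f\circ g,g\circ h\in\WEGpd$, and the cancellation applied to $(\phi,\psi,\xi):=(f,g,h)$ puts $f$ in $\WEGpd$. The opposite inclusion is Remark~\ref{rem-04}, since $(\EGpd,\WEGpd)$ satisfies (BF) by~\cite[Corollary~43]{Pr}; hence $\WEGpd=\WEGpdsat$. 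For $\WPEGpd$ and $\WPEEGpd$ the reduction is identical: these pairs also satisfy (BF), and every object and morphism produced below stays inside the proper (resp.\ proper effective) $2$-category, so I would run the same argument verbatim there. Thus everything rests on: \emph{if $\phi\circ\psi$ and $\psi\circ\xi$ are weak equivalences, then $\phi$ is a weak equivalence.}

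I would first record the purely formal half and explain why it is not enough. Since $\phi\circ\psi,\psi\circ\xi\in\WEGpd$, Theorem~\ref{theo-01} turns $\functor{U}_{\WEGpd,1}(\phi\circ\psi)$ and $\functor{U}_{\WEGpd,1}(\psi\circ\xi)$ into internal equivalences of $\EGpd\left[\WEGpdinv\right]$; composing with the associators of $\functor{U}_{\WEGpd}$ and invoking Lemma~\ref{lem-04}, the morphisms $\functor{U}_{\WEGpd,1}(\phi)\circ\functor{U}_{\WEGpd,1}(\psi)$ and $\functor{U}_{\WEGpd,1}(\psi)\circ\functor{U}_{\WEGpd,1}(\xi)$ are internal equivalences, so Lemma~\ref{lem-02} (read inside $\EGpd\left[\WEGpdinv\right]$) makes $\functor{U}_{\WEGpd,1}(\phi)$ an internal equivalence. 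By Proposition~\ref{prop-02} this says exactly $\phi\in\WEGpdsat$ -- precisely the inclusion we want -- so the abstract machinery is circular and the genuine content must be the two \emph{smooth} conditions \hyperref[V1]{(V1)} and \hyperref[V2]{(V2)} for $\phi$.

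The heart is therefore to verify \hyperref[V1]{(V1)} and \hyperref[V2]{(V2)} directly, and the plan is to break the usual $2$-out-of-$3$-versus-$2$-out-of-$6$ deadlock by using the two hypotheses asymmetrically. Essential surjectivity of $\phi\circ\psi$ gives essential surjectivity of $\phi$, and essential surjectivity of $\psi\circ\xi$ gives that of $\psi$, whence the surjective submersion $t\circ\pi^1\colon\fiber{\groupoid{Y}_1}{s}{\psi_0}{\groupoid{Z}_0}\to\groupoid{Y}_0$; feeding this covering of $\groupoid{Y}_0$ into the surjective submersion attached to $\phi\circ\psi$ yields \hyperref[V1]{(V1)} for $\phi$. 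For \hyperref[V2]{(V2)}, i.e.\ bijectivity of the comparison map $\groupoid{Y}_1\to\fiber{\groupoid{X}_1}{(s,t)}{\phi_0\times\phi_0}{(\groupoid{Y}_0\times\groupoid{Y}_0)}$, I would treat fullness and faithfulness of $\phi$ separately. Fullness is constructive: given $g\colon\phi_0 y\to\phi_0 y'$, choose via the $\psi$-covering arrows $h\colon\psi_0 z\to y$ and $h'\colon\psi_0 z'\to y'$, lift $\phi(h')^{-1}\,g\,\phi(h)$ through the bijective comparison map of $\phi\circ\psi$ to some $b\colon z\to z'$, and set $a:=h'\,\psi(b)\,h^{-1}$, so that $\phi(a)=g$. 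Faithfulness is the real obstacle, where every naive cancellation loops (fullness of $\phi$, fullness of $\psi$ and faithfulness of $\phi$ are mutually equivalent modulo the rest); here the trick is to cover $\groupoid{Y}_0$ instead by the surjective submersion coming from essential surjectivity of $\psi\circ\xi$, so that the base points lie in the image of $\psi\circ\xi$. Reducing to isotropy, an automorphism of $y$ killed by $\phi$ is conjugated to an automorphism of $\psi_0\xi_0 u$ killed by $\phi$; fullness of the weak equivalence $\psi\circ\xi$ writes it as $(\psi\circ\xi)(c)$, and faithfulness of $\phi\circ\psi\circ\xi$ -- which holds because it is the composite of the faithful $\phi\circ\psi$ with the faithful $\xi$, the latter being faithful since $\psi\circ\xi$ is -- forces $c$, hence the automorphism, to be trivial. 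This delivers faithfulness of $\phi$ with no circularity.

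The hard part will be upgrading these point-set statements to the required smooth conclusions, and this is exactly where the étale hypothesis is spent. The comparison map of $\phi$ is, by the previous paragraph, a smooth bijection; to see it is a diffeomorphism I would observe that, after pulling back along the surjective submersions furnished by the essential surjectivity of $\psi$ and of $\psi\circ\xi$, it is identified with the comparison maps of the genuine weak equivalences $\phi\circ\psi$ and $\psi\circ\xi$, which are diffeomorphisms; since for étale groupoids all the relevant fibre products are manifolds and the structural maps $s,t$ are local diffeomorphisms, the pullbacks are transverse, the two sides have equal dimension (forced by \hyperref[V1]{(V1)} together with the cartesian squares), and descent along a surjective submersion promotes a bijection to a diffeomorphism. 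The same étale bookkeeping makes the map in \hyperref[V1]{(V1)} a submersion and not merely a surjection. Carrying out this descent carefully -- checking that each pullback really reproduces the comparison square of $\phi\circ\psi$ or $\psi\circ\xi$, and that no transversality fails -- is the main technical burden; once it is done, $\phi$ satisfies \hyperref[V1]{(V1)} and \hyperref[V2]{(V2)}, the cancellation follows, and with it the right saturatedness of $\WEGpd$, $\WPEGpd$ and $\WPEEGpd$, reproving~\cite[Lemma~8.1]{PS}.
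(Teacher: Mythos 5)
Your formal reduction of the saturation claims to the single cancellation statement, and your point-set treatment of that statement, essentially reproduce the paper's own proof: the paper also verifies \hyperref[V1]{(V1)} and \hyperref[V2]{(V2)} for $\groupoidmaptot{\phi}{}$ directly, proving surjectivity of the comparison map $\gamma:\groupoid{Y}_1\rightarrow\fiber{\groupoid{X}_1}{(s,t)}{\phi_0\times\phi_0}{(\groupoid{Y}_0\times\groupoid{Y}_0)}$ by conjugating a given arrow of $\groupoid{X}_1$ into the image of $(\psi\circ\xi)_0$ and lifting through the cartesian square of $\groupoidmaptot{\phi}{}\circ\groupoidmaptot{\psi}{}$, and injectivity by lifting through the cartesian square of $\groupoidmaptot{\psi}{}\circ\groupoidmaptot{\xi}{}$ and then invoking that of $\groupoidmaptot{\phi}{}\circ\groupoidmaptot{\psi}{}$ again; these are exactly your ``fullness'' and ``faithfulness'' arguments, and your observation that the purely bicategorical route is circular is also correct.

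The genuine gap is in the smooth upgrade, which you flag as the main burden but whose proposed mechanism would fail. The paper's crucial first step --- absent from your proposal --- is to prove that $\phi_0$ and $\phi_1$ are themselves \'etale maps: by~\cite[Exercise~5.16(4)]{MM} the composites $\phi_0\circ\psi_0$, $\phi_1\circ\psi_1$, $\psi_0\circ\xi_0$, $\psi_1\circ\xi_1$ are \'etale, and a differential argument then forces $\phi_\bullet$ and $\psi_\bullet$ to be \'etale. Once this is known, the map in \hyperref[V1]{(V1)} is \'etale (hence a submersion), the relevant fibre products are manifolds, and $\gamma$ is \'etale, so your bijectivity argument immediately yields a diffeomorphism. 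Your substitute rests on two false steps. First, the asserted identification does not hold: writing $\gamma_\phi$, $\gamma_\psi$ for the comparison maps, the base change of $\gamma_\phi$ along $\psi_0\times\psi_0$ (equivalently, after translation by arrows, along the covering $\fiber{\groupoid{Y}_1}{s}{\psi_0}{\groupoid{Z}_0}\rightarrow\groupoid{Y}_0$) is \emph{not} the comparison map of $\groupoidmaptot{\phi}{}\circ\groupoidmaptot{\psi}{}$; rather $\gamma_{\phi\circ\psi}=(\textrm{base change of }\gamma_\phi)\circ\gamma_\psi$, so identifying the two amounts to knowing \hyperref[V2]{(V2)} for $\groupoidmaptot{\psi}{}$ as a diffeomorphism statement, which is part of what must be proven. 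Second, the principle ``descent along a surjective submersion promotes a bijection to a diffeomorphism'' is false: a smooth bijection such as $x\mapsto x^3$ stays a non-diffeomorphic bijection after any base change, and equal dimensions do not help; descent only transfers the property of already being a diffeomorphism from the pullback to the original map, which returns you to the first defect. The hole can be filled either the paper's way (\'etaleness of $\phi_0,\phi_1$ first), or by noting that $\gamma_\psi$ is an injective immersion (since $\gamma_{\phi\circ\psi}$ is a diffeomorphism) which your point-set argument makes bijective, hence \'etale and a diffeomorphism, so that the base change of $\gamma_\phi$ is one too; but as written your argument never shows $\gamma$ is a local diffeomorphism anywhere, and without that no descent can deliver \hyperref[V2]{(V2)}.
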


\begin{proof}
By~\cite[Exercise~5.16(4)]{MM} the following smooth maps are \'etale:

\begin{gather*}
\phi_0\circ\psi_0:\,\groupoid{Z}_0\longrightarrow\groupoid{X}_0,\quad\quad\phi_1\circ\psi_1:\,
 \groupoid{Z}_1\longrightarrow\groupoid{X}_1, \\
\psi_0\circ\xi_0:\,\groupoid{U}_0\longrightarrow\groupoid{Y}_0,\quad\quad\psi_1\circ\xi_1:\,
 \groupoid{U}_1\longrightarrow\groupoid{Y}_1.
\end{gather*}

Since $\phi_0\circ\psi_0$ is \'etale, then $\phi_0$ is a submersion; analogously we get that
$\psi_0$ is a submersion. Since their composition is \'etale, this implies that both $\phi_0$ and
$\psi_0$ are \'etale. In the same way, we prove that $\phi_1$ and $\psi_1$ are \'etale.\\

Then let us consider the following cartesian diagrams:

\[
\begin{tikzpicture}[xscale=1.8,yscale=-1.0]
    \node (B0_0) at (-2.4, 0) {$\fiber{\groupoid{X}_1}{s}{\phi_0\circ\psi_0}{\groupoid{Z}_0}$};
    \node (A0_0) at (-0.2, 0) {$\fiber{\Big(\fiber{\groupoid{X}_1}{s}{\phi_0}{\groupoid{Y}_0}\Big)}
      {\pi^2}{\psi_0}{\groupoid{Z}_0}$};
    \node (A0_2) at (2, 0) {$\fiber{\groupoid{X}_1}{s}{\phi_0}{\groupoid{Y}_0}$};
    \node (A0_4) at (3.5, 0) {$\groupoid{X}_1$};
    \node (A2_0) at (-0.2, 2) {$\groupoid{Z}_0$};
    \node (A2_2) at (2, 2) {$\groupoid{Y}_0$};
    \node (A2_4) at (3.5, 2) {$\groupoid{X}_0$.};

    \node (A1_1) at (1, 1) {$\square$};
    \node (A1_3) at (2.8, 1) {$\square$};
    
    \path (B0_0) edge [->]node [auto] {$\scriptstyle{}$} node [	sloped]
      {$\scriptstyle{\widetilde{\ \ \ }}$} (A0_0);
    \path (A2_0) edge [->]node [auto,swap] {$\scriptstyle{\psi_0}$} (A2_2);
    \path (A0_4) edge [->]node [auto] {$\scriptstyle{s}$} (A2_4);
    \path (A0_2) edge [->]node [auto] {$\scriptstyle{\pi^2}$} (A2_2);
    \path (A2_2) edge [->]node [auto,swap] {$\scriptstyle{\phi_0}$} (A2_4);
    \path (A0_2) edge [->]node [auto] {$\scriptstyle{\pi^1}$} (A0_4);
    \path (A0_0) edge [->]node [auto] {$\scriptstyle{\eta^1}$} (A0_2);
    \path (A0_0) edge [->]node [auto,swap] {$\scriptstyle{\eta^2}$} (A2_0);
\end{tikzpicture}
\]

Since $\groupoidmaptot{\phi}{}\circ\groupoidmaptot{\psi}{}$ is a Morita equivalence, then by
(\hyperref[V1]{V1}) the map $t\circ\pi^1\circ\eta^1$ is surjective, so $t\circ\pi^1$ is surjective.
Since $\phi_0$ is \'etale, so is $\pi^1$; moreover, $t$ is \'etale.
Therefore, $t\circ\pi^1$ is \'etale, hence a submersion. So we have proved that (\hyperref[V1]{V1})
holds for $\groupoidmaptot{\phi}{}$.\\

Now let us prove that (\hyperref[V2]{V2}) holds for $\groupoidmaptot{\phi}{}$. By definition of Lie
groupoid, we know that diagram \eqref{eq-09} is commutative, so we have a unique induced smooth map
$\gamma$, making the following diagram commute:

\begin{equation}\label{eq-90}
\begin{tikzpicture}[xscale=2.5,yscale=-1.2]
    \node (A0_0) at (0, 0) {$\groupoid{Y}_1$};
    \node (A1_1) at (1, 1)
      {$\fiber{\groupoid{X}_1}{(s,t)}{\phi_0\times\phi_0}{(\groupoid{Y}_0\times\groupoid{Y}_0)}$};
    \node (A1_3) at (3, 1) {$\groupoid{X}_1$};
    \node (A3_1) at (1, 3) {$\groupoid{Y}_0\times\groupoid{Y}_0$};
    \node (A3_3) at (3, 3) {$\groupoid{X}_0\times\groupoid{X}_0$.};
    
    \node (A1_2) at (1.5, 0.6) {$\curvearrowright$};
    \node (A2_2) at (2, 2) {$\square$};
    \node (A2_1) at (0.6, 1.5) {$\curvearrowright$};    

    \path (A0_0) edge [->,bend right=15]node [auto] {$\scriptstyle{\phi_1}$} (A1_3);
    \path (A3_1) edge [->]node [auto,swap] {$\scriptstyle{\phi_0\times\phi_0}$} (A3_3);
    \path (A0_0) edge [->,dashed]node [auto] {$\scriptstyle{\gamma}$} (A1_1);
    \path (A0_0) edge [->,bend left=15]node [auto,swap] {$\scriptstyle{(s,t)}$} (A3_1);
    \path (A1_1) edge [->]node [auto] {$\scriptstyle{\tau^2}$} (A3_1);
    \path (A1_1) edge [->]node [auto,swap] {$\scriptstyle{\tau^1}$} (A1_3);
    \path (A1_3) edge [->]node [auto] {$\scriptstyle{(s,t)}$} (A3_3);
\end{tikzpicture}
\end{equation}

Since $\phi_0$ is \'etale, so is $\phi_0\times\phi_0$, hence so is $\tau^1$. Since also $\phi_1$ is
\'etale, then we conclude that the smooth map $\gamma$ is also \'etale.\\

Now we claim that $\gamma$ is surjective. So let us fix any point $x_1\in\groupoid{X}_1$ and
any point $(y_0,y'_0)$ in $\groupoid{Y}_0\times\groupoid{Y}_0$, such that $(s,t)(x_1)=(\phi_0(y_0),
\phi_0(y'_0))$.
We need to prove that there is a point $y_1\in\groupoid{Y}_1$, such that $\phi_1(y_1)=x_1$,
and $(s,t)(y_1)=(y_0,y'_0)$. In order to prove such a claim, let us consider the following
fiber product

\[\begin{tikzpicture}[xscale=2.2,yscale=-1.0]
    \node (A0_0) at (0, 0) {$\fiber{\groupoid{Y}_1}{s}{\psi_0\circ\xi_0}{\groupoid{U}_0}$};
    \node (A0_2) at (2, 0) {$\groupoid{Y}_1$};
    \node (A2_0) at (0, 2) {$\groupoid{U}_0$};
    \node (A2_2) at (2, 2) {$\groupoid{Y}_0$.};
    
    \node (A1_1) at (1, 1) {$\square$};
        
    \path (A2_0) edge [->]node [auto,swap] {$\scriptstyle{\psi_0\circ\xi_0}$} (A2_2);
    \path (A0_0) edge [->]node [auto,swap] {$\scriptstyle{\sigma^2}$} (A2_0);
    \path (A0_2) edge [->]node [auto] {$\scriptstyle{s}$} (A2_2);
    \path (A0_0) edge [->]node [auto] {$\scriptstyle{\sigma^1}$} (A0_2);
\end{tikzpicture}\]

Since $\groupoidmaptot{\psi}{}\circ\groupoidmaptot{\xi}{}$ is a Morita equivalence, then $t\circ
\sigma^1$ is surjective. Therefore, there are a point $u_0\in\groupoid{U}_0$ and a point
$\overline{y}_1\in\groupoid{Y}_1$ such that $s(\overline{y}_1)=\psi_0\circ\xi_0(u_0)$ and
$t(\overline{y}_1)=y_0$. Analogously, there are a point $u'_0\in\groupoid{U}_0$ and a point
$\overline{y}'_1\in\groupoid{Y}_1$ such that $s(\overline{y}'_1)=\psi_0\circ\xi_0(u'_0)$ and
$t(\overline{y}'_1)=y'_0$. Then it makes sense to consider the following point

\begin{gather}
\label{eq-91} \widetilde{x}_1:=m\left(m\left(\phi_1\left(\overline{y}_1\right),x_1\right),i\circ
 \phi_1(\overline{y}'_1)\right)= \\
\nonumber =m\left(\phi_1(\overline{y}_1),m\left(x_1,i\circ\phi_1(\overline{y}'_1)\right)\right)\in
 \groupoid{X}_1.
\end{gather}

By construction,

\[s(\widetilde{x}_1)=s\circ\phi_1(\overline{y}_1)=\phi_0\circ s(\overline{y}_1)=\phi_0\circ
\psi_0\circ\xi_0(u_0)\]
and $t(\widetilde{x}_1)=
\phi_0\circ\psi_0\circ\xi_0(u'_0)$. Since $\groupoidmaptot{\phi}{}\circ\groupoidmaptot{\psi}{}$ is a
Morita equivalence, then by (\hyperref[V2]{V2}) the following diagram is cartesian

\begin{equation}\label{eq-23}
\begin{tikzpicture}[xscale=2.2,yscale=-1.0]
    \node (A0_0) at (0, 0) {$\groupoid{Z}_1$};
    \node (A0_2) at (2, 0) {$\groupoid{X}_1$};
    \node (A2_0) at (0, 2) {$\groupoid{Z}_0\times\groupoid{Z}_0$};
    \node (A2_2) at (2, 2) {$\groupoid{X}_0\times\groupoid{X}_0$.};
    
    \node (A1_1) at (1, 1) {$\square$};
    
    \path (A0_0) edge [->]node [auto] {$\scriptstyle{\phi_1\circ\psi_1}$} (A0_2);
    \path (A0_0) edge [->]node [auto,swap] {$\scriptstyle{(s,t)}$} (A2_0);
    \path (A0_2) edge [->]node [auto] {$\scriptstyle{(s,t)}$} (A2_2);
    \path (A2_0) edge [->]node [auto,swap]
      {$\scriptstyle{(\phi_0\circ\psi_0\times\phi_0\circ\psi_0)}$} (A2_2);
\end{tikzpicture}
\end{equation}

Therefore, there is a unique object $z_1\in\groupoid{Z}_1$ such that $(s,t)(z_1)=(\xi_0(u_0),
\xi_0(u'_0))$ and $\phi_1\circ\psi_1(z_1)=\widetilde{x}_1$. Then it makes sense to consider
the point

\[y_1:=m\left(m\left(i(\overline{y}_1),\psi_1(z_1)\right),\overline{y}'_1\right)=m\left(i
(\overline{y}_1),m\left(\psi_1(z_1),\overline{y}'_1\right)\right)\in\groupoid{Y}_1\]
and we have that

\begin{gather*}
\phi_1(y_1)=m\left(m\left(i\circ\phi_1(\overline{y}_1),\phi_1\circ\psi_1(z_1)\right),\phi_1
 (\overline{y}'_1)\right)= \\
=m\left(m\left(i\circ\phi_1(\overline{y}_1),\widetilde{x}_1)\right),\phi_1
 (\overline{y}'_1)\right)\stackrel{\eqref{eq-91}}{=}x_1.
\end{gather*}

Moreover, we have

\[(s,t)(y_1)=(s\circ i(\overline{y}_1),t(\overline{y}'_1))=(t(\overline{y}_1),t(\overline{y}'_1))=
(y_0,y'_0).\]

So we have proved that $\gamma$ is surjective.\\

Now we have also to prove that $\gamma$ is injective. So let us fix any pair of points $y_1^1,y_1^2$
in $\groupoid{Y}_1$ and let us suppose that $\gamma(y_1^1)=\gamma(y_1^2)$. For simplicity, we set

\[(y_0,y'_0):=(s,t)(y_1^1)=\tau^2\circ\gamma(y_1^1)=\tau^2\circ\gamma(y_1^2)=(s,t)(y_1^2)\]
and we choose a quadruple of points $(u_0,u'_0,\overline{y}_1,\overline{y}'_1)$ as in the previous
lines. Then for each $l=1,2$ we set:

\begin{equation}\label{eq-92}
\widetilde{y}_1^l:=m\left(m\left(\overline{y}_1,y_1^l\right),i(\overline{y}'_1)\right)=
m\left(\overline{y}_1,m\left(y_1^l,i(\overline{y}'_1)\right)\right)\in\groupoid{Y}_1
\end{equation}
and we have

\[s(\widetilde{y}_1^l)=s(\overline{y}_1)=\psi_0\circ\xi_0(u_0)\quad\textrm{and}\quad
t(\widetilde{y}_1^l)=\psi_0\circ\xi_0(u'_0)\quad\textrm{for}\,\,l=1,2.\]

Since $\psi_{\bullet}\circ\xi_{\bullet}$ is a Morita equivalence, then by (\hyperref[V2]{V2}) the
following diagram is cartesian:

\[
\begin{tikzpicture}[xscale=2.2,yscale=-0.8]
    \node (A0_0) at (0, 0) {$\groupoid{U}_1$};
    \node (A0_2) at (2, 0) {$\groupoid{Y}_1$};
    \node (A2_0) at (0, 2) {$\groupoid{U}_0\times\groupoid{U}_0$};
    \node (A2_2) at (2, 2) {$\groupoid{Y}_0\times\groupoid{Y}_0$.};
    
    \node (A1_1) at (1, 1) {$\square$};
    
    \path (A0_0) edge [->]node [auto] {$\scriptstyle{\psi_1\circ\xi_1}$} (A0_2);
    \path (A0_0) edge [->]node [auto,swap] {$\scriptstyle{(s,t)}$} (A2_0);
    \path (A0_2) edge [->]node [auto] {$\scriptstyle{(s,t)}$} (A2_2);
    \path (A2_0) edge [->]node [auto,swap]
      {$\scriptstyle{(\psi_0\circ\xi_0\times\psi_0\circ\xi_0)}$} (A2_2);
\end{tikzpicture}
\]

Therefore, for each $l=1,2$ there is a unique $u_1^l\in\groupoid{U}_1$ such that

\begin{equation}\label{eq-07}
(s,t)(u_1^l)=(u_0,u'_0)\quad\quad\textrm{and}\quad\quad\psi_1\circ\xi_1(u_1^l)=\widetilde{y}_1^l.
\end{equation}

Now by \eqref{eq-90} we have $\phi_1(y_1^1)=\tau^1\circ\gamma(y_1^1)=\tau^1\circ\gamma(y_1^2)=
\phi_1(y_1^2)$, hence

\begin{gather}
\nonumber \phi_1\circ\psi_1\circ\xi_1(u_1^1)\stackrel{\eqref{eq-07}}{=}\phi_1(\widetilde{y}_1^1)
 \stackrel{\eqref{eq-92}}{=}m\left(m\left(\phi_1
 (\overline{y}_1),\phi_1(y_1^1)\right),i\circ\phi_1(\overline{y}'_1)\right)= \\
\label{eq-06} =m\left(m\left(\phi_1(\overline{y}_1),\phi_1(y_1^2)\right),i\circ\phi_1
 (\overline{y}'_1)\right)\stackrel{\eqref{eq-92}}{=}\phi_1(\widetilde{y}_1^2)
 \stackrel{\eqref{eq-07}}{=}\phi_1\circ\psi_1\circ\xi_1(u_1^2).
\end{gather}

Moreover, we have

\begin{gather}
\nonumber (s,t)\circ\xi_1(u^1_1)=(\xi_0,\xi_0)\circ(s,t)(u^1_1)
 \stackrel{\eqref{eq-07}}{=}(\xi_0,\xi_0)(u_0,u'_0)\stackrel{\eqref{eq-07}}{=} \\
\label{eq-93} \stackrel{\eqref{eq-07}}{=}(\xi_0,\xi_0)\circ(s,t)(u^2_1)
 =(s,t)\circ\xi_1(u^2_1).
\end{gather}

Since diagram \eqref{eq-23} is cartesian, then by \eqref{eq-93} and \eqref{eq-06} we get that $\xi_1
(u_1^1)=\xi_1(u_1^2)$. Therefore,

\[\widetilde{y}_1^1\stackrel{\eqref{eq-07}}{=}\psi_1\circ\xi_1(u_1^1)=\psi_1\circ\xi_1(u_1^2)
\stackrel{\eqref{eq-07}}{=}\widetilde{y}_1^2.\]
From this and \eqref{eq-92} we conclude that $y_1^1=y_1^2$. This proves that $\gamma$ is injective.\\

So we have proved that $\gamma$ is an \'etale map that is a bijection, hence $\gamma$ is a
diffeomorphism of smooth manifolds. This means that diagram \eqref{eq-09} is cartesian, so
(\hyperref[V2]{V2}) holds, hence we have proved that $\groupoidmaptot{\phi}{}$ is a Morita
equivalence.
\end{proof}

Actually, since also
$\groupoidmaptot{\phi}{}\circ\groupoidmaptot{\psi}{}$ is a Morita equivalence by hypothesis, then
by~\cite[Lemma~8.1]{PS} we conclude that $\groupoidmaptot{\psi}{}$ is a Morita equivalence. Since
also $\groupoidmaptot{\psi}{}\circ\groupoidmaptot{\xi}{}$ is a Morita equivalence by hypothesis, then
again by~\cite[Lemma~8.1]{PS} we conclude that also $\groupoidmaptot{\xi}{}$ is a Morita equivalence.
The same result can also be obtained by remarking that the class $\WEGpd$ is (right) saturated by 
Proposition~\ref{prop-05}, hence we can apply Proposition~\ref{prop-03}(ii).

\begin{cor}
Let us fix any morphism $\groupoidmaptot{\phi}{}:\groupoidtot{Y}{}\rightarrow
\groupoidtot{X}{}$ in $\EGpd$. Then the following facts are equivalent:

\begin{enumerate}[\emphatic{(}a\emphatic{)}]
 \item for each \'etale groupoid $\groupoidtot{Y}{\prime}$ and for each Morita equivalence
  $\groupoidmaptot{\mu}{}:\groupoidtot{Y}{}\rightarrow\groupoidtot{Y}{\prime}$, the morphism
 
  \[
  \begin{tikzpicture}[xscale=2.4,yscale=-1.2]
    \node (A0_0) at (0, 0) {$\groupoidtot{Y}{\prime}$};
    \node (A0_1) at (1, 0) {$\groupoidtot{Y}{}$};
    \node (A0_2) at (2, 0) {$\groupoidtot{X}{}$};
    
    \path (A0_1) edge [->]node [auto,swap] {$\scriptstyle{\groupoidmaptot{\mu}{}}$} (A0_0);
    \path (A0_1) edge [->]node [auto] {$\scriptstyle{\groupoidmaptot{\phi}{}}$} (A0_2);
  \end{tikzpicture}
  \]
  is an internal equivalence in $\EGpd\left[\WEGpdinv\right]$;
 \item the morphism $\groupoidmaptot{\phi}{}$ belongs to $\WEGpdsat$;
 \item the morphism $\groupoidmaptot{\phi}{}$ belongs to $\WEGpd$ \emphatic{(}i.e.\ it is a Morita
  equivalence\emphatic{)}.
\end{enumerate}

In particular, any $2$ \'etale groupoids $\groupoidtot{X}{1},\groupoidtot{X}{2}$ are
\emph{equivalent in $\EGpd\left[\WEGpdinv\right]$} if and only if there are an \'etale groupoid
$\groupoidtot{X}{3}$ and a pair of Morita equivalences as follows

\[
\begin{tikzpicture}[xscale=2.4,yscale=-1.2]
    \node (A0_0) at (0, 0) {$\groupoidtot{X}{1}$};
    \node (A0_1) at (1, 0) {$\groupoidtot{X}{3}$};
    \node (A0_2) at (2, 0) {$\groupoidtot{X}{2}$};
    
    \path (A0_1) edge [->]node [auto,swap] {$\scriptstyle{\groupoidmaptot{\mu}{1}}$} (A0_0);
    \path (A0_1) edge [->]node [auto] {$\scriptstyle{\groupoidmaptot{\mu}{2}}$} (A0_2);
\end{tikzpicture}
\]
i.e.\ if and only if $\groupoidtot{X}{1}$ and $\groupoidtot{X}{2}$ are \emph{Morita equivalent}. The
same statements holds if we restrict to the bicategories $\PEGpd\left[\WPEGpdinv\right]$ and
$\PEEGpd\left[\WPEEGpdinv\right]$.
\end{cor}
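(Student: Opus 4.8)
The plan is to deduce the whole corollary from the abstract machinery of Section~2 together with the saturation statement in Proposition~\ref{prop-05}. The first observation is that Proposition~\ref{prop-05} gives $\WEGpdsat=\WEGpd$, so conditions (b) and (c) are literally the same assertion; hence the equivalence (b)$\,\Leftrightarrow\,$(c) is immediate, and all that remains is to tie (a) to these two.

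For (a)$\,\Rightarrow\,$(b) I would specialise hypothesis (a) to the choice $\groupoidtot{Y}{\prime}:=\groupoidtot{Y}{}$ and $\groupoidmaptot{\mu}{}:=\id_{\groupoidtot{Y}{}}$. The identity is a Morita equivalence (it belongs to $\WEGpd$ by (BF1)), so (a) forces the morphism $(\groupoidtot{Y}{},\id_{\groupoidtot{Y}{}},\groupoidmaptot{\phi}{})$ to be an internal equivalence in $\EGpd\left[\WEGpdinv\right]$. By \eqref{eq-70} this morphism is exactly $\functor{U}_{\WEGpd,1}(\groupoidmaptot{\phi}{})$, so Proposition~\ref{prop-02} yields at once $\groupoidmaptot{\phi}{}\in\WEGpdsat$, which is (b). For the reverse implication (b)$\,\Rightarrow\,$(a), given any Morita equivalence $\groupoidmaptot{\mu}{}\colon\groupoidtot{Y}{}\to\groupoidtot{Y}{\prime}$ (so $\groupoidmaptot{\mu}{}\in\WEGpd$) and assuming $\groupoidmaptot{\phi}{}\in\WEGpdsat$, the converse half of Corollary~\ref{cor-02} says directly that the span $(\groupoidtot{Y}{\prime},\groupoidmaptot{\mu}{},\groupoidmaptot{\phi}{})$ is an internal equivalence, giving (a) and closing the cycle.

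For the ``in particular'' clause I would invoke Corollary~\ref{cor-02} in full. Any internal equivalence $\groupoidtot{X}{1}\to\groupoidtot{X}{2}$ in $\EGpd\left[\WEGpdinv\right]$ must have the shape $(\groupoidtot{X}{3},\groupoidmaptot{\mu}{1},\groupoidmaptot{\mu}{2})$ with $\groupoidmaptot{\mu}{1}\in\WEGpd$ and $\groupoidmaptot{\mu}{2}\in\WEGpdsat$; since $\WEGpdsat=\WEGpd$ by Proposition~\ref{prop-05}, both legs are genuine Morita equivalences. Hence the existence of an internal equivalence between $\groupoidtot{X}{1}$ and $\groupoidtot{X}{2}$ is equivalent to the existence of a span of Morita equivalences, i.e.\ to Morita equivalence of the two groupoids; the converse half of Corollary~\ref{cor-02} supplies the other direction. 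The statements for $\PEGpd\left[\WPEGpdinv\right]$ and $\PEEGpd\left[\WPEEGpdinv\right]$ then follow verbatim, since those pairs also satisfy conditions (BF) (so Proposition~\ref{prop-02} and Corollary~\ref{cor-02} apply) and Proposition~\ref{prop-05} already records $\WPEGpd$ and $\WPEEGpd$ to be right saturated.

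I do not expect a genuine obstacle here: the differential-geometric content is entirely absorbed into Proposition~\ref{prop-05}, and the bicategorical content into Proposition~\ref{prop-02} and Corollary~\ref{cor-02}, so the corollary is essentially a repackaging. The only points demanding care---the nearest thing to a difficulty---are the precise identification of the triple in (a) with the canonical form $\functor{U}_{\WEGpd,1}(\groupoidmaptot{\phi}{})$ of \eqref{eq-70}, and the repeated use of the equality $\WEGpdsat=\WEGpd$ to upgrade a second leg lying a priori only in $\WEGpdsat$ to an honest Morita equivalence. Once these identifications are in place, each implication is a one-line citation of an earlier result.
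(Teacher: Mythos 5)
Your proposal is correct and follows essentially the same route as the paper's own proof: the paper likewise derives the equivalence of (a) and (b) from Corollary~\ref{cor-02}, the equivalence of (b) and (c) directly from Proposition~\ref{prop-05}, and settles the $\PEGpd$/$\PEEGpd$ cases by the same specialization (the paper additionally cites \cite[Proposition~5.6 and Example~5.2.1(2)]{MM} to back up the facts, already asserted in Section~\ref{sec-01}, that the restricted pairs satisfy conditions (BF) and that properness/effectiveness behave well under Morita equivalence). Your only deviations are immaterial: you prove (a)$\Rightarrow$(b) by specializing to $\groupoidmaptot{\mu}{}=\id$ and invoking Proposition~\ref{prop-02} via \eqref{eq-70} instead of quoting the first half of Corollary~\ref{cor-02} (whose proof reduces to Proposition~\ref{prop-02} anyway), and in (b)$\Rightarrow$(a) the triple should be written $(\groupoidtot{Y}{},\groupoidmaptot{\mu}{},\groupoidmaptot{\phi}{})$ with apex $\groupoidtot{Y}{}$ rather than $\groupoidtot{Y}{\prime}$, a harmless notational slip.
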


The equivalence of (a) and (b) is a direct consequence of Corollary~\ref{cor-02}; the equivalence of
(b) and (c) is simply Proposition~\ref{prop-05} for the case of $\EGpd$. The claims for $\PEGpd$ and
for $\PEEGpd$ follow at once from this and~\cite[Proposition~5.6 and Example~5.2.1(2)]{MM}.\\

As we mentioned above, by~\cite[Corollary~43]{Pr}
the bicategory $\EGpd\left[\WEGpdinv\right]$ is equivalent to the
$2$-category of differentiable stacks. Therefore, if one wants to construct a $2$-category
(equivalent to the $2$-category) of differentiable stacks, a possible way for doing that
is the following:

\begin{itemize}
 \item construct a bicategory $\CATA$ and identify a suitable class of morphisms $\SETWA$ in it, so
  that there is a bicategory of fractions $\CATA\left[\SETWAinv\right]$;
 \item construct a pseudofunctor $\functor{F}:\CATA\rightarrow\EGpd$, such that $\functor{F}_1
  (\SETWA)$ is contained in the class $\WEGpd$ of Morita equivalences (in general we should impose
  that $\functor{F}_1(\SETWA)$ is contained in the right saturation of $\WEGpd$, but the $2$ classes
  coincide because of Proposition~\ref{prop-05});
 \item consider the induced pseudofunctor $\widetilde{\functor{G}}$ as described in
  Theorem~\ref{theo-03}(A) (since $\WEGpd$ is right saturated, this coincides with
  Theorem~\ref{theo-03}(B)) and verify whether it is an equivalence of bicategories.
\end{itemize}

Then the natural question to ask is the following: under which conditions on $(\CATA,\SETWA,\EGpd,
\WEGpd,\functor{F})$ is the induced pseudofunctor $\widetilde{\functor{G}}$ an equivalence
of bicategories? As we mentioned above, in the next paper~\cite{T5} we will tackle and solve this
question in the more general case
when the pair $(\EGpd,\WEGpd)$ is replaced by any pair $(\CATB,\SETWB)$ satisfying conditions (BF).

\section*{Appendix}

\begin{proof}[Proof of Lemma~\ref{lem-06}.]
Let us suppose that both $f$ and $g$ are internal equivalences. Then there are a pair of morphisms
$\overline{f}:A\rightarrow B$, $\overline{g}:B\rightarrow C$ and a quadruple of invertible
$2$-morphisms as follows:

\begin{gather}
\label{eq-86} \delta:\id_B\Longrightarrow\overline{f}\circ f,\quad\quad
  \xi:f\circ\overline{f}\Longrightarrow\id_A, \\
\label{eq-87} \nu:\id_C\Longrightarrow\overline{g}\circ g,\quad\quad
  \mu:g\circ\overline{g}\Longrightarrow\id_B.
\end{gather}

Then the following pair of compositions prove that $f\circ g$ is an internal equivalence, with
$\overline{g}\circ\overline{f}$ as quasi-inverse:

\[
\begin{tikzpicture}[xscale=2.8,yscale=1.4]
    \node (A1_2) at (2, 0.5) {$A$};
    \node (A2_0) at (0.1, 2.25) {$C$};
    \node (A2_1) at (1, 2.25) {$B$};
    \node (A2_4) at (4, 2.25) {$C$,};
    \node (A3_3) at (3, 2.25) {$B$};
  
    \node (A2_2) at (2, 1) {$\Downarrow\,\thetab{\overline{g}\circ\overline{f}}{f}{g}$};
    \node (A2_3) at (2.8, 1.75) {$\Downarrow\,\thetaa{\overline{g}}{\overline{f}}{f}$};
    \node (A3_1) at (2, 2.25) {$\Downarrow\,\delta$};
    \node (A4_2) at (2, 3.6) {$\Downarrow\,\nu$};
    \node (A4_3) at (2.8, 2.75) {$\Downarrow\,\pi_{\overline{g}}^{-1}$};
    
    \node (B1_1) at (2, 4.12) {$\scriptstyle{\id_C}$};
    \node (B2_2) at (2.8, 0.35) {$\scriptstyle{\overline{g}\circ\overline{f}}$};
    \node (B3_3) at (1.2, 0.35) {$\scriptstyle{f\circ g}$};
    \node (B4_4) at (2.8, 1.25) {$\scriptstyle{(\overline{g}\circ\overline{f})\circ f}$};
    \node (B5_5) at (2.8, 3.22) {$\scriptstyle{\overline{g}}$};

    \draw [->,rounded corners] (A1_2) to (3.5, 0.5) to (A2_4); 
    \draw [->, rounded corners] (A2_0) to (0.5, 0.5) to (A1_2); 
    \draw [->, rounded corners] (A2_0) to (0.5, 4) to (3.5, 4) to (A2_4);
    \draw [->,rounded corners] (A2_1) to (1.2, 1.4) to (3.5, 1.4) to (A2_4);
    \draw [->, rounded corners] (A2_1) to (1.2, 3.1) to (3.5, 3.1) to (A2_4);
  
    \path (A2_1) edge [->,bend right=20]node [auto,swap] {$\scriptstyle{\overline{f}\circ f}$} (A3_3);
    \path (A2_1) edge [->,bend left=20]node [auto] {$\scriptstyle{\id_B}$} (A3_3);
    \path (A2_0) edge [->]node [auto] {$\scriptstyle{g}$} (A2_1);
    \path (A3_3) edge [->]node [auto] {$\scriptstyle{\overline{g}}$} (A2_4);
\end{tikzpicture}
\]

\[
\begin{tikzpicture}[xscale=2.8,yscale=-1.4]
    \node (A1_2) at (2, 0.5) {$C$};
    \node (A2_0) at (0.1, 2.25) {$A$};
    \node (A2_1) at (1, 2.25) {$B$};
    \node (A2_4) at (4, 2.25) {$A$.};
    \node (A3_3) at (3, 2.25) {$B$};
  
    \node (A2_2) at (2, 1) {$\Downarrow\,\thetaa{f\circ g}{\overline{g}}{\overline{f}}$};
    \node (A2_3) at (2.8, 1.75) {$\Downarrow\,\thetab{f}{g}{\overline{g}}$};
    \node (A3_1) at (2, 2.25) {$\Downarrow\,\mu$};
    \node (A4_2) at (2, 3.6) {$\Downarrow\,\xi$};
    \node (A4_3) at (2.8, 2.75) {$\Downarrow\,\pi_f$};
    
    \node (B1_1) at (2, 4.12) {$\scriptstyle{\id_A}$};
    \node (B2_2) at (2.8, 0.38) {$\scriptstyle{f\circ g}$};
    \node (B3_3) at (1.2, 0.38) {$\scriptstyle{\overline{g}\circ\overline{f}}$};
    \node (B4_4) at (2.8, 1.28) {$\scriptstyle{(f\circ g)\circ\overline{g}}$};
    \node (B5_5) at (2.8, 3.22) {$\scriptstyle{f}$};
    
    \draw [->,rounded corners] (A1_2) to (3.5, 0.5) to (A2_4); 
    \draw [->, rounded corners] (A2_0) to (0.5, 0.5) to (A1_2); 
    \draw [->, rounded corners] (A2_0) to (0.5, 4) to (3.5, 4) to (A2_4);
    \draw [->,rounded corners] (A2_1) to (1.2, 1.4) to (3.5, 1.4) to (A2_4);
    \draw [->, rounded corners] (A2_1) to (1.2, 3.1) to (3.5, 3.1) to (A2_4);
    
    \path (A2_1) edge [->,bend right=20]node [auto] {$\scriptstyle{g\circ\overline{g}}$} (A3_3);
    \path (A2_1) edge [->,bend left=20]node [auto,swap] {$\scriptstyle{\id_B}$} (A3_3);
    \path (A2_0) edge [->]node [auto] {$\scriptstyle{\overline{f}}$} (A2_1);
    \path (A3_3) edge [->]node [auto] {$\scriptstyle{f}$} (A2_4);
\end{tikzpicture}
\]

Now let us suppose that both $f$ and $f\circ g$ are internal equivalences. Then there are a morphism
$\overline{f}:A\rightarrow B$ and invertible $2$-morphisms as in \eqref{eq-86}; moreover there are
a morphism $h:A\rightarrow C$ and a pair of invertible $2$-morphisms:

\begin{gather}
\label{eq-88} \alpha:\id_C\Longrightarrow h\circ(f\circ g),\quad\quad\quad\beta:
(f\circ g)\circ h\Longrightarrow\id_A.
\end{gather}

Then the following pair of compositions prove that $g$ is an internal equivalence, with $h\circ f$
as quasi-inverse (below for simplicity we write $\theta_{\bullet}$ for any composition of
$2$-identities, associators or inverses of associators):

\[
\begin{tikzpicture}[xscale=3.5,yscale=-2.8]
    \node (A1_0) at (0, 1) {$C$};
    \node (A1_2) at (2, 1) {$C$,};

    \node (A0_1) at (1, 0.8) {$\Downarrow\,\alpha$};
    \node (A2_1) at (1, 1.2) {$\Downarrow\,\thetaa{h}{f}{g}$};
    
    \node (B1_1) at (0.5, 0.9) {$\scriptstyle{h\circ (f\circ g)}$};
    
    \path (A1_0) edge [->,bend right=40]node [auto] {$\scriptstyle{\id_C}$} (A1_2);
    \path (A1_0) edge [->]node [auto] {} (A1_2);
    \path (A1_0) edge [->,bend left=40]node [auto,swap] {$\scriptstyle{(h\circ f)\circ g}$} (A1_2);
\end{tikzpicture}
\]

\[
\begin{tikzpicture}[xscale=3.5,yscale=-1.4]
    \node (A0_2) at (1.5, 0.5) {$C$};
    \node (A2_0) at (0, 2) {$B$};
    \node (A2_2) at (2, 2) {$B$};
    \node (A2_3) at (3, 2) {$B$.};
    \node (A3_1) at (1.4, 2) {$A$};

    \node (A4_0) at (0.7, 2.3) {$\Downarrow\,\upsilon_f$};
    \node (A2_1) at (1.7, 2) {$\Downarrow\,\theta_{\bullet}$};
    \node (A1_1) at (2.2, 1) {$\Downarrow\,\upsilon^{-1}_{g\circ(h\circ f)}$};
    \node (A1_2) at (2.5, 2) {$\Downarrow\,\delta$};
    \node (A3_2) at (1.2, 3) {$\Downarrow\,\delta^{-1}$};
    \node (A3_0) at (0.7, 1.7) {$\Downarrow\,\beta\ast i_f$};
    
    \node (B1_1) at (1.5, 3.62) {$\scriptstyle{\id_B}$};
    \node (B2_2) at (2, 0.38) {$\scriptstyle{g}$};
    \node (B3_3) at (1, 0.38) {$\scriptstyle{h\circ f}$};
    \node (B4_4) at (1.2, 0.88) {$\scriptstyle{g\circ(h\circ f)}$};
    \node (B5_5) at (2.2, 3.15) {$\scriptstyle{\overline{f}}$};
    \node (B6_6) at (0.25, 2.12) {$\scriptstyle{\id_A\circ f}$};
    
    \draw [->,rounded corners] (A2_0) to (0.2, 3.5) to (2.8, 3.5) to  (A2_3);
    \draw [->, rounded corners] (A0_2) to (2.8, 0.5) to (A2_3);
    \draw [->, rounded corners] (A2_0) to (0.2, 0.5) to (A0_2);
    \draw [->, rounded corners] (A2_0) to (0.4, 1) to (1.6, 1) to (A2_2);
    \draw [->, rounded corners] (A3_1) to (2, 3) to (2.6, 3) to (A2_3);
    
    \path (A2_0) edge [->,bend right=70]node [auto]
      {$\scriptstyle{((f\circ g)\circ h)\circ f}$} (A3_1);
    \path (A2_0) edge [->,bend left=70]node [auto,swap] {$\scriptstyle{f}$} (A3_1);
    \path (A2_0) edge [->]node [auto,swap] {} (A3_1);
    \path (A2_2) edge [->,bend right=45]node [auto] {$\scriptstyle{\id_B}$} (A2_3);
    \path (A2_2) edge [->,bend left=45]node [auto,swap] {$\scriptstyle{\overline{f}\circ f}$} (A2_3);
\end{tikzpicture}
\]

Lastly, let us suppose that both $f\circ g$ and $g$ are internal equivalences. Then there are a pair
of morphisms $h:A\rightarrow C$ and $\overline{g}:B\rightarrow C$ and invertible $2$-morphisms
as in \eqref{eq-87} and \eqref{eq-88}. Then the following compositions prove that $f$ is an
internal equivalence, with $g\circ h$ as quasi-inverse.

\[
\begin{tikzpicture}[xscale=2.8,yscale=-1.4]
    \node (A1_2) at (2, 1.2) {$C$};
    \node (A2_0) at (0, 2) {$B$};
    \node (A2_1) at (1, 2) {$B$};
    \node (A2_3) at (2.4, 2) {$C$};
    \node (A2_4) at (4, 2) {$B$,};
    \node (A4_2) at (2, 3.5) {$A$};
    
    \node (A1_1) at (2, 0.85) {$\Downarrow\,\mu^{-1}$};
    \node (A3_0) at (1.2, 1.6) {$\Downarrow\,\theta_{\bullet}$};
    \node (A3_1) at (0.5, 2) {$\Downarrow\,\mu$};
    \node (A3_3) at (0.6, 2.9) {$\Downarrow\,\pi_{(g\circ h)\circ f}$};    
    \node (A1_3) at (3, 1.6) {$\Downarrow\,(i_g\ast\alpha)\odot\pi^{-1}_g$};
    \node (C1_1) at (2.4, 2.45) {$\Downarrow\,\thetaa{g}{h}{f}$};
    \node (C1_1) at (1.95, 1.6) {$\scriptstyle{h\circ(f\circ g)}$};
        
    \node (B1_1) at (1, 1.08) {$\scriptstyle{\overline{g}}$};
    \node (B6_6) at (3, 1.08) {$\scriptstyle{g}$};
    \node (B2_2) at (2, 0.38) {$\scriptstyle{\id_B}$};
    \node (B3_3) at (1.2, 3.62) {$\scriptstyle{f}$};
    \node (B4_4) at (2.8, 3.62) {$\scriptstyle{g\circ h}$};
    \node (B5_5) at (2, 2.98) {$\scriptstyle{(g\circ h)\circ f}$};
    
    \draw [->,rounded corners] (A2_0) to (0.2, 1.2) to (A1_2);
    \draw [->,rounded corners] (A2_0) to (0.2, 0.5) to (3.8, 0.5) to (A2_4);
    \draw [->,rounded corners] (A2_0) to (0.2, 3.5) to (A4_2);
    \draw [->,rounded corners] (A4_2) to (3.8, 3.5) to (A2_4);
    \draw [->,rounded corners] (A2_1) to (1.2, 2.8) to (3.8, 2.8) to (A2_4);
    \draw [->,rounded corners] (A1_2) to (3.8, 1.2) to (A2_4);

    \path (A1_2) edge [->]node [auto,swap] {} (A2_3);
    \path (A2_0) edge [->,bend right=35]node [auto] {$\scriptstyle{g\circ\overline{g}}$} (A2_1);
    \path (A2_0) edge [->,bend left=35]node [auto,swap] {$\scriptstyle{\id_B}$} (A2_1);
    \path (A2_1) edge [->]node [auto,swap] {$\scriptstyle{h\circ f}$} (A2_3);
    \path (A2_3) edge [->]node [auto,swap] {$\scriptstyle{g}$} (A2_4);
\end{tikzpicture}
\]

\[
\begin{tikzpicture}[xscale=3.5,yscale=2.8]
    \node (A1_0) at (0, 1) {$A$};
    \node (A1_2) at (2, 1) {$A$.};

    \node (A0_1) at (1, 0.8) {$\Downarrow\,\beta$};
    \node (A2_1) at (1, 1.2) {$\Downarrow\,\thetaa{f}{g}{h}$};
    
    \node (B1_1) at (0.5, 1.1) {$\scriptstyle{(f\circ g)\circ h}$};
    
    \path (A1_0) edge [->,bend right=40]node [auto,swap] {$\scriptstyle{\id_A}$} (A1_2);
    \path (A1_0) edge [->]node [auto] {} (A1_2);
    \path (A1_0) edge [->,bend left=40]node [auto] {$\scriptstyle{f\circ(g\circ h)}$} (A1_2);
\end{tikzpicture}
\]
\end{proof}

\begin{proof}[Proof of Lemma~\ref{lem-02}]
By definition of internal equivalence, there are morphisms $m:A\rightarrow C$ and $n:B\rightarrow D$
and a quadruple of invertible $2$-morphisms as follows:

\begin{gather*}
\delta:\,\id_C\Longrightarrow m\circ(f\circ g),\quad\quad\xi:\,(f\circ g)\circ m\Longrightarrow
 \id_A,\\
\eta:\,\id_D\Longrightarrow n\circ(g\circ h),\quad\quad\mu:\,(g\circ h)\circ n\Longrightarrow\id_B.
\end{gather*}

Then the following pair of compositions prove that $f$ is an internal equivalence of $\CATC$, with
$g\circ m$ as quasi-inverse:

\[
\begin{tikzpicture}[xscale=2.8,yscale=-1.4]
    \node (A2_0) at (0, 2) {$B$};
    \node (A2_1) at (2.2, 1.5) {$C$};
    \node (A2_3) at (2.5, 2.5) {$C$};
    \node (A2_4) at (4, 2) {$B$,};
    \node (A3_2) at (1.4, 2) {$B$};
    \node (A4_2) at (2, 3.5) {$A$};
    
    \node (A3_3) at (0.9, 3) {$\Downarrow\,\pi_{(g\circ m)\circ f}$};
    \node (A3_1) at (0.7, 2.2) {$\Downarrow\,\mu$};
    \node (A0_1) at (1.3, 0.75) {$\Downarrow\,\mu^{-1}$};
    \node (A1_2) at (1.5, 1.25) {$\Downarrow\,\thetab{g}{h}{n}$};
    \node (A1_3) at (3.1, 2) {$\Downarrow\,(i_g\ast\delta)\odot\pi_g^{-1}$};
    \node (A2_2) at (1.9, 2.5) {$\Downarrow\,\theta_{\bullet}$};
 
    \node (B1_1) at (1.2, 3.62) {$\scriptstyle{f}$};
    \node (B2_2) at (2.8, 3.62) {$\scriptstyle{g\circ m}$};
    \node (B3_3) at (2, 0.38) {$\scriptstyle{\id_B}$};
    \node (B4_4) at (2, 0.88) {$\scriptstyle{(g\circ h)\circ n}$};
    \node (B5_5) at (3, 3.12) {$\scriptstyle{(g\circ m)\circ f}$};
    \node (B6_6) at (3, 1.38) {$\scriptstyle{g}$};
    \node (B7_7) at (0.7, 1.38) {$\scriptstyle{h\circ n}$};
    \node (B8_8) at (3.2, 2.62) {$\scriptstyle{g}$};
    \node (B9_9) at (2.1, 2) {$\scriptstyle{m\circ(f\circ g)}$};
    
    \draw [->,rounded corners] (A2_0) to (0.2, 3.5) to (A4_2);
    \draw [->,rounded corners] (A4_2) to (3.8, 3.5) to (A2_4);
    \draw [->,rounded corners] (A2_0) to (0.2, 0.5) to (3.8, 0.5) to (A2_4);
    \draw [->,rounded corners] (A2_0) to (0.2, 1) to (3.8, 1) to (A2_4);
    \draw [->,rounded corners] (A3_2) to (1.6, 3) to (3.8, 3) to (A2_4);
    \draw [->,rounded corners] (A2_1) to (3.8, 1.5) to (A2_4);
    \draw [->,rounded corners] (A2_0) to (0.2, 1.5) to (A2_1);
    \draw [->,rounded corners] (A2_3) to (3.8, 2.5) to (A2_4);
    
    \path (A2_0) edge [->]node [auto] {$\scriptstyle{(g\circ h)\circ n}$} (A3_2);
    \path (A2_0) edge [->,bend left=55]node [auto,swap] {$\scriptstyle{\id_B}$} (A3_2);
    \path (A2_1) edge [->]node [auto,swap] {} (A2_3);
\end{tikzpicture}
\]

\[
\begin{tikzpicture}[xscale=3.5,yscale=-2.8]
    \node (A1_0) at (0, 1) {$A$};
    \node (A1_2) at (2, 1) {$A$.};

    \node (A0_1) at (1, 0.8) {$\Downarrow\,\thetaa{f}{g}{m}$};
    \node (A2_1) at (1, 1.2) {$\Downarrow\,\xi$};
    
    \node (B1_1) at (0.5, 0.9) {$\scriptstyle{(f\circ g)\circ m}$};
    
    \path (A1_0) edge [->,bend right=40]node [auto] {$\scriptstyle{f\circ(g\circ m)}$} (A1_2);
    \path (A1_0) edge [->]node [auto] {} (A1_2);
    \path (A1_0) edge [->,bend left=40]node [auto,swap] {$\scriptstyle{\id_A}$} (A1_2);
\end{tikzpicture}
\]

Now both $f\circ g$ and $f$ are internal equivalences, so by Lemma~\ref{lem-06} we conclude that also
$g$ is an internal equivalence. Since both $g$ and $g\circ h$ are internal equivalences, then again
by Lemma~\ref{lem-06} we conclude that also $h$ is an internal equivalence.
\end{proof}

\begin{proof}[Proof of Lemma~\ref{lem-07}]
Since $\phi$ is a pseudonatural equivalence of pseudofunctors, then it is described by:

\begin{itemize}
 \item a collection of internal equivalences $\phi(A_{\CATC}):\functor{F}_0(A_{\CATC})\rightarrow
  \functor{G}_0(A_{\CATC})$ in $\CATD$ for each object $A_{\CATC}$;
 \item a collection of invertible $2$-morphisms $\phi(f_{\CATC})$ in $\CATD$ for each morphism
  $f_{\CATC}:A_{\CATC}\rightarrow B_{\CATC}$, as follows
  
  \[
  \begin{tikzpicture}[xscale=1.5,yscale=-1.2]
    \node (A0_0) at (0, 0) {$\functor{F}_0(A_{\CATC})$};
    \node (A0_2) at (2, 0) {$\functor{F}_0(B_{\CATC})$};
    \node (A2_0) at (0, 2) {$\functor{G}_0(A_{\CATC})$};
    \node (A2_2) at (2, 2) {$\functor{G}_0(B_{\CATC})$,};
    
    \node (B1_1) [rotate=225] at (0.7, 1) {$\Longrightarrow$};
    \node (A1_1) at (1.2, 1) {$\phi(f_{\CATC})$};
    
    \path (A2_0) edge [->]node [auto,swap] {$\scriptstyle{\functor{G}_1(f_{\CATC})}$} (A2_2);
    \path (A0_0) edge [->]node [auto,swap] {$\scriptstyle{\phi(A_{\CATC})}$} (A2_0);
    \path (A0_2) edge [->]node [auto] {$\scriptstyle{\phi(B_{\CATC})}$} (A2_2);
    \path (A0_0) edge [->]node [auto] {$\scriptstyle{\functor{F}_1(f_{\CATC})}$} (A0_2);
  \end{tikzpicture}
  \]
\end{itemize}
satisfying some coherence conditions. Now let us fix any object $A_{\CATD}$: by (\hyperref[X1]{X1})
for $\functor{F}$ there are an object $A_{\CATC}$ and an internal equivalence $e_{\CATD}:\functor{F}_0
(A_{\CATC})\rightarrow A_{\CATD}$. Since $\phi(A_{\CATC})$ is an internal equivalence, we denote by
$\psi(A_{\CATC}):\functor{G}_0(A_{\CATC})\rightarrow\functor{F}_0(A_{\CATC})$ any chosen
quasi-inverse for $\phi(A_{\CATC})$. Then the internal equivalence
$e_{\CATD}\circ\psi(A_{\CATC})$ proves that (\hyperref[X1]{X1}) holds for $\functor{G}$.\\

Proving (\hyperref[X2]{X2}) for $\functor{G}$ is equivalent to proving the following $3$ conditions
for each pair of objects $A_{\CATC},B_{\CATC}$:

\begin{enumerate}[({X2}a)]
 \item\label{X2a} for each morphism $f_{\CATD}:\functor{G}_0(A_{\CATC})\rightarrow\functor{G}_0
  (B_{\CATC})$, there are a morphism $f_{\CATC}:A_{\CATC}\rightarrow B_{\CATC}$ and an invertible
  $2$-morphism $\alpha_{\CATD}:\functor{G}_1(f_{\CATC})\Rightarrow f_{\CATD}$;
 \item\label{X2b} for each pair of morphisms $f^1_{\CATC},f^2_{\CATC}:A_{\CATC}\rightarrow
  B_{\CATC}$ and for each pair of $2$-morphisms $\alpha^1_{\CATC},\alpha^2_{\CATC}:f^1_{\CATC}
  \Rightarrow f^2_{\CATC}$, if $\functor{G}_2(\alpha^1_{\CATC})=\functor{G}_2(\alpha^2_{\CATC})$,
  then $\alpha^1_{\CATC}=\alpha^2_{\CATC}$;
 \item\label{X2c} for each pair $f^1_{\CATC},f^2_{\CATC}$ as above and for each $2$-morphism
  $\alpha_{\CATD}:\functor{G}_1(f^1_{\CATC})\Rightarrow\functor{G}_1(f^2_{\CATC})$, there is a
  $2$-morphism $\alpha_{\CATC}:f^1_{\CATC}\Rightarrow f^2_{\CATC}$ such that $\functor{G}_2
  (\alpha_{\CATC})=\alpha_{\CATD}$.
\end{enumerate}

Let us prove (\hyperref[X2a]{X2a}), so let us fix
any morphism $f_{\CATD}:\functor{G}_0(A_{\CATC})\rightarrow\functor{G}_0(B_{\CATC})$. Since
both $\phi(A_{\CATC})$ and $\phi(B_{\CATC})$ are internal equivalences, then there are
internal equivalences $\psi(A_{\CATC}):\functor{G}_0(A_{\CATC})\rightarrow\functor{F}_0(A_{\CATC})$,
$\psi(B_{\CATC}):\functor{G}_0(B_{\CATC})\rightarrow\functor{F}_0(B_{\CATC})$ and invertible
$2$-morphisms as follows in $\CATD$:

\begin{gather*}
\delta_{A_{\CATC}}:\id_{\functor{F}_0(A_{\CATC})}\Longrightarrow\psi(A_{\CATC})\circ\phi(A_{\CATC}),
 \quad\quad\xi_{A_{\CATC}}:\phi(A_{\CATC})\circ\psi(A_{\CATC})\Longrightarrow
 \id_{\functor{G}_0(A_{\CATC})}, \\
\delta_{B_{\CATC}}:\id_{\functor{F}_0(B_{\CATC})}\Longrightarrow\psi(B_{\CATC})\circ\phi(B_{\CATC}),
 \quad\quad\xi_{B_{\CATC}}:\phi(B_{\CATC})\circ\psi(B_{\CATC})\Longrightarrow
 \id_{\functor{G}_0(B_{\CATC})}.
\end{gather*}

By (\hyperref[X2a]{X2a}) for $\functor{F}$, there are a morphism $f_{\CATC}:A_{\CATC}\rightarrow
B_{\CATC}$ and an invertible $2$-morphism

\[\alpha_{\CATD}:\,\functor{F}_1(f_{\CATC})\Longrightarrow\psi(B_{\CATC})\circ f_{\CATD}\circ
\phi(A_{\CATC}).\]

Then the composition of the following invertible $2$-morphism proves that
(\hyperref[X2a]{X2a}) holds for $\functor{G}$ (for simplicity, we omit all the unitors
and associators of $\CATD$):

\[
\begin{tikzpicture}[xscale=2.8,yscale=-1.8]
    \node (A0_2) at (2, 0) {$\functor{G}_0(A_{\CATC})$};
    \node (A1_0) at (0, 1) {$\functor{G}_0(A_{\CATC})$};
    \node (A1_1) at (1, 1) {$\functor{F}_0(A_{\CATC})$};
    \node (A1_3) at (3, 1) {$\functor{F}_0(B_{\CATC})$};
    \node (A1_4) at (4, 1) {$\functor{G}_0(B_{\CATC})$.};
    \node (A2_2) at (2, 2) {$\functor{G}_0(A_{\CATC})$};
    \node (B2_2) at (3, 2) {$\functor{G}_0(B_{\CATC})$};
    
    \node (A2_3) at (2.3, 1.5) {$\Downarrow\,\alpha_{\CATD}$};
    \node (A2_1) at (1, 1.4) {$\Downarrow\,\xi_{A_{\CATC}}$};
    \node (A0_3) at (2.5, 0.5) {$\Downarrow\,(\phi(f_{\CATC}))^{-1}$};
    \node (A0_1) at (1, 0.6) {$\Downarrow\,\xi_{A_{\CATC}}^{-1}$};
    \node (B1_1) at (3.4, 1.4) {$\Downarrow\,\xi_{B_{\CATC}}$};
    
    \path (A2_2) edge [->]node [auto,swap] {$\scriptstyle{f_{\CATD}}$} (B2_2);
    \path (B2_2) edge [->,bend left=15]node [auto,swap]
      {$\scriptstyle{\id_{\functor{G}_0(B_{\CATC})}}$} (A1_4);
    \path (B2_2) edge [->]node [auto] {$\scriptstyle{\psi(B_{\CATC})}$} (A1_3);
      
    \path (A1_1) edge [->]node [auto,swap] {$\scriptstyle{\phi(A_{\CATC})}$} (A0_2);
    \path (A1_0) edge [->,bend left=15]node [auto,swap]
      {$\scriptstyle{\id_{\functor{G}_0(A_{\CATC})}}$} (A2_2);
    \path (A1_0) edge [->,bend right=15]node [auto]
      {$\scriptstyle{\id_{\functor{G}_0(A_{\CATC})}}$} (A0_2);
    \path (A1_1) edge [->]node [auto] {$\scriptstyle{\phi(A_{\CATC})}$} (A2_2);
    \path (A1_1) edge [->]node [auto] {$\scriptstyle{\functor{F}_1(f_{\CATC})}$} (A1_3);
    \path (A0_2) edge [->,bend right=15]node [auto]
      {$\scriptstyle{\functor{G}_1(f_{\CATC})}$} (A1_4);
    \path (A1_0) edge [->]node [auto] {$\scriptstyle{\psi(A_{\CATC})}$} (A1_1);
    \path (A1_3) edge [->]node [auto] {$\scriptstyle{\phi(B_{\CATC})}$} (A1_4);
\end{tikzpicture}
\]

Now let us prove (\hyperref[X2b]{X2b}), so let us fix any
pair of morphisms $f^1_{\CATC},f^2_{\CATC}:A_{\CATC}\rightarrow B_{\CATC}$, any pair of
$2$-morphisms $\alpha^1_{\CATC},\alpha^2_{\CATC}:f^1_{\CATC}
\Rightarrow f^2_{\CATC}$ and let us suppose that $\functor{G}_2(\alpha^1_{\CATC})=\functor{G}_2
(\alpha^2_{\CATC})$. By the coherence conditions on $\phi$, we get that

\begin{gather*}
\Big(i_{\phi(B_{\CATC})}\ast\functor{F}_2(\alpha^1_{\CATC})\Big)=\phi(f^2_{\CATC})^{-1}\odot\Big(
 \functor{G}_2(\alpha^1_{\CATC})\ast i_{\phi(A_{\CATC})}\Big)\odot\phi(f^1_{\CATC})= \\
=\phi(f^2_{\CATC})^{-1}\odot\Big(\functor{G}_2(\alpha^2_{\CATC})\ast i_{\phi(A_{\CATC})}\Big)\odot
 \phi(f^1_{\CATC})=\Big(i_{\phi(B_{\CATC})}\ast\functor{F}_2(\alpha^2_{\CATC})\Big).
\end{gather*}

Then we get (associators and unitors of $\CATD$ omitted)

\begin{gather*}
\functor{F}_2(\alpha^1_{\CATC})=\Big(\delta^{-1}_{B_{\CATC}}\ast i_{\functor{F}_1(f^2_{\CATC})}\Big)
 \odot\Big(i_{\psi(B_{\CATC})}\ast\Big(i_{\phi(B_{\CATC})}\ast\functor{F}_2(\alpha^1_{\CATC})\Big)
 \Big)\odot\Big(\delta_{B_{\CATC}}\ast i_{\functor{F}_1(f^1_{\CATC})}\Big)= \\
=\Big(\delta^{-1}_{B_{\CATC}}\ast i_{\functor{F}_1(f^2_{\CATC})}\Big)\odot\Big(i_{\psi(B_{\CATC})}\ast
 \Big(i_{\phi(B_{\CATC})}\ast\functor{F}_2(\alpha^2_{\CATC})\Big)\Big)\odot\Big(\delta_{B_{\CATC}}
 \ast i_{\functor{F}_1(f^1_{\CATC})}\Big)=\functor{F}_2(\alpha^2_{\CATC}),
\end{gather*}
so by (\hyperref[X2b]{X2b}) for $\functor{F}$ we conclude that $\alpha^1_{\CATC}=\alpha^2_{\CATC}$.
So (\hyperref[X2b]{X2b}) holds for $\functor{G}$; the proof that (\hyperref[X2c]{X2c}) holds for
$\functor{G}$ is similar.
\end{proof}

\begin{proof}[Proof of Proposition~\ref{prop-02}]
Let us suppose that $\functor{U}_{\SETW,1}(f)$ is an internal equivalence. Then there are an 
internal equivalence

\[
\begin{tikzpicture}[xscale=2.4,yscale=-1.2]
    \node (A0_0) at (-0.15, 0) {$\underline{e}:=\Big(A$};
    \node (A0_1) at (1, 0) {$\widetilde{C}$};
    \node (A0_2) at (2, 0) {$B\Big)$};
    
    \path (A0_1) edge [->]node [auto,swap] {$\scriptstyle{\operatorname{v}}$} (A0_0);
    \path (A0_1) edge [->]node [auto] {$\scriptstyle{\widetilde{g}}$} (A0_2);
\end{tikzpicture}
\]
in $\CATC\left[\SETWinv\right]$ and an invertible $2$-morphism in $\CATC\left[\SETWinv\right]$

\[\Xi:\,\,\functor{U}_{\SETW,1}(f)\circ\underline{e}\Longrightarrow\Big(A,\id_A,\id_A\Big)\]
(the target of $\Xi$ is the identity of $A$ in $\CATC\left[\SETWinv\right]$).
By definition of composition in $\CATC\left[\SETWinv\right]$ (see~\cite[\S~2.2]{Pr}) and by
condition (\hyperref[C2]{C2}), we have $\functor{U}_{\SETW,1}(f)\circ\underline{e}=(\widetilde{C},
\operatorname{v}\circ\id_{\widetilde{C}},f\circ\widetilde{g})$, so by~\cite[\S~2.3]{Pr} any
representative for $\Xi$ is
given as follows, with $(\operatorname{v}\circ\id_{\widetilde{C}})\circ\operatorname{u}$ in
$\SETW$ and $\xi^1$ invertible:

\[
\begin{tikzpicture}[xscale=2.1,yscale=-0.8]
    \node (A2_0) at (0, 2) {$A$};
    \node (A0_2) at (2, 0) {$\widetilde{C}$};
    \node (A2_2) at (2, 2) {$C$};
    \node (A2_4) at (4, 2) {$A$;};
    \node (A4_2) at (2, 4) {$A$};
    
    \node (A2_3) at (2.9, 2) {$\Downarrow\,\xi^2$};
    \node (A2_1) at (1.1, 2) {$\Downarrow\,\xi^1$};
    
    \path (A4_2) edge [->]node [auto,swap] {$\scriptstyle{\id_A}$} (A2_4);
    \path (A0_2) edge [->]node [auto] {$\scriptstyle{f\circ\widetilde{g}}$} (A2_4);
    \path (A2_2) edge [->]node [auto,swap] {$\scriptstyle{\operatorname{u}}$} (A0_2);
    \path (A2_2) edge [->]node [auto] {$\scriptstyle{\operatorname{z}}$} (A4_2);
    \path (A0_2) edge [->]node [auto,swap] {$\scriptstyle{\operatorname{v}
      \circ\id_{\widetilde{C}}}$} (A2_0);
    \path (A4_2) edge [->]node [auto] {$\scriptstyle{\id_A}$} (A2_0);
\end{tikzpicture}
\]
using~\cite[Proposition~0.8]{T3} we can choose the data above in such a way that also $\xi^2$ is
invertible in $\CATC$. Then we define $g:=\widetilde{g}\circ\operatorname{u}:C\rightarrow B$ and we
consider the invertible $2$-morphism in $\CATC$

\[\nu:=\left(\xi^1\right)^{-1}\odot\xi^2\odot\thetaa{f}{\widetilde{g}}{\operatorname{u}}:\,\,
f\circ g\Longrightarrow(\operatorname{v}\circ\id_{\widetilde{C}})\circ\operatorname{u}.\]

Since the target of $\nu$ belongs to $\SETW$, then by (BF5)
we have that $f\circ g$ belongs to $\SETW$.\\

Since $\underline{e}$ is a morphism in $\CATC\left[\SETWinv\right]$, then $\operatorname{v}$ belongs
to $\SETW$; let us suppose that choices \hyperref[C]{C}$(\SETW)$ give data as in the upper part of the following
diagram, with $\operatorname{r}^1$ in $\SETW$ and $\eta$ invertible:

\[
\begin{tikzpicture}[xscale=2.2,yscale=-0.8]
    \node (A0_1) at (1, 0) {$C'$};
    \node (A1_0) at (0, 2) {$\widetilde{C}$};
    \node (A1_2) at (2, 2) {$\widetilde{C}$.};
    \node (A2_1) at (1, 2) {$A$};
    
    \node (A1_1) at (1, 0.95) {$\eta$};
    \node (B1_1) at (1, 1.4) {$\Rightarrow$};
    
    \path (A1_2) edge [->]node [auto] {$\scriptstyle{\operatorname{v}}$} (A2_1);
    \path (A0_1) edge [->]node [auto] {$\scriptstyle{\operatorname{r}^2}$} (A1_2);
    \path (A1_0) edge [->]node [auto,swap] {$\scriptstyle{\operatorname{v}}$} (A2_1);
    \path (A0_1) edge [->]node [auto,swap] {$\scriptstyle{\operatorname{r}^1}$} (A1_0);
\end{tikzpicture}
\]

Then by~\cite[\S~2.2]{Pr} and \eqref{eq-70} we have $\underline{e}\circ\functor{U}_{\SETW,1}
(\operatorname{v})=
(C',\id_{\widetilde{C}}\circ\operatorname{r}^1,\widetilde{g}\circ\operatorname{r}^2)$. By (BF4a)
and (BF4b) applied to $\eta$,
there are an object $C''$, a morphism $\operatorname{r}^3:C''\rightarrow C'$ in
$\SETW$ and an invertible $2$-morphism $\varepsilon:\operatorname{r}^1\circ\operatorname{r}^3
\Rightarrow\operatorname{r}^2\circ\operatorname{r}^3$. Then we define an invertible $2$-morphism
in $\CATC\left[\SETWinv\right]$ as follows:

\[\Gamma:=\Big[C'',\operatorname{r}^3,\operatorname{r}^2\circ\operatorname{r}^3,
\Big(i_{\id_{\widetilde{C}}}\ast\varepsilon\Big)\odot\thetab{\id_{\widetilde{C}}}{\operatorname{r}^1}
{\operatorname{r}^3},\thetab{\widetilde{g}}{\operatorname{r}^2}{\operatorname{r}^3}\Big]:
\underline{e}\circ\functor{U}_{\SETW,1}(\operatorname{v})\Longrightarrow
\Big(\widetilde{C},\id_{\widetilde{C}},\widetilde{g}\Big).\]

Since $(\operatorname{v}\circ\id_{\widetilde{C}})\circ\operatorname{u}$ belongs to $\SETW$, then
we get easily that also $\operatorname{v}\circ\operatorname{u}$ belongs to $\SETW$. So by
Theorem~\ref{theo-01} the
morphism $\functor{U}_{\SETW,1}(\operatorname{v}\circ\operatorname{u})$ is an internal equivalence in
$\CATC\left[\SETWinv\right]$. Moreover, by construction also $\underline{e}$ is an internal
equivalence. Therefore, $\underline{e}\circ\functor{U}_{\SETW,1}(\operatorname{v}\circ
\operatorname{u})$ is an internal equivalence. Now let us consider the invertible $2$-morphism
in $\CATC\left[\SETWinv\right]$

\begin{gather}
\nonumber \Big(\Gamma\ast i_{\functor{U}_{\SETW,1}(\operatorname{u})}\Big)\odot
 \Theta_{\underline{e},\,\functor{U}_{\SETW,1}(\operatorname{v}),\,\functor{U}_{\SETW,1}
 (\operatorname{u})}^{\CATC,\SETW}\odot\Big(i_{\underline{e}}\ast
 \psi^{\functor{U}_{\SETW}}_{\operatorname{v},\operatorname{u}}\Big)^{-1}: \\
\label{eq-53} \underline{e}\circ
 \functor{U}_{\SETW,1}(\operatorname{v}\circ\operatorname{u})\Longrightarrow
 \Big(\widetilde{C},\id_{\widetilde{C}},\widetilde{g}\Big)\circ
 \functor{U}_{\SETW,1}(\operatorname{u});
\end{gather}
here $\psi^{\functor{U}_{\SETW}}_{\bullet}$ denotes the associator of $\functor{U}_{\SETW}$
relative to the pair $(\operatorname{v},\operatorname{u})$ and $\Theta_{\bullet}^{\CATC,\SETW}$
is the associator of $\CATC\left[\SETWinv\right]$ relative to the triple $(\underline{e},
\functor{U}_{\SETW,1}(\operatorname{v}),\functor{U}_{\SETW,1}(\operatorname{u}))$.
Using \eqref{eq-53}, Lemma~\ref{lem-04} and condition (\hyperref[C2]{C2}), we get that the morphism

\begin{equation}\label{eq-71}
\begin{tikzpicture}[xscale=2.4,yscale=-1.2]
    \node (A0_0) at (-1.7, 0) {$\Big(\widetilde{C},\id_{\widetilde{C}},\widetilde{g}\Big)
      \circ\functor{U}_{\SETW,1}(\operatorname{u})=\Big(C$};
    \node (A0_1) at (0.5, 0) {$C$};
    \node (A0_2) at (2, 0) {$B\Big)$};
    
    \path (A0_1) edge [->]node [auto,swap] {$\scriptstyle{\id_C\circ\id_C}$} (A0_0);
    \path (A0_1) edge [->]node [auto] {$\scriptstyle{\widetilde{g}\circ
      \operatorname{u}}$} (A0_2);
\end{tikzpicture}
\end{equation}
is an internal equivalence of $\CATC\left[\SETWinv\right]$. Now there is an obvious invertible
$2$-morphism in $\CATC\left[\SETWinv\right]$ from $(C,\id_C,\widetilde{g}\circ\operatorname{u})=
\functor{U}_{\SETW,1}(g)$ to \eqref{eq-71}, so
again by Lemma~\ref{lem-04} we have that $\functor{U}_{\SETW,1}(g)$ is an internal equivalence
of $\CATC\left[\SETWinv\right]$.\\

So if we perform on $g$ the same computations that we did on $f$,
we get an object $D$ and a morphism $h:D\rightarrow C$ such that
$g\circ h$ belongs to $\SETW$. By comparing with Definition~\ref{def-01}, this proves that $f$
belongs to $\SETWsat$.\\

Conversely, let us suppose that $f:B\rightarrow A$ belongs to $\SETWsat$, so let us suppose that
there are a pair of objects $C,D$ and a pair of morphisms $g:C\rightarrow B$, $h:D\rightarrow C$, such
that both $f\circ g$ and $g\circ h$ belong to $\SETW$. Then it makes sense to define a morphism in
$\CATC\left[\SETWinv\right]$ as follows:

\[
\begin{tikzpicture}[xscale=2.4,yscale=-1.2]
    \node (A0_0) at (-0.1, 0) {$\underline{t}:=\Big(A$};
    \node (A0_1) at (1, 0) {$C$};
    \node (A0_2) at (2, 0) {$B\Big)$.};
    
    \path (A0_1) edge [->]node [auto,swap] {$\scriptstyle{f\circ g}$} (A0_0);
    \path (A0_1) edge [->]node [auto] {$\scriptstyle{g}$} (A0_2);
\end{tikzpicture}
\]

We want to prove that $\underline{t}$ is a quasi-inverse for $\functor{U}_{\SETW,1}(f)$.
By (\hyperref[C2]{C2}) we have

\[
\begin{tikzpicture}[xscale=2.4,yscale=-1.2]
    \node (A0_0) at (-0.4, 0) {$\functor{U}_{\SETW,1}(f)\circ\underline{t}=\Big(A$};
    \node (A0_1) at (1, 0) {$C$};
    \node (A0_2) at (2, 0) {$A\Big)$,};
    
    \path (A0_1) edge [->]node [auto,swap] {$\scriptstyle{(f\circ g)\circ\id_C}$} (A0_0);
    \path (A0_1) edge [->]node [auto] {$\scriptstyle{f\circ g}$} (A0_2);
\end{tikzpicture}
\]
so we can define an invertible $2$-morphism in $\CATC\left[\SETWinv\right]$ as follows:

\[\Xi:=\Big[C,\id_C,f\circ g,\upsilon_{f\circ g}^{-1}\odot\pi_{f\circ g}\odot\pi_{(f\circ g)
\circ\id_C},\upsilon_{f\circ g}^{-1}\odot\pi_{f\circ g}\Big]:\,\,\functor{U}_{\SETW,1}(f)\circ
\underline{t}\Rightarrow(A,\id_A,\id_A).\]

So in order to conclude that $\functor{U}_{\SETW,1}(f)$ is an internal equivalence, we need only to
find and invertible $2$-morphism $\Delta:(B,\id_B,\id_B)$
$\Rightarrow\underline{t}\circ\functor{U}_{\SETW,1}(f)$. In order to do that, let us suppose that
the fixed choices \hyperref[C]{C}$(\SETW)$ give data as in the upper part of the following diagram,
with $l$ in $\SETW$ and $\rho$ invertible:

\[
\begin{tikzpicture}[xscale=1.8,yscale=-0.8]
    \node (A0_1) at (1, 0) {$E$};
    \node (A1_0) at (0, 2) {$B$};
    \node (A1_2) at (2, 2) {$C$.};
    \node (A2_1) at (1, 2) {$A$};
    
    \node (A1_1) at (1, 1) {$\rho$};
    \node (B1_1) at (1, 1.4) {$\Rightarrow$};
    
    \path (A1_2) edge [->]node [auto] {$\scriptstyle{f\circ g}$} (A2_1);
    \path (A0_1) edge [->]node [auto] {$\scriptstyle{m}$} (A1_2);
    \path (A1_0) edge [->]node [auto,swap] {$\scriptstyle{f}$} (A2_1);
    \path (A0_1) edge [->]node [auto,swap] {$\scriptstyle{l}$} (A1_0);
\end{tikzpicture}
\]

By~\cite[\S~2.2]{Pr}, this implies that

\[
\begin{tikzpicture}[xscale=2.4,yscale=-1.2]
    \node (A0_0) at (-0.4, 0) {$\underline{t}\circ\functor{U}_{\SETW,1}(f)=\Big(B$};
    \node (A0_1) at (1, 0) {$E$};
    \node (A0_2) at (2, 0) {$B\Big)$.};
    
    \path (A0_1) edge [->]node [auto,swap] {$\scriptstyle{\id_B\circ l}$} (A0_0);
    \path (A0_1) edge [->]node [auto] {$\scriptstyle{g\circ m}$} (A0_2);
\end{tikzpicture}
\]

By (BF3) there are data as in the upper part of the following diagram,
with $p$ in $\SETW$ and $\sigma$ invertible:

\[
\begin{tikzpicture}[xscale=1.8,yscale=-0.8]
    \node (A0_1) at (1, 0) {$F$};
    \node (A1_0) at (0, 2) {$E$};
    \node (A1_2) at (2, 2) {$D$.};
    \node (A2_1) at (1, 2) {$B$};
    
    \node (A1_1) at (1, 1) {$\sigma$};
    \node (B1_1) at (1, 1.4) {$\Rightarrow$};
    
    \path (A1_2) edge [->]node [auto] {$\scriptstyle{g\circ h}$} (A2_1);
    \path (A0_1) edge [->]node [auto] {$\scriptstyle{n}$} (A1_2);
    \path (A1_0) edge [->]node [auto,swap] {$\scriptstyle{l}$} (A2_1);
    \path (A0_1) edge [->]node [auto,swap] {$\scriptstyle{p}$} (A1_0);
\end{tikzpicture}
\]

Then it makes sense to consider the following invertible $2$-morphism in $\CATC$

\begin{gather*}
\alpha:=\thetaa{f}{g}{h\circ n}\odot\Big(i_f\ast\thetab{g}{h}{n}\Big)\odot\Big(i_f\ast\sigma\Big)
 \odot\thetab{f}{l}{p}\odot\Big(\rho^{-1}\ast i_p\Big)\odot\thetaa{f\circ g}{m}{p}: \\
\phantom{\Big(}(f\circ g)\circ(m\circ p)\Longrightarrow(f\circ g)\circ(h\circ n).
\end{gather*}

Since $f\circ g$ belongs to $\SETW$, by (BF4a) and (BF4b)
there are an object $G$, a morphism $q:G\rightarrow F$ in $\SETW$ and an invertible $2$-morphism
$\beta:(m\circ p)\circ q\Rightarrow(h\circ n)\circ q$.
Then we define an invertible $2$-morphism in $\CATC$

\begin{gather*}
\delta:=\Big(\upsilon^{-1}_l\ast i_{p\circ q}\Big)\odot\thetab{l}{p}{q}\odot\Big(\sigma^{-1}\ast
 i_q\Big)\odot\Big(\thetaa{g}{h}{n}\ast i_q\Big)\odot\thetaa{g}{h\circ n}{q}\odot \\
\odot\Big(i_g\ast\beta\Big)\odot\Big(i_g\ast\thetaa{m}{p}{q}\Big)\odot\thetab{g}{m}{p\circ q}\odot
 \upsilon_{(g\circ m)\circ(p\circ q)}: \\
\id_B\circ((g\circ m)\circ(p\circ q))\Longrightarrow(\id_B\circ l)\circ(p\circ q).
\end{gather*}

Then it makes sense to define an invertible $2$-morphism in $\CATC\left[\SETWinv\right]$ as follows:

\begin{gather*}
\Delta:=\Big[G,(g\circ m)\circ(p\circ q),p\circ q,\delta,\upsilon_{(g\circ m)\circ(p\circ q)}
 \Big]: \\
\Big(B,\id_B,\id_B\Big)\Longrightarrow\Big(E,\id_B\circ l,g\circ m\Big)=\underline{t}\circ
 \functor{U}_{\SETW,1}(f).
\end{gather*}

This suffices to conclude.
\end{proof}

Actually, a direct computation using~\cite[pagg.~260--261]{Pr} proves that the quadruple
$(\functor{U}_{\SETW,1}(f),\underline{t},\Delta, \Xi)$ is an adjoint equivalence, but this fact
was not needed for the proof above.

\begin{proof}[Proof of Lemma~\ref{lem-05}]
Condition (BF1)
is obvious since $\SETW\subseteq\SETWsat$.\\

Let us fix any pair of morphisms $\operatorname{w}:B\rightarrow A$ and $\operatorname{v}:C\rightarrow
B$, both belonging to $\SETWsat$. By Proposition~\ref{prop-02}, we have that

\[
\begin{tikzpicture}[xscale=1.8,yscale=-1.2]
    \node (A0_0) at (0, 0) {$C$};
    \node (A0_1) at (1, 0) {$C$};
    \node (A0_2) at (2, 0) {$B$};
    \node (A0_3) at (3, 0) {$\textrm{and}$};
    
    \path (A0_1) edge [->]node [auto,swap] {$\scriptstyle{\id_C}$} (A0_0);
    \path (A0_1) edge [->]node [auto] {$\scriptstyle{\operatorname{v}}$} (A0_2);
    
    \node (A0_4) at (4, 0) {$B$};
    \node (A0_5) at (5, 0) {$B$};
    \node (A0_6) at (6, 0) {$A$};
    
    \path (A0_5) edge [->]node [auto,swap] {$\scriptstyle{\id_B}$} (A0_4);
    \path (A0_5) edge [->]node [auto] {$\scriptstyle{\operatorname{w}}$} (A0_6);
\end{tikzpicture}
\]
are both internal equivalences in $\CATC\left[\SETWinv\right]$. So by (\hyperref[C2]{C2}) and
Lemma~\ref{lem-06}, also their composition

\[
\begin{tikzpicture}[xscale=2.4,yscale=-1.2]
    \node (A0_0) at (0, 0) {$C$};
    \node (A0_1) at (1, 0) {$C$};
    \node (A0_2) at (2, 0) {$A$};
    
    \path (A0_1) edge [->]node [auto,swap] {$\scriptstyle{\id_C\circ\id_C}$} (A0_0);
    \path (A0_1) edge [->]node [auto] {$\scriptstyle{\operatorname{w}\circ\operatorname{v}}$} (A0_2);
\end{tikzpicture}
\]
is an internal equivalence. So using Lemma~\ref{lem-04} and \eqref{eq-70},
we get that also $\functor{U}_{\SETW,1}(\operatorname{w}\circ\operatorname{v})$ is an internal
equivalence. Again by Proposition~\ref{prop-02} this implies that $\operatorname{w}
\circ\operatorname{v}$ belongs to $\SETWsat$. Therefore (BF2)
holds for $(\CATC,\SETWsat)$.\\

Let us prove (BF3),
so let us fix any morphism $\operatorname{w}:A\rightarrow B$ in
$\SETWsat$ and any morphism $f:C\rightarrow B$. Since $\operatorname{w}$ belongs to $\SETWsat$,
there are an object $A'$ and a morphism $\operatorname{v}:A'\rightarrow A$ such that
$\operatorname{w}\circ\operatorname{v}$ belongs to $\SETW$. Since (BF3)
holds for $\SETW$, there are an object $D$, a morphism $\operatorname{w}':D\rightarrow C$ in
$\SETW\subseteq\SETWsat$, a morphism $f':
D\rightarrow A'$ and an invertible $2$-morphism $\alpha:(\operatorname{w}\circ\operatorname{v})\circ
f'\Rightarrow f\circ\operatorname{w}'$. Then the data

\[D,\quad\operatorname{w}',\quad\operatorname{v}\circ f',\quad\alpha\odot\thetaa{\operatorname{w}}
{\operatorname{v}}{f'}\]
prove that (BF3) holds for $(\CATC,\SETWsat)$.\\

\emph{For simplicity of exposition, we give the proof of} (BF4)
\emph{only in the special case when $\CATC$ is a $2$-category. The proof of the general case
follows the same ideas, adding associators and unitors wherever it is necessary}.
In order to prove (BF4a),
let us fix any morphism $\operatorname{w}:B\rightarrow
A$ in $\SETWsat$, any pair of morphisms $f^1,f^2:C\rightarrow B$ and any $2$-morphism $\alpha:
\operatorname{w}\circ f^1\Rightarrow\operatorname{w}\circ f^2$. Since $\operatorname{w}$ belongs to
$\SETWsat$, then there are a pair of objects $B',B''$ and a pair of morphisms $\operatorname{w}':B'
\rightarrow B$ and $\operatorname{w}'':B''\rightarrow B'$ such that both $\operatorname{w}\circ
\operatorname{w}'$ and $\operatorname{w}'\circ\operatorname{w}''$ belong to $\SETW$. By
(BF3) for $(\CATC,\SETW)$, for each $m=1,2$ there is a set of data ($D^m,g^m,
\operatorname{u}^m,\gamma^m)$ in $\CATC$ as in the following diagram, with $\operatorname{u}^m$
in $\SETW$ and $\gamma^m$ invertible:
\[
\begin{tikzpicture}[xscale=1.8,yscale=-0.8]
    \node (A0_1) at (1, 0) {$D^m$};
    \node (A1_0) at (0, 2) {$C$};
    \node (A1_2) at (2, 2) {$B''$.};
    \node (A2_1) at (1, 2) {$B$};

    \node (A1_1) at (1, 1) {$\gamma^m$};
    \node (B1_1) at (1, 1.4) {$\Rightarrow$};
    
    \path (A1_2) edge [->]node [auto] {$\scriptstyle{\operatorname{w}'
      \circ\operatorname{w}''}$} (A2_1);
    \path (A0_1) edge [->]node [auto] {$\scriptstyle{g^m}$} (A1_2);
    \path (A1_0) edge [->]node [auto,swap] {$\scriptstyle{f^m}$} (A2_1);
    \path (A0_1) edge [->]node [auto,swap] {$\scriptstyle{\operatorname{u}^m}$} (A1_0);
\end{tikzpicture}
\]

Again by (BF3)
for $(\CATC,\SETW)$, there is a set of data $(D^3,\operatorname{z}^1,
\operatorname{z}^2,\phi)$ in $\CATC$ as follows,with $\operatorname{z}^1$ in $\SETW$ and
$\phi$ invertible

\[
\begin{tikzpicture}[xscale=1.5,yscale=-0.8]
    \node (A0_1) at (1, 0) {$D^3$};
    \node (A1_0) at (0, 2) {$D^1$};
    \node (A1_2) at (2, 2) {$D^2$.};
    \node (A2_1) at (1, 2) {$C$};

    \node (A1_1) at (1, 1) {$\phi$};
    \node (B1_1) at (1, 1.4) {$\Rightarrow$};
    
    \path (A1_2) edge [->]node [auto] {$\scriptstyle{\operatorname{u}^2}$} (A2_1);
    \path (A0_1) edge [->]node [auto] {$\scriptstyle{\operatorname{z}^2}$} (A1_2);
    \path (A1_0) edge [->]node [auto,swap] {$\scriptstyle{\operatorname{u}^1}$} (A2_1);
    \path (A0_1) edge [->]node [auto,swap] {$\scriptstyle{\operatorname{z}^1}$} (A1_0);
\end{tikzpicture}
\]

Now we define a $2$-morphism in $\CATC$ as follows:

\begin{gather}
\nonumber \overline{\alpha}:=\Big(i_{\operatorname{w}}\ast\gamma^2\ast
 i_{\operatorname{z}^2}\Big)\odot\Big(i_{\operatorname{w}\circ f^2}\ast\phi\Big)\odot\Big(\alpha
 \ast i_{\operatorname{u}^1\circ\operatorname{z}^1}\Big)\odot\Big(i_{\operatorname{w}}\ast
 \left(\gamma^1\right)^{-1}\ast i_{\operatorname{z}^1}\Big): \\
\label{eq-69} \operatorname{w}\circ\operatorname{w}'\circ\operatorname{w}''\circ
 g^1\circ\operatorname{z}^1\Longrightarrow\operatorname{w}\circ\operatorname{w}'\circ
 \operatorname{w}''\circ g^2\circ\operatorname{z}^2.
\end{gather}

Since $\operatorname{w}\circ\operatorname{w}'$ belongs to $\SETW$, then by (BF4a)
for $(\CATC,\SETW)$ there are an object $D$, a morphism $\operatorname{z}:D\rightarrow D^3$ in $\SETW$
and a $2$-morphism

\[\overline{\beta}:\,\operatorname{w}''\circ g^1\circ\operatorname{z}^1\circ\operatorname{z}
\Longrightarrow\operatorname{w}''\circ g^2\circ\operatorname{z}^2\circ\operatorname{z},\]
such that

\begin{equation}\label{eq-65}
\overline{\alpha}\ast i_{\operatorname{z}}=i_{\operatorname{w}
\circ\operatorname{w}'}\ast\overline{\beta}.
\end{equation}

Now let us set

\[\operatorname{v}:=\operatorname{u}^1\circ\operatorname{z}^1\circ\operatorname{z}:\,D
\longrightarrow C;\]
such a morphism belongs to $\SETW$ (hence also to $\SETWsat$) because of (BF2)
for $(\CATC,\SETW)$. Then it makes sense to define:

\begin{equation}\label{eq-57}
\beta:=\Big(i_{f^2}\ast\phi^{-1}\ast i_{\operatorname{z}}\Big)\odot\Big(\left(\gamma^2\right)^{-1}
\ast i_{\operatorname{z}^2\circ\operatorname{z}}\Big)\odot\Big(i_{\operatorname{w}'}\ast
\overline{\beta}\Big)\odot\Big(\gamma^1\ast i_{\operatorname{z}^1\circ\operatorname{z}}\Big):
f^1\circ\operatorname{v}\Longrightarrow f^2\circ\operatorname{v}.
\end{equation}

By replacing \eqref{eq-69} in \eqref{eq-65}, we get:

\begin{gather*}
\alpha\ast i_{\operatorname{v}}=\alpha\ast i_{\operatorname{u}^1\circ\operatorname{z}^1
 \circ\operatorname{z}}= \\
=i_{\operatorname{w}}\ast\Big(\Big(i_{f^2}\ast\phi^{-1}\ast
 i_{\operatorname{z}}\Big)\odot\Big(\left(\gamma^2\right)^{-1}\ast i_{\operatorname{z}^2\circ
 \operatorname{z}}\Big)
 \odot\Big(i_{\operatorname{w}'}\ast\overline{\beta}\Big)\odot\Big(\gamma^1\ast
  i_{\operatorname{z}^1\circ\operatorname{z}}\Big)\Big)
 \stackrel{\eqref{eq-57}}{=} i_{\operatorname{w}}\ast\beta.
\end{gather*}

So we have proved that (BF4a)
holds for $(\CATC,\SETWsat)$.\\

Moreover, if in the previous computations we assume that $\alpha$ is invertible, then so is
$\overline{\alpha}$ because $\gamma^1,\gamma^2$ and $\phi$ are invertible. Then by (BF4b)
for $(\CATC,\SETW)$ we get that also $\overline{\beta}$ is invertible, hence
also $\beta$ is invertible, so (BF4b)
is verified for $(\CATC,\SETWsat)$.\\

Now let us prove that also (BF4c)
holds for $(\CATC,\SETWsat)$. So let us suppose
that there are an object $D'$, a morphism $\operatorname{v}':D'\rightarrow C$ in $\SETWsat$ and a
$2$-morphism $\beta':f^1\circ\operatorname{v}'\Rightarrow f^2\circ\operatorname{v}'$, such that

\begin{equation}\label{eq-66}
\alpha\ast i_{\operatorname{v}'}=i_{\operatorname{w}}\ast\beta'.
\end{equation}

Following the proof of (BF3)
for $(\CATC,\SETWsat)$ above, there are
data $(\overline{D},\operatorname{t},\operatorname{t}',\mu)$ as in the following diagram, with
$\operatorname{t}$ in $\SETW$ and $\mu$ invertible:

\[
\begin{tikzpicture}[xscale=2.2,yscale=-0.8]
    \node (A0_1) at (1, 0) {$\overline{D}$};
    \node (A1_0) at (0, 2) {$D$};
    \node (A1_2) at (2, 2) {$D'$.};
    \node (A2_1) at (1, 2) {$C$};

    \node (A1_1) at (1, 1) {$\mu$};
    \node (B1_1) at (1, 1.4) {$\Rightarrow$};
    
    \path (A1_2) edge [->]node [auto] {$\scriptstyle{\operatorname{v}'}$} (A2_1);
    \path (A0_1) edge [->]node [auto] {$\scriptstyle{\operatorname{t}'}$} (A1_2);
    \path (A1_0) edge [->]node [auto,swap] {$\scriptstyle{\operatorname{v}=
      \operatorname{u}^1\circ\operatorname{z}^1\circ\operatorname{z}}$} (A2_1);
    \path (A0_1) edge [->]node [auto,swap] {$\scriptstyle{\operatorname{t}}$} (A1_0);
\end{tikzpicture}
\]

Then we define a $2$-morphism as follows

\begin{gather}
\nonumber \lambda:=\Big(\gamma^2\ast i_{\operatorname{z}^2\circ\operatorname{z}
 \circ\operatorname{t}}\Big)\odot\Big(i_{f^2}\ast\phi\ast i_{\operatorname{z}\circ
 \operatorname{t}}\Big)\odot\Big(i_{f^2}\ast\mu^{-1}\Big)\odot \\
\nonumber \odot\Big(\beta'\ast i_{\operatorname{t}'}\Big)\odot\Big(i_{f^1}\ast\mu\Big)\odot
 \Big(\left(\gamma^1\right)^{-1}\ast i_{\operatorname{z}^1\circ\operatorname{z}\circ
 \operatorname{t}}\Big): \\
\label{eq-17} \phantom{\Big(}\operatorname{w}'\circ\operatorname{w}''\circ g^1\circ
 \operatorname{z}^1\circ\operatorname{z}\circ\operatorname{t}\Longrightarrow\operatorname{w}'
 \circ\operatorname{w}''\circ g^2\circ\operatorname{z}^2\circ\operatorname{z}
 \circ\operatorname{t}.
\end{gather}

By construction, $\operatorname{w}'\circ\operatorname{w}''$ belongs to $\SETW$, so by (BF4a)
for $(\CATC,\SETW)$ there are an object $\widetilde{D}$, a morphism
$\operatorname{r}:\widetilde{D}\rightarrow\overline{D}$ in $\SETW$ and a $2$-morphism

\[\psi:\,g^1\circ\operatorname{z}^1\circ\operatorname{z}\circ\operatorname{t}
\circ\operatorname{r}\Longrightarrow g^2\circ\operatorname{z}^2\circ\operatorname{z}
\circ\operatorname{t}\circ\operatorname{r},\]
such that

\begin{equation}\label{eq-64}
\lambda\ast i_{\operatorname{r}}=i_{\operatorname{w}'\circ\operatorname{w}''}\ast\psi.
\end{equation}

Then we have

\begin{gather}
\nonumber \overline{\alpha}\ast i_{\operatorname{z}\circ\operatorname{t}\circ\operatorname{r}}
 \stackrel{\eqref{eq-69}}{=}\Big(i_{\operatorname{w}}\ast\gamma^2
 \ast i_{\operatorname{z}^2\circ\operatorname{z}\circ\operatorname{t}\circ
 \operatorname{r}}\Big)\odot\Big(i_{\operatorname{w}\circ f^2}\ast\phi\ast
 i_{\operatorname{z}\circ
 \operatorname{t}\circ\operatorname{r}}\Big)\odot \\
\nonumber\odot\Big(\alpha\ast
 i_{\operatorname{u}^1\circ\operatorname{z}^1\circ\operatorname{z}\circ\operatorname{t}\circ
 \operatorname{r}}\Big)\odot\Big(i_{\operatorname{w}}\ast\left(\gamma^1\right)^{-1}\ast
 i_{\operatorname{z}^1\circ\operatorname{z}\circ\operatorname{t}\circ\operatorname{r}}\Big)
 \stackrel{(\ast)}{=} \\
\nonumber \stackrel{(\ast)}{=}\Big(i_{\operatorname{w}}\ast
 \gamma^2\ast i_{\operatorname{z}^2\circ\operatorname{z}\circ
 \operatorname{t}\circ\operatorname{r}}\Big)\odot\Big(i_{\operatorname{w}
 \circ f^2}\ast\phi\ast i_{\operatorname{z}\circ\operatorname{t}\circ\operatorname{r}}
 \Big)\odot\Big(i_{\operatorname{w}\circ f^2}\ast\mu^{-1}\ast i_{\operatorname{r}}
 \Big)\odot \\
\nonumber \odot\Big(\alpha\ast i_{\operatorname{v}'\circ\operatorname{t}'\circ\operatorname{r}}
 \Big)\odot\Big(i_{\operatorname{w}\circ f^1}\ast\mu\ast i_{\operatorname{r}}\Big)\odot
 \Big(i_{\operatorname{w}}\ast\left(\gamma^1\right)^{-1}\ast
 i_{\operatorname{z}^1\circ\operatorname{z}\circ\operatorname{t}\circ\operatorname{r}}\Big)
 \stackrel{\eqref{eq-66}}{=} \\
\nonumber \stackrel{\eqref{eq-66}}{=}\Big(i_{\operatorname{w}}\ast
 \gamma^2\ast i_{\operatorname{z}^2\circ\operatorname{z}\circ\operatorname{t}
 \circ\operatorname{r}}\Big)\odot\Big(i_{\operatorname{w}\circ f^2}\ast
 \phi\ast i_{\operatorname{z}\circ\operatorname{t}\circ\operatorname{r}}\Big)\odot
 \Big(i_{\operatorname{w}\circ f^2}\ast\mu^{-1}\ast i_{\operatorname{r}}\Big)\odot \\
\nonumber \odot\Big(i_{\operatorname{w}}\ast\beta'\ast i_{\operatorname{t}'\circ\operatorname{r}}
 \Big)\odot\Big(i_{\operatorname{w}\circ f^1}\ast\mu\ast i_{\operatorname{r}}\Big)\odot
 \Big(i_{\operatorname{w}}\ast\left(\gamma^1\right)^{-1}\ast i_{\operatorname{z}^1\circ
 \operatorname{z}\circ\operatorname{t}\circ\operatorname{r}}\Big)\stackrel{\eqref{eq-17}}{=} \\
\label{eq-18} \stackrel{\eqref{eq-17}}{=}\Big(i_{\operatorname{w}}\ast
 \lambda\ast i_{\operatorname{r}}\Big)\stackrel{\eqref{eq-64}}{=}
 i_{\operatorname{w}\circ\operatorname{w}'\circ\operatorname{w}''}\ast\psi,
\end{gather}
where the passage denoted by $(\ast)$ is given by the interchange law. Then we define
$\operatorname{z}':=\operatorname{z}\circ\operatorname{t}\circ\operatorname{r}:\widetilde{D}
\rightarrow D^3$; such a morphism belongs to $\SETW$ by (BF2)
for $(\CATC,\SETW)$. Moreover, we set:

\begin{equation}
\label{eq-58} \overline{\beta}':=i_{\operatorname{w}''}\ast\psi:\,\,\operatorname{w}''\circ g^1
\circ\operatorname{z}^1\circ\operatorname{z}'\Longrightarrow\operatorname{w}''\circ g^2\circ
\operatorname{z}^2\circ\operatorname{z}'.
\end{equation}

Then \eqref{eq-18} reads as follows:

\begin{equation}\label{eq-60}
\overline{\alpha}\ast i_{\operatorname{z}'}=i_{\operatorname{w}
\circ\operatorname{w}'}\ast\overline{\beta}'.
\end{equation}

Now $\operatorname{w}\circ\operatorname{w}'$ belongs to $\SETW$ by construction. Therefore, we can
apply (BF4c)
for $(\CATC,\SETW)$ to the pair of identities given by \eqref{eq-65}
and \eqref{eq-60}. So there is a set of data $(E,\operatorname{p},\operatorname{s},\nu)$ as in
the following diagram

\[
\begin{tikzpicture}[xscale=1.4,yscale=-0.6]
    \node (A0_0) at (0, 1) {$E$};
    \node (A0_2) at (2, 0) {$D$};
    \node (A0_4) at (4, 1) {$D^3$,};
    \node (A2_2) at (2, 2) {$\widetilde{D}$};

    \node (A1_2) at (2, 1) {$\Downarrow\,\nu$};

    \path (A0_0) edge [->,bend left=15]node [auto,swap] {$\scriptstyle{\operatorname{p}}$} (A2_2);
    \path (A0_2) edge [->,bend right=15]node [auto] {$\scriptstyle{\operatorname{z}}$} (A0_4);
    \path (A2_2) edge [->,bend left=15]node [auto,swap] {$\scriptstyle{\operatorname{z}'=
      \operatorname{z}\circ\operatorname{t}\circ\operatorname{r}}$} (A0_4);
    \path (A0_0) edge [->,bend right=15]node [auto] {$\scriptstyle{\operatorname{s}}$} (A0_2);
\end{tikzpicture}
\]
such that $\operatorname{z}\circ\operatorname{s}$ belongs to $\SETW$, $\nu$ is invertible and

\begin{equation}
\label{eq-59} \Big(\overline{\beta}'\ast i_{\operatorname{p}}\Big)\odot\Big(i_{\operatorname{w}''
\circ g^1\circ\operatorname{z}^1}\ast\nu\Big)=\Big(i_{\operatorname{w}''\circ g^2\circ
\operatorname{z}^2}\ast\nu\Big)\odot\Big(\overline{\beta}\ast i_{\operatorname{s}}\Big).
\end{equation}

Now we have:

\begin{gather}
\nonumber i_{\operatorname{w}'}\ast\overline{\beta}'\ast i_{\operatorname{p}}
 \stackrel{\eqref{eq-58}}{=}i_{\operatorname{w}'\circ
 \operatorname{w}''}\ast\psi\ast i_{\operatorname{p}}\stackrel{\eqref{eq-64}}{=}\lambda\ast
 i_{\operatorname{r}\circ\operatorname{p}}\stackrel{\eqref{eq-17}}{=} \\
\nonumber \stackrel{\eqref{eq-17}}{=}\Big(\gamma^2\ast
 i_{\operatorname{z}^2\circ\operatorname{z}\circ\operatorname{t}\circ\operatorname{r}\circ
 \operatorname{p}}\Big)\odot\Big(i_{f^2}\ast\phi\ast
 i_{\operatorname{z}\circ\operatorname{t}\circ\operatorname{r}\circ\operatorname{p}}\Big)\odot
 \Big(i_{f^2}\ast\mu^{-1}\ast i_{\operatorname{r}\circ\operatorname{p}}\Big)\odot \\
\label{eq-49} \odot\Big(\beta'\ast i_{\operatorname{t}'\circ
 \operatorname{r}\circ\operatorname{p}}\Big)\odot\Big(i_{f^1}\ast\mu\ast i_{\operatorname{r}
 \circ\operatorname{p}}
 \Big)\odot\Big(\left(\gamma^1\right)^{-1}\ast i_{\operatorname{z}^1\circ\operatorname{z}\circ
 \operatorname{t}\circ\operatorname{r}\circ\operatorname{p}}\Big).
\end{gather}

Moreover, by \eqref{eq-57} we have:

\begin{equation}\label{eq-54}
i_{\operatorname{w}'}\ast\overline{\beta}\ast i_{\operatorname{s}}=\Big(\gamma^2
\ast i_{\operatorname{z}^2\circ
\operatorname{z}\circ\operatorname{s}}\Big)\odot\Big(i_{f^2}\ast
\phi\ast i_{\operatorname{z}\circ\operatorname{s}}\Big)\odot\Big(\beta\ast
i_{\operatorname{s}}\Big)\odot\Big(\left(\gamma^1\right)^{-1}\ast i_{\operatorname{z}^1
\circ\operatorname{z}\circ\operatorname{s}}\Big).
\end{equation}

Then:

\begin{gather}
\nonumber \Big(\gamma^2\ast i_{\operatorname{z}^2\circ
 \operatorname{z}\circ\operatorname{t}\circ\operatorname{r}\circ\operatorname{p}}\Big)\odot
 \Big(i_{f^2}\ast\phi\ast i_{\operatorname{z}\circ\operatorname{t}
 \circ\operatorname{r}\circ\operatorname{p}}\Big)\odot\Big(i_{f^2}\ast\mu^{-1}\ast i_{\operatorname{r}
 \circ\operatorname{p}}\Big)\odot \\
\nonumber \odot\Big(\beta'\ast i_{\operatorname{t}'\circ\operatorname{r}\circ
 \operatorname{p}}\Big)\odot\Big(i_{f^1}\ast\mu\ast
 i_{\operatorname{r}\circ\operatorname{p}}\Big)\odot\Big(i_{f^1\circ\operatorname{u}^1\circ
 \operatorname{z}^1}\ast\nu\Big)\odot\Big(\left(\gamma^1\right)^{-1}\ast i_{\operatorname{z}^1\circ
 \operatorname{z}\circ\operatorname{s}}\Big)\stackrel{(\ast)}{=} \\
\nonumber \stackrel{(\ast)}{=}\Big(\gamma^2\ast
 i_{\operatorname{z}^2\circ\operatorname{z}\circ\operatorname{t}\circ\operatorname{r}\circ
 \operatorname{p}}\Big)\odot\Big(i_{f^2}\ast\phi\ast
 i_{\operatorname{z}\circ\operatorname{t}\circ\operatorname{r}\circ\operatorname{p}}\Big)\odot \\
\nonumber \odot\Big(i_{f^2}\ast\mu^{-1}\ast i_{\operatorname{r}\circ\operatorname{p}}\Big)\odot
 \Big(\beta'\ast i_{\operatorname{t}'\circ\operatorname{r}\circ
 \operatorname{p}}\Big)\odot\Big(i_{f^1}\ast\mu\ast
 i_{\operatorname{r}\circ\operatorname{p}}\Big)\odot \\
\nonumber \odot\Big(\left(\gamma^1\right)^{-1}\ast i_{\operatorname{z}^1\circ
 \operatorname{z}\circ\operatorname{t}\circ\operatorname{r}\circ\operatorname{p}}\Big)\odot
 \Big(i_{\operatorname{w}'\circ\operatorname{w}''\circ g^1\circ
 \operatorname{z}^1}\ast\nu\Big)\stackrel{\eqref{eq-49}}{=} \\
\nonumber \stackrel{\eqref{eq-49}}{=}\Big(i_{\operatorname{w}'}\ast
 \overline{\beta}'\ast i_{\operatorname{p}}\Big)\odot\Big(
 i_{\operatorname{w}'\circ\operatorname{w}''\circ g^1\circ\operatorname{z}^1}\ast\nu\Big)
 \stackrel{\eqref{eq-59}}{=} \\
\nonumber \stackrel{\eqref{eq-59}}{=}\Big(i_{\operatorname{w}'\circ
 \operatorname{w}''\circ g^2\circ\operatorname{z}^2}\ast\nu\Big)
 \odot\Big(i_{\operatorname{w}'}\ast\overline{\beta}\ast i_{\operatorname{s}}\Big)
 \stackrel{\eqref{eq-54}}{=} \\
\nonumber \stackrel{\eqref{eq-54}}{=}\Big(i_{\operatorname{w}'\circ
 \operatorname{w}''\circ g^2\circ\operatorname{z}^2}\ast\nu\Big)\odot\Big(
 \gamma^2\ast i_{\operatorname{z}^2\circ\operatorname{z}\circ\operatorname{s}}
 \Big)\odot \\
\nonumber \odot\Big(i_{f^2}\ast\phi\ast i_{\operatorname{z}\circ\operatorname{s}}\Big)
 \odot\Big(\beta\ast i_{\operatorname{s}}\Big)\odot\Big(\left(\gamma^1\right)^{-1}\ast
 i_{\operatorname{z}^1\circ\operatorname{z}\circ\operatorname{s}}\Big)
 \stackrel{(\ast)}{=} \\
\nonumber \stackrel{(\ast)}{=}\Big(\gamma^2\ast
 i_{\operatorname{z}^2\circ\operatorname{z}\circ\operatorname{t}\circ\operatorname{r}\circ
 \operatorname{p}}\Big)\odot\Big(i_{f^2\circ\operatorname{u}^2\circ
 \operatorname{z}^2}\ast\nu\Big)\odot \\
\nonumber \odot\Big(i_{f^2}\ast\phi\ast i_{\operatorname{z}\circ\operatorname{s}}\Big)
 \odot\Big(\beta\ast i_{\operatorname{s}}\Big)\odot\Big(\left(\gamma^1\right)^{-1}\ast
 i_{\operatorname{z}^1\circ\operatorname{z}\circ\operatorname{s}}\Big)
 \stackrel{(\ast)}{=} \\
\nonumber \stackrel{(\ast)}{=}\Big(\gamma^2\ast
 i_{\operatorname{z}^2\circ\operatorname{z}\circ\operatorname{t}\circ\operatorname{r}\circ
 \operatorname{p}}\Big)\odot\Big(i_{f^2}\ast\phi\ast
 i_{\operatorname{z}\circ\operatorname{t}\circ\operatorname{r}\circ\operatorname{p}}\Big)\odot \\
\label{eq-48} \odot\Big(i_{f^2\circ\operatorname{u}^1\circ
 \operatorname{z}^1}\ast\nu\Big)\odot\Big(\beta\ast i_{\operatorname{s}}\Big)
 \odot\Big(\left(\gamma^1\right)^{-1}\ast i_{\operatorname{z}^1\circ\operatorname{z}\circ
 \operatorname{s}}\Big),
\end{gather}
where all the identities denoted by $(\ast)$ are given by the interchange law in $\CATC$. Since
$\gamma^1,\gamma^2$ and $\phi$ are invertible, then identity \eqref{eq-48} implies that 

\begin{gather*}
\Big(i_{f^2}\ast\mu^{-1}\ast i_{\operatorname{r}\circ
 \operatorname{p}}\Big)\odot\Big(\beta'\ast i_{\operatorname{t}'\circ
 \operatorname{r}\circ\operatorname{p}}\Big)\odot\Big(i_{f^1}\ast\mu\ast
 i_{\operatorname{r}\circ\operatorname{p}}\Big)\odot
 \Big(i_{f^1\circ\operatorname{u}^1\circ\operatorname{z}^1}\ast\nu\Big)= \\
=\Big(i_{f^2\circ\operatorname{u}^1\circ\operatorname{z}^1}\ast
 \nu\Big)\odot\Big(\beta\ast i_{\operatorname{s}}\Big).
\end{gather*}

Therefore, we have the following identity:

\begin{gather}
\nonumber \Big(\beta'\ast i_{\operatorname{t}'\circ\operatorname{r}\circ
 \operatorname{p}}\Big)\odot\Big\{i_{f^1}\ast\Big[\Big(\mu\ast i_{\operatorname{r}\circ
 \operatorname{p}}\Big)\odot\Big(i_{\operatorname{u}^1\circ\operatorname{z}^1}
 \ast\nu\Big)\Big]\Big\}= \\
\label{eq-40} =\Big\{i_{f^2}\ast\Big[\Big(\mu\ast i_{\operatorname{r}\circ
 \operatorname{p}}\Big)\odot\Big(i_{\operatorname{u}^1\circ\operatorname{z}^1}
 \ast\nu\Big)\Big]\Big\}\odot\Big(\beta\ast i_{\operatorname{s}}\Big).
\end{gather}

Now we define a morphism $\operatorname{s}':=\operatorname{t}'\circ\operatorname{r}\circ
\operatorname{p}:\,E\rightarrow D'$ and an invertible $2$-morphism

\begin{gather*}
\zeta:=\Big(\mu\ast i_{\operatorname{r}\circ\operatorname{p}}\Big)\odot\Big(
 i_{\operatorname{u}^1\circ\operatorname{z}^1}\ast\nu\Big): \\
\phantom{\Big(}\operatorname{v}\circ\operatorname{s}=\operatorname{u}^1\circ\operatorname{z}^1
 \circ\operatorname{z}\circ\operatorname{s}\Longrightarrow\operatorname{v}'\circ\operatorname{t}'
 \circ\operatorname{r}\circ\operatorname{p}=\operatorname{v}'\circ\operatorname{s}'.
\end{gather*}

Then \eqref{eq-40} reads as follows:

\[\Big(\beta'\ast i_{\operatorname{s}'}\Big)\odot\Big(i_{f^1}\ast\zeta\Big)=
\Big(i_{f^2}\ast\zeta\Big)\odot\Big(\beta\ast i_{\operatorname{s}}\Big).\]

By construction, $\operatorname{u}^1$, $\operatorname{z}^1$ and $\operatorname{z}\circ
\operatorname{s}$ belong to $\SETWsat$, so by the already proved condition (BF2)
for $(\CATC,\SETWsat)$ we have that also $\operatorname{v}\circ\operatorname{s}=
\operatorname{u}^1\circ\operatorname{z}^1\circ\operatorname{z}\circ\operatorname{s}$ belongs to
$\SETWsat$; this proves that (BF4c)
holds for $(\CATC,\SETWsat)$.\\

Lastly, let us fix any pair of morphisms $\operatorname{w},\operatorname{v}:B\rightarrow A$, any
invertible $2$-morphism $\alpha:\operatorname{v}\Rightarrow\operatorname{w}$ and let us suppose that
$\operatorname{w}$ belongs to $\SETWsat$. Then there are a pair of objects $C,D$ and a pair of
morphisms $\operatorname{w}':C\rightarrow B$ and $\operatorname{w}'':D\rightarrow C$, such that
both $\operatorname{w}\circ\operatorname{w}'$ and $\operatorname{w}'\circ\operatorname{w}''$ belong
to $\SETW$. By (BF5) for $(\CATC,\SETW)$ applied to $\alpha\ast i_{\operatorname{w}'}$, we get that
$\operatorname{v}\circ\operatorname{w}'$ belongs to $\SETW$. Therefore, $\operatorname{v}$ belongs
to $\SETWsat$, i.e.\ (BF5) holds for $(\CATC,\SETWsat)$.
\end{proof}


\end{document}